\theoremstyle{plain} 
\newtheorem{theorem}{Theorem}[section]
\newtheorem{proposition}[theorem]{Proposition}
\newtheorem*{proposition*}{Proposition}
\newtheorem{lemma}[theorem]{Lemma}
\newtheorem{corollary}[theorem]{Corollary}
\newtheorem{remark}[theorem]{Remark}
\newtheorem{claim}[theorem]{Claim}
\newtheorem{conjecture}[theorem]{Conjecture}
\newtheorem{problem}[theorem]{Problem}}
\DeclareMathOperator{\SO}{SO}
\DeclareMathOperator{\SL}{SL}
\DeclareMathOperator{\Jac}{Jac}
\title{Minimal doubling for small subsets in compact Lie groups}
\author{Simon Machado}
\address{ETH Zurich, 101 Ramistrasse, 8006 Zurich, Switzerland}
\email{smachado@ethz.ch}
\date{\today}
\subjclass{22E30, 43A75, 20N99}
\begin{document}

\begin{abstract}
    We prove a sharp bound for the minimal doubling of a small measurable subset of a compact connected Lie group. Namely, let $G$ be a compact connected Lie group of dimension $d_G$, we show that for for all measurable subsets $A$, we have 
    $$\mu_G(A^2) \geq \left(2^{d_G-d_H} - C\mu_G(A)^{\frac{2}{d_G-d_H}}\right)\mu_G(A)$$
    where $d_H$ is the maximal dimension of a proper closed subgroup $H$ and $C > 0$ is a dimensional constant. This settles a conjecture of Breuillard and Green, and recovers and improves - with completely different methods - a recent result of Jing--Tran--Zhang corresponding to the case $G=\SO_3(\mathbb{R})$. As is often the case, the above doubling inequality is but a special case of general product-set estimates. We prove that for all $\epsilon >0$ and any pair of sufficiently small measurable subsets $A, B$ a Brunn--Minkowski-type inequality holds:
    $$ \mu_G(AB)^{\frac{1}{d_G-d_H}} \geq (1-\epsilon)\left( \mu_G(A)^{\frac{1}{d_G-d_H}} + \mu_G(B)^{\frac{1}{d_G-d_H}}\right).$$

    Going beyond the scope of the Breuillard--Green conjecture, we prove a stability result asserting that the only subsets with close to minimal doubling are essentially neighbourhoods of proper subgroups i.e. of the form
    $$H_{\delta}:=\{g \in G: d(g,H)<\delta\}$$
    where $H$ denotes a proper closed subgroup of maximal dimension, $d$ denotes a bi-invariant distance on $G$ and $\delta >0$. 

    Our approach relies on a combination of two toolsets: optimal transports and its recent applications to the Brunn--Minkowski inequality, and the structure theory of compact approximate subgroups. 
\end{abstract}
\maketitle

\section{Introduction}
Let $G$ be a connected Lie group of dimension $d_G$ and $\mu_G$ be the Haar probability measure i.e. the unique $G$-invariant Borel probability measure on $G$. The \emph{doubling constant} of a measurable subset $A \subset G$ is defined as the ratio
$$\frac{\mu_G(A^2)}{\mu_G(A)}$$
where $A^2:=\{a_1a_2 : a_1, a_2 \in A\}$ is the Minkowski product. The doubling constant is an important object of study in both additive combinatorics \cite{IntroAppGroups14Breuillard} and geometric analysis \cite{BrunnMinkowski02Gardner}. The minimal value it can take in a given group $G$ has been a long-established subject of research. Without any assumption on the ambient group the only known result dates back to the foundational work of Kemperman \cite{Kemperman64}. In Euclidean spaces and nilpotent Lie groups of dimension $d_G$, it can be easily established that the doubling constant takes value at least $2^{d_G}$ \cite{zbMATH06005474}. Recent work of Jing, Tran and Zhang \cite{zbMATH07715451} yields similar inequalities in many non-compact Lie groups.

This paper is concerned with proving doubling inequalities in the more challenging situation of compact Lie groups. If $G$ is a compact group then taking $G = A$ implies that the doubling constant takes value $1$. This somewhat trivial example indicates that any meaningful lower bound on the doubling thus requires dealing with subtle global phenomena absent in other setups. This motivated Breuillard and Green to ask and investigate doubling in compact Lie groups. They conjectured that doubling for small sets in $\SO_3(\mathbb{R})$ must be at least $4 - \epsilon$. In higher-dimension, their conjecture becomes: 

\begin{conjecture}[Breuillard--Green conjecture for compact Lie groups, \cite{Green100,jing2023measure}]\label{Conjecture: Breuillard--Green}
    Let $G$ be a compact Lie group of dimension $d_G$ and let $d_H$ be the maximal dimension of a proper closed subgroup $H$. Then the minimal value of the doubling constant for subsets of small measure should be about $2^{d_G-d_H}$.
\end{conjecture}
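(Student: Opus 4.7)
The plan is to combine a structure theorem, saying that small-doubling sets must concentrate near a proper closed subgroup, with a Brunn--Minkowski inequality of the right dimension obtained via optimal transport. The target exponent $2^{d_G-d_H}$ matches the codimension of the largest proper closed subgroup, which strongly suggests the answer comes from reducing to a transversal slice of that codimension. As a sanity check, I would first analyze the conjectured extremizer $H_\delta = \{g : d(g,H) < \delta\}$ with $H$ of maximal dimension $d_H$. In the exponential chart one gets $\mu_G(H_\delta) \sim c_H \delta^{d_G-d_H}$ and $H_\delta \cdot H_\delta \subseteq H_{2\delta}$ up to a curvature error, giving doubling $2^{d_G-d_H}$ with a correction of size $\delta^2 \sim \mu_G(H_\delta)^{2/(d_G-d_H)}$. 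This both identifies the extremal shape and shows that the error exponent $2/(d_G-d_H)$ in the conjectured inequality is sharp.

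For the lower bound I would argue by dichotomy on how spread $A$ is. If $A$ is not concentrated near any proper subgroup, a Brunn--Minkowski inequality pulled back through the exponential chart (valid since a small neighborhood of the identity looks Euclidean up to curvature errors) already gives doubling at least $2^{d_G} - o(1)$, which is much larger than $2^{d_G-d_H}$. Otherwise, the structure theory of compact approximate subgroups identifies a proper closed subgroup $H$ of the largest possible dimension such that $\mu_G(H_\delta) \asymp \mu_G(A)$ and $A$ is essentially trapped in a translate of $H_\delta$. The crucial input here is that approximate subgroups in compact Lie groups are controlled by neighborhoods of genuine closed subgroups.

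Having localized $A$ in a thin tube around $H$, I would reduce the problem to a Brunn--Minkowski inequality on the $(d_G-d_H)$-dimensional transversal. Since a bi-invariant metric has nonnegative sectional curvature, the optimal-transport inequality of Cordero-Erausquin--McCann--Schmuckenschl\"ager yields the Brunn--Minkowski inequality with exponent $1/(d_G-d_H)$ in its unweighted form, with a correction of order $\mu_G(A)^{2/(d_G-d_H)}$ coming from the Jacobian expansion of the exponential map along the transport geodesics. This is exactly the Brunn--Minkowski statement advertised in the abstract; setting $B = A$ then produces the main doubling inequality.

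The main obstacle I anticipate is the seam between the two halves. The structure theorem produces only an approximate containment of $A$ in $H_\delta$, whereas the optimal transport argument needs a sufficiently clean geometric model to extract the sharp constant $2^{d_G-d_H}$. Upgrading the structure theorem to a genuinely fibered approximation of $A$, and controlling the measure lost in this upgrade by something smaller than the target error $\mu_G(A)^{2/(d_G-d_H)}$, appears to be the technical heart of the argument. A secondary difficulty is the case when $H$ is not normal: then $G/H$ is only a homogeneous space, the transversal slice must be replaced by a slice in the normal bundle of a coset, and the transport plan has to be implemented directly on $G$ via the bi-invariant metric rather than on a group quotient.
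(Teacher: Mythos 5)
Your overall architecture (structure theory of approximate subgroups to trap $A$ near a maximal proper subgroup, plus an optimal-transport Brunn--Minkowski input) is the same combination the paper uses, but the two steps that actually produce the constant $2^{d_G-d_H}$ are not sound as you describe them. First, the branch ``$A$ not concentrated near a proper subgroup $\Rightarrow$ doubling $\geq 2^{d_G}-o(1)$ by Brunn--Minkowski in the exponential chart'' is unjustified: small measure does not mean small diameter, so such an $A$ need not lie in any ball where the exponential chart linearizes the product, and the local inequality simply does not apply to it. The correct route (and the one the paper takes) is: assume the doubling is bounded, invoke Tao's product-set theorem to get a controlling approximate subgroup, and then prove a Jordan-type theorem (Proposition \ref{Proposition: approximate subgroups of small measure}, via shrinking commutators and a multi-scale discretization) to place $A$ in boundedly many translates of $H_\delta$; there is no ``spread-out'' branch that local Brunn--Minkowski handles on its own. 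Note that even after localizing to $H_\delta$ you are still not inside a small ball, since the tube is global in the $H$-direction.

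Second, the reduction to a $(d_G-d_H)$-dimensional Brunn--Minkowski on a transversal slice via Cordero-Erausquin--McCann--Schmuckenschl\"ager does not work: that inequality controls sets of geodesic midpoints $Z_t(A,B)$, not Minkowski products $AB$, and in a compact group there is no dilation to convert one into the other (in $\mathbb{R}^n$ this conversion is exactly what scaling provides); moreover the transversal is not a group, and products inside the tube shear along $H$. The paper circumvents both issues differently: it proves the local Brunn--Minkowski in full dimension $d_G$ inside a small exponential chart (Brenier map $T$, $F=\mathrm{id}\cdot T$, skew-symmetry of $\mathrm{ad}$ controlling the Jacobian), then applies it to thin ``rectangle'' pieces $A_{h,\rho}=A\cap T_\delta B_H(h,\rho)$ against a single best piece $B_{h_0,\rho}$, and integrates over $h\in H$ (the double-counting identity of Lemma \ref{Lemma: Double counting}); the factor $2^{d_G}$ is then converted into $2^{d_G-d_H}$ by the ratio $\mu_H(B_H(e,\rho))/\mu_H(B_H(e,2\rho+c\delta^2))\approx 2^{-d_H}$ (Lemma \ref{Lemma: Growth in balls}). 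The paper even records why a direct transport argument on $G$ with the bi-invariant metric (your fallback in the last paragraph) must fail: for $A=B$ the map $x\mapsto x^2$ has degenerate differential at square roots of the identity. Finally, your claim that the error term $\mu_G(A)^{2/(d_G-d_H)}$ falls out of the Jacobian expansion presupposes $\delta\asymp\mu_G(A)^{1/(d_G-d_H)}$, i.e. that $A$ essentially fills $H_\delta$; in the paper this requires the stability theorem and a bootstrap, though for the qualitative conjecture as stated this last point is secondary.
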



When $G$ admits an abelian factor, the conjecture follows from Kemperman's \cite{Kemperman64}. Little progress was made on the Breuillard--Green conjecture beyond that case until a recent breakthrough of Jing, Tran and Zhang \cite{jing2023measure} building upon previous work and model-theoretic tools. They solved there the original Breuillard--Green conjecture (corresponding to $G =\SO_3(\mathbb{R})$ above) but their method does not generalise.

The main result of this paper is a proof of the general case of this conjecture along with sharp error terms:

\begin{theorem}[Minimal doubling in compact Lie groups]\label{Theorem: Expansion inequality}
 Let $G$ be a compact connected Lie group. For all $A \subset G$ compact we have $$\mu_G(A^2) \geq \left(2^{d_G-d_H} - C\mu_G(A)^{\frac{2}{d_G-d_H}}\right)\mu_G(A)$$
where $d_G$ denotes the dimension of $G$, $d_H$ denotes the maximal dimension of a proper closed subgroup and $C:=C(d_G)>0 $ denotes a dimensional constant. 
\end{theorem}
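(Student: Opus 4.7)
The plan is to derive Theorem~\ref{Theorem: Expansion inequality} as a corollary of the stronger Brunn--Minkowski-type inequality announced in the abstract. Setting $A=B$ there and raising to the $(d_G-d_H)$-th power yields
$$\mu_G(A^2) \geq (1-\epsilon)^{d_G-d_H}\cdot 2^{d_G-d_H}\mu_G(A),$$
so the stated bound follows provided $\epsilon$ can be taken of order $\mu_G(A)^{2/(d_G-d_H)}$. The real content is therefore the Brunn--Minkowski inequality with quantitative control on $\epsilon$ in terms of $\mu_G(A)$ and $\mu_G(B)$, and Theorem~\ref{Theorem: Expansion inequality} follows by specialization.

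To prove that Brunn--Minkowski inequality, I would combine, as the authors' abstract indicates, optimal transport with the structure theory of compact approximate subgroups. For the transport side, one endows $G$ with a bi-invariant Riemannian metric, making $(G, d, \mu_G)$ a $CD(0, d_G)$ metric-measure space. Applying the Cavalletti--Mondino displacement-interpolation machinery to $A$ and $B$ produces a local Brunn--Minkowski inequality on $G$, but only at the ``trivial'' exponent $1/d_G$ rather than the sharp $1/(d_G-d_H)$ demanded by the theorem.

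The improvement to exponent $1/(d_G-d_H)$ must come from the second toolset. If $\mu_G(A^2)/\mu_G(A)$ is close to $2^{d_G-d_H}$, then $A$ is a small-doubling set of controlled size; by the structure theory of compact approximate subgroups (in the spirit of Breuillard--Green--Tao and Hrushovski) such an $A$ is essentially contained in a tubular neighborhood $H_\delta$ of a proper closed subgroup $H$ of maximal dimension. Having reduced to this case, I would foliate $A$ by cosets of $H$: the fibers are $d_H$-dimensional while the transverse variable lives in a $(d_G-d_H)$-dimensional slice of $G/H$. Applying the transport argument only in the transverse directions then yields the sharp exponent $1/(d_G-d_H)$, and the fibers contribute only lower-order corrections which can be absorbed into $\epsilon$.

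The main obstacle will be circularity: the structure theory requires a small-doubling hypothesis as input, while the sharp inequality is exactly what we want to prove. I would break the loop by first establishing a rough unconditional inequality --- sufficient to certify that a doubling close to $2^{d_G-d_H}$ already forces $A$ inside some $H_\delta$ --- and then upgrade to the sharp version through the fiber decomposition above. The quadratic error $C\mu_G(A)^{2/(d_G-d_H)}$ should then emerge from a second-order Taylor expansion of the volume of the tube $H_\delta$ in normal coordinates around $H$, together with a quantitative stability estimate measuring how tightly $A$ approximates such a tube.
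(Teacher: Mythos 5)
Your overall architecture matches the paper's: reduce the doubling bound to a Brunn--Minkowski inequality, prove a local inequality by optimal transport, use the structure theory of approximate subgroups to confine $A$ to a tube $H_\delta$ around a maximal proper subgroup, and extract the sharp exponent by fibering over $H$; your worry about circularity is resolved exactly as you suggest (if the doubling exceeds the target bound there is nothing to prove, and otherwise Tao's product-set theorem produces an approximate subgroup to which the unconditional Jordan-type Proposition \ref{Proposition: approximate subgroups of small measure} applies). The genuine gap is in your local step. The Cavalletti--Mondino / $CD(0,N)$ machinery gives a Brunn--Minkowski inequality for the set of geodesic $t$-intermediate points $Z_t(A,B)$, not for the Minkowski product $AB$, and no standard curvature-dimension statement converts one into the other: the doubled-midpoint set and $AB$ differ by commutator terms, and a set displacement of size $O(\rho^2)$ gives no lower bound on $\mu_G(AB)$. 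Bridging precisely this gap is the content of the paper's proof of Theorem \ref{Theorem: Local Brunn--Minkowski}: one takes the Euclidean Brenier map $T$ in exponential coordinates and studies the map $F = \mathrm{id}\cdot T$, whose image lies in $AB$ by construction, bounding its Jacobian through the BCH formula and the skew-symmetry of $\mathrm{ad}(X)$ (which is also what makes the error quadratic, $\epsilon \le C(d_G)\rho^2$). That this is not a formality is shown by the paper's own remark at the end of \S \ref{Subsection: Multiplicative properties of rectangles}: the naive ``transport, then multiply'' strategy fails globally because the squaring map has singular differential at $2$-torsion points; only the restriction to a small ball, together with the skew-symmetry cancellation, saves the argument. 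Your proposal as written does not supply this bridge.

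Two further points where your route diverges from (and is vaguer than) the paper's. First, the upgrade from exponent $1/d_G$ to $1/(d_G-d_H)$ is not obtained by ``transport only in the transverse directions'': the paper applies the full $d_G$-dimensional local inequality to slabs $A_{h,\rho}=A\cap T_\delta B_H(h,\rho)$ against a fixed slab $B_{h_0,c\rho}$, integrates over $h\in H$ via the double-counting identity of Lemma \ref{Lemma: Double counting}, and the factor $2^{d_H}$ is recovered from the ball-growth ratio in $H$ (Lemma \ref{Lemma: Growth in balls}), with the asymmetric exponent coming from optimizing the scale ratio $c=(\mu_G(B)/\mu_G(A))^{1/(d_G-d_H)}$; a purely transverse slicing argument must also handle the $O(\delta^2)$ drift of products of slices in the $H$-direction. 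Second, the error term $C\mu_G(A)^{2/(d_G-d_H)}$ is not extracted from a quantitative $\epsilon$ in the Brunn--Minkowski inequality (the paper explicitly does not prove such a sharp dependence); it comes from a bootstrap: the qualitative stability theorem forces $A\subset H_\delta$ with $\mu_G(A)\ge \tfrac12\mu_G(H_\delta)$, hence $\delta=O(\mu_G(A)^{1/(d_G-d_H)})$, and then the ``moreover'' clause of Proposition \ref{Proposition: Lower bound on expansion close to a subgroup} (error at most $C(d_G)\rho^2$) yields the quadratic correction. Your closing sentence gestures at this mechanism, but as stated your derivation of the theorem from a quantitative Brunn--Minkowski inequality with $\epsilon\sim\mu_G(A)^{2/(d_G-d_H)}$ assumes an input the argument does not provide.
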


The presence of the correcting term $C\mu_G(A)^{\frac{2}{d_G-d_H}}$ is necessary, sharp (e.g. \cite[Fact 2.7]{jing2023measure}) and new even in the case $G=\SO_3(\mathbb{R})$. The constant $C$ is not explicit however (see \S \ref{Subsection: Babai's conjecture for compact Lie groups}). 

As is often the case, the doubling inequality (Theorem \ref{Theorem: Expansion inequality}) stems from a much more general lower bound on the measure of product-sets. The lower bound we obtain takes the form of a Brunn--Minkowski-type inequality:

\begin{theorem}[Brunn--Minkowski in compact Lie groups]\label{Theorem: BM in compact groups}
    Let $G$ be a compact connected Lie group. Let $\alpha >0$. There is $\epsilon > 0$ such that if $A,B$ are compact subsets of $G$ with $\mu_G(A), \mu_G(B) \leq \epsilon$,  then $$\mu_G(AB)^{\frac{1}{d_G-d_H}} \geq (1-\alpha)\left(\mu_G(A)^{\frac{1}{d_G-d_H}} + \mu_G(B)^{\frac{1}{d_G-d_H}}\right)$$
where $d_G$ denotes the dimension of $G$ and $d_H$ denotes the maximal dimension of a proper closed subgroup.
\end{theorem}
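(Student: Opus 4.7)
Set $m := d_G - d_H$. The plan combines the author's structure theory for compact approximate subgroups with an optimal-transport-based transverse analysis, organized around a structural dichotomy. Argue by contradiction: suppose the inequality fails for some $\alpha > 0$, yielding a sequence $(A_n, B_n)$ of compact pairs with $\mu_G(A_n), \mu_G(B_n) \to 0$ violating the claimed bound. A pair nearly saturating Brunn--Minkowski has very small product doubling, so $A_n \cup B_n$ (or a suitable symmetrization) qualifies as a compact approximate subgroup. The structure theory of such objects forces $A_n$ and $B_n$ to be, up to small-measure errors, tubular neighborhoods of cosets of a proper closed subgroup $H_n \leq G$, of vanishing thickness $\delta_n \to 0$. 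Using Chabauty-compactness of the space of closed subgroups of $G$, pass to a subsequence $H_n \to H$, a proper closed subgroup.

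If $\dim H < d_H$, the ``effective dimension'' of tubes around $H$ is $d_G - \dim H > m$; the transverse analysis described below, applied to $H$, yields Brunn--Minkowski with exponent $1/(d_G - \dim H)$, which by a power-mean / Jensen manipulation upgrades to the claimed inequality with exponent $1/m$ and constant strictly better than $1 - \alpha$, a contradiction. Hence $\dim H = d_H$.

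\textbf{Transverse Brunn--Minkowski.} For sets concentrated near a maximal-dimensional subgroup $H$, work in a tubular neighborhood. Fix a bi-invariant Riemannian metric, split $\mathfrak{g} = \mathfrak{h} \oplus \mathfrak{h}^{\perp}$ with $\dim \mathfrak{h}^\perp = m$, and parametrize via the local diffeomorphism $(X, h) \mapsto \exp(X) h$ from a small neighborhood of $0 \in \mathfrak{h}^\perp$ times $H$. The Haar measure pushes forward, to leading order, as Lebesgue on $\mathfrak{h}^\perp$ tensored with Haar on $H$. A Baker--Campbell--Hausdorff expansion shows that in the transverse direction multiplication reduces to $(X, Y) \mapsto X + \mathrm{Ad}_h(Y) + O((|X|+|Y|)^2)$, with $\mathrm{Ad}_h$ acting on $\mathfrak{h}^\perp$ by isometries (by bi-invariance of the metric, $\mathfrak{h}^\perp$ is $\mathrm{Ad}_H$-invariant). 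Averaging over the $H$-fibers via a Pr\'ekopa--Leindler / optimal-transport argument on the fibration, then applying Euclidean Brunn--Minkowski in $\mathfrak{h}^\perp \cong \mathbb{R}^{m}$, gives the desired inequality with exponent $1/m$ and constant $1$; the higher-order BCH corrections and Jacobian deviations are $O(\delta_n)$ and absorbed by $(1 - \alpha)$ for $\epsilon$ small enough.

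\textbf{Main obstacle.} The crux is the extraction, from approximate saturation of Brunn--Minkowski, of a proper closed subgroup of \emph{maximal} dimension $d_H$. The structure theory for compact approximate subgroups produces \emph{some} proper subgroup, but forcing maximality requires an extra ingredient — most naturally an induction on $d_G - \dim H$ together with the observation that smaller-dimensional tubes satisfy a strictly stronger BM and so cannot accommodate near-saturation at exponent $1/m$. The transverse analysis itself, while technically involved because of the $\mathrm{Ad}_h$-twist across fibers and the need to control BCH error terms uniformly in $h \in H$, is comparatively routine once concentration near $H$ is known, thanks to the bi-invariance of the metric and the compactness of $H$.
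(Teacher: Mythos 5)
Your reduction overstates what the structure theory delivers, and this creates a genuine gap. Proposition \ref{Proposition: approximate subgroups of small measure} (together with Proposition \ref{Proposition: Tao}) only gives that $A$ and $B$ are covered by \emph{boundedly many} translates of $H_\delta$ for some proper connected $H$; it does not give that $A_n,B_n$ are ``up to small-measure errors, tubular neighbourhoods of cosets.'' That stronger statement is essentially the stability theorem (Theorem \ref{Theorem: Stability}), which is proved later, only for near-minimal doubling with $A=B$, and using the present theorem — so you cannot invoke it here, and sets that merely violate the inequality by a factor $1-\alpha$ are not ``near-saturating'' in any usable sense. Consequently the step that reduces from a bounded union of translates to a single tube is missing; in the paper this is exactly the argument with disjoint translates (Lemma \ref{Lemma: Disjoint translates}), the choice of the translate $H_{\delta'}f_0$ carrying the largest relative fraction of $B$ (compared with the fractions of $A$ in its translates), multiplying every piece $A\cap fH_{\delta'}$ against that one slab, and summing. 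Two further points: before invoking Tao's covering machinery one must dispose of the regime $\mu_G(B)\ll\mu_G(A)$ (where $K=\mu_G(AB)/\sqrt{\mu_G(A)\mu_G(B)}$ is uncontrolled but the inequality is trivial); and the maximality of $\dim H$, which you single out as the main obstacle, is a non-issue — since $p\mapsto(x^{1/p}+y^{1/p})^p$ is increasing, a subgroup of smaller dimension yields a \emph{stronger} inequality, and the paper never needs $\dim H=d_H$. So the difficulty you flag is not one, while the steps you call routine are where the work lies.

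The second gap is the ``transverse Brunn--Minkowski'' itself. Declaring that ``averaging over the $H$-fibers via a Pr\'ekopa--Leindler / optimal transport argument on the fibration'' yields exponent $1/m$ with constant $1$ is asserting the crux, not proving it: there is no off-the-shelf Pr\'ekopa--Leindler over a compact base group, and after the BCH reduction the fibre of $AB$ over $g\in H$ is a union over $hk=g$ of sets $A_h^{\perp}+\mathrm{Ad}_h B_k^{\perp}$, so one must choose which fibres to pair and prove a Kemperman/Brunn--Minkowski hybrid on $H$ with the sharp constant — precisely the compact-group difficulty the whole paper is about (note the paper's remark that the naive global transport argument dies on the squaring map). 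The paper's actual mechanism is different and supplies the missing quantitative input: it applies the $d_G$-dimensional local Brunn--Minkowski (Theorem \ref{Theorem: Local Brunn--Minkowski}) to products $A_{h,\rho}\,B_{h_0,c\rho}$ of small rectangles against a single best slice of $B$, integrates over $h\in H$ with the double-counting identity (Lemma \ref{Lemma: Double counting}), pays the base ball-growth factor $\mu_H(B_H(e,(1+c)\rho+c\delta^2))$, and then optimizes the slice-scale ratio $c=(\mu_G(B)/\mu_G(A))^{1/(d_G-d_H)}$; it is this optimization that produces the asymmetric exponent $1/(d_G-d_H)$ in Proposition \ref{Proposition: BM close to a subgroup}. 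Nothing in your sketch plays that role, so even granting concentration near one tube, your argument as written yields at best a doubling-type bound rather than the stated Brunn--Minkowski inequality.
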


The Brunn--Minkowski-type inequality is sharp and the presence of the correcting term $\alpha$ is also necessary. We can also deduce an estimate of  $\alpha$ of the form  $C\max(\mu_G(A), \mu_G(B))^{c}$, but this is far from optimal when $\mu_G(B)$ becomes small in front of $\mu_G(A)$. So we do not pursue this.

Even when specialised to $\SO_3(\mathbb{R})$, our proof provides a new approach to the result of \cite{jing2023measure} based on geometric ideas related to Jordan's theorem, the structure of approximate groups and the optimal transport approach to the Brunn--Minkowski inequality. This new approach provides further insight into the structure of sets with doubling close to the optimum. Recall that the relevance of the constant $2^{d_G - d_H}$ was explained in \cite{Green100,jing2023measure} by the fact that neighbourhoods of maximal subgroups should have close to optimal doubling and have doubling about $2^{d_G - d_H}$. The additional insight brought by our method reverses this observation. We show that, in general, neighbourhoods of maximal subgroups are the \emph{only} explanation for doubling close to the optimum:

\begin{theorem}[Stability of the doubling inequality]\label{Theorem: Stability}
Let $G$ be a semi-simple connected compact Lie group of dimension $d_G$ and let $d_H$ be the maximal dimension of a proper closed subgroup. Let $\delta> 0$. There are $\epsilon, \alpha > 0$ such that if $\mu_G(A)<\epsilon$ and 
$$\mu_G(A^2) \leq \left(2^{d_G-d_H} + \alpha\right)\mu_G(A)$$
then there are a proper closed subgroup $H$ of maximal dimension and $\delta'>0$ such that $A \subset H_{\delta'}:=\{g \in G: d(g,H) < \delta'\}$ and $\mu_G(H_{\delta'}\setminus A) \leq \delta\mu_G(H_{\delta'})$.
\end{theorem}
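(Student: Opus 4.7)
My plan is a compactness-and-contradiction scheme that combines Theorems \ref{Theorem: Expansion inequality} and \ref{Theorem: BM in compact groups} with the structure theory of compact approximate subgroups.

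Suppose the conclusion fails for some $\delta>0$. Then I can produce a sequence of compact sets $A_n \subset G$ with $\mu_G(A_n)\to 0$ and
$$\mu_G(A_n^2) \le \bigl(2^{d_G-d_H}+1/n\bigr)\mu_G(A_n),$$
but such that no $A_n$ is contained in a tube $H_{\delta'}$ around a maximal-dimensional proper closed subgroup while filling a proportion at least $1-\delta$ of it. Each $A_n$ is a $K$-approximate subgroup with uniform $K$, so the structure theorem for compact approximate subgroups yields a proper closed subgroup $H_n$ and a radius $\tau_n>0$ such that $A_n A_n^{-1}$ is commensurable with $(H_n)_{\tau_n}$ up to uniformly bounded multiplicative constants. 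Since the space of closed subgroups of a compact Lie group is compact in the Chabauty topology, I pass to a subsequence and assume $H_n \to H_\infty$, a proper closed subgroup. A volume comparison driven by the sharp exponent in Theorem \ref{Theorem: Expansion inequality} forces $\dim H_\infty = d_H$: a strictly smaller dimension would predict, for $A_n$ essentially sitting inside a bounded number of translates of $(H_n)_{\tau_n}$, a doubling strictly larger than $2^{d_G-d_H}$, contradicting the uniform bound above.

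The second step is to extract structural information from the near-equality in Theorem \ref{Theorem: BM in compact groups} (applied with $A=B=A_n$ and error $\alpha_n\to 0$). The optimal-transport proof provides a Brenier-type map $T_n$ between the normalisations of $\mathbf{1}_{A_n}$ and $\mathbf{1}_{A_n^2}$, and the null directions of the limiting inequality coincide with the tangent directions to $H_\infty$. Slicing by the fibres of the projection $\pi: G \to G/H_\infty$, I would argue transversally: a localised Prékopa--Leindler-type inequality on the $(d_G-d_H)$-dimensional slice, combined with the near-equality, forces the fibre sections of $A_n$ to fill essentially all of their intersection with the tube, while the projection $\pi(A_n)\subset G/H_\infty$ is forced to be close, in symmetric difference, to a metric ball around the identity coset of radius $r_n$ with $r_n^{d_G-d_H}\asymp \mu_G(A_n)$. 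Putting the two pieces together, $A_n$ fills a $(1-o(1))$-proportion of a tube $(H_\infty)_{r_n}$ (after conjugation/translation to place the identity inside $A_n$), which contradicts the assumption for $n$ large.

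The main obstacle is the rigidity in step two: converting the integrated displacement-convexity produced by the optimal-transport proof of Theorem \ref{Theorem: BM in compact groups} into the product/slicing structure described above, and doing so \emph{uniformly} as the candidate subgroups $H_n$ vary and the radii $r_n$ shrink. A secondary difficulty is step one, namely pinning down the dimension of $H_\infty$ to be exactly $d_H$: the abstract approximate-group structure theorem only outputs \emph{some} closed subgroup, and one must use the semisimplicity of $G$ (which excludes anomalous central directions) together with the sharp exponent of Theorem \ref{Theorem: Expansion inequality} to promote the candidate to a maximal-dimensional one.
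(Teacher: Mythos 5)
There is a genuine gap, in fact two. First, the structural input you invoke does not exist in the form you need: the structure theory of compact approximate subgroups (Proposition \ref{Proposition: approximate subgroups of small measure}) only yields a \emph{one-sided} covering of $A_nA_n^{-1}$ by boundedly many translates of a tube $(H_n)_{\delta}$; it does not give two-sided commensurability of $A_nA_n^{-1}$ with a tube $(H_n)_{\tau_n}$ with uniform constants (indeed such a statement is false for general approximate subgroups, e.g.\ exponentials of eccentric convex bodies, and for sets with doubling $\approx 2^{d_G-d_H}$ it is essentially the conclusion you are trying to prove). Relatedly, your dimension-pinning argument cannot run on Theorem \ref{Theorem: Expansion inequality} alone: that theorem gives the same lower bound $2^{d_G-d_H}$ for \emph{all} small sets, so it cannot penalise a candidate subgroup of dimension $<d_H$; what is needed is the relative inequality for sets confined to a tube around a given subgroup (Proposition \ref{Proposition: Lower bound on expansion close to a subgroup} / Proposition \ref{Proposition: BM close to a subgroup}), and even then one must first pass from ``covered by boundedly many translates of $H_\delta$'' to ``contained in a single $H_\delta$,'' which is a separate nontrivial step (Lemma \ref{Lemma: Exactly one neighbourhood of a subgroup}, resting on Kemperman-type arguments and the even-spreading of mass in Proposition \ref{Proposition: Sets near equality are evenly spread}).

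Second, and more seriously, the rigidity step that you yourself flag as ``the main obstacle'' is the entire content of the theorem, and the mechanism you propose for it does not work: there is no usable global Brenier map between $\mathbf{1}_{A_n}$ and $\mathbf{1}_{A_n^2}$ on a compact group --- the paper explicitly observes (end of \S\ref{Subsection: Multiplicative properties of rectangles}'s discussion in \S4) that the naive global transport strategy fails already for $A=B$, since $x\mapsto x^2$ has degenerate differential at involutions --- and the local optimal-transport inequality (Theorem \ref{Theorem: Local Brunn--Minkowski}) is only used through a double-counting over small rectangles, not through a transport map whose ``null directions'' one could read off. The actual route to rigidity is entirely different and none of it appears in your sketch: near-equality forces the localised pieces $A_{h,\rho}$ to be approximate subgroups of near-minimal doubling commensurated along $H$ (Lemma \ref{Lemma: Commensurator}), one analyses their rescaled ultraproduct via density functions and the Euclidean equality case (Corollary \ref{Corollary: Density functions are indicators}), one needs the one-dimensional Figalli--Jerison stability along rays and a simplex-filling argument (Lemma \ref{Lemma: Minimal subset contains a large neighbourhood of e}, Lemma \ref{Lemma: Fills in a simplex}) to see that the mass is genuinely comparable to that of the limiting convex body, one uses irreducibility of the isotropy representation of the maximal subgroup to rule out degenerate limits, and finally one glues the local convex approximants along $H$ by showing the translation parameters form an almost $1$-cocycle and invoking Kazhdan's stability of almost homomorphisms. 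Without some substitute for these steps (especially the local-to-global gluing along $H$ and the comparison of $\mu_G(A_n)$ with the volume of the limiting convex set), your outline remains a plausible plan rather than a proof.
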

When $G=SO_n(\mathbb{R})$, $n \neq 4$, Theorem \ref{Theorem: Stability} thus asserts that the only small subsets with close to minimal doubling are - up to a small error - inverse images of small balls of the sphere $\mathbb{S}^{n-1}$ seen as the symmetric space $\SO_n(\mathbb{R})/\SO_{n-1}(\mathbb{R})$. This is part of a larger framework concerning stability results related to the Brunn--Minkowski inequality - see \cite{FigalliJerison15, figalli2023sharp} for Euclidean spaces, \cite{zbMATH07070254, zbMATH07600613} for tori and \cite{jing2023measure2} for groups with an abelian factor - and is the first result of its kind in the context of non-commutative groups. 

\subsection{Historical background}
The first results on the doubling constant can be traced back to fundamental papers of Brunn \cite{Brunn87}, Minkowski \cite{Minkowski68} and Lusternik \cite{Lusternik35} regarding Minkowski sums in $\mathbb{R}^{d_G}$. Their works unveiled a natural relationship between minimal doubling and dimension: the logarithm of the minimal doubling constant equals the dimension $d_G$. This tells us that minimal doubling is intimately linked to the local behaviour of the set considered. These inequalities and their generalisations have been a fundamental tool of geometric analysis in relation to isoperimetric inequalities. We refer to the survey \cite{BrunnMinkowski02Gardner} for this and more.  

As a counterpart, the seminal work of Freiman \cite{Freiman73} showed that doubling is also linked to global phenomena. Freiman's work showed quantitatively that the only subsets of $\mathbb{Z}$ exhibiting small doubling are arithmetic progressions - for which the small doubling is also $2$ to the power of a dimensional constant. Many works have improved upon Freiman's results. They were extended to other abelian and non-abelian groups (e.g. \cite{GreenRuzsa07, Helfgott08}) culminating in a general classification result for finite approximate subgroups due to Breuillard, Green and Tao \cite{BGT12}. The quantitative aspects of Freiman's results were also improved towards the polynomial Freiman conjecture \cite{Sanders13Survey, gowers2023conjecture}. 


The interaction between these two fields recently gained traction following the seminal work of Figalli and Jerison on the quantitative stability problem of the Brunn--Minkowski inequality \cite{FigalliJerison15}. They observed there that the one-dimensional stability could be seen as a consequence of Freiman's $3k-4$ theorem \cite{Freiman59} and used it to kickstart their proof. Following this, further quantitative stability results were obtained \cite{FigalliICM14, böröczky2022quantitative} culminating in the recent sharp bounds obtained by Figalli, van Hintum and Tiba \cite{figalli2023sharp}. Keevash, van Hintum and Tiba \cite{vanhintum2023ruzsas} also provided a discrete counterpart to the Brunn--Minkowski inequality and the relevant stability result by solving a conjecture of Ruzsa establishing a Brunn--Minkowski type inequality in $\mathbb{Z}^n$ - we also mention in that direction the recent work of Gowers, Green, Manners and Tao \cite{gowers2023conjecture} on the polynomial Freiman--Ruzsa conjecture over finite fields. We show in this article that some of the techniques and ideas developed in that field are also decisive in solving the Breuillard--Green conjecture for compact Lie groups (Theorem \ref{Theorem: Expansion inequality}) and the related stability result (Theorem \ref{Theorem: Stability}), where they were perhaps less expected. We also mention \cite{ellis2024product} which appeared at the same time as the first version of this article and studies product-set estimates for large subsets of compact Lie groups. 

Finally, we point the reader interested in further considerations and open problems to the last section (\S \ref{Section: Closing remarks}) of this paper.

\subsection{A look at the methods}

The proofs of Theorems \ref{Theorem: Expansion inequality}, \ref{Theorem: BM in compact groups} and \ref{Theorem: Stability} have the same overall structure. They are the combination of two parts: a result about the global structure and a counterpart about local doubling. 

In the case of Theorems \ref{Theorem: Expansion inequality} and \ref{Theorem: BM in compact groups}, the local result is an extension of the Brunn--Minkowski inequality to subsets in small enough neighbourhoods of the identity: 

\begin{theorem}\label{Theorem: Local Brunn--Minkowski}
Let $G$ be a Lie group of dimension $d_G$ and $d$ be a bi-invariant distance such that $(G,d)$ has diameter $1$. For all $\epsilon > 0$, there is $\rho >0$ such that for any $A, B \subset B_d(e,\rho)$ compact  we have
$$ \mu_G(AB)^{\frac{1}{d_G}} \geq (1-\epsilon)\left(\mu_G(A)^{\frac{1}{d_G}} + \mu_G(B)^{\frac{1}{d_G}}\right).$$
Moreover, we can choose $\epsilon < C(d_G) \rho^2$ for some dimensional constant $C(d_G) >0$.
\end{theorem}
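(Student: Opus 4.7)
The plan is to pull the problem back to the Lie algebra via exponential coordinates, where the Euclidean Brunn--Minkowski inequality applies exactly, and to control the Euclidean-versus-group discrepancy via the Baker--Campbell--Hausdorff formula together with the skew-adjointness of $\mathrm{ad}$ in any bi-invariant metric. Write $\tilde{A}=\exp^{-1}(A)$, $\tilde{B}=\exp^{-1}(B)$; both sit inside $B(0,\rho')\subset\mathfrak{g}$ with $\rho'=O(\rho)$. Two local facts will be used: (i) Haar pulls back to Lebesgue with Jacobian $J(X)=1+O(|X|^{2})$, the first-order term vanishing because $\mathrm{tr}(\mathrm{ad}\,X)=0$; (ii) the BCH formula gives $\phi(X,Y):=\log(\exp X\exp Y)=X+Y+\tfrac{1}{2}[X,Y]+O(\rho^{3})$.

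The first move is to take $T=\nabla\varphi\colon\tilde{A}\to\tilde{B}$ to be the Brenier optimal transport from uniform measure on $\tilde{A}$ to uniform measure on $\tilde{B}$; by Monge--Amp\`ere, $\det DT\equiv\mathrm{Leb}(\tilde{B})/\mathrm{Leb}(\tilde{A})$ a.e.\ and $DT$ is symmetric and positive semi-definite. Set $\Phi(X):=\phi(X,T(X))$, so $\exp\Phi(X)=\exp(X)\exp(T(X))\in AB$. Since $\Phi$ is an $O(\rho^{2})$-Lipschitz perturbation of $X+T(X)=\nabla(\tfrac{1}{2}|X|^{2}+\varphi)$, which is $1$-strongly monotone, $\Phi$ is injective for $\rho$ small, and the area formula gives
\[\mu_{G}(AB)\ge\mu_{G}(\exp\Phi(\tilde{A}))\ge(1-C\rho^{2})\int_{\tilde{A}}\det D\Phi(X)\,dX.\]

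The crux is the Jacobian inequality $\det D\Phi\ge(1-C\rho^{2})\det(I+DT)$. Differentiating BCH yields $D\Phi=I+DT-\tfrac{1}{2}\mathrm{ad}(T(X))+\tfrac{1}{2}\mathrm{ad}(X)DT+O(\rho^{2})(I+DT)$, so writing $D\Phi=(I+DT)(I+N)$ one has
\[N=\tfrac{1}{2}(I+DT)^{-1}\bigl(\mathrm{ad}(X)DT-\mathrm{ad}(T(X))\bigr)+O(\rho^{2}).\]
Two features force $\det(I+N)=1+O(\rho^{2})$. First, $\mathrm{tr}(N)=O(\rho^{2})$: $(I+DT)^{-1}$ and $DT(I+DT)^{-1}$ are symmetric, while $\mathrm{ad}(X)$ and $\mathrm{ad}(T(X))$ are skew-adjoint in a bi-invariant metric, and $\mathrm{tr}(SK)=0$ for $S$ symmetric and $K$ skew. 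Second, in an eigenbasis of $DT$ with eigenvalues $\lambda_{i}\ge 0$, a generic entry of $N$ has shape $[A'_{ij}\lambda_{j}-U'_{ij}]/(1+\lambda_{i})$ with $|A'_{ij}|,|U'_{ij}|=O(\rho)$, so a cyclic product telescopes:
\[|N_{i_{1}i_{2}}N_{i_{2}i_{3}}\cdots N_{i_{k}i_{1}}|\le C^{k}\rho^{k}\prod_{p}\frac{\lambda_{i_{p}}}{1+\lambda_{i_{p}}}\le C^{k}\rho^{k},\]
yielding $|\mathrm{tr}(N^{k})|\le(Cd_{G}\rho)^{k}$. Summing $\log\det(I+N)=\sum_{k\ge 1}(-1)^{k+1}\mathrm{tr}(N^{k})/k$ then gives $|\log\det(I+N)|=O(\rho^{2})$ for $\rho$ small in terms of $d_{G}$.

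Combining with the Minkowski determinant inequality $\det(I+DT)^{1/d_{G}}\ge 1+(\det DT)^{1/d_{G}}$ and integrating produces
\[\int_{\tilde{A}}\det D\Phi\,dX\ge(1-C\rho^{2})\bigl(\mathrm{Leb}(\tilde{A})^{1/d_{G}}+\mathrm{Leb}(\tilde{B})^{1/d_{G}}\bigr)^{d_{G}},\]
and converting back via $\mu_{G}(A)=(1+O(\rho^{2}))\mathrm{Leb}(\tilde{A})$ (and the analogue for $B$) finishes the argument. The hard part will be the Jacobian estimate: $DT$ can have unbounded operator norm (e.g.\ when a thin set is transported to a fat one), so additive perturbation bounds fail; only the skew/symmetric cancellation coming from bi-invariance restores the expected multiplicative $O(\rho^{2})$ control.
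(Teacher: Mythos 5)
Your proposal is correct and follows essentially the same route as the paper's proof: exponential coordinates with the $1+O(|X|^2)$ Jacobian of $\exp$, the Brenier map $T=\nabla\varphi$ with $\det DT=\mathrm{Leb}(\tilde B)/\mathrm{Leb}(\tilde A)$, the BCH-perturbed map $X\mapsto\log(\exp X\exp T(X))$ with injectivity from monotonicity, a Jacobian lower bound that uses skew-adjointness of $\mathrm{ad}$ in the bi-invariant metric to reduce the error to $O(\rho^2)$ uniformly in the (possibly huge) eigenvalues of $DT$, and finally the AM--GM/Minkowski determinant inequality. The only difference is linear-algebra bookkeeping in the Jacobian step — you factor $D\Phi=(I+DT)(I+N)$ and control $\det(I+N)$ via traces of powers and a telescoping cyclic-product bound, whereas the paper factors out $I+\tfrac12\mathrm{ad}(X)+\cdots$ on the left and expands the determinant directly in an eigenbasis of $DT$ — an inessential variation.
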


To establish this result we adapt a strategy described in particular in \cite[p.7]{FigalliICM14} proving Brunn--Minkowski-type inequalities thanks to optimal transport. 

Theorem \ref{Theorem: Local Brunn--Minkowski} combined with a simple double counting argument yields Theorem \ref{Theorem: Expansion inequality} for subsets contained in a neighbourhood of a proper subgroup; Theorem \ref{Theorem: BM in compact groups} for those same subsets follows along similar lines with the simple addition of the optimisation of a parameter. That we can reduce to such a situation is guaranteed by our next result concerning the global structure of approximate subgroups:

\begin{proposition}[Approximate subgroups of small measure are close to subgroups]\label{Proposition: approximate subgroups of small measure}
    Let $G$ be a compact semi-simple Lie group $G$ and $K > 0$. For all $\delta > 0$ there is $\epsilon > 0$ such that if $\Lambda \subset G$ is a compact approximate subgroup with $\mu_G(\Lambda) \leq \epsilon$ then there is a proper connected subgroup $H$ such that $\Lambda$ is covered by $C(d_G,K)$ translates of $H_{\delta}$.
\end{proposition}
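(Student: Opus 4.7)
The plan is to argue by compactness and contradiction, extracting a proper connected subgroup from a sequence of compact approximate subgroups whose measure shrinks to zero, and then locating $\Lambda$ near this subgroup using local structure theory on the Lie algebra.

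Fix $\delta>0$ and $K$, and suppose the statement fails. Then there exists a sequence $(\Lambda_n)_{n\geq 1}$ of compact $K$-approximate subgroups with $\mu_G(\Lambda_n)\to 0$, such that no $\Lambda_n$ is covered by $C(d_G,K)$ translates of $H_\delta$ for any proper connected subgroup $H$. The task is to produce such an $H$ in the limit. The first move is to apply a structure theorem for compact $K$-approximate subgroups, available either from the author's prior work on compact and locally compact approximate subgroups or from a Hrushovski / Breuillard--Green--Tao style analysis adapted to the Lie setting. Inside a bounded power $\Lambda_n^{O_K(1)}$ this would supply a symmetric compact set $P_n$ of measure comparable to $\mu_G(\Lambda_n)$ approximating a coset-nilprogression contained in a closed connected virtually nilpotent subgroup $N_n \subset G$, with $\Lambda_n$ covered by $O_K(1)$ left translates of $P_n$. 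Since $G$ is semisimple and compact, closed connected nilpotent subgroups are abelian and contained in maximal tori --- which are proper in $G$ --- so each $N_n$ is already a proper connected subgroup.

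To obtain a uniform limit, I would then work infinitesimally at the identity. After a bounded translation most of the mass of $\Lambda_n$ lies in a small neighbourhood of $e$; pulling back via $\exp$ gives a symmetric subset $S_n \subset \mathfrak g$ of small Lebesgue measure. Theorem~\ref{Theorem: Local Brunn--Minkowski} shows that $S_n$ has nearly minimal Euclidean doubling, so classical stability theory for the Brunn--Minkowski inequality in $\mathbb R^{d_G}$ forces $S_n$ to be close to a neighbourhood of an affine subspace $V_n \subset \mathfrak g$. A subsequence of the $V_n$ converges in the Grassmannian to a subspace $\mathfrak h$, which must be closed under the Lie bracket: otherwise the Baker--Campbell--Hausdorff expansion applied to $\Lambda_n^2$ would push definite mass outside every tube around $\exp(V_n)$, contradicting Theorem~\ref{Theorem: Local Brunn--Minkowski} applied to $\Lambda_n$. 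Exponentiating $\mathfrak h$ then produces a proper connected subgroup $H$, and for $n$ large $\Lambda_n$ is contained in $O_K(1)$ translates of $H_\delta$, yielding the desired contradiction.

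The main obstacle is the structure step: one needs a version of the approximate-group classification whose output is a concrete subgroup sitting inside $G$, uniformly as $\mu_G(\Lambda_n)\to 0$, and not merely an abstract model of $\Lambda_n$. An attractive alternative that I would pursue if the classification route proves too heavy is to bypass the nilprogression machinery entirely and identify $\mathfrak h$ directly from a sharp Euclidean stability statement for the Brunn--Minkowski inequality applied to $\exp^{-1}(\Lambda_n^k)$, taking Theorem~\ref{Theorem: Local Brunn--Minkowski} as the sole external input. Whichever path one chooses, the delicate point is to confirm that the subspace $\mathfrak h$ produced at the end is a genuine Lie subalgebra rather than merely a tangent direction to an accidental concentration of $\Lambda_n$ near $e$; Baker--Campbell--Hausdorff combined with the doubling constraint is the mechanism that should enforce this.
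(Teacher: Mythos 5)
There is a genuine gap, and it occurs at both pillars of your argument. First, the structure theorem you invoke is misstated. The classification of (locally) compact approximate subgroups in the Hrushovski/Breuillard--Green--Tao/Carolino line does \emph{not} place a bounded power of $\Lambda_n$ inside a closed connected virtually nilpotent subgroup of $G$: its output is a ``compact subgroup times nilprogression'' structure (equivalently an abstract Lie model), and the compact-subgroup factor is exactly the object whose position inside $G$ you would still have to understand. The basic example $\Lambda=H_{\delta'}$ with $H=\SO_{n-1}(\mathbb{R})\subset\SO_n(\mathbb{R})$ and $\delta'$ tiny is a $K$-approximate subgroup of arbitrarily small measure whose mass cannot be captured by boundedly many translates of a neighbourhood of any nilpotent (hence abelian, toral) connected subgroup, so the step ``each $N_n$ is already a proper connected subgroup'' fails. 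You flag the need for a classification ``whose output is a concrete subgroup sitting inside $G$, uniformly as $\mu_G(\Lambda_n)\to 0$'' as the main obstacle, but that uniform statement \emph{is} essentially the proposition to be proved; the paper itself records that an earlier version leaned on Carolino's finite-plus-convex decomposition and replaced it precisely because extracting the subgroup is where all the work lies, ultimately doing it by a finitary, Jordan-style argument (multi-scale discretization, the generation lemma for the adjoint representation, and shrinking commutators), followed by an induction on simple factors using Kemperman's inequality and Kazhdan's stability of almost homomorphisms for the semisimple case.

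Second, the infinitesimal step is founded on a false premise: it is not true that ``after a bounded translation most of the mass of $\Lambda_n$ lies in a small neighbourhood of $e$.'' In the same example $H_{\delta'}$, the mass is spread uniformly along $H$, a set of fixed diameter, and no bounded number of translations concentrates it near a point. Consequently the pullback by $\exp$, the appeal to Theorem \ref{Theorem: Local Brunn--Minkowski} (which requires $A,B\subset B_d(e,\rho)$ with $\rho$ small), the Euclidean Brunn--Minkowski stability step, and the Grassmannian limit never get off the ground; note also that Euclidean stability for near-minimal doubling yields proximity to a \emph{convex body}, not to a tube around an affine subspace, so even in the concentrated case the conclusion you want would not follow in the stated form. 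The small-measure hypothesis controls volume, not diameter, and the whole difficulty of the proposition is the global spreading of $\Lambda$ along a positive-dimensional subgroup --- which is why the paper attacks it through commutators and scales rather than through a local expansion at the identity.
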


Proposition \ref{Proposition: approximate subgroups of small measure} is a continuous version of Jordan's seminal theorem stating that a finite subgroup is contained in finitely many cosets of a proper (abelian) connected subgroup. Much like Jordan's proof, our method relies on `shrinking commutators', which we can exploit thanks to an analysis of discretizations of approximate subgroups at various scales.


The proof of Theorem \ref{Theorem: Stability} follows along the same principle. We observe that our proof of Theorem \ref{Theorem: Local Brunn--Minkowski} shows the following surprising fact: for some $\rho > 0$ small, we can prove the following inequality
$$ \mu_G(AA_{g,\rho}) \geq (2^{d_G-d_H}-\epsilon)\mu_G(A)$$
where $A_{g,\rho}$ denotes the intersection of $A$ with the ball of radius $\rho$ about $g$. We can use this insight to show that, in fact, if $\mu_G(A^2) \leq (2^{d_G - d_H} + \alpha)\mu_G(A)$, then $\mu_G(A_{h,\rho}) \simeq \mu_G(A_{h',\rho})$ for all $h, h'$ in a closed subgroup $H$ of $G$ of maximal dimension. This is used to reduce our stability question to a local stability one which we solve by proving the following local stability result under a non-degeneracy assumption (see Proposition \ref{Proposition: Local stability under assumption} for a precise statement and \S \ref{Subsection: Local Brunn--Minkowski improved} for a discussion about how to waive non-degeneracy): 
\begin{proposition*}[Proposition \ref{Proposition: Local stability under assumption}]
    Let $\delta > 0$. There are $\rho, \alpha > 0$ such that if  $A \subset B_G(e,\rho)$ is a compact \emph{non-degenerate} subset of $G$  satisfying $$\mu_G(A^2) \leq (2^{d_G} +  \alpha)\mu_G(A)$$
    then there is a convex subset $C \subset \mathfrak{g}$ such that $\mu_G(\exp(C) \Delta A) \leq \delta (\mu_G(A) + \mu_G(\exp(C)))$.
\end{proposition*}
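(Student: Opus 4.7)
The plan is to transfer the problem to the Lie algebra $\mathfrak{g}$ via the exponential map, use the Baker--Campbell--Hausdorff formula to convert the near-minimal doubling hypothesis on $A$ into a near-minimal Brunn--Minkowski hypothesis on $\tilde A := \exp^{-1}(A) \subset \mathfrak{g}$, and then invoke the sharp Euclidean stability of the Brunn--Minkowski inequality (Figalli--Jerison, Figalli--van Hintum--Tiba) as a black box. The non-degeneracy hypothesis is precisely what is needed to make the Euclidean stability step available.

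Concretely, I would first choose $\rho$ small enough that $\exp$ is a diffeomorphism from $B_\mathfrak{g}(0, 4\rho)$ onto $B_G(e, 4\rho)$, with Jacobian $J$ satisfying $|J - 1| \leq C(d_G)\rho^2$. Writing $|\cdot|$ for Lebesgue measure on $\mathfrak{g}$ (induced by a bi-invariant inner product), this yields $\mu_G(A) = (1 + O(\rho^2)) |\tilde A|$ and the analogous comparison for any subset of $B_\mathfrak{g}(0, 2\rho)$. By the Baker--Campbell--Hausdorff formula, $\log(xy) - (\log x + \log y) = O(\rho^2)$ for $x, y \in B_G(e, \rho)$, so $\log(A^2)$ differs from $\tilde A + \tilde A$ by a set of diameter $O(\rho^2)$; careful bookkeeping along the lines of the proof of Theorem \ref{Theorem: Local Brunn--Minkowski} converts the hypothesis $\mu_G(A^2) \leq (2^{d_G} + \alpha)\mu_G(A)$ into
$$|\tilde A + \tilde A| \leq (2^{d_G} + \alpha')|\tilde A|$$
for some $\alpha' \to 0$ as $\alpha, \rho \to 0$.

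Next, under the non-degeneracy assumption on $A$ (ensuring that $\tilde A$ is not too concentrated near any proper affine hyperplane of $\mathfrak{g}$), the sharp Euclidean stability of Brunn--Minkowski provides a convex $\tilde C \subset \mathfrak{g}$ with
$$|\tilde C \Delta \tilde A| \leq \delta'' \bigl(|\tilde A| + |\tilde C|\bigr),$$
where $\delta'' \to 0$ as $\alpha' \to 0$. Setting $C := \tilde C \cap B_\mathfrak{g}(0, 2\rho)$ (which truncates $\tilde C$ negligibly, since $\tilde A \subset B_\mathfrak{g}(0, \rho)$ and $\tilde C$ is almost equal to $\tilde A$) and pushing forward by $\exp$, the Jacobian bound gives
$$\mu_G(\exp(C) \Delta A) \leq (1 + O(\rho^2)) |\tilde C \Delta \tilde A| \leq \delta \bigl(\mu_G(A) + \mu_G(\exp(C))\bigr)$$
provided $\rho$ and $\alpha$ are chosen small enough in terms of $\delta$ and $d_G$.

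The main obstacle is the Euclidean stability step: one has to check that ``non-degeneracy'' as formalised in the proposition implies the quantitative non-collapsing hypothesis demanded by Figalli--van Hintum--Tiba (typically, comparable diameters in all coordinate directions), so as to rule out extremal configurations that degenerate into lower-dimensional convex bodies. The BCH and Jacobian estimates are routine once $\rho$ is taken small; the substantive input is the sharp Euclidean stability theorem, which is imported unchanged. The subsequent discussion in \S\ref{Subsection: Local Brunn--Minkowski improved} presumably explains how to bootstrap this non-degenerate statement to handle general $A$, via a dimension-reduction argument when $\tilde A$ is concentrated near a hyperplane.
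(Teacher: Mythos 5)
There is a genuine gap, and it sits exactly where you wave your hands: the passage from $\mu_G(A^2)\leq(2^{d_G}+\alpha)\mu_G(A)$ to $|\tilde A+\tilde A|\leq(2^{d_G}+\alpha')|\tilde A|$ is not ``routine bookkeeping along the lines of Theorem \ref{Theorem: Local Brunn--Minkowski}''; it is the heart of the matter. The optimal transport proof of Theorem \ref{Theorem: Local Brunn--Minkowski} gives a \emph{lower} bound on the measure of the BCH product set $\tilde A\star\tilde A=\log(A^2)$; it never bounds the Euclidean sumset $\tilde A+\tilde A$ from above by the group product set. What BCH gives is only pointwise proximity, $x\star y=x+y+O(\rho^2)$, hence $\tilde A+\tilde A$ is contained in an $O(\rho^2)$-neighbourhood of $\log(A^2)$. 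But no hypothesis gives regularity of $A$ at scale $\rho^2$: $\mu_G(A)$ may be arbitrarily small compared to any power of $\rho$, so passing to that neighbourhood can inflate the measure by an unbounded factor, and the additive doubling of $\tilde A$ cannot be controlled this way. This is precisely the difficulty the paper flags when contrasting its limiting procedure with Christ's (which needs a priori boundedness/regularity), and it is why the actual proof runs through Lemma \ref{Lemma: Sets near equality are approximate subgroups} (near-extremal sets are approximate subgroups), the ultraproduct model of \S\ref{Subsection: Models for small convex neighbourhoods}--\S\ref{Subsection: Taking limits without boundedness assumption} with the Jing--Tran--Zhang density functions and the \emph{equality} case of Brunn--Minkowski (Corollary \ref{Corollary: Density functions are indicators}), and then the polar-coordinate argument with one-dimensional stability (Lemma \ref{Lemma: Minimal subset contains a large neighbourhood of e}) and the simplex-filling Lemma \ref{Lemma: Fills in a simplex} to show the mass of $A_n$ actually stays comparable to that of the limiting convex body, plus the reduction to $e\in A$ via Lemma \ref{Lemma: Subset with minimal expansion is normal}. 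The sharp higher-dimensional Euclidean stability theorems you want to import are never the engine; only the one-dimensional statement is used.

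A second, related misreading concerns ``non-degeneracy''. In the precise statement (Proposition \ref{Proposition: Local stability under assumption}) the sets live in anisotropic rectangles $R(e,\alpha_n,\epsilon_n)$ and non-degeneracy means that $\phi(\underline A\underline A^{-1})$ has non-empty interior \emph{after} the anisotropic rescaling $\phi$ of \S\ref{Subsection: Models for small convex neighbourhoods}; in the regime where the proposition is applied (proof of Theorem \ref{Theorem: Stability}), $\tilde A$ lies within $\epsilon_n$ of the proper subspace $\mathfrak h$ with $\epsilon_n/\alpha_n\to0$, i.e.\ it is maximally collapsed in the Euclidean sense. So the quantitative non-collapsing you propose to extract from non-degeneracy is simply not available before rescaling, and after rescaling the comparison between group multiplication and addition is again only asymptotic (it relies on $[\mathfrak h,\mathfrak h^\perp]\subset\mathfrak h^\perp$ and holds in the ultralimit), which brings you back to the paper's machinery rather than to a direct application of Figalli--van Hintum--Tiba. (For the symmetric case $A=A$ the Euclidean stability input needs no non-degeneracy anyway; its role here is to guarantee the limit model is genuinely $d_G$-dimensional, not to legitimise a Euclidean black box.) Your closing guess about \S\ref{Subsection: Local Brunn--Minkowski improved} is also off: the suggestion there is to change the rescaling (possibly obtaining a nilpotent model via Carolino's decomposition), not a Euclidean hyperplane dimension-reduction.
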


The proof of local stability relies on ideas recently introduced in \cite{jing2023measure} related to the so-called \emph{density functions}. This essentially reduces to the study of convolutions $\mathbf{1}_A * \mathbf{1}_{B(e,\rho')}$ where $B_G(e,\rho')$ denotes a ball of radius $\rho'$ at the identity and $\rho' > 0$ is a judiciously chosen scale, see \S \ref{Subsection: Local Brunn--Minkowski improved}.

To conclude the proof of the global stability from the local stability requires two additional pieces of information: that in our case $C$ can be taken invariant under the adjoint action of $H$ and well-known stability results due to Kazhdan \cite{KazhdaneRep82} applied to an almost-1-cocycle of $H$ given as the ``centre of mass" of the $A_{h,\rho}$'s.

This outline only provides qualitative statements due to the reliance on the structure theory of approximate subgroups as well as ultraproducts. We finally obtain the quantitative estimate on the correcting term of Theorem \ref{Theorem: Expansion inequality} from a bootstrap argument using stability in \S \ref{Subsection: Quantitative results}. There we also discuss directions in which one could improve our results, further open problems related to doubling inequalities at large and related questions. 

\subsection{Outline of the paper}
We start in \S \ref{Section: Local Brunn--Minkowski via optimal transport} by establishing the local Brunn--Minkowski (Theorem \ref{Theorem: Local Brunn--Minkowski}) thanks to an optimal transport arguments. We follow up in \S \ref{Section: Approximate subgroups of small measure} by showing that small approximate subgroups are contained in neighbourhoods of proper subgroups (Proposition \ref{Proposition: approximate subgroups of small measure}). We conclude in \S \ref{Section: Expansion of small subsets} the proof of our first main results (Theorems \ref{Theorem: Expansion inequality} and \ref{Theorem: BM in compact groups}) by relying on the previous two sections and a double counting argument. 

We start our way towards the stability result (Theorem \ref{Theorem: Stability}) by establishing in \S \ref{Section: A stability result for the local Brunn--Minkowksi} stability for the local Brunn--Minkowski inequality. We then conclude the proof of Theorem \ref{Theorem: Stability} in \S \ref{Section: Global stability}. 

We end this paper in \S \ref{Section: Closing remarks} with a proof of the estimate of the correcting term of Theorem \ref{Theorem: Expansion inequality}, a number of remarks and open questions.

\subsection{Notation}

Given two subsets $X, Y$ of a group $G$ we denote by $XY$ the set of products $\{xy : x \in X, y \in Y\}$ and we write $X^{-1}:=\{x^{-1} : x \in X \}$. For all $n \in \mathbb{N}$ define inductively $X^0=\{e\}$, $X^1=X$, $X^2=XX$ and $X^{n+1}=XX^n$. 

Throughout this article, $G$ will denote a compact semi-simple Lie group of dimension $d_G$. We say a Lie group is semi-simple if it is connected and has no non-trivial normal closed abelian subgroup. Compact Lie groups can be equipped with a bi-invariant (i.e. invariant by right and left translations) Riemannian distance $d$ associated with a norm $||\cdot ||$ on the Lie algebra $\mathfrak{g}$ of $G$ as well as a Haar probability measure $\mu_G$ which is a bi-invariant Borel probability measure, see \cite{Compact07Sepanski} for this and more. We will often use the fact that balls centered at $e$ for the distance $d$ are normal.  

A subgroup $H \subset G$ is maximal if it is proper ($H \neq G$) and for all subgroups $L$ with $H \subset L \subset G$ we have $L=G$ or $L=H$. Given a subgroup $H$ and $\delta > 0$ write $H_\delta$ the $\delta$-neighbourhood of $H$. It is defined by 
$$ H_\delta := \{g \in G: \exists h \in H, d(h,g) < \delta\}=B_d(e,\delta)H = HB_d(e,\delta)$$
where $B_d(e, \delta)$ denotes the ball of radius $\delta$ centred at $e$. 

Throughout the paper, we will deal with several parameters and constants that depend one upon the other. When ambiguous we will indicate in between parentheses dependence i.e. a constant $C(d_G)> 0$ is a constant depending on the parameter $d_G$. We will usually keep track of the exact value of each constant with up to one exception. In many steps of our proofs constants depending exclusively on the dimension of the ambient Lie group $G$ will appear. We will denote all these constants by $C(d_G)$ even though they might differ. We do however keep reference to the dimension to emphasise the dependence. As a rule of thumb, $\alpha$ and $\epsilon$ will denote small quantities, $\delta$ and $\rho$ will denote radii of certain balls and $\lambda$ will be a Lebesgue measure on a vector space. 

\section{Acknowledgements}
I am deeply grateful to Yifan Jing and Chieu Minh Tran for many fascinating discussions and their encouragement. I am indebted to Alessio Figalli, Federico Glaudo and Lauro Silini for sharing their insights on optimal transport and the Brunn--Minkowski inequality. It is my pleasure to thank Peter van Hintum and Nicolas de Saxcé for many insightful comments on a first version of this work.

\section{Local Brunn--Minkowski via optimal transport}\label{Section: Local Brunn--Minkowski via optimal transport}





\subsection{Preliminaries to the proof}
\subsubsection{Baker--Campbell--Hausdorff formula}
The Baker--Campbell--Hausdorff (BCH) formula is a well-known formula relating product in the Lie group structure with addition and Lie brackets in the Lie algebra. Namely:

\begin{proposition}[\S 5.6, \cite{LieGroups15Hall}]\label{Proposition: BCH formula}
There is a neighbourhood of the identity $V$ in $\mathfrak{g}$ such that for every $X,Y \in V$ we have 
\begin{equation}
    \log \exp X \exp Y = X + Y + \frac12 [X,Y] + \ldots \label{Eq: BCH formula}
\end{equation}
where the remaining terms range over all higher Lie brackets made using $X$ and $Y$.
\end{proposition}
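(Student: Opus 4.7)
The plan is to prove the formula via a differential equation satisfied by $Z(t):=\log(\exp X\cdot\exp(tY))$, which is the standard route to BCH. Since $\exp:\mathfrak{g}\to G$ is a local diffeomorphism at $0$, choose a neighborhood $U$ of $0$ on which $\exp$ is a diffeomorphism onto its image, and then pick $V\subset U$ small enough so that for all $X,Y\in V$ and $t\in[0,1]$ the product $\exp X\cdot\exp(tY)$ lies in $\exp(U)$. Then $Z(t)$ is a well-defined smooth curve in $\mathfrak{g}$ with $Z(0)=X$ and $Z(1)=\log(\exp X\exp Y)$.

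The key computation is to differentiate in $t$ using the formula for the differential of the exponential map,
\[
d(\exp)_W = (dL_{\exp W})_{e}\circ\frac{\mathrm{id}-e^{-\mathrm{ad}(W)}}{\mathrm{ad}(W)},
\]
which is valid for $W$ in a neighborhood of $0$. Applying this to $\exp Z(t) = \exp X\cdot\exp(tY)$ and solving for $Z'(t)$ yields
\[
Z'(t) = \frac{\mathrm{ad}(Z(t))}{\mathrm{id}-e^{-\mathrm{ad}(Z(t))}}(Y),
\]
where the right-hand side is defined via the power series $\frac{z}{1-e^{-z}} = 1 + \tfrac{z}{2} + \sum_{k\geq 1}\frac{B_{2k}}{(2k)!}z^{2k}$ in the Bernoulli numbers. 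Substituting this expansion gives
\[
Z'(t) = Y + \tfrac{1}{2}[Z(t),Y] + \text{higher iterated brackets in }Z(t),Y.
\]

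Next I would solve this ODE by Picard iteration with initial datum $Z(0)=X$. Each Picard iterate is by construction a polynomial in $X$ and $Y$ obtained using only the Lie bracket; an induction on the iteration step, combined with the explicit constant term $1$ in the series for $\frac{z}{1-e^{-z}}$ and the $\tfrac{1}{2}z$ coefficient, shows that the initial terms of $Z(1)$ are exactly $X+Y+\tfrac{1}{2}[X,Y]$, with all remaining contributions built from higher iterated brackets of $X$ and $Y$, as claimed in \eqref{Eq: BCH formula}.

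The main obstacle, and the only nontrivial point, is establishing the convergence of this iterative/series expansion on a uniform neighborhood $V$. The standard way to handle it is to fix an operator norm $\|\cdot\|$ on $\mathrm{End}(\mathfrak{g})$, bound $\|\mathrm{ad}(W)\|\leq c\|W\|$ for $W\in U$, and then use the analyticity of $z/(1-e^{-z})$ on the disk $|z|<2\pi$ together with a Gr\"onwall-type estimate to show that if $\|X\|,\|Y\|$ are small enough then the Picard iterates converge absolutely in $\mathfrak{g}$ uniformly in $t\in[0,1]$. Shrinking $V$ accordingly delivers the neighborhood claimed in the statement.
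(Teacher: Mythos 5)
The paper does not prove this proposition at all: it is quoted as a standard fact with a citation to Hall, \S 5.6, and the cited source proves it by exactly the route you take — the ODE for $Z(t)=\log(\exp X\exp(tY))$ obtained from the formula for $d(\exp)_W$, the expansion of $z/(1-e^{-z})$, and convergence of the resulting Lie series for $X,Y$ small. Your argument is correct (the only points needing care, which you address, are invertibility of $\frac{\mathrm{id}-e^{-\mathrm{ad}(Z(t))}}{\mathrm{ad}(Z(t))}$, guaranteed by keeping $\|\mathrm{ad}(Z(t))\|<2\pi$, and uniform convergence of the iterates on a fixed small $V$), so it supplies a valid proof of the statement the paper merely cites.
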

\subsubsection{Haar-measure in exponential coordinates}
Using the BCH formula \eqref{Eq: BCH formula} one can show how the Haar measure on the Lie group and the Lebesgue measure on the Lie algebra relate through exponential coordinates. 

\begin{proposition}[Haar measure in exponential coordinates]\label{Proposition: Haar measure in exponential coordinates}
Let $G$ be a Lie group and $\mathfrak{g}$ its Lie algebra. There is a neighbourhood of the identity $U \subset \mathfrak{g}$ such that the exponential map $\exp: U \rightarrow G$ is an homeomorphism onto its image with inverse $\log$ and $$\log^*\mu_G(de^X) = H(X) \lambda(dX)$$
where $H$ is continuous, $H(0)> 0$ and $H(X)=H(0) + O(|X|^2)$ for any norm $|\cdot |$. 
\end{proposition}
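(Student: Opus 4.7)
The plan is to apply the inverse function theorem at $0 \in \mathfrak{g}$ to obtain the local diffeomorphism, then to compute the Jacobian of $\exp$ in the left-invariant trivialisation, and finally to kill the linear term of its expansion using the bi-invariance of $\mu_G$.

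First I would note that $d\exp_0 = \mathrm{id}_{\mathfrak{g}}$, so the inverse function theorem gives an open neighbourhood $U$ of $0$ on which $\exp$ is a smooth diffeomorphism onto its image, with smooth inverse $\log$; after shrinking, we may assume the BCH formula of Proposition \ref{Proposition: BCH formula} holds on $U$. Fixing a basis of $\mathfrak{g}$ gives a reference Lebesgue measure $\lambda$. Because $\mu_G$ is a smooth left-invariant measure on $G$, its pullback $\log^{*}\mu_G$ is absolutely continuous with respect to $\lambda$ with a smooth positive density, which we call $H(X)$. Thus $\log^{*}\mu_G(de^X) = H(X)\,\lambda(dX)$, and $H$ is continuous with $H(0) > 0$.

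The main step is then to get the explicit expression for $H$. A standard consequence of BCH (obtained by differentiating $\exp(X + tY)\exp(-X)$ in $t$ at $t=0$) is that, after trivialising $T_{\exp X}G$ by left translation by $\exp(-X)$, the differential of $\exp$ at $X$ is the endomorphism
\[
\frac{1 - e^{-\mathrm{ad}_X}}{\mathrm{ad}_X} = I - \tfrac12 \mathrm{ad}_X + \tfrac16 \mathrm{ad}_X^{2} - \cdots
\]
of $\mathfrak{g}$. Using left-invariance of $\mu_G$ to transport the measure back to the identity, we obtain
\[
H(X) = H(0)\,\det\!\left(\frac{1 - e^{-\mathrm{ad}_X}}{\mathrm{ad}_X}\right) = H(0)\bigl(1 - \tfrac12 \mathrm{tr}(\mathrm{ad}_X) + O(|X|^{2})\bigr).
\]

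The last step is to annihilate the linear term. Bi-invariance of $\mu_G$ means the modular character of $G$ is trivial, i.e.\ $\det \mathrm{Ad}_g = 1$ for all $g \in G$; differentiating at $g = e$ yields $\mathrm{tr}(\mathrm{ad}_X) = 0$ for every $X \in \mathfrak{g}$. Substituting into the display above gives $H(X) = H(0) + O(|X|^{2})$, as desired. The only non-routine obstacle is the Jacobian formula for $\exp$, which is a classical consequence of BCH; everything else is bookkeeping and the elementary observation that bi-invariance forces $\mathrm{ad}$ to be trace-free.
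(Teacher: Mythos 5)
Your proof is correct, and it reaches the conclusion by a slightly different route than the paper. You invoke the classical derivative-of-the-exponential formula, writing the left-trivialised differential of $\exp$ at $X$ as $\frac{1-e^{-\mathrm{ad}_X}}{\mathrm{ad}_X}$, so that $H(X)=H(0)\det\bigl(\frac{1-e^{-\mathrm{ad}_X}}{\mathrm{ad}_X}\bigr)$, and you kill the linear term $-\frac12\mathrm{tr}(\mathrm{ad}_X)$ via unimodularity (bi-invariance of $\mu_G$ forces $\det \mathrm{Ad}_g\equiv 1$, hence $\mathrm{tr}(\mathrm{ad}_X)=0$). The paper instead never computes the full Jacobian of $\exp$: it uses left-invariance to express $H(X)$ through the Jacobian at $0$ of left multiplication by $\exp X$ in exponential coordinates, computes that differential to first order as $Id+\frac12\mathrm{ad}_X+O(|X|^2)$ directly from BCH, and concludes because $\mathrm{ad}_X$ is skew-symmetric for a bi-invariant scalar product (which of course implies your trace-zero condition). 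Your version buys the exact closed formula for $H$ (the paper only states it without proof afterwards) and works verbatim for any unimodular Lie group, while the paper's version needs only the first-order BCH expansion and the compact-group inner product, which is all it uses later. Two small points: the curve you differentiate, $\exp(X+tY)\exp(-X)$, is the \emph{right}-trivialised differential $\frac{e^{\mathrm{ad}_X}-1}{\mathrm{ad}_X}$ rather than the left-trivialised one you quote; since the two determinants differ by $\det e^{\mathrm{ad}_X}=e^{\mathrm{tr}(\mathrm{ad}_X)}=1$ here, nothing is affected, but the parenthetical should match the trivialisation. Also note that, as you implicitly acknowledge by invoking bi-invariance, the quadratic error estimate genuinely requires unimodularity even though the statement says only ``Lie group''; in the paper's standing setting ($G$ compact with bi-invariant Haar probability measure) this is automatic, and the paper's own proof uses it in the same way.
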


\begin{proof}
    Proposition \ref{Proposition: Haar measure in exponential coordinates} is a direct consequence of the fact that the exponential map is a diffeomorphism and that its differential at $0$ is the identity (e.g. \cite[\S 4.1.2]{Compact07Sepanski}). Upon choosing the right normalisation of $\lambda$ we may always assume that $H(0)=1$. Estimates for $H(X)$ are standard but we sketch a proof exploiting the symmetries of the Haar measure. Since the Haar measure is left-invariant, we find by a change of variable that $H$ can be expressed as a jacobian. Namely, $H(X) = |\Jac_0 (\exp X \cdot )|^{-1}H(0)$. Using the BCH formula, we see that the differential of $Y \mapsto \exp X \cdot \exp Y$ at $0$ is equal to $Id + \frac{[X,\cdot]}{2} + O(|X|^2|Y| + |Y|^2|X|)$. Since $[X,\cdot]$ is skew-symmetric with respect to any bi-invariant scalar product \cite[Prop. 4.24]{zbMATH01849081}, one has $|\Jac_0 (\exp X \cdot )| = 1 + O(|X|^2)$ which establishes the claim.
\end{proof}

One can be much more precise and show that 
$H(X):= |\det \frac{\sinh X/2}{X/2}|.$ We will not need the exact formula for $H$ however. 

\subsubsection{Optimal transport map}

We use the following result of Brenier: 

\begin{theorem}[Brenier, \cite{Brenier91}]\label{Theorem: Brenier}
Let $\mu_1, \mu_2$ be two probability Borel measures on $\mathbb{R}^n$ that are absolutely continuous with respect to the Lebesgue measure. There exists a convex function $f: \mathbb{R}^n \rightarrow \mathbb{R}$ such that $T=\nabla f$ satisfies $\mu_1 = T_*\mu_2$.
\end{theorem}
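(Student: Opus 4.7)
The plan is to recover Brenier's theorem via the variational approach to optimal transport with quadratic cost, following the classical Brenier--McCann argument. First I would set up the Kantorovich relaxation: consider the set $\Pi(\mu_2,\mu_1)$ of Borel probability measures $\pi$ on $\mathbb{R}^n\times\mathbb{R}^n$ whose first and second marginals are $\mu_2$ and $\mu_1$, and study the minimization problem
\[
\inf_{\pi\in\Pi(\mu_2,\mu_1)}\int |x-y|^2\,d\pi(x,y).
\]
After expanding $|x-y|^2=|x|^2+|y|^2-2\langle x,y\rangle$, the problem is equivalent to maximizing $\int\langle x,y\rangle\,d\pi$. To avoid worrying about integrability, I would first assume $\mu_1$ and $\mu_2$ are compactly supported (which will suffice for the applications later in the paper) and remove this at the end by an exhaustion. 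Weak-$*$ compactness of $\Pi(\mu_2,\mu_1)$ and upper semicontinuity of $\pi\mapsto\int\langle x,y\rangle\,d\pi$ on compactly supported plans give a maximizer $\pi^*$.

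The next step is to extract from $\pi^*$ a convex potential. By a standard result in optimal transport, the support $S$ of any optimal plan for the cost $-\langle x,y\rangle$ is cyclically monotone: for any $(x_1,y_1),\dots,(x_k,y_k)\in S$ one has $\sum_i \langle x_i,y_i\rangle \geq \sum_i\langle x_i,y_{i+1}\rangle$ with indices taken mod $k$. Rockafellar's theorem then provides a lower semicontinuous convex function $f:\mathbb{R}^n\to\mathbb{R}\cup\{+\infty\}$ such that $S\subset\partial f$, i.e.\ $y\in\partial f(x)$ for all $(x,y)\in S$. One standard way to produce $f$ is to fix a base point in $S$ and define $f$ by a supremum of finite sums along chains; cyclical monotonicity is exactly what makes this well defined.

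To conclude, I would use the absolute continuity of $\mu_2$ to promote $\partial f$ to a genuine gradient. Convex functions are differentiable outside a set of Lebesgue measure zero (Rademacher, or the Alexandrov/Mignot result that the non-differentiability set of a convex function is $\sigma$-porous), so $\partial f(x)=\{\nabla f(x)\}$ for $\mu_2$-a.e.\ $x$. This forces $\pi^*$ to be concentrated on the graph of $\nabla f$, hence $\pi^*=(\mathrm{id}\times\nabla f)_*\mu_2$, and comparing marginals yields $(\nabla f)_*\mu_2=\mu_1$, which is exactly the desired conclusion. The absolute continuity of $\mu_1$ is not needed for this step (it becomes relevant only if one also wants $\nabla f$ to be injective $\mu_2$-a.e.).

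The main technical obstacle is the passage from cyclical monotonicity of $\mathrm{supp}(\pi^*)$ to a single convex potential; this is where Rockafellar's theorem does the essential work, and where one must be careful about measurability so that the convex function $f$ constructed via the chain formula is Borel and the conclusion $y=\nabla f(x)$ holds $\mu_2$-almost everywhere rather than merely pointwise on a non-measurable set. Everything else (existence of an optimizer, cyclical monotonicity of its support, differentiability of convex functions almost everywhere) is either a soft compactness argument or a classical fact about convex analysis.
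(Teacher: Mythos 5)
The paper does not prove this statement: Brenier's theorem is imported as a black box with a citation to Brenier's original article, and the surrounding text only records the regularity facts about $\nabla f$ (via \cite{VillaniOldNew09}) that are needed later. So there is no in-paper argument to compare against; what can be assessed is whether your sketch is a correct proof of the quoted result, and it essentially is. You have reproduced the classical Brenier--McCann argument: Kantorovich relaxation for the quadratic cost, existence of an optimal plan by tightness/Prokhorov, cyclical monotonicity of its support, Rockafellar's theorem producing a proper lower semicontinuous convex potential $f$ with $\mathrm{supp}(\pi^*)\subset\partial f$, and then absolute continuity of $\mu_2$ to upgrade the subdifferential to a gradient $\mu_2$-a.e. (the point being that $\mu_2$ gives full mass to the interior of the domain of $f$, where $f$ is locally Lipschitz and differentiable Lebesgue-a.e., and $\nabla f$ defined a.e. is Borel, so the pushforward identity $(\nabla f)_*\mu_2=\mu_1$ makes sense). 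Your remark that absolute continuity of $\mu_1$ is not needed for this direction is also correct. The one soft spot is the closing claim that the compact-support assumption can be "removed at the end by an exhaustion": gluing potentials obtained on an exhausting sequence is not straightforward, and the standard treatments instead either run the same duality argument under a finite-second-moment hypothesis or invoke McCann's construction, which dispenses with moment conditions altogether. For the purposes of this paper the issue is moot, since Brenier's theorem is only ever applied to normalized restrictions of Lebesgue measure to compact subsets of a small ball, where your compactly supported argument applies verbatim.
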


 Note that the convex function $f$ considered above is not assumed to be differentiable. Convex functions are however always differentiable almost everywhere and we refer to \cite{VillaniOldNew09} for this and the precise definition of $\nabla f$ referred to in the above \cite[Def. 10.2]{VillaniOldNew09}. 
 
\subsubsection{Change of variable}

Our strategy relies on a general change of variable theorem that can be found in \cite{VillaniOldNew09}. It uses a notion of approximate differentiability and we refer to \cite[Def. 10.2]{VillaniOldNew09} for a definition.

\begin{theorem}[Change of variable, Thm 11.1, \cite{VillaniOldNew09}]\label{Theorem: Change of variable}
Let $f \in L^1(\mathbb{R}^{d_G})$ be a non-negative function and let $T: \mathbb{R}^{d_G}\rightarrow \mathbb{R}^{d_G}$ be a Borel map. Define $d\mu(x)=f(x) d\lambda(x)$ and $\nu=T_*\mu$. Assume that: 
\begin{itemize}
    \item There exists a measurable set $\Sigma \subset \mathbb{R}^{d_G}$ such that $f=0$ almost everywhere outside of $\Sigma$ and $T$ is injective on $\Sigma$;
    \item $T$ is approximately differentiable almost everywhere on $\Sigma$.
\end{itemize}
Let $\tilde{\nabla} T$ be the approximate gradient of $T$ and let $\mathcal{J}_T$ be defined almost everywhere on $\Sigma$ by $\mathcal{J}_T(x) := |\det \tilde{\nabla} T (x)|$. Then $\nu$ is absolutely continuous with respect to the Lebesgue measure if and only if $\mathcal{J}_T > 0$ almost everywhere. In that case $\nu$ is concentrated on $T(\Sigma)$ and its density $g$ is determined by the equation 
\begin{equation}
    f(x) = g(T(x)) \mathcal{J}_T(x). \label{Eq: Monge-Ampere}
\end{equation}
\end{theorem}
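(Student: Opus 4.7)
The plan is to reduce the statement to Federer's classical area formula by decomposing $\Sigma$ into countably many pieces on which $T$ behaves like a Lipschitz map. Since $T$ is approximately differentiable almost everywhere on $\Sigma$, a Whitney-type approximation argument produces measurable sets $\Sigma_k \subset \Sigma$ ($k \geq 1$) such that each restriction $T|_{\Sigma_k}$ coincides with a Lipschitz function $T_k : \mathbb{R}^{d_G} \to \mathbb{R}^{d_G}$, and the complement $\Sigma \setminus \bigcup_k \Sigma_k$ has Lebesgue measure zero. On each $\Sigma_k$ the approximate gradient $\tilde{\nabla} T$ agrees $\lambda$-almost everywhere with the classical differential of $T_k$ by Rademacher's theorem, so $\mathcal{J}_T$ computes the genuine Jacobian there.

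On each Lipschitz piece I would invoke the area formula. Using the injectivity of $T$ on $\Sigma$ to collapse the multiplicity, this yields, for every non-negative Borel $u$,
$$\int_{\Sigma_k} u(T(x)) \mathcal{J}_T(x) \, d\lambda(x) = \int_{T(\Sigma_k)} u(y) \, d\lambda(y).$$
Assuming $\mathcal{J}_T > 0$ almost everywhere on $\{f>0\}$, define $g$ on $T(\Sigma)$ by $g(T(x)) := f(x)/\mathcal{J}_T(x)$; this is unambiguous by injectivity. Applying the display above with $u = \varphi \cdot g$ for a non-negative Borel test function $\varphi$ and summing over $k$, I get
$$\int \varphi \, d\nu = \int \varphi(T(x)) f(x) \, d\lambda(x) = \sum_k \int_{T(\Sigma_k)} \varphi(y) g(y) \, d\lambda(y) = \int_{T(\Sigma)} \varphi(y) g(y) \, d\lambda(y),$$
which simultaneously shows that $\nu$ is concentrated on $T(\Sigma)$, that $\nu \ll \lambda$, and that its density is $g$, giving \eqref{Eq: Monge-Ampere}.

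For the converse direction of the equivalence, suppose $\mathcal{J}_T = 0$ on a set $E \subset \Sigma$ with $\mu(E) > 0$. Decomposing $E$ along the $\Sigma_k$'s and applying the area formula again to each Lipschitz piece forces $\lambda(T(E)) = 0$, while by construction $\nu(T(E)) \geq \mu(E) > 0$. Hence $\nu$ cannot be absolutely continuous. Together with the previous paragraph this closes the ``if and only if''.

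The main obstacle is the Lipschitz decomposition step: approximate differentiability is a notion strictly weaker than classical differentiability, and turning it into a usable Lipschitz partition requires a careful Whitney-type construction together with verification that $\tilde{\nabla} T$ really coincides with the classical gradient on each piece. Once this structural result is in hand, the rest of the proof is essentially a bookkeeping exercise built on top of the area formula, so almost all of the analytic content is concentrated in this preliminary reduction.
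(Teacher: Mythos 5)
This statement is not proved in the paper at all: it is imported verbatim as Theorem 11.1 of Villani's \emph{Optimal Transport: Old and New}, so there is no in-paper argument to compare against. Your proposal is essentially the standard proof of that theorem (and the one underlying Villani's own treatment): use the Federer--Whitney fact that a map approximately differentiable a.e.\ on $\Sigma$ agrees with countably many Lipschitz maps on measurable pieces $\Sigma_k$ covering $\Sigma$ up to a $\lambda$-null set, check that $\tilde{\nabla}T$ coincides a.e.\ on $\Sigma_k$ with the classical differential of the Lipschitz extension (true at density points of $\Sigma_k$, by uniqueness of approximate differentials), apply the area formula on each piece with injectivity killing the multiplicity, and then assemble; the forward direction and the Monge--Amp\`ere identity come out exactly as you write, after disjointifying the $\Sigma_k$ and noting that $\mu(\Sigma\setminus\bigcup_k\Sigma_k)=0$ because $\mu\ll\lambda$.

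One small inaccuracy in the converse direction: from the area formula you only get $\lambda\bigl(T(E\cap\bigcup_k\Sigma_k)\bigr)=0$, not $\lambda(T(E))=0$, since on the residual $\lambda$-null set $\Sigma\setminus\bigcup_k\Sigma_k$ the map $T$ is merely Borel and Borel maps can send Lebesgue-null sets onto sets of positive measure. The repair is immediate: replace $E$ by $E\cap\bigcup_k\Sigma_k$, which still has $\mu$-measure $\mu(E)>0$ because the discarded part is $\lambda$-null and $\mu\ll\lambda$; then $\nu\bigl(T(E\cap\bigcup_k\Sigma_k)\bigr)\geq\mu(E)>0$ while its Lebesgue measure vanishes, contradicting absolute continuity. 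With that adjustment (and the routine verification that $g$, defined via injectivity on $T(\Sigma\cap\{f>0\})$, is measurable), your argument is correct, and the only substantive input is indeed the Lipschitz decomposition step you isolate.
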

Equation \eqref{Eq: Monge-Ampere} is often called the \emph{Monge--Ampère equation} and it plays a key role in our approach. 
\subsection{Proof of the local Brunn--Minkowski theorem}

Our strategy follows the one outlined in \cite[p.7]{FigalliICM14}. 

\begin{proof}[Proof of Theorem \ref{Theorem: Local Brunn--Minkowski}]
By inner regularity of the Haar measure, it is enough to prove the result for compact subsets. Let $U \subset \mathfrak{g}$ be a neighbourhood of the origin on which the exponential map $\exp$ is defined, $\exp: U \rightarrow G$ induces a diffeomorphism onto its image and the BCH formula (Proposition \ref{Proposition: BCH formula}) holds for all $x,y \in U$. Let $A,B \subset U \subset  \mathfrak{g}$ be two compact subsets and suppose that $A, B$ are moreover contained in a ball of radius $\rho > 0$ (with respect to some fixed norm).  Let $\mu_A$ and $\mu_B$ be the restrictions of the Lebesgue measure $\lambda$ to $A$ and $B$ normalised to total mass $1$. Let $T=\nabla f$ denote the optimal transport map given by Brenier's theorem (Theorem \ref{Theorem: Brenier}). We have $\mu_B = T_* \mu_A$ and $T$ is convex. Finally, in this proof $c(\rho) > 0$ denotes a constant that might change line to line but that will remain $O_{d_G}(\rho)$ as $\rho$ goes to $0$. 

We will consider the map $$F = id \cdot T$$ where $\cdot$ denotes the (partial) multiplicative law $\mathfrak{g} \times \mathfrak{g} \rightarrow G$ defined by the BCH formula (Proposition \ref{Proposition: BCH formula}).
According to the BCH formula (Proposition \ref{Proposition: BCH formula}), we have $$||F(x) - F(y)|| \geq ||x + T(x) - y - T(y)|| - c(\rho)||x-y||.$$
But 
$$ ||x + T(x) - y - T(y)|| \geq ||x - y||$$
since $T$ is the gradient of a convex map.
Thus, 
$$||F(x) - F(y)|| \geq (1 - c(\rho))|| x - y ||.$$ 
So $F$ is injective as soon as $\rho$ is sufficiently small. Since $T$ is approximately differentiable everywhere \cite[Thm 14. 25]{VillaniOldNew09}, we know that $F$ is as well, so we can apply the change of variable formula (Theorem \ref{Theorem: Change of variable}). 

Moreover, $\det \tilde{\nabla} T (x) = \frac{\lambda(B)}{\lambda(A)}$ as a consequence of Theorem \ref{Theorem: Change of variable}. Indeed, we know that $T_* \mu_A = \mu_B$ and the density of $\mu_B$ is equal to $\frac{1}{\lambda(B)}\mathbf{1}_B$ while the density of $\mu_A$ is $\frac{1}{\lambda(A)}\mathbf{1}_A$ by the conclusion of Theorem \ref{Theorem: Change of variable} we see that indeed $|\det \tilde{\nabla}T(x)|=\frac{\lambda(B)}{\lambda(A)}$ almost everywhere. 

 Now, we follow a sketch in \cite{FigalliICM14} to prove a bound on the Jacobian of $F$. Using the BCH formula $\tilde{\nabla} F$ is equal almost everywhere to 
 $$ Id + \tilde{\nabla} T (x) + A_{\rho,x}' + A_{\rho,x}'' \circ  \tilde{\nabla} T (x)$$
 where $A_{\rho,x}' - \frac{ad(T(x))}{2}$ and $A_{\rho,x}'' - \frac{ad(x)}{2}$ have all their coefficients bounded above by $O_{d_G}(\rho^2)$. According to \cite[Prop. 4.24]{zbMATH01849081}, for all $Y \in \mathfrak{g}$, $ad(Y)$ is a skew-symmetric matrix with respect to any orthonormal basis. 
 
 Hence, $Id + A_{\rho,x}''$ has determinant at least $(1 -c(\rho)^2)$ in absolute value. In particular, $Id + A_{\rho,x}''$ is invertible for $\rho$ sufficiently small. Therefore, 
 $$ | \det \tilde{\nabla} F (x) | \geq (1 - c(\rho)^2)^{-1} | \det \left(Id + \tilde{\nabla} T (x) + A_{\rho,x}\right) |$$
  where $Id + A_{\rho,x} = (Id + A_{\rho,x}'')^{-1}(Id + A_{\rho,x})$. Thus, in any orthonormal basis $A_{\rho,x}$ is the sum of a skew-symmetric matrix and a matrix that has all its entries bounded above in modulus by $O_{d_G}(\rho^2)$.

 But $T$ is the gradient of a convex function so $\tilde{\nabla} T (x)$ is a positive definite matrix \cite[Thm 14.25]{VillaniOldNew09}. Denote by $\lambda_1, \ldots, \lambda_n > 0$ its eigenvalues. There is $O_x$ an orthonormal change of basis such that $O_x \tilde{\nabla} T (x) O_x^{-1}$ is diagonal. Therefore, 
 \begin{align*}
     |\det \left(Id + \tilde{\nabla} T (x) + A_{\rho,x}\right)| & \geq |\det \left(Id + O_x\tilde{\nabla} T (x)O_x^{-1} + O_xA_{\rho,x}O_x^{-1}\right)| \\
    & \geq \prod_{i=1}^{d_G} (1 + \lambda_i - c(\rho)^2) \\
    & \  \  \   \   \   \   \   - \sum_{J \subset [1;d_G], |[1;d_G]\setminus J| \geq 2} \prod_{j \in J} (1+\lambda_j + c(\rho)) c(\rho)^{d_G - |J|} \\
    & \geq (1 - c(\rho)^2)\prod_{i=1}^{d_G} (1 + \lambda_i).
 \end{align*} 
 where we have used the triangle inequality, the positivity of eigenvalues and the decomposition of $O_xA_{\rho,x}O_x^{-1}$ into a skew-symmetric matrix and a matrix with coefficients bounded in $O(\rho^2)$  to go from the first to the second line. The positivity of eigenvalues is then used to go from the second to the last line. Using the AM-GM inequality \cite[Proof of Thm 1]{zbMATH05649884} we see now that 
 \begin{align} |\det \tilde{\nabla} F (x) | & \geq (1 - c(\rho)^2) \left(1 + \prod_{i=1}^n\lambda_i^{\frac{1}{d_G}}\right)^{d_G} \\
 & \geq (1 - c(\rho)^2) \left(1 + \left(\frac{\lambda(B)}{\lambda(A)}\right)^{\frac{1}{d_G}}\right)^{d_G}.
 \end{align}
 
So Theorem \ref{Theorem: Change of variable} implies
$$\frac{\lambda( A\cdot B)}{\lambda(A)} \geq \int_{A}|\det \tilde{\nabla} F(x)| \frac{1}{\lambda(A)}d\lambda(x)\geq (1 - c(\rho)^2)\left(\lambda(A)^{\frac{1}{d_G}} + \lambda(B)^{\frac{1}{d_G}}\right)^{d_G}.$$

Finally, using Proposition \ref{Proposition: Haar measure in exponential coordinates}  we obtain 
$$\mu_G(\exp A \exp B) \geq (1-c(\rho)^2)\left(\mu_G(\exp A)^{\frac{1}{d_G}} + \mu_G(\exp B)^{\frac{1}{d_G}}\right)^{d_G}.$$
\end{proof}

\section{Approximate subgroups of small measure}\label{Section: Approximate subgroups of small measure}

The goal of this section is to establish Proposition \ref{Proposition: approximate subgroups of small measure} concerning the global structure of approximate subgroups of small measure. 

\subsection{Preliminaries on approximate subgroups}

Approximate subgroups naturally appear in relation with \emph{small doubling}. At the lowest level, this is hinted at by the following: 

\begin{lemma}[Ruzsa's covering lemma, Lem. 3.6, \cite{TaoProductSet08}]\label{Lemma: Covering lemma}
    Let $A, B \subset G$ be two compact subsets. Suppose that $\mu_G(AB) \leq K \mu_G(B)$. Then there is a subset $F \subset G$ of size at most $K$ such that $A \subset FBB^{-1}$. 
\end{lemma}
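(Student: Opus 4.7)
The plan is the standard greedy packing argument. I would construct $F \subset A$ iteratively: set $F_0 := \emptyset$ and, at step $n$, if $A \not\subset F_n B B^{-1}$, pick any $f_{n+1} \in A \setminus F_n B B^{-1}$ and put $F_{n+1} := F_n \cup \{f_{n+1}\}$; if no such $f_{n+1}$ exists, stop. The output $F$ will then satisfy $A \subset F B B^{-1}$ by the very termination condition, so the only thing to verify is $|F| \leq K$.

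The pivotal observation, which turns the covering condition into a disjointness condition, is the elementary equivalence
$$ a \in f B B^{-1} \iff aB \cap fB \neq \emptyset, $$
valid for all $a, f \in G$: a point $x \in aB \cap fB$ is of the form $a b_2 = f b_1$ with $b_1, b_2 \in B$, which is equivalent to $a = f b_1 b_2^{-1} \in f B B^{-1}$. Consequently, by the construction each newly chosen $f_{n+1}$ satisfies $f_{n+1} B \cap f_i B = \emptyset$ for all $i \leq n$, so the translates $f_1 B, f_2 B, \ldots$ are pairwise disjoint.

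Since $f_i \in A$ for every $i$, the disjoint union $\bigsqcup_{i \leq n} f_i B$ lies inside $AB$, and by left-invariance of the Haar measure together with disjointness,
$$ n\, \mu_G(B) \;=\; \mu_G\!\Big(\bigsqcup_{i \leq n} f_i B\Big) \;\leq\; \mu_G(AB) \;\leq\; K \mu_G(B). $$
Assuming $\mu_G(B) > 0$ (the case $\mu_G(B) = 0$ being the trivially degenerate one implicit in the statement), this forces $n \leq K$, so the procedure halts after at most $\lfloor K \rfloor$ steps and produces a finite $F$ with $|F| \leq K$ and $A \subset F B B^{-1}$.

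The main subtlety — rather than a genuine obstacle — is the set-theoretic equivalence above, which is the one line that converts the greedy packing of disjoint translates of $B$ into a covering of $A$ by translates of $B B^{-1}$. Measurability issues do not arise: compactness of $A$ and $B$ implies that every $F_n B B^{-1}$ is closed, and each of the finitely many sets $f_i B$ appearing in the measure computation is Borel.
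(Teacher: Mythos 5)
Your proof is correct and is precisely the standard greedy argument behind Ruzsa's covering lemma — a maximal family of pairwise disjoint translates $f_iB$ with $f_i\in A$, bounded in number by $K$ via measure counting inside $AB$, then converted into a cover of $A$ by $FBB^{-1}$ through the equivalence $a\in fBB^{-1}\iff aB\cap fB\neq\emptyset$ — which is the same argument as in the cited source (the paper itself states the lemma without proof). The only caveat, the implicit assumption $\mu_G(B)>0$, you flag appropriately.
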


We will rely on:

\begin{proposition}[Prop. 4.5 and Thm. 4.6, \cite{TaoProductSet08}]\label{Proposition: Tao}
Let $A, B \subset G$ be two measurable subsets such that $AB:=\{ab: a \in A, b \in B\}$ is measurable. Write $K:=\frac{\mu_G(AB)}{\mu_G(A)^{1/2}\mu_G(B)^{1/2}}$. Then there is a $K^{O(1)}$-approximate subgroup $\Lambda \subset G$ such that $A$ is covered by $K^{O(1)}$ left-translates of $\Lambda$ and $B$ is covered by $K^{O(1)}$ right translates of $\Lambda$. 

Moreover, there is a universal constant $C > 0$ such that for all $m \geq 0$, 
$$ \mu_G(A\Lambda^mB) \leq K^{Cm}\mu_G(A)^{\frac12}\mu_G(B)^{\frac12}$$
\end{proposition}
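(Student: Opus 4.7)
The plan is to combine a symmetrization step, the noncommutative Plünnecke–Ruzsa inequalities (due to Tao, which should be available in the same source), and Ruzsa's covering lemma (Lemma \ref{Lemma: Covering lemma}), which are exactly the tools designed to turn small doubling/small product into approximate-group structure.

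First I would reduce to the symmetric setting by setting $S := A \cup A^{-1} \cup B \cup B^{-1}$ (possibly after replacing $A,B$ by the larger of the two so as not to lose a constant). The hypothesis $\mu_G(AB) \leq K\, \mu_G(A)^{1/2}\mu_G(B)^{1/2}$ is a two-sided Ruzsa-distance condition, and a standard application of the Ruzsa triangle inequality in its noncommutative form gives $\mu_G(SS) \leq K^{O(1)} \mu_G(S)$, i.e.\ $S$ has bounded doubling. This is the step where the symmetric $\sqrt{\mu_G(A)\mu_G(B)}$ normalization of $K$ pays off: both $A$ and $B$ sit inside $S$ with comparable measure, so doubling of $S$ is controlled purely by $K$.

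Next I would invoke the noncommutative Plünnecke–Ruzsa inequality to propagate the doubling bound: $\mu_G(S^n) \leq K^{O(n)} \mu_G(S)$ for every $n \geq 1$. To manufacture the approximate subgroup, I would define $\Lambda$ by a convolution threshold such as $\Lambda := \{ g \in G : (\mathbf{1}_S * \mathbf{1}_{S^{-1}})(g) \geq c\,\mu_G(S)\}$ for a suitable small constant $c = K^{-O(1)}$; standard arguments show that $\Lambda$ is symmetric, contains the identity, has $\mu_G(\Lambda) \asymp \mu_G(S)$ (up to $K^{O(1)}$), and satisfies $\Lambda\Lambda \subset S^{O(1)}$. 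Combined with the Plünnecke bound $\mu_G(S^{O(1)}) \leq K^{O(1)}\mu_G(\Lambda)$, Ruzsa's covering lemma then yields $\Lambda\Lambda \subset F\Lambda$ for some finite $F$ with $|F|=K^{O(1)}$, i.e.\ $\Lambda$ is a $K^{O(1)}$-approximate subgroup. Covering $A$ and $B$ by $K^{O(1)}$ translates of $\Lambda$ comes from another application of Ruzsa's covering lemma, using that $A, B \subset S$ and $\mu_G(S\Lambda^{-1}) \leq K^{O(1)} \mu_G(\Lambda)$.

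Finally, for the moreover clause I would observe that after the covering step $A\Lambda^m B$ is contained in a union of $K^{O(1)}$ translates of $\Lambda^{m+O(1)} \subset S^{O(m)}$, so the Plünnecke estimate $\mu_G(S^{O(m)}) \leq K^{O(m)} \mu_G(S)$ delivers the desired $\mu_G(A\Lambda^m B) \leq K^{Cm} \mu_G(A)^{1/2}\mu_G(B)^{1/2}$. The main obstacle here is the noncommutative Plünnecke–Ruzsa step: in nonabelian groups the standard Plünnecke graph-theoretic argument does not apply directly, and one instead needs Tao's iterative Ruzsa-triangle-inequality argument to control $\mu_G(S^n)$ without losing exponentially in $n$ more than a factor of $K$ per step; once that is granted, everything else is a fairly mechanical chain of Ruzsa-covering applications.
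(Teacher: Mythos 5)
The paper does not prove this proposition at all: it is quoted verbatim from Tao's product-set paper (Prop.~4.5 and Thm.~4.6 of \cite{TaoProductSet08}), so the only thing to assess is whether your argument would actually work — and its very first step fails. From the single hypothesis $\mu_G(AB)\leq K\mu_G(A)^{1/2}\mu_G(B)^{1/2}$, the Ruzsa triangle inequality controls only the products in the compatible order, namely $\mu_G(AA^{-1})\leq K^2\mu_G(A)$ and $\mu_G(B^{-1}B)\leq K^2\mu_G(B)$; it gives nothing for $\mu_G(A^2)$, $\mu_G(A^{-1}A)$, $\mu_G(BB^{-1})$ or $\mu_G(BA)$. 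Hence the symmetrized set $S=A\cup A^{-1}\cup B\cup B^{-1}$ need not have doubling $K^{O(1)}$. Concretely, take $G=\SO_3(\mathbb{R})$, $H=\SO_2(\mathbb{R})$, $x$ generic, $A=H_\delta x$ and $B=x^{-1}H_\delta$: then $AB\subset H_{2\delta}$, so $K=O(1)$ uniformly in $\delta$, while $S^2\supset A^2=H_\delta xH_\delta x$ is a $\delta$-neighbourhood of the two-dimensional double coset $HxH$ and has measure of order $\delta$, against $\mu_G(S)\asymp\delta^2$; thus $\mu_G(S^2)/\mu_G(S)\to\infty$ as $\delta\to 0$. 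This asymmetry is exactly why the conclusion covers $A$ by \emph{left} translates and $B$ by \emph{right} translates of $\Lambda$: the symmetric statement your $S$ would produce is simply false. A second, related gap: even if $S$ did have bounded doubling, in a noncommutative group doubling does not control tripling (same kind of example, $S=H_\delta\cup xH_\delta\cup H_\delta x^{-1}$), so the ``Pl\"unnecke--Ruzsa propagation'' $\mu_G(S^n)\leq K^{O(n)}\mu_G(S)$ cannot be launched from a bound on $\mu_G(S^2)$ alone; Tao's iterated triangle-inequality mechanism needs a triple-product or approximate-group hypothesis as input.

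The second half of your plan is in the right spirit and is close to what Tao actually does: a thresholded convolution (symmetry) set, Ruzsa's covering lemma, and iterated triangle inequalities for the moreover clause. But in \cite{TaoProductSet08} this machinery is run throughout on correctly ordered products — sets of the form $\{g:\mu_G(A\cap gA)\geq K^{-O(1)}\mu_G(A)\}$ (supported in $AA^{-1}$, whose measure \emph{is} controlled), products $AA^{-1}$, $B^{-1}B$, $A\Lambda^mB$, and so on — with careful bookkeeping of left versus right, never on powers of a symmetrized union. If you rebuild $\Lambda$ from such a symmetry set and keep the orders straight at every covering and triangle-inequality step, you essentially recover Tao's proof; as written, the symmetrization collapses the only structure the hypothesis provides.
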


In the spirit of Ruzsa's covering lemma, we have: 

\begin{lemma}\label{Lemma: Disjoint translates}
    Let $A, B$ be subsets of a group $G$ with $e \in B$ and $n > 0$ be an integer. If there is $F$ finite such that $A \subset FB$, then there are $F' \subset F$ and $m \leq |F|$ such that 
    $$ A \subset \bigsqcup_{f \in F'} fB^{n^{m}}$$
    and $f_1 B^{n^{m+1}} \cap f_2 B^{n^{m+1}} = \emptyset$ for all $f_1 \neq f_2 \in F'$. 
\end{lemma}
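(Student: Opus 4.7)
The natural approach is via a pigeonhole/stabilization argument on the ``overlap structure'' of $F$ at increasing scales. For each integer $k \geq 0$, define $\Gamma_k$ to be the graph on $F$ whose edges are the pairs $\{f_1, f_2\}$ with $f_1 \neq f_2$ and $f_1 B^{n^k} \cap f_2 B^{n^k} \neq \emptyset$. Since $e \in B$, we have $B^{n^k} \subset B^{n^{k+1}}$, so the edge set of $\Gamma_k$ is contained in that of $\Gamma_{k+1}$, and the number $c_k$ of connected components of $\Gamma_k$ is non-increasing in $k$.

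Now $c_0 \leq |F|$ and $c_k \geq 1$, so the non-increasing sequence $c_0, c_1, \ldots, c_{|F|}$ takes at most $|F|$ distinct values among $|F|+1$ terms; hence by pigeonhole there exists $m \in \{0, 1, \ldots, |F|-1\}$ with $c_m = c_{m+1}$. Equivalently, the partition of $F$ into connected components of $\Gamma_m$ coincides with the partition into components of $\Gamma_{m+1}$: no two distinct components of $\Gamma_m$ have gotten merged at scale $n^{m+1}$. For each such component $C$, pick a representative $f_C \in C$ and set $F' := \{f_C\}_C$.

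Disjointness at scale $n^{m+1}$ is then immediate: if $f_C B^{n^{m+1}} \cap f_{C'} B^{n^{m+1}} \neq \emptyset$ for $C \neq C'$, then $f_C$ and $f_{C'}$ would be adjacent in $\Gamma_{m+1}$, so lie in the same component of $\Gamma_{m+1}$, contradicting $c_m = c_{m+1}$. For the covering $A \subset \bigsqcup_{f \in F'} f B^{n^m}$, one traces, for each $g \in F$ lying in a component $C$, a chain of $\Gamma_m$-overlaps from $g$ to $f_C$ and absorbs $gB$ into $f_C B^{n^m}$ using (a) symmetry of $B$ (which we may assume, as in the intended application $B$ is an approximate subgroup, or else replace $B$ by $B \cup B^{-1}$), and (b) the identity that each overlap step at scale $n^m$ yields $g_{i+1} \in g_i B^{2 n^m}$.

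The main obstacle is precisely this last step: a naive iteration of the chain relation produces $g \in f_C B^{2(|C|-1)n^m}$, which is not obviously contained in $f_C B^{n^m}$. Bridging this gap should be accomplished either by choosing $f_C$ as a ``central'' element of $C$ that minimizes the maximal chain length (so that the excess diameter stays controlled), or by strengthening the pigeonhole to a run of several consecutive stable scales, at the cost of enlarging $m$ by at most a dimensional constant factor while keeping $m \leq |F|$. The analogue iterative formulation---start with $F_0 = F$, $m_0 = 0$, and at each step either halt when the $fB^{n^{m_i+1}}$ are disjoint or merge one overlapping pair by removing an element and incrementing $m$---makes transparent the bound $m \leq |F|$, since at most $|F|-1$ merges can occur; the technical content is exactly verifying that removing $f_2$ and replacing it by $f_1$ preserves the covering $A \subset F_{i+1} B^{n^{m_{i+1}}}$ via the overlap hypothesis.
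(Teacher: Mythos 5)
The disjointness half of your argument is fine, but the covering half is exactly where the content of the lemma lies, and you leave it open. Stabilisation of the components between the scales $n^m$ and $n^{m+1}$ buys you nothing for absorption: even a single edge of $\Gamma_m$, i.e. $gB^{n^m}\cap f_C B^{n^m}\neq\emptyset$, only gives $g\in f_C B^{n^m}(B^{n^m})^{-1}$, hence (already assuming $B$ symmetric) $gB\subset f_C B^{2n^m+1}$, which is never contained in $f_C B^{n^m}$. So choosing a ``central'' representative cannot repair the covering step: the obstruction occurs for paths of length one, not only for long chains. Your second patch (a run of consecutive stable scales) can be made to work, but the run must have length roughly $\log_n(2|F|)$ so that a chain of at most $|F|-1$ overlaps at the graph scale fits inside a single covering scale, and disjointness must then be demanded one further scale up; the pigeonhole then yields $m=O(|F|\log_n|F|)$, not $m\leq |F|$, so the assertion that the bound is kept ``at the cost of a dimensional constant factor'' is unjustified -- the loss depends on $|F|$ and $n$, not on any dimension.

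The iterative scheme you sketch at the end is the standard route (the paper states the lemma without proof, as routine), but the ``technical content'' you defer is precisely the step that fails at the scales you indicate: if $f_1B^{n^{k+1}}\cap f_2B^{n^{k+1}}\neq\emptyset$, then for symmetric $B$ one only gets $f_2B^{n^k}\subset f_1B^{2n^{k+1}+n^k}$, so absorbing the deleted translate forces you up to the scale $n^{k+2}$ (for $n\geq 3$; even further for $n=2$), not $n^{k+1}$. Hence each merge costs two scales and the bookkeeping gives $m\leq 2(|F|-1)$ rather than the transparent $m\leq|F|$ you claim; this is harmless for all the applications in the paper (any bound depending only on $|F|$ suffices there), but it means your proposal as written neither closes the covering gap nor delivers the stated bound. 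Note finally that some symmetry of $B$ (or replacing $B^{n^m}$ by $(B\cup B^{-1})^{n^m}$ in the conclusion) is genuinely used in every absorption step; in the paper's applications $B$ is an approximate subgroup or a neighbourhood $H_\delta$, so this is innocuous, but it has to be stated, since the lemma as written assumes only $e\in B$.
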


\subsection{Discretizing approximate subgroups}`Discretizing' a subset $X \subset G$ means choosing a scale $r >0$ and considering the set $XB_{d}(e,r)$. To simplify notations we write $B_{r}$ for $B_{d}(e,r)$ throughout this section.

Discretizing approximate subgroups at various scales has two advantages: because of the Baker--Campbell--Hausdorff formula, multiplication looks like addition at small scales; outside a controlled number of exceptional scales approximate subgroups behave like groups. We start with the identification of interesting scales:

\begin{lemma}(Boundedly many interesting scales)\label{Lemma: Boundedly many interesting scales}
    Let $\Lambda$ be a compact approximate subgroup and $m \geq 0$. Let $F \subset G$ finite be such that $\Lambda^m \subset  \Lambda F$ and write $|F|=n$. Define 
    $r_f:=\inf_{\lambda \in \Lambda} d(e,\lambda f)$ and let $r_1> \ldots > r_n$ be an enumeration of $\{r_f:f \in F\} \setminus \{0\}$. Then for all $i=1, \ldots, n$ (with $r_{n+1}=0$), 
    $$ B_{r_i} \cap \Lambda^m \subset \Lambda^2B_{r_{i+1}}.$$
\end{lemma}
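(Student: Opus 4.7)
The plan is to combine three ingredients: the covering hypothesis $\Lambda^m \subset \Lambda F$, which decomposes any element of $\Lambda^m$ as a product of a $\Lambda$-factor and an $F$-factor; the compactness of $\Lambda$, which makes each infimum $r_f$ attained by some canonical minimiser $\lambda_f \in \Lambda$ with $d(e,\lambda_f f) = r_f$; and the symmetry $\Lambda = \Lambda^{-1}$ built into the definition of an approximate subgroup. The key observation behind the lemma is a pigeonhole one: the values $r_f$ for $f \in F$ take only finitely many values, so any strict inequality of the form $r_f < r_i$ automatically upgrades to $r_f \leq r_{i+1}$.

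Concretely, for $x \in B_{r_i} \cap \Lambda^m$ I would use the covering hypothesis to write $x = \lambda f$ with $\lambda \in \Lambda$ and $f \in F$. The definition of $r_f$ then gives
\[
r_f \leq d(e,\lambda f) = d(e,x) < r_i,
\]
and since $r_f$ lies in the finite set $\{r_1,\dots,r_n\}\cup\{0\}$, the enumeration $r_1 > \dots > r_n$ forces $r_f \leq r_{i+1}$. Using the attained minimiser $\lambda_f$ supplied by compactness, I would then factor
\[
x = \lambda f = (\lambda \lambda_f^{-1})(\lambda_f f).
\]
The first factor lies in $\Lambda \cdot \Lambda^{-1} = \Lambda^2$ by symmetry of $\Lambda$, and the second factor satisfies $d(e,\lambda_f f) = r_f \leq r_{i+1}$, placing it in $B_{r_{i+1}}$. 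This yields the desired containment $x \in \Lambda^2 B_{r_{i+1}}$.

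I do not expect any substantive obstacle here: the argument is essentially the pigeonhole principle applied to the finite set of critical radii, combined with the standard attainment of an infimum of a continuous function on a compact set. The only points deserving a brief sanity check are the degenerate case $r_f = 0 = r_{n+1}$, where the minimiser satisfies $\lambda_f f = e$ so that $f = \lambda_f^{-1} \in \Lambda$ and hence $x \in \Lambda^2$ directly, and the minor open-versus-closed ball convention at the boundary radius when $r_f$ happens to equal $r_{i+1}$, which is merely a matter of interpretation rather than a real difficulty.
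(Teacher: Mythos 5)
Your proof is correct and follows essentially the same route as the paper: the paper's chain of inclusions $B_{r_i}\cap\Lambda^m\subset\Lambda F_{>i}\subset\bigcup_f\Lambda^2\lambda_f\subset\Lambda^2B_{r_{i+1}}$ is exactly your element-wise argument — decompose $x=\lambda f$ via the covering, pigeonhole $r_f<r_i$ down to $r_f\leq r_{i+1}$, and re-factor through the attained minimiser using the symmetry of $\Lambda$. Your explicit treatment of the degenerate case $r_f=0$ and the boundary convention is a welcome (if minor) addition that the paper leaves implicit.
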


\begin{proof}
For all $f \in F$, let $\lambda_f$ denote the shortest element of $\Lambda f$. Pick $i \in \{1,\ldots,n\}$ and write $F_{>i}:=\{f \in F: r_f < r_i\}$. We have 
\begin{align*}
     B_{r_i} \cap \Lambda^m & \subset B_{r_i} \cap \Lambda F \\
     & \subset \Lambda F_{>i} \\
                                & \subset \bigcup_{f \in F_{>i} }\Lambda^2\lambda_f \\
                                & \subset \Lambda^2 B_{r_{i+1}}.
\end{align*}
\end{proof}

As an easy corollary:

\begin{corollary}
    With notation as above. We have
    $$B_{r_n} \subset \Lambda^2.$$
\end{corollary}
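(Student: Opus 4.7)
The plan is to apply the preceding lemma (Lemma: Boundedly many interesting scales) at the smallest scale, $i = n$. At this extremity, with the stated convention $r_{n+1} = 0$, the ``next smaller'' ball $B_{r_{n+1}}$ collapses to (at most) $\{e\}$, so the right-hand side $\Lambda^{2}B_{r_{n+1}}$ in the lemma collapses to $\Lambda^{2}$. Thus the lemma instantly gives $B_{r_n} \cap \Lambda^m \subseteq \Lambda^{2}$, and the remaining step is the mild observation that $B_{r_n} \subseteq \Lambda^{m}$, which is reasonable because $r_n$ is defined to be the smallest \emph{positive} distance measured from $e$ to some translate $\Lambda f$, so the ball $B_{r_n}$ is already so small that it should sit inside the approximate subgroup thickened by $\Lambda^{m-1}$.

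Concretely, I would unpack the argument as follows rather than invoke the lemma as a black box, since this makes the ``degenerate scale'' issue transparent. Pick $x \in B_{r_n}$, assumed (or shown) to lie in $\Lambda^{m}$. Using the cover $\Lambda^m \subseteq \Lambda F$, write $x = \lambda f$ with $\lambda \in \Lambda$ and $f \in F$. By definition of $r_f$ we have
\[
r_f \;=\; \inf_{\lambda' \in \Lambda} d(e,\lambda'f) \;\leq\; d(e,\lambda f) \;=\; d(e,x) \;<\; r_n.
\]
Since $r_n$ is the smallest positive element of $\{r_f : f \in F\}$, this forces $r_f = 0$. Compactness of $\Lambda$ together with continuity of $d$ implies the infimum defining $r_f$ is attained, so there exists $\lambda_0 \in \Lambda$ with $\lambda_0 f = e$, i.e.\ $f = \lambda_0^{-1}$. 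Because $\Lambda$ is symmetric (being an approximate subgroup containing $e$), $f \in \Lambda$, and hence $x = \lambda f \in \Lambda \cdot \Lambda = \Lambda^{2}$.

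It remains to justify the inclusion $B_{r_n} \subseteq \Lambda^{m}$ used above. I expect this to be either automatic from the way Lemma 3.5 is being applied (typically one chooses $F$ so that the cover $\Lambda^m \subseteq \Lambda F$ exhausts a neighbourhood of $e$, in which case for the smallest scale $r_n$ one indeed has $B_{r_n} \subseteq \Lambda^{m}$), or it is an implicit hypothesis of the corollary. The core mathematical content is the previous paragraph.

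The only real obstacle here is bookkeeping rather than mathematics: one needs the ball-of-radius-$0$ convention to behave well (so that $\Lambda^{2} B_{0} \subseteq \Lambda^{2}$), and one must ensure that the smallest positive scale $r_n$ is genuinely captured by $\Lambda^{m}$. Everything else is a direct consequence of the definition of $r_f$, the symmetry of $\Lambda$, and compactness.
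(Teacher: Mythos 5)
The step you defer at the end --- the inclusion $B_{r_n}\subset\Lambda^m$ --- is not bookkeeping, an artefact of how $F$ is chosen, or an implicit hypothesis: it is the entire content of the corollary. Nothing in the definition of $F$ forces the cover $\Lambda^m\subset\Lambda F$ to ``exhaust a neighbourhood of $e$'', and the conclusion genuinely requires a thickness input on $\Lambda$: if $\Lambda$ were, say, a finite symmetric set containing $e$, the scales $r_f$ and hence $r_n$ could all be positive while $\Lambda^2$ is finite, so $B_{r_n}\subset\Lambda^2$ would simply be false. Your argument nowhere uses that $\Lambda$ has positive measure, so it cannot close this gap; the part you do write out carefully (taking $x\in B_{r_n}\cap\Lambda^m$, writing $x=\lambda f$, forcing $r_f=0$ and using compactness and symmetry to get $f\in\Lambda$) is correct, but it only re-derives the $i=n$ case of Lemma \ref{Lemma: Boundedly many interesting scales}, which is not where the difficulty lies.

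The paper closes the gap with a short bootstrap that you are missing. Since $\Lambda$ has positive measure, $\Lambda^2$ contains a ball about $e$, so one may choose $r\in(0,r_n]$ maximal with $B_r\subset\Lambda^2$. Because $d$ is a bi-invariant length metric, $B_{2r}\subset B_rB_r\subset\Lambda^4\subset\Lambda^m$ (here $m\geq 4$, as in the intended application), so at the scale $\min(2r,r_n)$ the intersection with $\Lambda^m$ in the lemma is automatic: $B_{\min(2r,r_n)}=B_{\min(2r,r_n)}\cap\Lambda^m\subset\Lambda^2B_{r_{n+1}}=\Lambda^2$. Maximality of $r$ then forces $\min(2r,r_n)\leq r$, i.e.\ $r_n\leq r$, whence $B_{r_n}\subset\Lambda^2$. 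Some argument of this kind --- starting from a small ball inside $\Lambda^2$ furnished by positive measure and doubling up to the scale $r_n$ --- is needed to justify the inclusion you assumed, and without it the proposal does not prove the corollary.
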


\begin{proof}
    Let $r_n \geq r > 0$ be maximal such that $B_r \subset \Lambda^2$; $r$ exists as $\Lambda$ has positive measure. We have $B_{2r} \subset \Lambda^4$. So $B_{\min(2r,r_n)} \subset \Lambda^2$ (Lemma \ref{Lemma: Boundedly many interesting scales}). Hence, $r_n \leq r$. 
\end{proof}

Pushing this idea further, if we gather information at any scale not identified in Lemma \ref{Lemma: Boundedly many interesting scales}, we can exploit it thanks to:

\begin{lemma}\label{Lemma: Compounding all interests}
    Let $r_{max} > r_{min} > 0$. Suppose that $\Lambda^4 \cap B_{r_{max}} \subset \Lambda^2B_{r_{min}}$ and that there is $\frac{r_{max}}{30}>r_0 \geq r_{min}$ such that  $B_{15r_0} \subset \Lambda^2B_{r_0}$. Then $$B_{r_{max} - 15r_0} \subset \Lambda^2B_{r_{min} + r_0}.$$
\end{lemma}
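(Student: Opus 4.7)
The plan is to prove the containment element-wise: given $g \in B_{r_{max} - 15 r_0}$, I will construct $\lambda \in \Lambda^2$ with $d(g, \lambda) \leq r_{min} + r_0$. The two hypotheses play complementary roles. The density assumption $B_{15 r_0} \subset \Lambda^2 B_{r_0}$ supplies a local $r_0$-net used to make a small step along a path from $e$ to $g$, while the ``collapse'' hypothesis $\Lambda^4 \cap B_{r_{max}} \subset \Lambda^2 B_{r_{min}}$ is applied at every step to pull the resulting $\Lambda^4$-element back into $\Lambda^2$, preventing the exponent of $\Lambda$ from blowing up as we iterate.

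Concretely, fix a minimizing geodesic from $e$ to $g$ and sample points $e = p_0, p_1, \ldots, p_N = g$ with $d(p_i, p_{i+1}) \leq r_0$ and $d(e, p_i) \leq d(e, g) \leq r_{max} - 15 r_0$ for all $i$ (such a path exists because $G$ is a compact Riemannian manifold). I would maintain the inductive invariant that there exists $\lambda_i \in \Lambda^2$ with $d(p_i, \lambda_i) \leq r_{min} + r_0$, starting from $\lambda_0 = e$. At the inductive step, the element $y_i := \lambda_i^{-1} p_{i+1}$ satisfies $d(e, y_i) \leq (r_{min} + r_0) + r_0 \leq 15 r_0$ (using $r_{min} \leq r_0$), so the first hypothesis factors $y_i = \mu_i b_i$ with $\mu_i \in \Lambda^2$ and $b_i \in B_{r_0}$. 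Set $\nu_{i+1} := \lambda_i \mu_i \in \Lambda^4$. Using bi-invariance of $d$ we get $d(e, \nu_{i+1}) \leq d(e, p_{i+1}) + d(e, b_i) \leq (r_{max} - 15 r_0) + r_0 \leq r_{max}$, so the second hypothesis yields $\nu_{i+1} = \lambda_{i+1} b'_i$ with $\lambda_{i+1} \in \Lambda^2$ and $b'_i \in B_{r_{min}}$. Then $p_{i+1} = \lambda_{i+1} (b'_i b_i)$ with $b'_i b_i \in B_{r_{min} + r_0}$, preserving the invariant. Iterating $N$ times yields $\lambda_N \in \Lambda^2$ with $d(g, \lambda_N) \leq r_{min} + r_0$, completing the proof.

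The main obstacle is ensuring that the $r_0$-sized error introduced by invoking the net hypothesis at each step is reabsorbed into the tight $r_{min} + r_0$ budget and never accumulates: a naive iteration of the first hypothesis alone would produce $\Lambda^{2N}$-type bounds. The collapse hypothesis is exactly what saves the argument, because it repeatedly converts an $r_0$-error into the smaller $r_{min}$-error while reducing $\Lambda^4$ back to $\Lambda^2$. The quantitative buffers $15 r_0$ in both the hypothesis on $r_0$ and the radius of the conclusion are consumed precisely by the one-step growth in the net factorization and by the verification that $\nu_{i+1}$ remains in $B_{r_{max}}$ so that the collapse hypothesis is available.
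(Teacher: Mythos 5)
Your proof is correct. It rests on exactly the same mechanism as the paper's argument -- alternate a ``net'' step using $B_{15r_0}\subset\Lambda^2B_{r_0}$ with a ``collapse'' step using $\Lambda^4\cap B_{r_{max}}\subset\Lambda^2B_{r_{min}}$, so that the $\Lambda$-exponent never exceeds $4$ before being pushed back to $2$ -- but the implementation differs. The paper works at the level of sets: it first proves an auxiliary claim about composing the three inclusions, then runs a maximality argument on the largest radius $r'$ with $B_{r'}\subset\Lambda^2B_{r_{min}+r_0}$, increasing $r'$ by $10r_0$ per step using normality of balls. You instead argue element-wise, chaining points $e=p_0,\dots,p_N=g$ along a minimizing geodesic with steps of length $\le r_0$ and propagating the invariant $p_i\in\Lambda^2B_{r_{min}+r_0}$; bi-invariance of $d$ plays the role that normality of balls plays in the paper. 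Your version is a bit more elementary (no maximality argument, no auxiliary claim) and is in fact slacker with the constants: the only place the radius of $g$ enters is in checking $\nu_{i+1}\in B_{r_{max}}$, so your argument actually yields the stronger inclusion $B_{r_{max}-r_0}\subset\Lambda^2B_{r_{min}+r_0}$ and only needs the net hypothesis at scale about $3r_0$, whereas the paper's radius-increment bookkeeping genuinely spends the $10r_0$/$15r_0$ margins. The only implicit ingredients you use are $e\in\Lambda^2$ (true since an approximate subgroup contains the identity, giving the base case $\lambda_0=e$) and the existence of $r_0$-chains with $d(e,p_i)\le d(e,g)$, which Hopf--Rinow provides on a compact group with a bi-invariant Riemannian metric; neither is a gap.
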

\begin{proof}
We start with a simple observation:
\begin{claim}\label{claim: Compounding interest}
Let $r_1 > r_2 > r_3 > r_4$. Assume that 
$$ B_{r_1} \subset \Lambda^2 B_{r_2}, B_{r_2} \subset \Lambda^2 B_{r_3} \text{ and } B_{r_1+r_3} \cap \Lambda^4 \subset \Lambda^2 B_{r_4}.$$
Then 
$B_{r_1} \subset \Lambda^2 B_{r_3 + r_4}$.
\end{claim}

\begin{proof}[Proof of Claim \ref{claim: Compounding interest}]
We have $B_{r_1} \subset \Lambda^4B_{r_3}$ according to the first two inclusions. So $B_{r_1} \subset \left(\Lambda^4 \cap B_{r_1 + r_3}\right)B_{r_3}$. Thus, $B_{r_1} \subset \Lambda^2B_{r_3 + r_4}$ according to the last inclusion.   
\end{proof}



    Let us go back to the proof of Lemma \ref{Lemma: Compounding all interests}. Let $r' \leq r_{max} - 15r_0$ be the maximal number such that $B_{r'} \subset \Lambda^2B_{r_{min}+r_0}$. We have $$B_{r'+10r_0}=B_{r'}B_{10r_0} \subset \Lambda^4 B_{r_{min}+2r_0}$$ using normality of balls. If $r'+12r_0+r_{min}\leq r_{max}$, then $$B_{r'+10r_0} \subset \left(\Lambda^4 \cap B_{r_{max}}\right) B_{r_{min}+2r_0}.$$
    Since  $\Lambda^4 \cap B_{r_{max}} \subset \Lambda^2B_{r_{min}}$, $B_{r'+10r_0} \subset \Lambda^2 B_{2r_{min} + 2r_0}$. We apply now Claim \ref{claim: Compounding interest}: since $B_{r'+10r_0} \subset \Lambda^2 B_{2r_{min} + 2r_0}$ and $B_{10r_0} \subset \Lambda^2B_{r_0}$ we have $B_{r'+10r_0} \subset \Lambda^2B_{r_0+r_{min}}$. This concludes.
\end{proof}

We want to know that under favourable assumptions the condition of Lemma \ref{Lemma: Compounding all interests} is satisfied. We start with a result for irreducible actions. 

\begin{lemma}(Generation)\label{Lemma: Generation}
    Let $\frac12>\delta >0$. Let $V$ be an irreducible representation of $G$ of dimension $d$ equipped with an invariant norm. There is $C \geq 0$ such that if $X \subset G$ is a symmetric subset not contained in the $\delta$-neighbourhood of a proper subgroup and $v \in V$ is a unit vector, then $$B_{\delta^Cm}\subset \left(\sum_{d}X^d\cdot([m]v) \right) + B_d$$
    where $[m]v$ denotes $\{-mv,\ldots,mv\}$ and $\cdot$ denotes the action of $G$.
\end{lemma}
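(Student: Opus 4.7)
The plan is to combine a quantitative generation result producing, from $X^d \cdot v$, a well-conditioned basis of $V$, with a lattice-filling argument that turns such a basis into the claimed ball-covering inclusion.

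For the generation step, I build a basis $w_i = g_i v \in X^i \cdot v$ of $V$ inductively. Start with $w_1 = v$. Assume one has constructed $w_1,\dots,w_j \in X^j \cdot v$ spanning a $j$-dimensional subspace $V_j$ with volume $|w_1 \wedge \cdots \wedge w_j| \geq \delta^{C_j}$ for a dimensional constant $C_j$. I look for $g \in X$ and some index $i$ with $d(g w_i, V_j) \geq \delta^{C_{j+1}}$, and then set $w_{j+1} := g w_i \in X^{j+1} \cdot v$. If no such $g$ existed, then every $g \in X$ would satisfy $g w_i \in V_j^{(\delta^{C_{j+1}})}$ for all $i$, and because the $w_i$ form a $\delta^{C_j}$-well-conditioned basis, expanding $g V_j$ in that basis yields $d_{\mathrm{Gr}}(gV_j,V_j) \leq C(d)\,\delta^{C_{j+1}-C_j}$ in the Grassmannian $\mathrm{Gr}_j(V)$. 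Irreducibility of $V$ ensures the stabilizer $H_j := \{g \in G : gV_j = V_j\}$ is a proper closed subgroup, and the orbit map $G \to G \cdot V_j \subset \mathrm{Gr}_j(V)$ is a submersion, so a tubular-neighbourhood estimate places $X$ inside the $C'(d)\,\delta^{C_{j+1}-C_j}$-neighbourhood of $H_j$. Choosing $C_{j+1}$ large enough relative to $C_j$ makes this neighbourhood $\subset (H_j)_\delta$, contradicting the hypothesis. Iterating $d$ times produces $g_1,\dots,g_d \in X^d$ with $\|w_i\|\leq 1$ (the representation being isometric for the invariant norm) and $|\det(w_1|\cdots|w_d)|\geq \delta^{C_0}$ for some dimensional $C_0$.

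For the lattice-filling step, Cramer's rule gives $\|M^{-1}\| \leq d!/\delta^{C_0}$ where $M=(w_1|\cdots|w_d)$, so any $p \in B_{\delta^{C_0}m/d!}$ decomposes as $p = \sum_{i=1}^d \alpha_i w_i$ with $|\alpha_i| \leq m$. Rounding $\alpha_i$ to the nearest integer $k_i\in[-m,m]$ produces $\sum_i k_i w_i = \sum_i g_i(k_i v) \in \sum_d X^d \cdot ([m]v)$, and
\[
\Bigl\|p - \sum_i k_i w_i\Bigr\| \leq \tfrac12 \sum_i \|w_i\| \leq d/2 \leq d.
\]
Absorbing the $d!$ factor into the exponent yields a dimensional $C$ with $B_{\delta^C m} \subset \sum_d X^d\cdot([m]v) + B_d$.

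The principal obstacle is the inductive generation step: keeping the exponents $C_j$ uniformly bounded in terms of $d$ only, independently of the particular subspace $V_j$ that arises. This boils down to a Łojasiewicz-type (equivalently, tubular neighbourhood) inequality linking approximate stabilization of a proper subspace by $X$ to proximity of $X$ to the algebraic subgroup $H_j$, which must be made uniform across the compact parameter space of proper subspaces of $V$.
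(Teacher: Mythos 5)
Your second step is essentially the paper's own conclusion: the paper forms the linear map $\phi(t_1,\dots,t_d)=\sum t_i(\lambda_i\cdot v)$, notes $B_{\delta^{C_2}}\subset\phi(B_1)\subset B_d$ (isometric action), and concludes via $B_{\delta^{C_2}m}\subset\phi(B_m)\subset\phi(\{-m,\dots,m\}^d)+\phi(B_1)$, which is the same rounding/lattice-filling argument as your Cramer-plus-rounding step. The real difference is the generation step. The paper does not prove the escape-from-subspaces statement at all: it obtains elements $\lambda_i\in X^d$ with $d(\lambda_i\cdot v,\mathrm{span}(\lambda_1\cdot v,\dots,\lambda_{i-1}\cdot v))\ge\delta^{C_1}$ by repeated application of a quoted result (Prop.\ 2.7 of de Saxc\'e's product-theorem paper, the reference \cite{zbMATH06466329}), which is precisely the quantitative statement that a symmetric set not contained in the $\delta$-neighbourhood of a proper closed subgroup escapes $\delta^{C}$-neighbourhoods of proper subspaces of an irreducible representation, with $C$ depending only on the dimension.

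You attempt to prove that escape statement directly, and its crux is missing from your argument. The step ``if every $g\in X$ sends the well-conditioned basis of $V_j$ into the $\delta^{C_{j+1}}$-neighbourhood of $V_j$, then $X$ lies in the $C'(d)\,\delta^{C_{j+1}-C_j}$-neighbourhood of the stabilizer $H_j$'' is exactly the quantitative input that has to be established; you assert it via ``a tubular-neighbourhood estimate'' and then concede at the end that making it uniform in $V_j$ is ``the principal obstacle''. This is not a routine tubular-neighbourhood argument: as $V_j$ ranges over the Grassmannian the stabilizers $H_j$ change dimension and number of components (they may even be finite), the orbit maps $G/H_j\to\mathrm{Gr}_j(V)$ degenerate, and the exponent-$1$ bound with a constant depending only on $d$ that you write down is unjustified — at best one can hope for a parametric \L{}ojasiewicz-type inequality with some exponent $\theta(d)>0$, which itself requires a genuine proof or a citation (essentially the de Saxc\'e proposition the paper invokes). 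Without this uniform estimate your induction on the exponents $C_j$ does not close, so the proposal has a genuine gap at its central point; the remaining ingredients (properness of $H_j$ from irreducibility, the wedge/determinant bookkeeping, and the lattice-filling step with error at most $d$) are correct and coincide with the paper's argument.
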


\begin{proof}

By repeated applications of \cite[Prop. 2.7]{zbMATH06466329}, there are $C_1 > 0$ depending only on the dimension $d$ of $V$ and $\lambda_1=e, \ldots, \lambda_d \in \Lambda^{d}$ such that $d(\lambda_i \cdot v, W_i) \geq \delta^{C_1}$ for all $i =1, \ldots, d$ where $W_i:=span(\lambda_1 \cdot v, \ldots, \lambda_{i-1}\cdot v)$.

Therefore, the map 
\begin{align*}
    \phi:\mathbb{R}^d & \longrightarrow V \\
    (t_1, \ldots, t_d) & \longmapsto \sum t_i (\lambda_i \cdot v)
\end{align*}
is a linear map with $B_{\delta^{C_2}} \subset \phi(B_1)$ for some constant $C_2 > 0$ depending on the dimension of $V$ only. Moreover, $\phi(B_1) \subset B_d$ because $G$ acts by isometries. 

Thus, $$B_{\delta^{C_2}m} \subset \phi(B_m) \subset \phi(\{-m,\ldots,m\}^d)+\phi(B_1)\subset\left(\sum_{d}X^{d}\cdot([m]v)\right) + B_d. $$
\end{proof}

 Lemma \ref{Lemma: Generation} will be paired with a simple approximation principle, consequence of the BCH formula \eqref{Eq: BCH formula}. Namely, if $X, Y \in \mathfrak{g}$ and $||X||, ||Y|| \leq \rho$, then $$d\left(\exp(X) \exp(Y), \exp(X+Y)\right) = O_{d}(\rho^2).$$ In particular, for every $ m > 0$ there is $r_a > 0$ depending on $d,m$ only such that if  $\Lambda \subset B_{r_a}$ is a symmetric subset and $X \in \mathfrak{g}$ satisfies $\exp(X) \in \Lambda$, then $$\exp\left(\sum_{d}\Lambda^d\cdot([m]X) + B_d\right) \subset \Lambda^{d^2m}B_{\frac{||X||}{100}}$$ where $\cdot$ denotes the adjoint action and $||\cdot||$ denotes the norm on $\mathfrak{g}$ associated with $d$. 

\subsection{Jordan's theorem for discretized sets}
We turn to the proof of the main result of this section. We adopt a strategy reminiscent of the proof of Jordan's theorem on finite subgroups of linear groups (see \cite{BGUnitary12} for this and an extension to finite approximate groups). We rely on \emph{shrinking commutators} i.e. for a compact group $G$ there is $r_0> 0$  such that if $g,h \in B_{r_0}$ we have
$$d(e,[g,h]) < \frac{d(e,g)}{2}.$$

    \begin{proposition}\label{Proposition: approximate subgroups of small measure, simple case}
    Let $G$ be a simple Lie group group of dimension $d_G$,  $K > 0$ and $\delta > 0$. There is $\epsilon > 0$ such that for all  $K$-approximate subgroups $\Lambda \subset G$ with $\mu_G(\Lambda) \leq \epsilon$ there is a proper connected subgroup $H$ such that $\Lambda$ is covered by $C(d_G)K^{100}$ translates of  $H_{\delta}$.
\end{proposition}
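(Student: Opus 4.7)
My plan is to prove this by contradiction, adapting the classical proof of Jordan's theorem on finite subgroups of linear groups to the setting of discretized approximate subgroups, with scales playing the role of ``element size''. Suppose toward contradiction that $\Lambda$ admits no cover by $C(d_G) K^{100}$ translates of $H_\delta$ for any proper connected subgroup $H$; I will deduce $\mu_G(\Lambda) \geq \epsilon_0(d_G, K, \delta) > 0$, contradicting the hypothesis once $\epsilon < \epsilon_0$.

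The first step is to fix scales. Let $r_0 > 0$ be the shrinking-commutator radius stated just before the proposition, and let $r_\star > 0$ be the supremum of radii with $B_{r_\star} \subset \Lambda^2$, which is positive by the corollary to Lemma \ref{Lemma: Boundedly many interesting scales}. Applying Lemma \ref{Lemma: Boundedly many interesting scales} with $m = 4$ yields at most $K^{O(1)}$ ``interesting'' scales $r_1 > \cdots > r_n$ between $r_\star$ and $r_0$, outside of which consecutive pairs satisfy $\Lambda^4 \cap B_r \subset \Lambda^2 B_{r'}$. At each such scale I would consider the dichotomy: either (a) $\Lambda \cap B_{r_i}$ is contained in some $H_\delta$ for a proper closed subgroup $H$, or (b) it is not. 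Since $G$ is simple, its adjoint representation on $\mathfrak{g}$ is irreducible, so in case (b) Lemma \ref{Lemma: Generation} combined with the BCH approximation principle stated right after it gives $\Lambda^{d_G^2 m} \cdot B_{\|X\|/100} \supset B_{\delta^C m \|X\|}$ for any witness $X \in \log(\Lambda^2 \cap B_{r_i})$.

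The shrinking-commutator inequality provides the downward descent. Starting from a witness $g \in \Lambda \cap B_{r_i}$ in case (b), non-degeneracy of $\Lambda \cap B_{r_0}$ combined with the simplicity (centerlessness) of $G$ produces some $h \in \Lambda \cap B_{r_0}$ for which $[g,h] \in \Lambda^4$ is non-trivial and satisfies $d(e, [g,h]) < d(e, g)/2$, giving a smaller witness. Iterating the descent through all the interesting scales and piecing the results together via Lemma \ref{Lemma: Compounding all interests} forces $B_{r_0/2} \subset \Lambda^{C(d_G)}$ whenever case (b) occurs at some scale. Proposition \ref{Proposition: Tao} then yields $\mu_G(\Lambda) \geq \mu_G(B_{r_0/2})/K^{C'(d_G)} =: \epsilon_0$, contradicting the measure hypothesis. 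Hence case (a) must hold at every interesting scale, and in particular $\Lambda \cap B_{r_n}$ lies in some $H_\delta$. Combining Ruzsa's covering (Lemma \ref{Lemma: Covering lemma}) with Lemma \ref{Lemma: Disjoint translates} and the approximate subgroup property then propagates this containment to a cover of $\Lambda$ by $C(d_G) K^{100}$ translates of $H_\delta$, with the connectedness of $H$ coming from the fact that it is the Lie subgroup generated by logarithms of elements of $\Lambda \cap B_{r_n}$.

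The main obstacle I anticipate is the non-degenerate case: making the shrinking-commutator descent quantitative, tracking the growth of $\Lambda$-product lengths at each step, and invoking the Generation Lemma at a scale where it produces a ball of size independent of the iteration rather than one that vanishes with $r_i$. Controlling the final exponent at $K^{100}$ rests on careful bookkeeping of how the bounded count of scales $n \leq K^{O(1)}$ interacts with the dimensional constants emerging from both Lemma \ref{Lemma: Generation} and Lemma \ref{Lemma: Compounding all interests}.
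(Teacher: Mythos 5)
Your toolkit is the right one (interesting scales, the Generation Lemma, compounding, shrinking commutators, Ruzsa covering), but there is a genuine gap at exactly the two places where the proper subgroup $H$ and the bounded cover are supposed to come from. First, your scale-by-scale dichotomy asks whether $\Lambda\cap B_{r_i}$ lies in some $H_\delta$; for any scale $r_i\le\delta$ this holds automatically (the piece sits in the $\delta$-neighbourhood of the trivial subgroup), so case (b) never occurs at small scales and Lemma \ref{Lemma: Generation} cannot be invoked there as you describe. In the paper the non-containment hypothesis fed into Lemma \ref{Lemma: Generation} is the \emph{global} one about $\Lambda$ itself (with $v=\log\lambda$ for a witness $\lambda\in\Lambda^2$ at an intermediate scale), not a statement about $\Lambda\cap B_{r_i}$. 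Second, and more seriously, your concluding step --- from ``case (a) holds at every scale, so $\Lambda\cap B_{r_n}\subset H_\delta$'' to ``$\Lambda$ is covered by $C(d_G)K^{100}$ translates of $H_\delta$'' --- has no mechanism behind it. The piece $\Lambda\cap B_{r_n}$ can have measure far smaller than $\mu_G(\Lambda)/K^{O(1)}$, so Ruzsa's covering lemma (Lemma \ref{Lemma: Covering lemma}) does not apply (it needs a bound of the form $\mu_G(\Lambda S)\le K^{O(1)}\mu_G(S)$ for the set $S$ being translated), and containment of a tiny piece near the identity in some $H_\delta$ is essentially vacuous information: it holds for \emph{any} set once $r_n\le\delta$, and the subgroup could differ from scale to scale. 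Since that implication would be the whole content of the proposition, this is the crux and it is missing.

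For comparison, the paper produces $H$ quite differently: when there is a genuine gap in scales, one takes $\lambda$ of minimal size in $\Lambda^2\setminus B_{r_i}$, uses the shrinking-commutator bound to place every $[\lambda,\lambda']$, $\lambda'\in\Lambda$, below half the size of $\lambda$, and then a pigeonhole over an $r_i$-separated set of cardinality at most $K^6$ yields $\Lambda'\subset\Lambda$ with $\mu_G(\Lambda')\ge\mu_G(\Lambda)/K^{6}$ all of whose elements approximately centralize $\lambda$; hence $\Lambda'$ lies in a thin neighbourhood of $\mathrm{stab}(\lambda)$, which is proper because $G$ is simple and $\lambda\neq e$, and Ruzsa covering applied to $\Lambda'$ (which \emph{does} have comparable measure) gives the $K^{100}$ translates. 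No iterated descent is needed --- one commutator step plus the pigeonhole suffices --- whereas in your plan the product length grows with each descent step and the number of steps is not a priori bounded. Your target inclusion $B_{r_0/2}\subset\Lambda^{C(d_G)}$ is also too strong: what the paper obtains in the complementary (``dense scales'') case is that a ball $B_{r_a}$, with $r_a$ depending on $\delta$ and $d_G$, is covered by a number of translates of $\Lambda$ depending on $\delta$, $d_G$ and $K$, which already gives the measure lower bound. That half of your plan does parallel the paper; it is the construction of $H$ and of the bounded cover in the other case that needs to be redone along the lines above.
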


\begin{proof} A standard covering argument (e.g. \cite[\S 2]{machado2020closed}) shows that $\Lambda$ is covered by $C(d_G)>0$ translates of $\Lambda^2 \cap B_{r_0}$ and $\Lambda^2 \cap B_{r_0}$ is a $C(d_G)K^3$-approximate subgroup. Upon considering $\Lambda^2 \cap B_{r_0}$ instead of $\Lambda$, we will assume from now on that $\Lambda \subset B_{r_0}$.  If $\Lambda$ is contained in the $\delta$-neighbourhood of a proper connected subgroup, then we are done. Assume it is not. Let $m > 0$ to be chosen later. Let $C_1> 0$ denote the constant given by Lemma \ref{Lemma: Generation} applied to $\delta$, $X=\Lambda$ and $V$ the adjoint representation. Let $r_1 > \ldots > r_n > 0$ be given by Lemma \ref{Lemma: Boundedly many interesting scales} applied to $\Lambda^{d_G^2m}$ - thus $n \leq K^{d_G^2m}$.

    Pick $i\in \{2, \ldots, n\}$. For $\eta > \delta^{-1}$, assume that $\Lambda^2 \cap B_{\eta r_i} \subset B_{r_i}$ and $30d_G\eta r_i \leq r_{i-1}$. Choose $\lambda$ of minimal size in $\Lambda^2 \setminus B_{r_i}$. If there is no such $\lambda$, $$\Lambda \subset B_{\eta^{-1}r_0} \subset B_\delta$$ and we are done. If it exists, we have $\eta r_i \leq d(e,\lambda) \leq r_0$. Since $\Lambda \subset B_{ r_0}$, for all $\lambda' \in \Lambda$, 
    $$d(e,[\lambda,\lambda']) < \frac{d(e,\lambda)}{2}.$$
    But for any two $f_1, f_2 \in \Lambda^6 \cap B_{\frac{d(e,\lambda)}{2}}$ we have $f_1\Lambda \cap f_2\Lambda \neq \emptyset$ only if $f_1^{-1}f_2 \in B_{r_i}$. Indeed, if $f_1\Lambda \cap f_2 \Lambda \neq \emptyset$, then $f_1^{-1}f_2 \in \Lambda^2 \cap B_{d(e,\lambda)} \subset B_{r_i}$. If $F \subset \Lambda^6$ is a maximal $r_i$-separated subset (i.e. any two distinct points are at distance at least $r_i$), then $$|F|\mu_G(\Lambda)=\mu_G(F \Lambda) \leq K^6\mu_G(\Lambda)$$
    i.e. $|F| \leq K^6$. There are thus a subset $\Lambda' \subset \Lambda$ of size at least $\frac{\mu_G(\Lambda)}{K^6}$ and $f \in F$ such that $d([\lambda,\lambda'],f) \leq r_i$ for all $\lambda' \in \Lambda'$. In other words, for any $\lambda' \in \Lambda'^{-1}\Lambda'$, $d(\lambda, \lambda' \lambda \lambda'^{-1}) \leq 2r_i$. Since $d(e,\lambda) \geq \eta r_i$ we have $\lambda' \subset stab(\lambda)B_{\eta^{-\frac{1}{C_2}}}$ for some dimensional constant $C_2>1$. By Rusza's covering lemma, $\Lambda$ is covered by $K^{100}$ translates of $\Lambda'$. Setting $\eta = \delta^{-C_2}$ we reach the desired conclusion. 

    Hence, it remains to consider the case where for all $i=2, \ldots, n$ either $30d_G\eta r_i \geq r_{i-1}$ or $\Lambda^2 \cap B_{\eta r_i} \setminus B_{r_i}$ is not empty.  Assume that we are in the latter case and $r_i \leq r_a$.  Applying Lemma \ref{Lemma: Generation} with $\delta$, $m=10d_G\delta^{-C_1}$ and $v=\log \lambda$ for any $\lambda \in \Lambda^2 \cap B_{\eta r_i} \setminus B_{r_i}$ we find that $$ B_{10d_Gd(e,\lambda)} \subset \Lambda^{d_G^2m}B_{d_Gd(e,\lambda)}.$$ Applying Lemma \ref{Lemma: Compounding all interests} we finally have,  $$ B_{\min(r_{i-1},r_a) -15d_G\eta r_i} \subset \Lambda^2B_{d_G\eta r_i}.$$ 
    
    We now claim that few translates of $\Lambda$ must cover $B_{r_a}$. Notice first that $\Lambda^2 \cap B_{r_n} =B_{r_n}$. Now, if $r_{n-1} \geq 30 d_G \eta r_n$, then $ B_{r_{n-1} -15d_G\eta r_n} \subset \Lambda^2B_{d_G\eta n}$ and there is $C_3 >0$ depending on the dimension only such that  $B_{d_G\eta r_{n}}$ is covered by $\eta^{-C_3}$ translates of $B_{r_n}$ and $B_{r_{n-1}}$ is covered by $2^{C_3}$ translates of $B_{r_{n-1} -15d_G\eta r_n}$. We find that $ B_{r_{n-1} -15\eta r_n}$ is covered by $\eta^{-C_3}$ translates of $\Lambda^4$. Hence, $B_{r_{n-1}}$ is covered by $\eta^{-2C_3}K^3$ translates of $\Lambda$. If $r_{n-1} \leq 30 d_G \eta r_n$, we also conclude that $B_{r_{n-1}}$ is covered by $\eta^{-2C_3}K^3$ translates of $\Lambda$ as $B_{r_n} \subset \Lambda^2$.  Repeating this step inductively and noticing that $n \leq K^{d_G^2m}$, we find that $B_{r_a}$ is covered by $C_4$ translates of $\Lambda$ for $C_4>1$ depending on $\delta$, $d_G$ and $K$. Therefore, $$\mu_G(\Lambda) \geq C_4\mu_G\left(B_{r_a}\right).$$ But $r_a$ also depends on $d_G$ and $m$ (hence $\delta$) only. Choosing $\epsilon$ so that $\mu_G(\Lambda) < C_4\mu_G\left(B_{r_a}\right) $ concludes.
\end{proof}
A previous iteration of this paper relied on a ``finite + convex" decomposition result due to Carolino \cite{carolino2015structure} in the above proof. While qualitatively stronger (see e.g. \S \ref{Subsection: Local Brunn--Minkowski improved}), this approach used at its core model-theoretic compactness arguments - thus being ineffective. Our new approach is finitary (and quantitative) in nature.

Although computable, the dependence of $\epsilon$ in $\delta$, the dimension and $K$ is far from optimal. When $K = 2^d$ our method provides $\epsilon$ of the form $2^{\delta^{-cd}}$ for some absolute constant $c >0$. For that reason, we do not keep track of these constants more precisely. 

Our methods are inspired by works on discretized product theorems \cite{zbMATH06466329}.  The small measure assumption from Proposition \ref{Proposition: approximate subgroups of small measure} and the small dimension assumption from \cite{zbMATH06466329}, albeit similar in spirit, work in slightly different ways however. This makes the results from \cite{zbMATH06466329} unfit for use here. Conversely,  the dependence on constants we obtain does not appear good enough to streamline the proofs from \cite{zbMATH06466329}.

\subsection{From simple to semi-simple}
 It only remains to extend our result from compact simple Lie groups to compact \emph{semi-simple} Lie groups. To do so we decompose an approximate subgroup along an exact sequence, and use the stability of compact groups to show that this decomposition `splits'.

 \begin{proof}[Proof of Proposition \ref{Proposition: approximate subgroups of small measure}.] If $G$ is simple then Proposition \ref{Proposition: approximate subgroups of small measure} is simply Proposition \ref{Proposition: approximate subgroups of small measure, simple case} which we have already established. Suppose that $G$ has at least two simple factors and let us proceed by induction on the dimension $d_G$ of $G$. Let $\epsilon > 0$ be such that in all semi-simple Lie groups of dimension at most $d_G-1$, a $K$-approximate subgroup of measure at most $\epsilon$ is contained in $C(d_G)K^{100}$ translates of the $\delta$-neighbourhood of a proper connected subgroup. Let $\Lambda \subset G$ be a $K$-approximate subgroup of measure at most $\eta>0$, with $\eta$ to be chosen later. If $\pi:G \rightarrow S$ is any non-trivial proper quotient and $\mu_S(\pi(\Lambda)) \leq \epsilon$, then $\pi(\Lambda)$ - hence $\Lambda$ - is contained in $C(d_G)K^{100}$ translates of a $\delta$-neighbourhood of a proper connected subgroup. Assume henceforth that this is never satisfied. In particular, choosing $m=\lceil \epsilon^{-1}  \rceil+1$, Kemperman's inequality \cite{Kemperman64} ensures that $\pi(\Lambda^m)=S$ for all such $\pi$.
 
 

    Let $H \subset G$ be a non-trivial proper normal subgroup and write $\pi_1: G \rightarrow G/H$ and $\pi_2: G \rightarrow H$ the natural projections. Since $\pi_2(\Lambda)$ has measure at least $\epsilon$, there is a constant $C_2>1$ depending on $H$ only such that if $h\in H$ and $d(e,h) = \alpha $, then \begin{equation}
        \mu_H\left( \left(h^{\Lambda}\right)^{d_H} \right) \geq (\alpha\epsilon)^{C_2} \label{Eq: large measure through large quotient}
    \end{equation} where $h^{\Lambda}$ denotes $\{\lambda h \lambda^{-1} : \lambda \in \Lambda\}$.

    Now,
   $$ \mu_H(\Lambda^{6d_Hm} \cap H) \mu_{G/H}(\pi(\Lambda)) \leq \mu_G(\Lambda^{6d_Hm+1}) \leq K^{6d_Hm}\eta$$
   as a consequence of the quotient formula \eqref{Eq: Quotient formula}.
   So $\mu_H(\Lambda^{6d_Hm} \cap H) \leq K^{6d_Hm}\eta\epsilon^{-1}=:\eta'$. But for every $\lambda \in \Lambda^{3m} \cap H$, we have  $\left(\lambda^{\pi_2(\Lambda)}\right)^{d_H} \subset \Lambda^{6d_Hm} \cap H$. So  \eqref{Eq: large measure through large quotient} yields, 
   $$ \Lambda^{3m} \cap H \subset B_{\epsilon^{-1}\eta'^{\frac{1}{C_2}}}.$$

   Pick now a section of $\pi_1$ taking values in $\Lambda^{m}$ i.e. a map $s: G/H \rightarrow \Lambda^{m}$ such that $\pi_1 \circ s=id_{G/H}$ (recall that $\pi_1(\Lambda^m)=G/H$ by assumption). For every $g_1, g_2 \in G/H$, $$s(g_1g_2)s(g_2)^{-1}s(g_1)^{-1} \subset \Lambda^{3m} \cap H \subset B_{\epsilon^{-1}\eta'^{\frac{1}{C_2}}}.$$
   So $s$ is an $\epsilon^{-1}\eta'^{\frac{1}{C_2}}$ almost homomorphism and is within distance at most $2\epsilon^{-1}\eta'^{\frac{1}{C_2}}$ of a homomorphism, say $\phi$, by the stability of compact groups \cite{KazhdaneRep82}. Therefore, 
   $$ \Lambda^{m} \subset s(G/H)B_{\epsilon^{-1}\eta'^{\frac{1}{C_2}}} \subset \phi(G/H)B_{3\epsilon^{-1}\eta'^{\frac{1}{C_2}}}.$$

   Now, simply choose $\eta$ so that $3\epsilon^{-1}\eta'^{\frac{1}{C_2}} \leq \delta$. 
 \end{proof}

\section{Expansion of small subsets}\label{Section: Expansion of small subsets}

In this section, we prove Theorem \ref{Theorem: BM in compact groups} and, thus, Theorem \ref{Theorem: Expansion inequality} without an estimate on the correcting term. The first part of the proof is a straightforward application of Proposition \ref{Proposition: approximate subgroups of small measure} which enables us to assume $A,B$ contained in a small neighbourhood of a proper subgroup. The second part consists in leveraging the local Brunn--Minkowski theorem (Theorem \ref{Theorem: Local Brunn--Minkowski}).

\subsection{Doubling of balls}
An essential role will be played by a special case of Theorem \ref{Theorem: Expansion inequality} concerning the doubling of balls. This doubling is controlled by the dimension: 
\begin{lemma}\label{Lemma: Growth in balls}
Let $G$ be a connected Lie group of dimension $d_G$ and $d$ be a bi-invariant Riemannian distance. We have:
$$\frac{\mu_G(B_d(e,2\rho))}{\mu_G(B_d(e,\rho))} = 2^{d_G}(1 - S\rho^2 + o(\rho^2))$$
where $S > 0$ is a dimensional constant proportional to the scalar curvature. 
\end{lemma}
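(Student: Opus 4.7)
The plan is to reduce the computation to a volume integral in the Lie algebra via exponential coordinates and then expand the Jacobian to second order. Since $d$ is a bi-invariant Riemannian distance on the compact Lie group $G$, the group exponential map coincides with the Riemannian exponential at $e$, so for all sufficiently small $r>0$ the ball $B_d(e,r)$ is the diffeomorphic image under $\exp$ of the metric ball $B(0,r)\subset\mathfrak{g}$ (with respect to the norm $\|\cdot\|$ defining $d$). By Proposition \ref{Proposition: Haar measure in exponential coordinates}, normalising $\lambda$ so that $H(0)=1$,
$$\mu_G(B_d(e,r))=\int_{\|X\|<r} H(X)\,d\lambda(X).$$

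The next step is to refine the $H(X)=1+O(\|X\|^2)$ estimate of Proposition \ref{Proposition: Haar measure in exponential coordinates} into an expansion with an explicit quadratic term. Using the standard identity $d\exp_X=dL_{\exp X}\circ\frac{1-e^{-\mathrm{ad}(X)}}{\mathrm{ad}(X)}$, together with the fact that $\mathrm{ad}(X)$ is skew-symmetric with respect to any orthonormal basis of a bi-invariant metric (\cite[Prop.~4.24]{zbMATH01849081}), one obtains $H(X)=\bigl|\det\tfrac{\sinh(\mathrm{ad}(X)/2)}{\mathrm{ad}(X)/2}\bigr|$. The Taylor expansion $\frac{\sinh(t/2)}{t/2}=1+\frac{t^2}{24}+O(t^4)$ applied eigenvalue by eigenvalue yields
$$H(X)=1+\tfrac{1}{24}\mathrm{tr}(\mathrm{ad}(X)^2)+O(\|X\|^4)=1-Q(X)+O(\|X\|^4),$$
where $Q(X):=-\tfrac{1}{24}\mathrm{tr}(\mathrm{ad}(X)^2)$ is a positive-semi-definite quadratic form on $\mathfrak{g}$, which is positive definite when $G$ is semi-simple (as then $\ker \mathrm{ad}=0$).

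Integrating and rescaling $X=rY$ gives
$$\mu_G(B_d(e,r))=\omega_{d_G}r^{d_G}-c_Q\,r^{d_G+2}+O(r^{d_G+4}),$$
with $\omega_{d_G}:=\lambda(B(0,1))$ and $c_Q:=\int_{B(0,1)}Q(Y)\,d\lambda(Y)>0$. Forming the ratio and expanding,
$$\frac{\mu_G(B_d(e,2\rho))}{\mu_G(B_d(e,\rho))}=2^{d_G}\cdot\frac{1-4(c_Q/\omega_{d_G})\rho^2+O(\rho^4)}{1-(c_Q/\omega_{d_G})\rho^2+O(\rho^4)}=2^{d_G}\bigl(1-3(c_Q/\omega_{d_G})\rho^2+o(\rho^2)\bigr),$$
so the claimed identity holds with $S=3c_Q/\omega_{d_G}>0$. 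Proportionality of $S$ to the scalar curvature is then just the standard identification: the general Riemannian volume expansion $\mathrm{Vol}(B(p,r))=\omega_{d_G}r^{d_G}(1-\mathrm{Scal}(p)r^2/(6(d_G+2))+O(r^4))$ combined with the bi-invariant metric formula $\mathrm{Ric}=-\tfrac14 B$ forces $c_Q/\omega_{d_G}=\mathrm{Scal}/(6(d_G+2))$.

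The only subtlety is the second step: the preliminaries only record $H(X)=1+O(\|X\|^2)$, so one has to justify the explicit quadratic coefficient. I expect this to be straightforward given the skew-symmetry of $\mathrm{ad}(X)$, which makes the linear term in the Jacobian expansion vanish and ensures that the quadratic term comes solely from the $t^2/24$ correction of $\sinh(t/2)/(t/2)$; the positivity $S>0$ then reduces to the fact that $\mathrm{ad}$ is injective on a semi-simple Lie algebra, which is where the hypothesis fixed throughout the paper (compact semi-simple $G$) enters.
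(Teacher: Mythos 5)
Your proof is correct, but it takes a genuinely different route from the paper. The paper disposes of the lemma in two lines, by quoting the standard Riemannian expansion of the volume of small geodesic balls in terms of scalar curvature (citing \cite[Fact 2.7]{jing2023measure}) together with Milnor's theorem \cite[Thm 2.2]{zbMATH03532133} that bi-invariant metrics on compact non-abelian Lie groups have strictly positive scalar curvature. You instead rederive the required expansion from scratch in exponential coordinates: the exact Jacobian $H(X)=\bigl|\det\tfrac{\sinh(\mathrm{ad}(X)/2)}{\mathrm{ad}(X)/2}\bigr|$ (which the paper records without proof just after Proposition \ref{Proposition: Haar measure in exponential coordinates}), its second-order expansion $1+\tfrac{1}{24}\mathrm{tr}(\mathrm{ad}(X)^2)+O(\|X\|^4)$, and a rescaling of the integral; this is a correct, self-contained argument that moreover produces the explicit constant $S=3c_Q/\omega_{d_G}=\mathrm{Scal}/(2(d_G+2))$ expressed through the Killing form, whereas the paper's version buys brevity by outsourcing the volume expansion and the positivity of the curvature. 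Two minor remarks: in passing from $\det\tfrac{1-e^{-\mathrm{ad}(X)}}{\mathrm{ad}(X)}$ to the $\sinh$ formula what you actually use is $\mathrm{tr}\,\mathrm{ad}(X)=0$ (implied by, but weaker than, skew-symmetry); and, as you correctly flag, the positivity $S>0$ genuinely needs $\mathfrak{g}$ to have trivial centre (for a torus the ratio is exactly $2^{d_G}$ and $S=0$), so the lemma as literally stated for an arbitrary connected Lie group with bi-invariant metric must be read under the paper's standing semi-simplicity hypothesis --- the same caveat applies to the paper's appeal to Milnor's theorem.
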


This can be seen from the formula for the volume of balls from scalar curvature (e.g. \cite[Fact 2.7]{jing2023measure}) and the fact that bi-invariant metric on compact Lie groups have strictly positive scalar curvature at every point \cite[Thm 2.2]{zbMATH03532133}.  

\subsection{Double counting}
Choose $\delta, \rho> 0$, $H \subset G$ a proper connected subgroup and let $X \subset H_{\delta}$ be a compact subset. Let $B_H(h, \rho)$ (resp. $B_{G/H}(\overline{g}, \delta)$) denote the ball of radius $\rho$ at $h$ (resp. radius $\delta$ at $\overline{g}$) with respect to the restriction of $d$ to $H$ (resp. the projection of $d$ to $G/H$). Choose a cross-section $T_{\delta} \subset B_G(e,\delta)$ of $B_{G/H}(e,\delta)$ i.e. a measurable subset such that the restriction of the projection $G \rightarrow G/H$ is a bijection from $T_{\delta}$ to $B_{G/H}(e, \delta)$. For instance, write $\mathfrak{h}$ the Lie algebra of $H$, $\mathfrak{h}^{\perp}$ its orthogonal with respect to the Killing form, and $|\cdot|$ the norm on $\mathfrak{g}$ arising from the Riemannian metric. Define $T_{\delta}$ as the image through the exponential map of the ball centered at $0$ of radius $\delta$ in $\mathfrak{h}^{\perp}$ with respect to $|\cdot|$. Then 

\begin{lemma}\label{Lemma: Double counting}
\begin{equation}
    \int_H \mu_G(X \cap T_\delta B_{H}(h, \rho)) d\mu_H(h) = \mu_G(X)\mu_H(B_{H}(e, \rho)). \label{Eq: Double counting}
\end{equation}
\end{lemma}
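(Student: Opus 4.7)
The identity \eqref{Eq: Double counting} is really a statement about the disintegration of $\mu_G|_{H_\delta}$ along right $H$-cosets, with $T_\delta$ parametrising the cosets. My plan is to swap the order of integration via Fubini and then use the cross-section property of $T_\delta$ to evaluate the resulting $H$-integral.

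First I rewrite $B_H(h,\rho) = B_H(e,\rho)\, h$ by right-invariance of the metric on $H$. The left-hand side of \eqref{Eq: Double counting} becomes
\[
\int_H \int_G \mathbf{1}_X(g)\, \mathbf{1}_{T_\delta B_H(e,\rho) h}(g)\, d\mu_G(g)\, d\mu_H(h),
\]
and Fubini applies, since $(g,h)\mapsto gh^{-1}$ is continuous and $T_\delta B_H(e,\rho)$ is universally measurable as a continuous image of the Borel set $T_\delta \times B_H(e,\rho)$.

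After swapping the order of integration, every $g \in X \subset H_\delta = T_\delta H$ admits a unique decomposition $g = t(g) h(g)$ with $t(g) \in T_\delta$ and $h(g) \in H$, thanks to the cross-section property. The condition $g \in T_\delta B_H(e,\rho) h$ then rewrites as $t(g) \bigl(h(g) h^{-1}\bigr) \in T_\delta B_H(e,\rho)$, which by uniqueness of the $T_\delta H$-decomposition is equivalent to $h(g) h^{-1} \in B_H(e,\rho)$, i.e. $h \in B_H(e,\rho)^{-1} h(g) = B_H(e,\rho) h(g)$, the last equality using that balls at $e$ for a bi-invariant metric are symmetric under inversion (a direct consequence of left-invariance: $d(e, g^{-1}) = d(g, e)$). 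Left-invariance of $\mu_H$ then gives that the inner integral equals $\mu_H(B_H(e,\rho))$, independently of $g$, and pulling this constant out of the outer integral leaves $\mu_G(X)\, \mu_H(B_H(e,\rho))$, as required.

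I do not anticipate any serious obstacle: the only subtle point is the uniqueness and measurability of the decomposition $g = t(g) h(g)$ on $H_\delta$, both of which are built into the hypothesis that $T_\delta$ is a measurable cross-section of $B_{G/H}(e,\delta)$.
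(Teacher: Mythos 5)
Your proof is correct, and it takes a route that is organised differently from the paper's. The paper first applies the quotient (Weil) formula \eqref{Eq: Quotient formula} to disintegrate $\mu_G(X\cap T_\delta B_H(h,\rho))$ into an integral over $G/H$ of $\mu_H(t^{-1}X\cap B_H(h,\rho))$ with $t$ the representative in $T_\delta$, then swaps the two integrals and uses that $Y\mapsto \int_H\mu_H(Y\cap B_H(h,\rho))\,d\mu_H(h)$ is a Haar measure on $H$ of total mass $\mu_H(B_H(e,\rho))$ (uniqueness of Haar measure), and finally reassembles $\mu_G(X)$ by the quotient formula again. You instead apply Fubini directly on $G\times H$ and compute, for each fixed $g\in X\subset T_\delta H$, the fibre measure $\mu_H(\{h: g\in T_\delta B_H(e,\rho)h\})=\mu_H(B_H(e,\rho)h(g))=\mu_H(B_H(e,\rho))$, using the unique decomposition $g=t(g)h(g)$, the symmetry of $B_H(e,\rho)$, and invariance of $\mu_H$. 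What your route buys is that the quotient formula is not needed at all; what it costs is the measurability bookkeeping for the tube $T_\delta B_H(e,\rho)$ and for the decomposition map, which you handle adequately — in fact, since multiplication is injective on $T_\delta\times B_H(e,\rho)$ (this is exactly the uniqueness of the decomposition), the Lusin--Souslin theorem gives that the image is Borel, which is slightly cleaner than invoking universal measurability. Two cosmetic points: the translate you end up with is the right translate $B_H(e,\rho)h(g)$, so the invariance used is right-invariance (equivalently, unimodularity of the compact group $H$) rather than left-invariance; and, like the paper's argument, yours relies on the standing hypothesis $X\subset H_\delta=T_\delta H$, without which \eqref{Eq: Double counting} is false — it is worth making that explicit.
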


The proof of \eqref{Eq: Double counting}  relies and the quotient formula for the Haar measure \cite[Ch. 1]{PrincipleHarmonic09}
\begin{equation}
    \mu_G(X) = \int_{G/H} \mu_H(g^{-1}X \cap H) d\mu_{G/H}(gH) \label{Eq: Quotient formula}
\end{equation}

\begin{proof}[Proof of Lemma \ref{Lemma: Double counting}.]
\begin{align}
    \int_H \mu_G(X \cap T_\delta & B_{H}(h, \rho)) d\mu_H(h) \\& = \int_H \int_{G/H} \mu_H(t^{-1}X \cap B_{H}(h,\rho)) d\mu_{G/H}(tH) d\mu_H(h) \label{Eq: DCl1}\\
    & = \int_{G/H} \int_H \mu_H(t^{-1}X \cap B_{H}(h,\rho))d\mu_H(h)d\mu_{G/H}(tH) \label{Eq: DCl2}\\
    & = \int_{G/H} \mu_H(t^{-1}X \cap H) \mu_H(B_{H}(e,\rho)) d\mu_{G/H}(tH) \label{Eq: DCl3}\\
    & = \mu_G(X)\mu_H(B_{H}(h, \rho)) \label{Eq: DCl4}
\end{align}
where for all cosets $tH$, $t$ is a representative chosen in $T_\epsilon$. In the series of equalities above,  we have used the quotient formula to obtain \eqref{Eq: DCl1},  the fact that the map: 
$$ Y \subset H \rightarrow \int_H \mu_H(Y \cap B_{H}(h,\rho))d\mu_H(h)$$ 
is a Haar measure of total mass $\mu_H(B_{H}(e,\rho))$ to go from \eqref{Eq: DCl2} to \eqref{Eq: DCl3}, and the quotient formula again to conclude. 
\end{proof}

\subsection{Multiplicative properties of rectangles}\label{Subsection: Multiplicative properties of rectangles}
The rectangles introduced in the previous section will play a central role throughout this paper. With notations as above, write $R(h,\delta,\rho):=T_{\delta}B_H(h,\rho)$. Now, the map
\begin{align*}
    \mathfrak{h}^{\perp} \times \mathfrak{h} &\longrightarrow G \\
    (h_1,h_2) &\longmapsto e^{h_1}e^{h_2}
\end{align*}
is a local diffeomorphism with differential at $e$ equal to the identity. It sends $B_{\mathfrak{h}^{\perp}}(e,\delta) \times B_{\mathfrak{h}}(e,\rho)$ to $R(e,\delta,\rho)$ diffeomorphically as soon as $\delta$ and $\rho$ are sufficiently small. 

Above, $B_{\mathfrak{h}^{\perp}}(e,\delta)$ (resp. $B_{\mathfrak{h}}(e,\rho)$) denote the ball of radius $\delta$ (resp. $\rho$) about $0$ in $\mathfrak{h}^{\perp}$ (resp. $\mathfrak{h}$) with respect to the restriction of the $Ad(G)$ invariant norm. In particular, we find that $R(e,\delta,\rho)$ is invariant under conjugation by an element of $H$. We have in particular:

\begin{lemma}
Let $h_1,h_2 \in H$. There is a dimensional constant $c=c(d_G)>0$ such that as soon as $\delta, \rho$ are sufficiently small
    $$R(h_1,\delta, \rho)R(h_2,\delta, \rho) \subset R(h_1h_2,2\delta, 2\rho + c\delta^2)^2.$$
\end{lemma}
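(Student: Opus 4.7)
The plan is to reduce everything to a local BCH estimate at the identity, by exploiting the $\mathrm{Ad}(H)$-invariance of the transverse set $T_\delta$. Since the norm on $\mathfrak{g}$ is $\mathrm{Ad}(G)$-invariant, the orthogonal complement $\mathfrak{h}^\perp$ of $\mathfrak{h}$ is $\mathrm{Ad}(H)$-invariant, so $h T_\delta h^{-1} = T_\delta$ for every $h \in H$. As noted in the discussion preceding the lemma this yields $R(h, \delta, \rho) = h \cdot R(e, \delta, \rho) = R(e, \delta, \rho) \cdot h$, so
\[ R(h_1, \delta, \rho) R(h_2, \delta, \rho) = h_1 h_2 \cdot R(e, \delta, \rho)^2, \]
and an analogous identity rewrites the right-hand side in terms of $R(e, 2\delta, 2\rho + c\delta^2)^2$. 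The problem therefore reduces to bounding the ``doubled base rectangle'' $R(e, \delta, \rho)^2$.

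The heart of the proof is a BCH computation. A generic element of $R(e, \delta, \rho)^2$ has the form $e^{X_1} e^{Y_1} e^{X_2} e^{Y_2}$ with $X_i \in B_{\mathfrak{h}^\perp}(\delta)$ and $Y_i \in B_{\mathfrak{h}}(\rho)$, and the key step is to rewrite the middle pair $e^{Y_1} e^{X_2}$ in the form $e^{X_2'} e^{Y_2'}$ with $X_2' \in \mathfrak{h}^\perp$, $Y_2' \in \mathfrak{h}$. By BCH the exponent of this product is $Y_1 + X_2 + \tfrac12 [Y_1, X_2] + O((\delta + \rho)^3)$. Now the $\mathrm{Ad}$-invariance of the inner product makes $\mathrm{ad}(Y_1)$ skew-symmetric, and since it preserves $\mathfrak{h}$ it also preserves $\mathfrak{h}^\perp$; consequently $[Y_1, X_2] \in \mathfrak{h}^\perp$. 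This is the decisive point: the $\mathfrak{h}$-component of the exponent is $Y_1 + O((\delta + \rho)^3)$, while the $\mathfrak{h}^\perp$-component is $X_2 + \tfrac12 [Y_1, X_2] + O((\delta+\rho)^3)$. Inverting BCH yields $|X_2'| \le \delta + O(\delta \rho + \delta^3)$ and, crucially, $|Y_2'| \le \rho + O((\delta + \rho)^3)$, with no first-order $\delta \rho$ contamination in the $H$-direction.

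After the swap the word becomes $(e^{X_1} e^{X_2'}) \cdot (e^{Y_2'} e^{Y_2})$. The tail $e^{Y_2'} e^{Y_2}$ lies in $H$ at distance at most $2\rho + O((\delta + \rho)^3)$ from $e$ by the triangle inequality in $H$ with its bi-invariant restricted metric. A second application of BCH, again combined with the decomposition $\mathfrak{g} = \mathfrak{h} \oplus \mathfrak{h}^\perp$, writes the head $e^{X_1} e^{X_2'}$ as $e^{X''} e^{Y''}$ with $|X''| \le 2\delta + O(\delta^2)$ and $|Y''| = O(\delta^2)$; the $\mathfrak{h}$-part is genuinely second order in $\delta$ because $[X_1, X_2']$ is. Combining these estimates, $R(e, \delta, \rho)^2 \subset T_{2\delta + c\delta^2} \cdot B_H(e, 2\rho + c\delta^2)$ once $\delta, \rho$ are small enough. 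A final absorbing step rewrites this slight enlargement as a product of two copies of $R(e, 2\delta, 2\rho + c\delta^2)$---the $c\delta^2$ transverse overshoot factors as $T_{2\delta} \cdot T_{c\delta^2}$ by BCH in the transverse direction, and both $T_{c\delta^2}$ and the $e^{Y''}$ correction fit comfortably inside the second copy of $R(e, 2\delta, 2\rho + c\delta^2)$. Translating by $h_1 h_2$ via the first paragraph delivers the inclusion in $R(h_1 h_2, 2\delta, 2\rho + c\delta^2)^2$.

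The main technical obstacle is keeping the $H$-direction error at $O(\delta^2)$ rather than the naive $O(\delta \rho)$ one would get from a carefree application of BCH, which would ruin the bound whenever $\rho$ is large compared to $\delta$. This is precisely the role of the containment $[\mathfrak{h}, \mathfrak{h}^\perp] \subset \mathfrak{h}^\perp$: it guarantees that $\mathfrak{h}$-projections of BCH remainders only arise from brackets of two transverse vectors, so the $H$-direction corrections depend only on $\delta$, not on $\rho$. The squared form of the right-hand side then provides exactly the slack needed to fold the residual $O(\delta^2)$ corrections into a clean product of two rectangles with the stated parameters.
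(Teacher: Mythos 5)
Your proposal is correct in substance and follows essentially the same route as the paper: reduce to $R(e,\delta,\rho)^2$ via the $\mathrm{Ad}(H)$-invariance of $\mathfrak{h}^{\perp}$ (hence of $T_\delta$), then control the product by BCH, with the decisive input $[\mathfrak{h},\mathfrak{h}^{\perp}]\subset\mathfrak{h}^{\perp}$ keeping the $H$-direction error independent of $\rho$. The one genuine difference is the middle swap: the paper moves the transverse factor past the $H$-factor by the exact identity $e^{Y}e^{X}=e^{\mathrm{Ad}(e^{Y})X}e^{Y}$, so no BCH error arises in that step and the only approximation occurs when multiplying two transverse exponentials, where every correction is trivially $O(\delta^2)$; you instead do the swap approximately. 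In your version the displayed bound $|Y_2'|\le\rho+O((\delta+\rho)^3)$ is too weak to justify the intermediate claim $R(e,\delta,\rho)^2\subset T_{2\delta+c\delta^2}\,B_H(e,2\rho+c\delta^2)$ when $\rho\gg\delta$, since an error of order $\rho^3$ or $\delta\rho^2$ is not $O(\delta^2)$; the correct statement, which your closing paragraph implicitly asserts but the estimate does not reflect, is that every BCH monomial containing the transverse vector exactly once is a multiple of $\mathrm{ad}(Y_1)^{k}X_2\in\mathfrak{h}^{\perp}$, so the $\mathfrak{h}$-component of the remainder comes only from monomials with at least two transverse entries and is genuinely $O(\delta^2)$, uniformly in $\rho$. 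Either make that observation explicit (it is the whole point of the lemma's error term being $c\delta^2$ rather than $c\delta\rho$), or adopt the exact conjugation trick, which renders it automatic; your final absorption of the residual $O(\delta^2)$ overshoots using the squared right-hand side is fine as sketched.
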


\begin{proof}
We have $R(h_1,\delta, \rho)=h_1R(e,\delta, \rho).$ So $$R(h_1,\delta, \rho)R(h_2,\delta, \rho) = h_1R(e,\delta, \rho)h_2R(e,\delta, \rho) = h_1h_2 R(e,\delta, \rho)R(e,\delta, \rho)$$ since the subsets $R(e,\delta, \rho)$ are invariant under conjugation by elements of $H$.

    Thus, if $h_1,h_1' \in B_{\mathfrak{h}^{\perp}}(e,\delta)$ and $h_2,h_2' \in B_{\mathfrak{h}}(e,\rho)$ we have that 
$$
    e^{h_1}e^{h_2}e^{h_1'}e^{h_2'}  = e^{h_1}e^{Ad(e^{h_2})h_1'}e^{h_2}e^{h_2'}.$$
Since $\mathfrak{h}^{\perp}$ is stable under the adjoint action of $H$, $Ad(e^{h_2})h_1' \in \mathfrak{h}^{\perp}$. Write 
$e^{h_1}e^{Ad(e^{h_2})h_1'}=e^{h_1''}e^{h_2''}$ with $(h_1'',h_2'') \in B_{\mathfrak{h}^{\perp}}(e,10(\delta+\rho)) \times B_{\mathfrak{h}}(e,10(\delta+\rho))$. Since $h_1 + Ad(e^{h_2})h_1' \in \mathfrak{h}^{\perp}$, $|h_2''| = O(\delta^2)$ according to the BCH formula \ref{Eq: BCH formula}. Since in addition $H_{\delta}H_{\delta} = H_{2\delta}$, $|h_1''| \leq 2\delta$. Now, $e^{h_2},e^{h_2'},e^{h_2''} \in H$ so 
$$e^{h_1}e^{h_2}e^{h_1'}e^{h_2'} \in R(e,2\delta, 2\rho + c\delta^2).$$
\end{proof}

\subsection{Lower bound on expansion close to a subgroup}

We start with a doubling estimate:
 \begin{proposition}\label{Proposition: Lower bound on expansion close to a subgroup}
 Let $\epsilon > 0$ and $H \subset G$ be a proper closed subgroup of dimension $d_H$. There is $\delta > 0$ such that if $A, B \subset G$ are two compact subsets with  $A,B,AB \subset H_{\delta}$, then
 $$\mu_G(AB) \geq \left(2^{d_G-d_H} - \epsilon \right)\min\{\mu_G(A),\mu_G(B)\}.$$
 Moreover, $\epsilon$ can be chosen smaller than $C(d_G)\rho^2$ for some dimensional constant $C(d_G)>0$.
 \end{proposition}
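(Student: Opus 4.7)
The plan is to combine the double-counting formula (Lemma \ref{Lemma: Double counting}) with the local Brunn--Minkowski theorem (Theorem \ref{Theorem: Local Brunn--Minkowski}) and the rectangle multiplication property of \S\ref{Subsection: Multiplicative properties of rectangles}. Fix auxiliary parameters with $\delta^{2}\ll\rho\ll 1$ and, for each $h\in H$, set $A_{h}:=A\cap R(h,\delta,\rho)$ and $B_{h}:=B\cap R(h,\delta,\rho)$. Lemma \ref{Lemma: Double counting} then gives $\int_{H}\mu_G(A_{h})\,d\mu_H(h)=\mu_G(A)\mu_H(B_H(e,\rho))$ and similarly for $B$ and $AB$. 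The rectangle multiplication property gives $A_{h_{1}}B_{h_{2}}\subset R(h_{1}h_{2},2\delta,2\rho+c\delta^{2})$, and the hypothesis $AB\subset H_{\delta}$ sharpens this to $A_{h_{1}}B_{h_{2}}\subset R(h_{1}h_{2},\delta,2\rho+c\delta^{2})$, since $H_{\delta}\cap R(\cdot,2\delta,\cdot)=R(\cdot,\delta,\cdot)$.

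Local Brunn--Minkowski is then applied slice-wise. Using the $H$-invariance of $T_{\delta}$, one conjugates so that both $A_{h_{1}}$ and $B_{h_{2}}$ sit inside the single rectangle $R(e,\delta,\rho)\subset B_G(e,\delta+\rho)$, and Theorem \ref{Theorem: Local Brunn--Minkowski} yields
\[
\mu_G(A_{h_{1}}B_{h_{2}})^{1/d_G}\ge\bigl(1-C(d_G)(\delta+\rho)^{2}\bigr)\bigl(\mu_G(A_{h_{1}})^{1/d_G}+\mu_G(B_{h_{2}})^{1/d_G}\bigr).
\]
To aggregate, I choose a maximal $2(2\rho+c\delta^{2})$-separated net $\Sigma\subset H$, so that the rectangles $R(\sigma,\delta,2\rho+c\delta^{2})$ for $\sigma\in\Sigma$ are pairwise disjoint. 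Summing disjoint contributions gives $\mu_G(AB)\ge\sum_{\sigma\in\Sigma}\mu_G(A_{h_{1}(\sigma)}B_{h_{2}(\sigma)})$ for any measurable choice $h_{1}(\sigma)h_{2}(\sigma)=\sigma$. Lemma \ref{Lemma: Growth in balls} gives $\mu_H(B_H(e,2\rho))/\mu_H(B_H(e,\rho))=2^{d_H}(1-O(\rho^{2}))$, so $|\Sigma|\cdot\mu_H(B_H(e,\rho))\approx 2^{-d_H}\mu_H(H)$. Combined with the factor $2^{d_G}$ produced by local Brunn--Minkowski, this yields exactly the global factor $2^{d_G-d_H}=2^{d_G}/2^{d_H}$, the quantitative error $\epsilon\le C(d_G)\rho^{2}$ being inherited from Theorem \ref{Theorem: Local Brunn--Minkowski}.

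The main obstacle is controlling the variability of the slice sizes $\mu_G(A_{h})$ as $h$ ranges over $H$: in the extremal case $A\approx H_{\delta/2}$ the slices are essentially constant and the aggregation is immediate, but for general $A$ one must carefully optimize or average over the decomposition $\sigma=h_{1}(\sigma)h_{2}(\sigma)$. The hypothesis $AB\subset H_{\delta}$ is crucial here, as it constrains the transverse profiles of $A$ and $B$ to be ``compatible'' in a way that prevents wasteful concentration, ensuring that the pointwise local lower bound on $\mu_G(A_{h_{1}}B_{h_{2}})$ integrates to the sharp global bound with $\min(\mu_G(A),\mu_G(B))$ on the right rather than $\mu_G(A)+\mu_G(B)$ or a power mean.
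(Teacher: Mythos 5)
Your toolkit (double counting, the rectangle multiplication property, slice-wise local Brunn--Minkowski, and Lemma \ref{Lemma: Growth in balls}) is the right one, but the aggregation step is where the sharp constant lives, and your version of it fails on two counts. First, summing over a maximal $2(2\rho+c\delta^2)$-separated net $\Sigma\subset H$ cannot give $|\Sigma|\,\mu_H(B_H(e,\rho))\approx 2^{-d_H}$: maximality of a $\sim 4\rho$-separated set only guarantees covering by balls of radius $\sim 4\rho$, hence $|\Sigma|\gtrsim \mu_H(B_H(e,4\rho))^{-1}$ and so $|\Sigma|\,\mu_H(B_H(e,\rho))\gtrsim 4^{-d_H}$, while disjointness only yields the upper bound $\lesssim 2^{-d_H}$. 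The unavoidable slack between packing and covering costs you a factor as large as $2^{d_H}$, so the net route proves at best a bound of the shape $2^{d_G-2d_H}\min\{\mu_G(A),\mu_G(B)\}$, not the claimed $2^{d_G-d_H}$. Second, you name the variability of the slice measures $\mu_G(A_h)$, $\mu_G(B_h)$ as ``the main obstacle'' and then dispose of it only by asserting that $AB\subset H_\delta$ makes the transverse profiles ``compatible''; no mechanism is given for choosing $h_1(\sigma),h_2(\sigma)$ with $h_1(\sigma)h_2(\sigma)=\sigma$ so that $\sum_{\sigma\in\Sigma}\min\bigl(\mu_G(A_{h_1(\sigma)}),\mu_G(B_{h_2(\sigma)})\bigr)$ recovers $(1-\epsilon)\min\{\mu_G(A),\mu_G(B)\}/\mu_H(B_H(e,\rho))$, and in general one decomposition per net point only samples a measure-zero family of slices, so no such choice is guaranteed. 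As written, the last paragraph is a statement of hope rather than a proof.

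The paper's proof removes both difficulties simultaneously by not discretizing at all. One fixes a single $h_0\in H$ maximizing $\mu_G(B_{h_0,\rho})$ and assumes, after exchanging the roles of $A$ and $B$ if necessary (harmless because of the $\min$), that this one slice dominates every slice of $A$; then Theorem \ref{Theorem: Local Brunn--Minkowski} gives $\mu_G(A_{h,\rho}B_{h_0,\rho})\geq (2^{d_G}-\tfrac{\epsilon}{2})\,\mu_G(A_{h,\rho})$ for \emph{every} $h\in H$, with no need to compare slices of $A$ at different points. Integrating over $h\in H$, the right-hand side becomes $(2^{d_G}-\tfrac{\epsilon}{2})\mu_G(A)\mu_H(B_H(e,\rho))$ by Lemma \ref{Lemma: Double counting}, while the left-hand side is bounded, using $A_{h,\rho}B_{h_0,\rho}\subset AB\cap T_\delta B_H(hh_0,2\rho+c\delta^2)$ together with $AB\subset H_\delta$ and the fact that right translation by $h_0$ preserves $\mu_H$, by $\mu_G(AB)\,\mu_H(B_H(e,2\rho+c\delta^2))$. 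The only loss is thus the exact ratio $\mu_H(B_H(e,\rho))/\mu_H(B_H(e,2\rho+c\delta^2))$, which Lemma \ref{Lemma: Growth in balls} evaluates as $2^{-d_H}(1+O(\rho^2))$ after taking $\rho=\delta$; this is precisely how the sharp constant $2^{d_G-d_H}$ and the error $\epsilon\leq C(d_G)\rho^2$ emerge. Replace your net-based summation by this continuous averaging against a fixed extremal slice and the argument closes.
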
 
 
\begin{proof}
  Let $\rho > 0$ to be chosen later. Choose $h_0$ such that $\mu_G(B \cap T_\delta B_{H}(h_0, \rho))$ is maximal. Without loss of generality, we can also assume that $\mu_G(B \cap T_\delta B_{H}(h_0, \rho)) \geq \mu_G(A \cap T_\delta B_{H}(h, \rho))$ for all $h \in H$.  Then Theorem \ref{Theorem: Local Brunn--Minkowski} asserts that $$(2^{d_G}-\frac{\epsilon}{2} )\mu_G(A \cap T_\delta B_{H}(h, \rho)) \leq \mu_G\left(\left(A \cap T_{\delta}B_{H}(h, \rho)\right)\left( B \cap T_{\delta}B_{H}(h_0, \rho) \right) \right)$$
  when $\max(\delta, \rho)$ is sufficiently small. So, by Lemma \ref{Lemma: Double counting}, 
  \begin{align}
     (2^{d_G} - \frac{\epsilon}{2}) \mu_G(A) \mu_H(B_{H}(e, \rho)) = (2^{d_G} - \frac{\epsilon}{2}) \int_H \mu_G\left(A \cap T_{\delta}B_{H}(h, \rho)\right)d\mu_H(h) \\
       \leq \int_H \mu_G\left(\left(A \cap T_{\delta}B_{H}(h, \rho)\right)\left( B \cap T_{\delta}B_{H}(h_0, \rho) \right) \right)d\mu_H(h) \\
       \leq \int_H \mu_G(AB \cap T_{\delta}B_{H}(hh_0, 2 \rho + c\delta^2))d\mu_H(h) \\
       = \mu_G(AB)\mu_H(B_{H}(e, 2\rho + c\delta^2))
  \end{align}
where we have used the inclusion $AB \subset H_{\delta}$ to go from (13) to (14) and \eqref{Eq: Double counting} to conclude (16). So 
$$ \mu_G(AB) \geq (2^{d_G} - \frac{\epsilon}{2}) \frac{\mu_H(B_{H}(e, \rho))}{\mu_H(B_{H}(e, 2\rho + c\delta^2))}\mu_G(A).$$
Therefore, choosing $\rho = \delta$, Lemma \ref{Lemma: Growth in balls} yields 
$$ \mu_G(AB) \geq (2^{d_G-d_H} - \epsilon) \mu_G(A)$$
for $\delta$ sufficiently small. 

The moreover part is a consequence of the estimate in Theorem \ref{Theorem: Local Brunn--Minkowski}.
\end{proof} 

The subsets of the form $$X \cap T_\delta B_{H}(h, \rho)$$ for $X \subset H_{\delta}$, $h \in H$ and $\rho >0$ play an important role in the end of the proof of Theorem \ref{Theorem: Expansion inequality} as well as in the proof of Theorem \ref{Theorem: Stability}. While we have refrained from using it until now for the sake of completeness, we will often use the shorthand notation: 
$$ X_{h,\rho}:= X \cap T_\delta B_{H}(h, \rho) = X \cap R(h,\delta, \rho)$$
from now on. 

Our proof of Theorem \ref{Theorem: Stability} will be based on the series of inequalities exhibited in the proof of Proposition \ref{Proposition: Lower bound on expansion close to a subgroup}. We can in fact improve upon the above strategy and establish a Brunn--Minkowksi-type inequality: 

 \begin{proposition}\label{Proposition: BM close to a subgroup}
 Let $\epsilon > 0$ and $H \subset G$ be a proper closed subgroup of dimension $d_H$. There is $\delta > 0$ such that if $A, B \subset G$ are two compact subsets such that $A,B, AB \subset H_{\delta}$, then
 $$\mu_G(AB)^{\frac{1}{d_G-d_H}} \geq \left(1 - \epsilon \right)\left(\mu_G(A)^{\frac{1}{d_G-d_H}} + \mu_G(B)^{\frac{1}{d_G-d_H}}\right).$$
 \end{proposition}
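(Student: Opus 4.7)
The plan is to follow the proof of Proposition \ref{Proposition: Lower bound on expansion close to a subgroup} verbatim, with the sole new ingredient being an optimisation of the slicing radii used for $A$ and $B$. Write $u := \mu_G(A)^{1/(d_G-d_H)}$ and $v := \mu_G(B)^{1/(d_G-d_H)}$, and for a sufficiently small $\rho > 0$ define
$$\rho_A := \frac{u\rho}{u+v}, \qquad \rho_B := \frac{v\rho}{u+v},$$
so that $\rho_A + \rho_B = \rho$. The proportionality $\rho_A : \rho_B = u : v$ is precisely what converts the slice-wise $d_G$-dimensional local Brunn--Minkowski bound into the desired $(d_G-d_H)$-dimensional inequality after integration over $H$. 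With $\rho_A = \rho_B$ one recovers Proposition \ref{Proposition: Lower bound on expansion close to a subgroup}; the asymmetric choice handles the case $\mu_G(A) \neq \mu_G(B)$.

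\textbf{Slice-wise inequality and integration.} Choose $h_0 \in H$ maximising $\mu_G(B_{h_0, \rho_B})$; by averaging $\mu_G(B_{h_0, \rho_B}) \geq \mu_G(B)\mu_H(B_H(e, \rho_B))$. By Subsection \ref{Subsection: Multiplicative properties of rectangles}, the product $A_{h, \rho_A}B_{h_0, \rho_B}$ lies in $R(hh_0, 2\delta, \rho + c\delta^2)$ for every $h \in H$. Applying the local Brunn--Minkowski inequality (Theorem \ref{Theorem: Local Brunn--Minkowski}) in its full form, rather than just its doubling consequence, gives
$$\mu_G\bigl(A_{h, \rho_A} B_{h_0, \rho_B}\bigr)^{1/d_G} \geq (1-\epsilon')\bigl(\mu_G(A_{h, \rho_A})^{1/d_G} + \mu_G(B_{h_0, \rho_B})^{1/d_G}\bigr),$$
with $\epsilon'$ of order $\rho^2$. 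Raising to the $d_G$-th power and integrating over $h \in H$ with Lemma \ref{Lemma: Double counting} (applied at scale $2\delta$, which is permitted since $AB \subset H_\delta \subset H_{2\delta}$) yields
$$\mu_G(AB)\,\mu_H(B_H(e, \rho+c\delta^2)) \geq (1-\epsilon')^{d_G}\int_H \bigl(\mu_G(A_{h, \rho_A})^{1/d_G} + \mu_G(B_{h_0, \rho_B})^{1/d_G}\bigr)^{d_G} d\mu_H(h).$$
Using the Riemannian volume expansion $\mu_H(B_H(e, r)) = c_H r^{d_H}(1 + O(r^2))$ together with the balance $\rho_A/\rho_B = u/v$, a direct algebraic manipulation gives
$$\bigl(\mu_G(A)\mu_H(B_H(e, \rho_A))\bigr)^{1/d_G} + \bigl(\mu_G(B)\mu_H(B_H(e, \rho_B))\bigr)^{1/d_G} = c_H^{1/d_G}\rho^{d_H/d_G}(u+v)^{(d_G-d_H)/d_G}.$$
Raising to the $d_G$-th power and dividing by $\mu_H(B_H(e, \rho+c\delta^2)) \sim c_H \rho^{d_H}$ produces $\mu_G(AB) \geq (1-\epsilon)^{d_G-d_H}(u+v)^{d_G-d_H}$, which upon extracting the $(d_G-d_H)$-th root is the claim.

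\textbf{Main obstacle.} The delicate step is rigorously passing from the pointwise bound to the integrated one. Setting $\alpha_h := \mu_G(A_{h, \rho_A})^{1/d_G}$ and $\beta := \mu_G(B_{h_0, \rho_B})^{1/d_G}$, the right-hand side of the integrated inequality is $\int_H (\alpha_h + \beta)^{d_G}\,d\mu_H(h)$, and one would like this to exceed the value obtained by replacing $\alpha_h^{d_G}$ with its integral average $\mu_G(A)\mu_H(B_H(e, \rho_A))$. However, since $x \mapsto (x+c)^{d_G}$ is convex in $x$, Jensen's inequality points in the wrong direction. I would circumvent this either by a Borell--Brascamp--Lieb-type argument applied to $h \mapsto \mu_G(A_{h, \rho_A})$ along $H$, exploiting that the slice-wise local Brunn--Minkowski is uniform in $h$, or by first convolving $A$ and $B$ with a small ball of radius $\eta \ll \rho$ to homogenise the slice volumes, the resulting $O(\eta)$ boundary error being absorbed into the $(1-\epsilon)$ factor once $\rho$ and $\delta$ are chosen small enough.
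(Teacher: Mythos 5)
Your skeleton is the paper's: slice $A$ and $B$ at radii in the ratio $\mu_G(A)^{1/(d_G-d_H)} : \mu_G(B)^{1/(d_G-d_H)}$ (this is exactly the paper's choice $c=\left(\mu_G(B)/\mu_G(A)\right)^{1/(d_G-d_H)}$, which it finds by optimising at the end), apply Theorem \ref{Theorem: Local Brunn--Minkowski} slice against a fixed best slice of the other set, and integrate with Lemma \ref{Lemma: Double counting}. But the step you flag as the ``main obstacle'' is a genuine gap, and neither of your proposed fixes closes it. The honest pointwise bound carries an indicator $\mathbf{1}_{A_{h,\rho_A}\neq\emptyset}$, and the integrated inequality you want, $\int_H \mathbf{1}_{A_{h,\rho_A}\neq\emptyset}\left(\mu_G(A_{h,\rho_A})^{1/d_G}+\beta\right)^{d_G}d\mu_H(h)\gtrsim \left[\left(\mu_G(A)\mu_H(B_H(e,\rho_A))\right)^{1/d_G}+\left(\mu_G(B)\mu_H(B_H(e,\rho_B))\right)^{1/d_G}\right]^{d_G}$, is simply false in general: take $A$ supported in a single rectangle $R(h_1,\delta,\rho')$ with $\rho'\ll\rho_A$ and $B$ spread over all of $H_{\delta}$, so that $\beta\approx\left(\mu_G(B)\mu_H(B_H(e,\rho_B))\right)^{1/d_G}$ while the $A$-slice mass is concentrated on a set of $h$ of small measure; then the left-hand side collapses to about $\mu_G(A)\mu_H(B_H(e,\rho_A))$, strictly below the right-hand side. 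Smoothing $A$ by a ball of radius $\eta\ll\rho$ does nothing to remedy this, since it cannot redistribute slice masses at scale $\rho$; and the ``Borell--Brascamp--Lieb along $H$'' suggestion is not formulated (and no such inequality on a compact group is available off the shelf), so the obstacle remains.

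The paper's resolution is different and elementary, and it is the load-bearing step you are missing: one compares \emph{normalised} slice densities and chooses the fixed reference slice to be the densest one among \emph{both} families, i.e.\ one assumes (after swapping the roles of $A$ and $B$ if necessary, which is exactly what rescues the concentrated example above) that $\frac{\mu_G(B_{h_0,c\rho})}{\mu_G(B)\mu_H(B_H(e,c\rho))}\geq\sup_{h}\frac{\mu_G(A_{h,\rho})}{\mu_G(A)\mu_H(B_H(e,\rho))}$. This gives the uniform-in-$h$ lower bound $\frac{\mu_G(B_{h_0,c\rho})}{\mu_G(A_{h,\rho})}\geq\frac{\mu_G(B)\mu_H(B_H(e,c\rho))}{\mu_G(A)\mu_H(B_H(e,\rho))}$, so the slice-wise Brunn--Minkowski estimate becomes \emph{linear} in $\mu_G(A_{h,\rho})$, namely $\mu_G(A_{h,\rho}B_{h_0,c\rho})\geq(1-\tfrac{\epsilon}{2})^{d_G}\mu_G(A_{h,\rho})\left[1+\left(\frac{\mu_G(B)\mu_H(B_H(e,c\rho))}{\mu_G(A)\mu_H(B_H(e,\rho))}\right)^{1/d_G}\right]^{d_G}$, and the integration over $h$ via Lemma \ref{Lemma: Double counting} goes through with no convexity issue at all; the final optimisation in $c$ then produces the $(d_G-d_H)$-th roots. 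Without this density-maximisation/role-swapping device (or a genuine substitute handling concentrated slice distributions), your argument does not yield the proposition.
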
 

\begin{proof}
    The proof is almost identical to the proof of Proposition \ref{Proposition: Lower bound on expansion close to a subgroup}. Let $\rho, c  > 0$ to be chosen later. Recall that for $h \in H$ and $\rho' > 0 $, we write $A_{h,\rho'}:=A \cap T_\delta B_H(h,\rho')$ and we define similarly $B_{h,\rho'}$ and $(AB)_{h,\rho'}$. Notice moreover that Proposition \ref{Proposition: BM close to a subgroup} is trivial when $\max \left(\left(\frac{\mu_G(A)}{\mu_G(B)}\right)^{\frac{1}{d_G}}, \left(\frac{\mu_G(B)}{\mu_G(A)}\right)^{\frac{1}{d_G}}\right) \leq \epsilon.$ So we assume from now on 
    $\max\left( \left(\frac{\mu_G(A)}{\mu_G(B)}\right)^{\frac{1}{d_G}}, \left(\frac{\mu_G(B)}{\mu_G(A)}\right)^{\frac{1}{d_G}}\right) \geq \epsilon.$

    Choose $h_0$ such that $\mu_G(B_{h_0,c\rho})$ is maximal. Suppose as we may that 
    $$ \frac{\mu_G(B_{h_0, c\rho})}{\mu_G(B)\mu_H(B_H(e,c\rho))} \geq \sup_{h \in H} \left(\frac{\mu_G(A_{h,\rho})}{\mu_G(A)\mu_H(B_H(e,\rho))},  \frac{\mu_G(B_{h,c\rho})}{\mu_G(B)\mu_H(B_H(e,c\rho))}\right). $$
    Notice that above we are implicitly comparing the values of $\mu_G(A_{h,\rho})$ as $h$ ranges in $H$ with the expected value $\mu_G(A)\mu_H(B_H(e,\rho))$ if we were to draw $h \in H$ uniformly at random (and similarly for $B$). Again, by Theorem \ref{Theorem: Local Brunn--Minkowski} we have
    \begin{equation}(1-\frac{\epsilon}{2})\left(\mu_G(A_{h,\rho})^{\frac{1}{d_G}} +  \mu_G(B_{h_0,c\rho})^{\frac{1}{d_G}}\right) \leq \mu_G\left(A_{h,\rho}B_{h_0,c\rho}\right)^{\frac{1}{d_G}}. \label{Eq: Application local BM}
    \end{equation}
    By our assumption on $h_0$ we have 
    $$\frac{\mu_G(B_{h_0,c\rho})}{\mu_G(A_{h,\rho})} \geq  \frac{\mu_G(B)\mu_H(B_H(e,c\rho))}{\mu_G(A)\mu_H(B_H(e,\rho))}.$$
    Together with \eqref{Eq: Application local BM} this then yields $$\mu_G\left(A_{h,\rho}B_{h_0,c\rho}\right) \geq (1-\frac{\epsilon}{2})^{d_G}\mu_G(A_{h,\rho})\left[1 + \left(\frac{\mu_G(B)\mu_H(B_H(e,c\rho))}{\mu_G(A)\mu_H(B_H(e,\rho))}\right)^{\frac{1}{d_G}}\right]^{d_G} .$$
    Following now inequalities (13) to (16) above, we have 
    \begin{align*} \mu_G\left(A_{h,\rho}B_{h_0,c\rho}\right) \geq 
        (1-\frac{\epsilon}{2})^{d_G}\mu_G(A)&\frac{\mu_H(B_H(e,\rho))}{\mu_H(B_H(e,(1+c)\rho + c(d_G)\delta^2))} \\ 
        &\times \left[1 + \left(\frac{\mu_G(B)\mu_H(B_H(e,c\rho))}{\mu_G(A)\mu_H(B_H(e,\rho))}\right)^{\frac{1}{d_G}}\right]^{d_G} 
        .
    \end{align*}
    This can be rewritten
    $$(1-\frac{\epsilon}{2})^{d_G}\left(\frac{\left(\mu_G(A)\mu_H(B_H(e,\rho))\right)^{\frac{1}{d_G}} + \left(\mu_G(B)\mu_H(B_H(e,c\rho))\right)^{\frac{1}{d_G}}}{\mu_H(B_H(e,(1+c)\rho + c(d_G)\delta^2))^{\frac{1}{d_G}}}\right)^{d_G} \leq \mu_G\left(AB\right).$$
    Which yields, when $\rho = \delta$ and $\delta$ is sufficiently small, 
    $$(1-\epsilon) \frac{(\mu_G(A)\rho^{d_H})^{\frac{1}{d_G}} + (\mu_G(B)(c\rho)^{d_H})^{\frac{1}{d_G}}}{((1+c)\rho)^{d_H/d_G}}  \leq \mu_G(AB)^{\frac{1}{d_G}}$$
    where we have used Lemma \ref{Lemma: Growth in balls} to estimate the volume of balls in $H$. We now get: 
    $$(1-\epsilon) \frac{(\mu_G(A))^{\frac{1}{d_G}} + (\mu_G(B)c^{d_H})^{\frac{1}{d_G}}}{(1+c)^{d_H/d_G}} \leq \mu_G(AB)^{\frac{1}{d_G}}.$$
    Setting $c:=\left(\frac{\mu_G(B)}{\mu_G(A)}\right)^{\frac{1}{d_G-d_H}}$ (which is optimal) we find
    $$(1-\epsilon)\left( \mu_G(A)^{\frac{1}{d_G-d_H}} + \mu_G(B)^{\frac{1}{d_G-d_H}}\right) \leq \mu_G(AB)^{\frac{1}{d_G-d_H}}.$$
\end{proof}

We found the correct value of $c$ from a simple variational argument over $c$. The Brunn--Minkowski inequality is therefore the optimal inequality one could get by introducing the parameter $c$. It is therefore reassuring to observe that this inequality is in fact sharp. 

Let us end this paragraph with a simple remark. Given the simplicity of the proof of the local Brunn--Minkowski (Theorem \ref{Theorem: Local Brunn--Minkowski}), it is tempting to adapt the same strategy in $G$ to obtain a global Brunn--Minkowski inequality. A potential approach would go as follows: pick the correct notion of cost on $G$ equipped with a left-invariant metric, find $T:A \rightarrow B$ an optimal transport map relative to this cost, prove that $F:x \mapsto a\cdot T(x)$ sends $A$ onto a set of measure at least $(1-\epsilon)\left(\mu_G(A)^{\frac{1}{d_G}} + \mu_G(B)^{\frac{1}{d_G}}\right)^{d_G}$. While this strategy works perfectly in $\mathbb{R}^n$, it fails in compact simple groups already in the case $A=B$. Indeed, the map $T$ is then the identity and $F(x)=x^2$. But the differential of $F$ is not invertible at any $x \neq e$ satisfying $x^2=e$. So no lower bound on the volume of $A^2$ can be obtained from $F(A)$. This simple observation shows once again that doubling in compact non-abelian Lie groups offers different and new challenges compared to its abelian counterpart. The double-counting argument (Lemma \ref{Lemma: Double counting}) we exploit above is our way of circumventing this difficulty.

\subsection{The Brunn--Minkowski inequality in compact groups}
\begin{proof}[Proof of Theorem \ref{Theorem: BM in compact groups}.]
Let $\epsilon > 0$. Let $\delta > 0$ to be determined later. By inner regularity of the Haar measure, it suffices to prove Theorem \ref{Theorem: Expansion inequality} for compact subsets. So let $A,B \subset G$ be two compact subsets of $G$ and suppose $\mu_G(A) \geq \mu_G(B)$. Write $\alpha:= \min\{\mu_G(A), \mu_G(B)\}$. Notice that $\epsilon \geq \frac{\mu_G(B)}{\mu_G(A)}$ then the inequality is trivial. We suppose from now on that $\tau:=\frac{\mu_G(B)}{\mu_G(A)} \geq \epsilon$.

Suppose as we may that $$K:=\frac{\mu_G(AB)}{\min\{\mu_G(A),\mu_G(B)\}} \leq \frac{2^{2(d_G - d_H)}}{\tau}$$
where $d_H$ is the dimension of a proper subgroup $H$ of maximal dimension. By Lemma \ref{Proposition: Tao} there is a $C(d_G,\tau)$-approximate subgroup $\Lambda$ and two finite subsets $F_1'',F_2''$ of size at most $C(d_G,\tau)$ such that $A \subset F_1'' \Lambda$ and $B \subset F_2'' \Lambda$. According to Proposition \ref{Proposition: approximate subgroups of small measure}, if $\alpha >0 $ is sufficiently small, then there is a proper subgroup, denoted by $H$, and $F_1', F_2' \subset G$ of size at most $C(d_G,\tau)$ such that
$A \subset F_1' H_{2^{-C(d_G,\tau)}\delta}$ and $B \subset H_{2^{-C(d_G,\tau)}\delta}F_2'$. 

We can  now find $\delta' \leq \delta$ and $F_1 \subset F_1'$, $F_2 \subset F_2'$ such that $A \subset F_1 H_{\delta}$ and $B \subset H_{\delta'}F_2$ and the subsets $(f H_{2\delta'})_{f \in F_1}$ (resp. $(H_{2\delta'}f)_{f \in F_2}$) are pairwise disjoint (see Lemma \ref{Lemma: Disjoint translates} for details). Choose now $\delta$ so that Proposition \ref{Proposition: Lower bound on expansion close to a subgroup} holds in $H_{2\delta'} \subset G$ for this choice of $\epsilon$.

Choose $f_0 \in F_2$ such that $$\mu_G(B \cap H_{\delta'}f_0)= \sup_{f\in F_2}\mu_G(B \cap H_{\delta'}f).$$
Suppose that $\frac{\mu_G(B \cap H_{\delta'}f_0)}{\mu_G(B)} \geq \sup_{f\in F_1}\frac{\mu_G(A \cap fH_{\delta'})}{\mu_G(A)}.$ We have 
\begin{align}
    \mu_G(AB) &\geq \mu_G\left(A\left( B \cap H_{\delta'}f_0\right)\right) \\
    &= \sum_{f \in F_2'}\mu_G\left(\left( A \cap fH_{\delta'}\right) \left( B \cap H_{\delta'}f_0\right)\right) \\
    & \geq (1 - \epsilon) \sum_{f \in F_2'} \left(\mu_G\left( A \cap fH_{\delta'}\right)^{\frac{1}{d_G-d_H}} + \mu_G\left( B \cap H_{\delta'}f_0\right)^{\frac{1}{d_G-d_H}} \right)^{d_G-d_H}\\
    & \geq (1 - \epsilon) \sum_{f \in F_2'} \mu_G\left( A \cap fH_{\delta'}\right)\left(1 + \left(\frac{\mu_G\left( B \cap H_{\delta'}f_0\right)}{\mu_G\left( A \cap fH_{\delta'}\right)}\right)^{\frac{1}{d_G-d_H}} \right)^{d_G-d_H}\\
     &\geq (1 - \epsilon) \sum_{f \in F_2'} \mu_G\left( A \cap fH_{\delta'}\right)\left(1 + \left(\frac{\mu_G(B)}{\mu_G(A)}\right)^{\frac{1}{d_G-d_H}} \right)^{d_G-d_H} \\
     &= (1 - \epsilon)\left(\mu_G(A)^{\frac{1}{d_G-d_H}} + \mu_G(B)^{\frac{1}{d_G-d_H}}\right)^{d_G-d_H}.
\end{align}
where we have used pairwise disjointness of the family the subsets $(f H_{2\delta'})_{f \in F_1'}$ to go from (18) to (19), the assumption that $\frac{\mu_G(B \cap H_{\delta'}f_0)}{\mu_G(B)} \geq \sup_{f\in F_1}\frac{\mu_G(A \cap fH_{\delta'})}{\mu_G(A)}$ to go from (21) to (22) and Proposition \ref{Proposition: Lower bound on expansion close to a subgroup} to go from (19) to (20). 

If $\frac{\mu_G(B \cap H_{\delta'}f_0)}{\mu_G(B)} \leq \sup_{f\in F_1}\frac{\mu_G(A \cap fH_{\delta'})}{\mu_G(A)}$, then a symmetric proof with $A$ playing the role of $B$ yields 
$$ \mu_G(AB) \geq (1 - \epsilon)\left(\mu_G(A)^{\frac{1}{d_G-d_H}} + \mu_G(B)^{\frac{1}{d_G-d_H}}\right)^{d_G-d_H}.$$
So Theorem \ref{Theorem: Expansion inequality} is proven. 
\end{proof}

\begin{proof}[Proof of Theorem \ref{Theorem: Expansion inequality}.]
    Theorem \ref{Theorem: Expansion inequality} without the estimate on the correcting term is simply Theorem \ref{Theorem: BM in compact groups} when $A=B$. We refer to \S \ref{Subsection: Quantitative results} for the missing estimate.
\end{proof}

Before we move on let us remark a simple consequence of Theorem \ref{Theorem: BM in compact groups}. Namely a version of \cite[Lemma 10.4]{BGT12} for compact semi-simple Lie groups:

\begin{corollary}
    Let $G$ be a semi-simple compact Lie group and $K \geq 0$. There is $\epsilon(K) > 0$ such that the following holds. Let $A \subset G$ be a $K$-approximate subgroup containing a neighbourhood of the identity. Then  either $\Lambda^4$ contains a subgroup of codimension $O(\log(K))$ or $\mu_G(A) \geq \epsilon(K)$.
\end{corollary}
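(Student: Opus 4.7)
The plan is to combine Theorem \ref{Theorem: Expansion inequality} with Proposition \ref{Proposition: approximate subgroups of small measure}: the doubling inequality constrains the codimension of the maximal proper closed subgroup, while the structural proposition locates the candidate subgroup of low codimension near $A$. Concretely, suppose $\mu_G(A) < \epsilon(K)$, with $\epsilon(K)$ to be chosen. The $K$-approximate subgroup property gives $\mu_G(A^2) \leq K\mu_G(A)$, while Theorem \ref{Theorem: Expansion inequality} provides
\[
\mu_G(A^2) \geq \bigl(2^{d_G-d_H} - C\mu_G(A)^{2/(d_G-d_H)}\bigr)\mu_G(A).
\]
Choosing $\epsilon(K)$ small enough (depending only on $d_G$) that the correcting term does not exceed $\tfrac12 2^{d_G-d_H}$, one obtains $K \geq \tfrac12 2^{d_G-d_H}$, i.e., $d_G - d_H \leq 1 + \log_2 K = O(\log K)$. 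Thus the maximal proper closed subgroup of $G$ already has codimension $O(\log K)$.

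Next I would apply Proposition \ref{Proposition: approximate subgroups of small measure} at a small $\delta > 0$ to be tuned: shrinking $\epsilon(K)$ further, this produces a proper connected subgroup $H \subset G$ of dimension $d_H$ together with a finite set $F$, $|F| \leq C(d_G, K)$, such that $A \subset F \cdot H_\delta$. By the previous step, $\operatorname{codim}(H) = O(\log K)$. The remaining task is to upgrade $H$ from a subgroup that $A$ is covered by a neighbourhood of, to a subgroup that $A^4$ literally contains. Here the hypothesis that $A \supset U$ for some open $U \ni e$ enters crucially: fix $r > 0$ with $B_e(r) \subset U$ and take $\delta \ll r$. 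Since $B_e(r)$ is connected and $e \in A$, one of the cosets $f_0 H_\delta$ with $f_0 \in F$ must contain $e$, forcing $f_0 \in H_\delta$ and hence $f_0 H_\delta \subset H_{2\delta}$. A substantial part of $A$ near $e$ therefore sits inside $H_{2\delta}$, and $A \cap H_{2\delta}$ is a $C(d_G,K)$-approximate subgroup of $H_{2\delta}$ containing a neighbourhood of $e$ in $H$ (via the Ruzsa calculus of Proposition \ref{Proposition: Tao}).

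Finally, I would leverage the approximate-subgroup arithmetic of $A \cap H_{2\delta}$ inside $H$ together with the covering lemma (Lemma \ref{Lemma: Disjoint translates}) to conclude $H \subset A^4$; combined with the bound $\operatorname{codim}(H) = O(\log K)$ from Step 1, this furnishes the desired subgroup of codimension $O(\log K)$ contained in $A^4$. The main obstacle is precisely this last step: the naive argument using connectedness of $H$ to generate it from its neighbourhood of $e$ produces only $H \subset A^{n}$ for $n$ depending on $r$, which is not uniform in $K$. Making the power actually equal to $4$ (rather than some larger absolute constant) requires exploiting the structural inclusion $A \subset F H_\delta$ together with a Freiman-style covering argument inside $H$ — essentially the compact semi-simple analogue of the classical manoeuvre in \cite[Lemma 10.4]{BGT12}. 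The codimension bound emerging from Step 1 is what makes this bounded-power saturation tenable, since the ambient subgroup $H$ has dimension at least $d_G - O(\log K)$ and hence is tightly controlled by the approximate-subgroup structure of $A \cap H_{2\delta}$.
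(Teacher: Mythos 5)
The paper states this corollary without proof (it is offered as a remark after Theorem \ref{Theorem: BM in compact groups}), so your attempt has to stand on its own, and as written it does not close. Your Step 1 is fine: $\mu_G(A^2)\le K\mu_G(A)$ together with Theorem \ref{Theorem: Expansion inequality} gives $d_G-d_H\le 1+\log_2 K$ once $\epsilon(K)$ is small. But this bounds the codimension of a maximal-dimensional proper closed subgroup of $G$ — a statement about $G$ alone — whereas the subgroup $H$ produced by Proposition \ref{Proposition: approximate subgroups of small measure} is merely \emph{some} proper connected subgroup, with no control on its dimension; so the sentence ``by the previous step, $\mathrm{codim}(H)=O(\log K)$'' is a non sequitur. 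One could try to repair it by applying Proposition \ref{Proposition: Lower bound on expansion close to a subgroup} to the largest piece $A\cap f_0H_\delta$, which forces $2^{\mathrm{codim}(H)}\lesssim C(d_G,K)\,K$, but then the covering constant $C(d_G,K)$ enters, and if (as the analogy with \cite[Lemma 10.4]{BGT12} suggests) the implied constant in $O(\log K)$ is meant to be absolute — the only reading under which the statement is not satisfied trivially by the trivial subgroup — this requires care that the proposal does not supply.

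The decisive gap is the one you yourself flag: nothing in the proposal actually places a subgroup inside $A^4$; the ``Freiman-style covering argument inside $H$'' you defer to \emph{is} the content of the corollary. Worse, the intermediate statement you are steering towards — $H\subset A^4$ for the covering subgroup $H$ of Proposition \ref{Proposition: approximate subgroups of small measure} — is false in general. Take $A=T_\delta B_H(e,\rho_0)$, a rectangle around a proper closed subgroup $H$ of maximal dimension, with $\rho_0>0$ a small fixed constant and $\delta\to 0$: by the lemma of \S\ref{Subsection: Multiplicative properties of rectangles} and a covering argument this is a $K$-approximate subgroup with $\log_2 K\asymp d_G$, it has arbitrarily small measure and contains a neighbourhood of the identity, yet $A^4\subset T_{5\delta}B_H(e,5\rho_0)$ contains \emph{no} nontrivial closed connected subgroup at all (any such subgroup of $G$ has diameter bounded below by a constant of $G$, while $A^4$ lies in a ball of radius $5\rho_0+5\delta$). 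In particular, whatever nontrivial subgroup Proposition \ref{Proposition: approximate subgroups of small measure} returns here cannot lie in $A^4$; the corollary survives only because the doubling of this $A$ forces $\log_2 K\ge(1-o(1))d_G$, so the \emph{trivial} subgroup already has codimension $O(\log K)$. This shows the subgroup to be exhibited cannot in general be the covering subgroup, and the proof must contain a genuine dichotomy: either $A$ is thick along $H$, in which case one must show a \emph{bounded} power of $A$ saturates $H$, or the $H$-extent of $A$ is short, in which case $K$ is forced to be correspondingly large and a lower-dimensional (possibly trivial) subgroup suffices. Neither branch is established by your connectedness argument, which, as you concede, only yields $H\subset A^n$ with $n$ depending on the size of the neighbourhood $U$ rather than on $K$.
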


\section{A stability result for the local Brunn--Minkowksi}\label{Section: A stability result for the local Brunn--Minkowksi}

Beyond generalising to all dimensions, our approach also brings information regarding stability questions. We start with stability results concerning certain types of sets close to the identity in Lie groups. We will deduce this argument by taking limits of subsets and relating the qualitative stability statement to equality of the Brunn--Minkowski inequality in Euclidean spaces. Compared for instance to the limiting process found in Christ, our approach requires only much weaker boundedness assumptions on the sets considered. Namely, we only ask only have to know \emph{a priori} that the sets considered are approximate subgroups, following in the footsteps of \cite{BGT12, jing2023measure, zbMATH06005474}.

An important difference from these latter works is that we do retain precise information on the space we obtain at the limit - the so-called \emph{model}. It is indeed obtained by a suitable rescaling procedure that we describe now. 



\subsection{Models for small convex neighbourhoods}\label{Subsection: Models for small convex neighbourhoods}
In this section, we identify a neighbourhood of the identity $U \subset G$ and its image in the Lie algebra $\mathfrak{g}$. Therefore, the underlying set of $U$ is a neighbourhood of the origin of a real vector space equipped with (partially defined) vector space addition $+$ and scalar multiplication $(\lambda, g) \mapsto \lambda g$, the Lie bracket $[\cdot, \cdot ]$ and the group multiplication $\cdot$. 

Given a compact subgroup $H$ of $G$, $H \cap U$ is identified (as in the previous paragraph) with its Lie algebra. Define $H^{\perp}$ the exponential of the subspace orthogonal - with respect to the Killing form - to $H$. If $x \in H^{\perp} \cap U$ and $y \in H \cap U$, then $[y,x] \in H^{\perp}$.

For $\delta, \rho > 0$, define 
$$ R(e,\rho, \delta):=H \cap B(0,\rho) + H^{\perp} \cap B(0,\delta).$$ For $h \in H$, write moreover
$$R(h,\rho, \delta):=h R(e,\rho, \delta) = R(e,\rho, \delta) h.$$

These sets are approximate subgroups and, as a family, they admit a natural limit as $\delta$ and $\rho$ go to $0$. Note also that although they are close to the subsets studied in \S \ref{Subsection: Multiplicative properties of rectangles}, they are not exactly the same. Consider $\rho_n \geq \delta_n \rightarrow 0$ and for every sequence of elements $x_n \in R(e,\rho_n, \delta_n)$ define
$$\phi((x_n))=\mathcal{F}-\lim \frac{x_{n,1}}{\rho_n} + \frac{x_{n,2}}{\delta_n}$$
where $x_{n,1}$ is the $H$ component of $x_n$,  $x_{n,2}$ is the $H^{\perp}$ component of $x_n$ and $\mathcal{F}$ is some fixed ultrafilter on $\mathcal{N}$ (see \cite[App. A]{BGT12} and \cite[\S 4]{jing2023measure} for background on ultrafilters and ultraproducts in the context of approximate groups). The map $\phi$ constructed that way exhibits good properties:

\begin{proposition}
Let $\rho_n \geq \delta_n$, $\phi$ and $\mathcal{F}$ be as above. Then $\phi: \langle\prod R(e,\rho_n,\delta_n)/\mathcal{F}\rangle \rightarrow \mathbb{R}^{d_G}$ is a surjective group homomorphism for the group law induced from the group law of $G$ and addition on $\mathbb{R}^{d_G}$. 
\end{proposition}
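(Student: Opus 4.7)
The plan is to verify three things in turn: (i) the map $\phi$ makes sense as stated on $\prod R(e,\rho_n,\delta_n)/\mathcal{F}$, (ii) it is multiplicative with respect to the group law of $G$ and hence extends to the generated group, (iii) it is surjective. Boundedness for (i) is automatic: writing $x_n = x_{n,1}+x_{n,2}$ with $x_{n,1}\in H\cap B(0,\rho_n)$ and $x_{n,2}\in H^{\perp}\cap B(0,\delta_n)$, the vectors $x_{n,1}/\rho_n$ and $x_{n,2}/\delta_n$ lie in fixed unit balls of $H$ and $H^{\perp}$, so the ultralimit exists. Then $\phi$ is identified with a map into $H\oplus H^{\perp}\cong\mathbb{R}^{d_G}$.

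For multiplicativity, the heart of the proof is the BCH formula \eqref{Eq: BCH formula}. Given $x_n,y_n\in R(e,\rho_n,\delta_n)$, one writes $x_n\cdot y_n = x_n+y_n+\tfrac12[x_n,y_n]+\cdots$ in the identification of $U$ with its image in $\mathfrak{g}$, then projects onto $H$ and $H^{\perp}$. The subalgebra structure recalled in \S\ref{Subsection: Models for small convex neighbourhoods} ($[H,H]\subset H$, $[H,H^{\perp}]\subset H^{\perp}$) forces any nested bracket containing exactly one $H^{\perp}$-factor to land in $H^{\perp}$; a length-$k$ bracket with $k_1$ factors of size $\rho_n$ and $k_2$ factors of size $\delta_n$ has norm $O(\rho_n^{k_1}\delta_n^{k_2})$. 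The only terms that could possibly survive after rescaling are:
\begin{itemize}
\item for the $H$-projection, divided by $\rho_n$: brackets either have $k_2=0$ (in which case size $O(\rho_n^k)$ for $k\geq 2$ gives $O(\rho_n^{k-1})\to 0$) or $k_2\geq 2$ (in which case $O(\rho_n^{k_1}\delta_n^{k_2}/\rho_n)\leq O(\delta_n)\to 0$, using $\delta_n\leq\rho_n$);
\item for the $H^{\perp}$-projection, divided by $\delta_n$: $k_2\geq 1$, and $O(\rho_n^{k_1}\delta_n^{k_2-1})\to 0$ for $k\geq 2$.
\end{itemize}
Thus the scaled $H$-part of $x_n y_n$ converges to $\phi(x)_H+\phi(y)_H$ and the scaled $H^{\perp}$-part to $\phi(x)_{H^{\perp}}+\phi(y)_{H^{\perp}}$, giving $\phi(xy)=\phi(x)+\phi(y)$ whenever both sides are defined.

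To extend $\phi$ to $\langle\prod R(e,\rho_n,\delta_n)/\mathcal{F}\rangle$, represent an element as $g=g_1\cdots g_k$ with $g_i=(g_{i,n})/\mathcal{F}$, $g_{i,n}\in R(e,\rho_n,\delta_n)$, and define
\[
\phi(g):=\mathcal{F}\text{-}\lim\frac{(g_{1,n}\cdots g_{k,n})_1}{\rho_n}+\frac{(g_{1,n}\cdots g_{k,n})_2}{\delta_n}.
\]
The same BCH estimate applies inductively: at each step $g_{1,n}\cdots g_{(j-1),n}$ has $H$-component of size $O_j(\rho_n)$ and $H^{\perp}$-component of size $O_j(\delta_n)$, so the multiplicativity computation above goes through with constants depending only on $j$. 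This shows both that $\phi(g)=\sum_i\phi(g_i)$ (independent of the chosen decomposition, hence well-defined on the generated group) and that $\phi$ is a homomorphism into $(\mathbb{R}^{d_G},+)$.

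Surjectivity is then easy. Given $v=v_1+v_2\in H\oplus H^{\perp}$ with $\|v_1\|,\|v_2\|\leq 1$, take $x_n:=\rho_n v_1+\delta_n v_2\in R(e,\rho_n,\delta_n)$; then $\phi((x_n)/\mathcal{F})=v$. For general $v$, scale by an integer $N$ so that $v/N$ lies in the unit box, produce $x$ with $\phi(x)=v/N$, and use the homomorphism property: $\phi(x^N)=Nv/N=v$. Thus $\phi$ is onto. The main technical obstacle is the careful bookkeeping in step (ii): guaranteeing that the $[H^{\perp},H^{\perp}]$ contributions (which can have an $H$-component) are negligible after dividing by $\rho_n$, which is exactly where the hypothesis $\delta_n\leq\rho_n$ plays its role.
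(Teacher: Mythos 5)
Your proof is correct and follows essentially the same route as the paper's: the BCH formula combined with $[\mathfrak{h},\mathfrak{h}]\subset\mathfrak{h}$, $[\mathfrak{h},\mathfrak{h}^{\perp}]\subset\mathfrak{h}^{\perp}$ and the size bookkeeping under $\delta_n\leq\rho_n$ shows every bracket contribution is $o(\rho_n)$ in the $\mathfrak{h}$-direction and $o(\delta_n)$ in the $\mathfrak{h}^{\perp}$-direction, hence vanishes after rescaling, while surjectivity comes from the explicit preimage $\rho_n v_1+\delta_n v_2$. You simply spell out details the paper leaves implicit (the higher-order BCH terms, the extension of $\phi$ to the generated group, and the power trick for vectors outside the unit box), which is fine.
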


Recall here that the set $\prod R(e,\rho_n,\delta_n)/\mathcal{F}$ is defined as 
$$ \prod R(e,\rho_n,\delta_n) / \sim $$
where $(x_n)_n \sim (y_n)_n$ if $\{n \in \mathbb{N} : x_n = y_n \} \in \mathcal{F}$. The set $\prod R(e,\rho_n,\delta_n)/\mathcal{F}$ is now a subset of the group $\prod G /\mathcal{F}$ and can be considered as an "algebraic" limit of the subsets $R(e,\rho_n,\delta_n)$. Once again, we refer to \cite[App. A]{BGT12} for background on these objects in the context of approximate subgroups (see also \S \ref{Subsection: Taking limits without boundedness assumption} below). 

\begin{proof}
 Surjectivity of the map is straightforward. If $x_n, y_n \in R(e,\rho_n, \delta_n)$, then 
$$[x_n,y_n] = [x_{n,1},y_{n,1}] + [x_{n,1},y_{n,2}] + [x_{n,2},y_{n,1}] + [x_{n,2},y_{n,2}]$$
where $x_n =x_{n,1} + x_{n,2}$ and $y_n=y_{n,1} + y_{n,2}$ are the decompositions with respect to $H$ and $H^{\perp}$.

Since $$[x_{n,1},y_{n,1}] \in H$$ and by bilinearity of the Lie bracket, we have $[x_n,y_n] \in R(e,C\rho_n^2, C\rho_n\delta_n)$ for some $C \geq 2$ depending on $G$ only. So $\phi(([x_n,y_n])_n)=0$. According to the Baker--Campbell--Hausdorff formula (Proposition \ref{Proposition: BCH formula}), the same argument furthermore proves 
$\phi(x_n \cdot y_n)=\phi(x_n + y_n)$.  This establishes our claim.
\end{proof}

Before we move on, let us further comment on the above construction. First of all, notice that the kernel of $\phi$ is made of those sequences $(x_n)_{n \in \mathbb{N}}$ such that $x_n \in R(e,\rho_n', \delta_n')$ with $(\rho_n')_n$ and $(\delta_n')_n$ negligible in front of $(\rho_n)_n$ and $(\delta_n)_n$ respectively. 

Moreover, given a norm $|| \cdot ||'$ on $\mathfrak{g}$ obtained as the sum of a norm $|| \cdot ||$ on $\mathfrak{h}$ and a norm $|| \cdot ||_{\perp}$ on $\mathfrak{h}^{\perp}$ the formula
$$ | \phi((x_n)_n) | = \mathcal{F}-\lim \frac{||x_{n,1}||}{\rho_n} + \frac{||x_{n,2}||_{\perp}}{\delta_n}$$
defines a norm $|\cdot|$ on $\mathbb{R}^{d_G}$ for which the balls are images through $\phi$ of ultraproducts of subsets of $R(e,\rho_n, \delta_n)$. This second observation will come in handy in the next section. 

\subsection{Sets near equality are approximate subgroups}

As mentioned in the introduction to \S \ref{Section: A stability result for the local Brunn--Minkowksi}, we will only be able to take limits under the assumption that the sets considered are approximate subgroups. Fortunately, we are in that situation: 

\begin{lemma}\label{Lemma: Sets near equality are approximate subgroups}
     There is $\alpha > 0$ depending only on $d_G$ such that the following holds. Let $U \subset G$ be a neighbourhood of the identity, $A \subset U \subset G$ be a measurable subset such that $\mu_G(A^2)\leq 2^{d_G}(1+\alpha)\mu_G(A)$ and suppose that for all $B,C \subset U$ measurable 
    $$ \mu_G(BC)^{\frac{1}{d_G}} \geq (1-\alpha) \left(\mu_G(B)^{\frac{1}{d_G}} + \mu_G(C)^{\frac{1}{d_G}} \right), $$ then $A$ is contained in a $C(d_G)$-approximate subgroup of measure $\leq C(d_G)\mu_G(A)$.
\end{lemma}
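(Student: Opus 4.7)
The plan is to construct the containing approximate subgroup explicitly as a short power of the symmetrised set $B := A \cup A^{-1} \cup \{e\}$, using Tao's non-commutative Plünnecke--Ruzsa calculus together with Ruzsa's covering lemma (Lemma \ref{Lemma: Covering lemma}).

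First I would choose $\alpha$ small enough (depending on $d_G$ only) that the doubling ratio $K := \mu_G(A^2)/\mu_G(A) \leq 2^{d_G}(1+\alpha)$ is bounded by a constant depending on $d_G$ alone. The non-commutative Plünnecke--Ruzsa inequalities from \cite{TaoProductSet08} then give
$$\mu_G(B^n) \leq C(d_G) \mu_G(A)$$
for every fixed integer $n \leq 5$. Equivalently, this estimate may be extracted from Proposition \ref{Proposition: Tao}: the covering $A \cup A^{-1} \subset F \Lambda_0$ and the moreover-inequality $\mu_G(A \Lambda_0^m A) \leq K^{Cm} \mu_G(A)$ let one absorb each factor of $A$ or $A^{-1}$ into a bounded power of the symmetric approximate subgroup $\Lambda_0$, at the cost of a bounded number of two-sided translations.

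I would then set $\Lambda := B^2 = (A \cup A^{-1} \cup \{e\})^2$. By construction $\Lambda$ is symmetric, contains $e$, and contains $A$; the previous step yields $\mu_G(\Lambda) \leq C(d_G) \mu_G(A)$. To verify that $\Lambda$ is a $C(d_G)$-approximate subgroup I would apply Lemma \ref{Lemma: Covering lemma} with $X := \Lambda^2 = B^4$ and $Y := B$: since
$$\frac{\mu_G(X Y)}{\mu_G(Y)} \;=\; \frac{\mu_G(B^5)}{\mu_G(B)} \;\leq\; C(d_G)$$
is bounded, the covering lemma provides a finite set $F$ of cardinality at most $C(d_G)$ with $\Lambda^2 \subset F \cdot B \cdot B^{-1}$. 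The symmetry $B^{-1}=B$ then yields $\Lambda^2 \subset F \cdot B^2 = F \cdot \Lambda$, which is exactly the approximate-subgroup condition.

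The main obstacle is keeping the non-commutative Plünnecke--Ruzsa constants dependent only on $d_G$: this works here because the small-doubling hypothesis forces $K \leq 2^{d_G+1}$, so every expression of the form $K^{O(n)}$ with $n$ fixed is a constant depending only on $d_G$. The Brunn--Minkowski hypothesis enters only qualitatively: it ensures that $U$ is a tame enough neighbourhood of the identity for the exponential parametrisation and the rescaling procedure of \S \ref{Subsection: Models for small convex neighbourhoods} to apply, which is precisely the setting in which this lemma will be used further in the section.
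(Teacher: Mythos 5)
There is a genuine gap at the core step. Your claim that the non-commutative Pl\"unnecke--Ruzsa calculus of \cite{TaoProductSet08} converts the doubling bound $\mu_G(A^2)\leq 2^{d_G}(1+\alpha)\mu_G(A)$ into $\mu_G(B^n)\leq C(d_G)\mu_G(A)$ for $B=A\cup A^{-1}\cup\{e\}$ and $n\leq 5$ is false: in non-commutative groups, small doubling does not control tripling or mixed products. The Ruzsa triangle inequality does give $\mu_G(AA^{-1}),\mu_G(A^{-1}A)\leq K^2\mu_G(A)$, so $\mu_G(B^2)$ is fine, but already $B^3\supset AA^{-1}A$ escapes control: for $A=H\cup\{x\}$ with $H$ a subgroup (or a thin neighbourhood of one) and $x$ generic, the doubling of $A$ is bounded while $AA^{-1}A\supset Hx^{-1}H$ can have measure comparable to $\mu_G(A)^2/\mu_G(A)^{2}$-free of any $K^{O(1)}$ bound. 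Your fallback via Proposition \ref{Proposition: Tao} does not repair this: that proposition covers $A$ by left translates $F\Lambda_0$ and controls $\mu_G(A\Lambda_0^m A)$, but the ``absorption'' you invoke produces interleaved products such as $A\Lambda_0^m f\Lambda_0^k A$ with $f\in F^{-1}F$ sitting in the middle, and since the elements of $F$ need not commensurate $\Lambda_0$ these are not bounded by the moreover clause. Note also that the estimate $\mu_G(B^5)\leq C(d_G)\mu_G(A)$ you need is essentially equivalent to the conclusion of the lemma (once $A$ sits in a $C(d_G)$-approximate subgroup of comparable measure, all bounded powers of $B$ are controlled), so deriving it from general sumset calculus alone begs the question.

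The second issue is that you set aside the Brunn--Minkowski hypothesis as ``only qualitative'', whereas it is exactly the ingredient that makes the lemma true and is the engine of the paper's proof. There one takes the approximate subgroup $\Lambda$ and the coverings $A\subset F_1\Lambda$, $A\subset\Lambda F_2$ from Proposition \ref{Proposition: Tao}, arranges the translates to be disjoint (Lemma \ref{Lemma: Disjoint translates}), and then applies the hypothesis to the pairs $(A\cap f\Lambda,\,A\cap\Lambda f_2)$: summing the resulting lower bounds and comparing with $\mu_G(A^2)\leq 2^{d_G}(1+\alpha)\mu_G(A)$ forces every piece $A\cap f\Lambda$ to carry mass at least $\mu_G(A)/C(d_G)$. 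Only then does Ruzsa covering (applied to $\Lambda(A\cap f\Lambda)$, whose measure is controlled by the moreover clause of Proposition \ref{Proposition: Tao}) show that $\Lambda$ is covered by boundedly many translates of $(A\cap f\Lambda)(A\cap f\Lambda)^{-1}\subset f\Lambda f^{-1}$, from which $A\Lambda\cup\Lambda A^{-1}$ is seen to be a $C(d_G)$-approximate subgroup containing $A$ with measure $\leq C(d_G)\mu_G(A)$. In short, the near-equality structure supplied by the Brunn--Minkowski hypothesis must be used quantitatively to rule out configurations of the $H\cup\{x\}$ type; without it, your symmetrisation-and-covering endgame (which is otherwise sound) has no valid input.
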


Before proving this, let us remark that the notion of boundedness and regularity proved for instance by Christ in \cite{christ2012near} to allow for the limiting procedure to work (see also the discussion in \cite[Appendix B]{FigalliJerison15}) would be a much stronger assumption. The approach we offer here thus allows for more flexibility. As a trade-off, the conclusions drawn from our limiting procedure require more work to be exploited. 

\begin{proof}
 Let $\Lambda$ be the approximate subgroup provided by Proposition \ref{Proposition: Tao} and let $F_1, F_2$ be two subsets of size at most $C(d_G)$ such that $A \subset F_1\Lambda$ and $A \subset \Lambda F_2$. Upon shrinking $F_1$ and $F_2$ and considering $\Lambda^{C(d_G)}$ instead of $\Lambda$, we may assume that for all $f \neq f' \in F_1$ (resp. $f \neq f' \in F_2$) we have $f\Lambda^2 \cap f'\Lambda^2 = \emptyset$ (Lemma \ref{Lemma: Disjoint translates}).

 Suppose - the other case can be treated symmetrically - that there is $f_2 \in F_2$ such that $$\mu_G(A \cap \Lambda f_2) \geq \max \left(\sup_{f \in F_2} \mu_G(A \cap \Lambda f), \sup_{f \in F_1} \mu_G(A \cap f \Lambda)\right).$$ 
 Note that $$\mu_G(A \cap \Lambda f_2) \geq \frac{1}{C(d_G)}\mu_G(A).$$

 Now, 
 \begin{align*}
     \mu_G(A^2) & \geq \mu_G\left(A\left( A \cap \Lambda f_2\right)\right) \\
     & \geq  \sum_{f \in F_1}\mu_G\left( \left(A \cap f\Lambda\right) \left(A \cap \Lambda f_2\right)\right) \\
     & \geq \sum_{f \in F_1} (1-\alpha) \left( \mu_G\left(A \cap f\Lambda\right)^{\frac{1}{d_G}} + \mu_G\left(A \cap \Lambda f_2\right)^{\frac{1}{d_G}}\right)^{d_G} \\
     & \geq \sum_{f \in F_1}  2^{d_G}(1-\alpha)\mu_G(A \cap f\Lambda) \\
     & \geq 2^{d_G}(1-\alpha) \mu_G(A). 
 \end{align*}
 Since $\mu_G(A^2) \leq 2^{d_G}(1+\alpha) \mu_G(A)$, we therefore find that for all $f \in F_1$,
 $$(1-\alpha) \left( \mu_G\left(A \cap f\Lambda\right)^{\frac{1}{d_G}} + \mu_G\left(A \cap \Lambda f_2\right)^{\frac{1}{d_G}}\right)^{d_G} - 2^{d_G}(1-\alpha)\mu_G(A \cap f\Lambda) \leq 2^{d_G+1}\alpha \mu_G(A).$$
 By subadditivity of $x \mapsto x^{\frac{1}{d_G}}$ we have
 $$ \mu_G(A \cap f\Lambda) \geq \left(\frac{1}{C(d_G)} - 4\left(\frac{\alpha}{1-\alpha}\right)^{\frac{1}{d_G}}\right)^{d_G}\mu_G(A).$$
 So for $\alpha$ smaller than some constant $C(d_G)$ depending on the dimension only, 
$$\mu_G(A \cap f\Lambda) \geq \frac{1}{C(d_G)}\mu_G(A).$$

 But by Proposition \ref{Proposition: Tao}, $$\mu_G\left( \Lambda\left(A \cap f\Lambda\right)\right) \leq  \mu_G(A\Lambda A) \leq C(d_G) \mu_G(A).$$ By Ruzsa's covering lemma (Lemma \ref{Lemma: Covering lemma}), $\Lambda$ is therefore covered by $C(d_G)$-translates of $\left(A \cap f\Lambda\right)\left(A \cap f\Lambda\right)^{-1} \subset f\Lambda f^{-1}$. Since this is true for all $f \in F_1$, we see that $A\Lambda \cup \Lambda A^{-1}$ is a $C(d_G)$-approximate subgroup containing $A$ of measure at most $C(d_G)\mu_G(A)$.
\end{proof}

We have furthermore: 

\begin{lemma}
    With $A \subset U \subset G$ as in Lemma \ref{Lemma: Sets near equality are approximate subgroups}. Suppose that $B \subset U$ is such that $\mu_G(A) \geq (1-\alpha) \mu_G(B)$ and
    $$ \mu_G(AB) \leq 2^{d_G}(1 + \alpha)\mu_G(B).$$
    If $\alpha$ is sufficiently small, depending on $d_G$ only, then $B$ is contained in one right translate of $A^{C(d_G)}$.  
\end{lemma}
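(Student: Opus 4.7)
My plan follows the strategy of the previous lemma. First, Proposition \ref{Proposition: Tao} applied to $(A,B)$ with $K := \mu_G(AB)/(\mu_G(A)\mu_G(B))^{1/2}$ bounded by a dimensional constant (since the local Brunn--Minkowski lower bound combined with the hypotheses forces $\mu_G(A) \asymp \mu_G(B)$) gives a $C(d_G)$-approximate subgroup $\Lambda$ with $\mu_G(\Lambda) \leq C(d_G)\mu_G(A)$ and finite sets $F_1, F_2$ of size $\leq C(d_G)$ with $A \subset F_1 \Lambda$ and $B \subset \Lambda F_2$. After replacing $\Lambda$ by a bounded power using Lemma \ref{Lemma: Disjoint translates}, I can assume the translates $f \Lambda^2$, $f \in F_1$, and $\Lambda^2 f$, $f \in F_2$, are pairwise disjoint.

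The crux is a decomposition argument in the spirit of Lemma \ref{Lemma: Sets near equality are approximate subgroups}. Writing $B_f := B \cap \Lambda f$ for $f \in F_2$, and arranging the disjointness of the products $AB_f$ by further enlarging the chosen power of $\Lambda$, I have $\mu_G(AB) \geq \sum_{f \in F_2} \mu_G(AB_f)$. The local Brunn--Minkowski inequality (Theorem \ref{Theorem: Local Brunn--Minkowski}) applied to each pair, together with $t_f := \mu_G(B_f)/\mu_G(A)$, then translates the hypothesis $\mu_G(AB) \leq 2^{d_G}(1+\alpha)\mu_G(B)$ into
$$\sum_{f \in F_2} \bigl(1 + t_f^{1/d_G}\bigr)^{d_G} \leq 2^{d_G}\bigl(1 + O(\alpha)\bigr).$$
The function $\phi(t) := (1+t^{1/d_G})^{d_G}$ is concave on $[0,\infty)$ with $\phi(0) = 1$ and $\phi(1) = 2^{d_G}$, and the constraint $\sum_f t_f = \mu_G(B)/\mu_G(A) \in [1-O(\alpha), 1+O(\alpha)]$ is forced by the two-sided comparison of $\mu_G(A)$ and $\mu_G(B)$. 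Concentrating all mass at a single index minimises $\sum_f \phi(t_f)$, so that $\sum_f \phi(t_f) \geq (|F_2|-1) + \phi\bigl(\sum_f t_f\bigr) \geq (|F_2|-1) + 2^{d_G}(1-O(\alpha))$. Comparing bounds yields $|F_2| - 1 \leq C(d_G)\alpha$, hence $|F_2| = 1$ for $\alpha$ small enough. A symmetric argument decomposing $A$ gives $|F_1| = 1$, so $A \subset f_1\Lambda$ and $B \subset \Lambda f_2$.

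It remains to replace $\Lambda f_2$ by a right translate of $A^{C(d_G)}$. Since $\mu_G(f_1^{-1}A) \geq \mu_G(\Lambda)/C(d_G)$, the estimate $\mu_G(\Lambda A^{-1}) \leq \mu_G(\Lambda^2 F_1^{-1}) \leq C(d_G)\mu_G(A)$ and Ruzsa's covering lemma let me cover $\Lambda$, and hence $B$, by $C(d_G)$ right translates of a bounded product involving copies of $A$ and $A^{-1}$. The main obstacle I expect is the final step: collapsing these finitely many translates into a single right translate of $A^{C(d_G)}$ (a product of $A$'s with no inverses). I anticipate handling this by iterating the local Brunn--Minkowski to verify that $\mu_G(A^{C(d_G)})$ grows like $C(d_G)^{d_G}\mu_G(A)$ and thus matches $\mu_G(\Lambda)$ for a suitable choice of $C(d_G)$, so that a single judiciously chosen translate absorbs the cover; alternatively, one can run the same concavity argument once more on the decomposition of $B$ against the cover of $\Lambda f_2$ to force the covering set to have a single element.
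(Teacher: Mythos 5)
Your core argument is essentially the paper's own. The paper likewise covers $B$ by at most $C(d_G)$ right translates of the approximate subgroup attached to $A$ (Proposition \ref{Proposition: Tao} together with Lemma \ref{Lemma: Disjoint translates}), writes $B_f$ for the resulting pieces, uses disjointness of the products $AB_f f$ and the Brunn--Minkowski hypothesis of Lemma \ref{Lemma: Sets near equality are approximate subgroups} to bound $2^{d_G}(1+\alpha)\mu_G(B)$ from below by $(1-\alpha)\sum_f\bigl(\mu_G(A)^{1/d_G}+\mu_G(B_f)^{1/d_G}\bigr)^{d_G}$, and concludes $|F|=1$; the only difference is cosmetic: instead of your superadditivity estimate for $\phi(t)=(1+t^{1/d_G})^{d_G}$, the paper deduces from the same display that \emph{every} piece satisfies $\mu_G(B_f)>\frac12\mu_G(B)$, which forces a single piece since the $B_f$ partition $B$. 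The two-sided comparison $\mu_G(A)=(1\pm O_{d_G}(\alpha))\mu_G(B)$ that you invoke is indeed needed (both for bounding Tao's constant $K$ and in the final inequality), and it follows exactly as you indicate from the hypothesised local Brunn--Minkowski inequality applied to the pair $(A,B)$.

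Concerning the ``main obstacle'' you flag at the end: the paper does not address it either — its written proof stops at $|F|=1$, i.e.\ $B$ lies in one right translate of a bounded power of the approximate subgroup containing $A$ furnished by Lemma \ref{Lemma: Sets near equality are approximate subgroups}, and the notation $A^{C(d_G)}$ in the statement is used loosely: when the lemma is applied later (proof of Lemma \ref{Lemma: Commensurator}) it is in the form $Bg^{-1}\subset (AA^{-1})^{C(d_G)}$, so inverses are allowed and the ``product of $A$'s with no inverses'' reading is not the intended content. Do note, however, that your first proposed fix is not an argument: comparability of $\mu_G(A^{C(d_G)})$ with $\mu_G(\Lambda)$ gives no containment (sets of comparable measure need not be related by a translation), so nothing is ``absorbed'' that way. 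Your second suggestion — cover $\Lambda$ by boundedly many translates of $(f_1^{-1}A)(f_1^{-1}A)^{-1}$ via Ruzsa and rerun the disjointness/counting argument to cut the cover down to one translate — is the right move and matches how the paper manipulates these objects in Lemma \ref{Lemma: Sets near equality are approximate subgroups} and Lemma \ref{Lemma: Commensurator}.
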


\begin{proof}
    By Lemma \ref{Proposition: Tao}, $B$ is contained in $C(d_G)$ right translates of the stabiliser $S$ of $A$. There are therefore a constant $C(d_G)$ and a subset $F$ such that $B \subset \sqcup_{f \in F} \Lambda^{C(d_G)} f$ with the $(AS^{C(d_G)}f)_{f \in F}$ pairwise disjoint and $|F| \leq C(d_G)$ (Lemma \ref{Lemma: Disjoint translates}). Write $B_f:=\Lambda^{C(d_G)} \cap Bf^{-1}$

    Thus, 
    \begin{align}
         2^{d_G}(1 + \alpha)\mu_G(B) &\geq \mu_G(AB) \\
        & \geq \sum_{f\in F} \mu_G(AB_ff) \\
        & \geq (1 - \alpha) \sum_{f\in F} \left(\mu_G(A)^{\frac{1}{d_G}} + \mu_G(B_f)^{\frac{1}{d_G}}\right)^{d_G} \\
        & \geq (1 - \alpha) \sum_{f\in F} 2^{d_G}\mu_G(B_f) \\
        & \geq (1-\alpha) 2^{d_G}\mu_G(B_f).
    \end{align}
    So for all $f \in F$, 
$$\left(\mu_G(A)^{\frac{1}{d_G}} + \mu_G(B_f)^{\frac{1}{d_G}}\right)^{d_G} - 2^{d_G}\mu_G(B_f) \leq \frac{2^{d_G+1}\alpha}{1-\alpha}\mu_G(B).$$
Which implies
$$ \mu_G(B_f) \geq \left((1-\alpha)^{\frac{1}{d_G}} - 4\left( \frac{\alpha}{1-\alpha}\right)^{\frac{1}{d_G}}\right)^{d_G} \mu_G(B).$$
    
    For $\alpha$ sufficiently small, depending on $d_G$ only, $\mu_G(B_f) > \frac12 \mu_G(B)$. Since this is true for all $f \in F$ and $\sum \mu_G(B_f) =\mu_G(B)$, $|F| = 1$.  
\end{proof}

\subsection{Limits of approximate subgroups and density functions}\label{Subsection: Taking limits without boundedness assumption}

Our goal is now to prove local stability under the assumption that the subsets we are considering do not essentially live in a space of lower dimension. Our assumption will involve the group homomorphism $\phi$ constructed above as a limit along an ultrafilter. 

\begin{proposition}\label{Proposition: Local stability under assumption}
Let $A_n$ be subsets such that $\mu_G(A_n^2)/\mu_G(A_n) \rightarrow 2^{d_G}$. Suppose that there are $\alpha_n \geq \epsilon_n$ with $\alpha_n$ going to $0$ as $n$ goes to $\infty$ and such that $A_n \subset R(e, \alpha_n, \epsilon_n)$ and $\phi(\underline{A}\underline{A}^{-1})$ has non-empty interior. Then there is a convex set $C_n$ such that $\mu_G(C_n \Delta A_n) \rightarrow 0$ and $\phi(\underline{C})=\phi(\underline{A})$.
\end{proposition}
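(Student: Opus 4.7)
The plan is to pass to an ultralimit through the homomorphism $\phi$ of \S\ref{Subsection: Models for small convex neighbourhoods} and reduce the problem to the equality case of the Brunn--Minkowski inequality in $\mathbb{R}^{d_G}$. The nondegeneracy hypothesis that $\phi(\underline{A}\,\underline{A}^{-1})$ has nonempty interior is precisely the input needed for measure-theoretic information to survive transport through $\phi$, and the conclusion (convexity modulo null sets) is the classical equality characterisation of Brunn--Minkowski in Euclidean space.

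Concretely, I would first set $\tilde A := \phi(\underline A) \subset \mathbb{R}^{d_G}$ and construct a Radon measure $\tilde\mu$ on $\mathbb{R}^{d_G}$ as the $\mathcal{F}$-limit of the pushforwards of $\mu_G$ restricted to $A_n$ and rescaled by $\mu_G(R(e,\alpha_n,\epsilon_n))$. Using Proposition \ref{Proposition: Haar measure in exponential coordinates} together with the definition of $\phi$ via the $\mathfrak{h}\oplus\mathfrak{h}^\perp$ splitting, and the translation invariance of Haar measure, $\tilde\mu$ is forced to be a constant multiple of the Lebesgue measure $\lambda$ restricted to $\tilde A$; the nondegeneracy assumption then ensures $0 < \lambda(\tilde A) < \infty$, as otherwise $\tilde A - \tilde A$ could not have interior. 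Since $\phi$ converts the group law into addition, $\phi(\underline A\cdot \underline A) = \tilde A + \tilde A$, and the hypothesis $\mu_G(A_n^2)/\mu_G(A_n) \to 2^{d_G}$ combined with Theorem \ref{Theorem: Local Brunn--Minkowski} at the scales $(\alpha_n,\epsilon_n)$ gives in the limit
$$\lambda(\tilde A + \tilde A) = 2^{d_G}\,\lambda(\tilde A).$$
By the classical equality case of the Brunn--Minkowski inequality in $\mathbb{R}^{d_G}$ (see e.g.\ \cite{BrunnMinkowski02Gardner}), $\tilde A$ coincides with a convex body $\tilde C$ up to a Lebesgue-null set. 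Pulling $\tilde C$ back via the exponential chart at scales $(\alpha_n,\epsilon_n)$ produces compact sets $C_n \subset G$ which are convex in the chosen coordinates, satisfy $\mu_G(C_n\Delta A_n)\to 0$ by the very construction of $\tilde\mu$, and $\phi(\underline C) = \tilde C = \phi(\underline A)$ after adjusting $\tilde C$ to be the closed convex hull of the Lebesgue density points of $\tilde A$.

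The main obstacle I anticipate is the correct construction of $\tilde\mu$ and its identification with Lebesgue measure. The two scales $\alpha_n$ (in the $H$-direction) and $\epsilon_n$ (in the $H^\perp$-direction) enter $R(e,\alpha_n,\epsilon_n)$ asymmetrically but are merged into a single Euclidean scale by $\phi$, whose kernel consists precisely of those sequences living at strictly smaller scales than $(\alpha_n,\epsilon_n)$. Securing tightness of the rescaled pushforwards and ruling out concentration on lower-dimensional subsets requires uniform estimates via the diffeomorphism $(h_1,h_2)\mapsto e^{h_1}e^{h_2}$ of \S\ref{Subsection: Multiplicative properties of rectangles} and the Jacobian bounds coming from the BCH formula. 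A density-function regularisation of $\mathbf{1}_{A_n}$ in the spirit of \cite{jing2023measure}, convolved against $\mathbf{1}_{R(e,\alpha_n',\epsilon_n')}$ at a well-chosen sub-scale $(\alpha_n',\epsilon_n')$, is a natural way to implement this step quantitatively. Once this measure-theoretic input is in place, the equality case of Brunn--Minkowski closes the argument.
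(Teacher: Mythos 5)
There are two genuine gaps, and together they hide essentially all of the work in the paper's proof. First, your identification of the limit measure $\tilde\mu$ with a constant multiple of Lebesgue measure restricted to $\tilde A=\phi(\underline A)$ does not follow from ``translation invariance of Haar measure'': restricting $\mu_G$ to $A_n$ destroys any invariance, and a priori the rescaled pushforwards can converge to a measure with highly non-constant density on $\tilde A$ (think of $A_n$ having limiting local density $1$ on part of the rectangle and $1/2$ on another part, at scales finer than $(\alpha_n,\epsilon_n)$). That the limiting density is a scalar multiple of the indicator of a convex set is exactly the content of Lemma \ref{Lemma: Density functions are convex} and Corollary \ref{Corollary: Density functions are indicators}, and it is derived from the doubling hypothesis $\mu_G(A_n^2)/\mu_G(A_n)\to 2^{d_G}$ via a functional Brunn--Minkowski inequality for the density functions; setting this up requires knowing that the $A_n$ are (contained in) bounded approximate subgroups (Lemma \ref{Lemma: Sets near equality are approximate subgroups}) and invoking the locally compact model to define the densities at all. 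Your appeal to the classical equality case of Brunn--Minkowski for $\tilde A+\tilde A$ also presupposes that $\mathcal{F}$-limits of $\mu_G(A_n^2)$, suitably normalised, compute Lebesgue measures of $\phi$-images; this is again the density-function machinery, not something that comes for free from $\phi$ being a homomorphism.

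Second, even granting that the limiting density equals $c\,\mathbf 1_{\tilde C}$ for a convex $\tilde C$, the conclusion $\mu_G(C_n\Delta A_n)\to 0$ does not hold ``by the very construction of $\tilde\mu$''. The limiting procedure only records how the mass of $A_n$ spreads \emph{relative to its own total mass} (or, in your normalisation, relative to $\mu_G(R(e,\alpha_n,\epsilon_n))$, in which case $\tilde\mu$ could a priori even be the zero measure); it cannot distinguish $A_n$ filling $\phi_n^{-1}(\tilde C)$ with density tending to $1$ from $A_n$ having constant density $c<1$ there. This is precisely the obstruction the paper flags at the start of \S 5.4, and closing it is where the second half of the argument lives: the ray-wise one-dimensional stability argument (Lemma \ref{Lemma: Minimal subset contains a large neighbourhood of e}, using the Figalli--Jerison one-dimensional result in polar coordinates), the simplex-filling Lemma \ref{Lemma: Fills in a simplex} showing that $A_n$ occupies a fixed-proportion open piece of $\tilde C$ with density $\to 1$, and the reduction to the case $e\in A$ via Lemma \ref{Lemma: Subset with minimal expansion is normal}. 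Your proposal has no counterpart to any of this, so the final step asserting $\mu_G(C_n\Delta A_n)\to 0$ begs the question. The convolution idea at a sub-scale that you mention is indeed close in spirit to what a quantitative version would use (as the paper notes in \S 7.4), but as written it is a remark, not an argument filling the gap.
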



Fix a sequence of compact subsets $A_n$ as in Proposition \ref{Proposition: Local stability under assumption} and an ultrafilter $\mathcal{F}$. Write $\underline{A}:=\prod_n A_n /\mathcal{F}$ and $\Gamma$ the group it generates. Let $m \geq 0$ be an integer and $B_n \subset \left(A_n A_n^{-1}\right)^m$ be a measurable subset for all $n \geq 0$. Denote by $\underline{B}:=\prod_{n \geq 0} B_n / \mathcal{F}$, such a set is called an \emph{internal} subset of $\Gamma$. A useful feature of ultraproducts is that the internal set $\underline{B}$ shares many of the algebraic properties of the sets $B_n$. In particular, if the $B_n$'s are all $K$-approximate subgroups, then $\underline{B}$ is a $K$-approximate subgroup. 

One can also define a measure of $\underline{B}$ as follows $$\underline{\mu}(\underline{B}) = \mathcal{F}-\lim \frac{\mu_G(B_n)}{\mu_G(A_n)}. $$
Internal subsets are closed under finite intersection and this can be used to extend $\underline{\mu}$ to a $\Gamma$-invariant $\sigma$-additive measure on the $\sigma$-algebra generated by internal subsets, see \cite[\S 4]{jing2023measure} where these elementary properties are checked thoroughly. By Lemma \ref{Lemma: Sets near equality are approximate subgroups}, $\underline{\mu}$ moreover takes bounded values on internal subsets - hence $\underline{\mu}$ is $\sigma$-finite. 

By Theorem \ref{Theorem: Local Brunn--Minkowski} we find that for any two internal subsets $\underline{B}, \underline{C}$ we have 
\begin{equation} \underline{\mu}(\underline{B}\underline{C})^{\frac{1}{d_G}} \geq \underline{\mu}(\underline{B})^{\frac{1}{d_G}} + \underline{\mu}(\underline{C})^{\frac{1}{d_G}}. \label{Eq: Brunn--Minkowski in ultra-product}
\end{equation}
As a direct corollary, any ascending union of internal sets also satisfies \eqref{Eq: Brunn--Minkowski in ultra-product}. Note in particular that if $x \in \mathbb{R}^{d_G}$ and $B_r$ denotes the open ball of radius $r > 0$ centred at $x$ for the norm $|\cdot |$, then $x + B_r$ is an ascending union of internal sets.

We can now study the so-called "density function" of $A$ and its doubling which will enable us to relate the algebraic limit given by the ultraproduct $\underline{A}$ and the rescaling procedure given by the map $\phi$ from \S \ref{Subsection: Models for small convex neighbourhoods}. Let us recall a result from \cite[\S 8]{jing2023measure}: 

\begin{lemma}
Let $\underline{B} \subset \Gamma$ be an internal subset covered by finitely many translates of $\underline{A}\underline{A}^{-1}$. There is a measurable function $f_{\underline{B}}$ defined on $\mathbb{R}^{d_G}$ such that for all $X \subset \mathbb{R}^{d_G}$ measurable we have
$$ \int_X f_{\underline{B}}(x) d\lambda(x) = \underline{\mu}(\underline{B} \cap \phi^{-1}(X)).$$
Moreover,  for any positive real number $r$, let $B_r$ be ball centred at $0$ for the norm $|\cdot|$ in $\mathbb{R}^{d_G}$. Furthermore,  $$f_{\underline{B}}(x) = \lim_{r \rightarrow 0} \frac{\underline{\mu}(\underline{B} \cap \phi^{-1}(x + B_r))}{\lambda(B_r)}$$
for almost all $x \in \mathbb{R}^{d_G}$ (\cite[Prop. 8.5]{jing2023measure}).
\end{lemma}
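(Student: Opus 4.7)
The natural plan is to invoke the Radon--Nikodym theorem, so the main task is to build a $\sigma$-finite Borel measure $\nu$ on $\mathbb{R}^{d_G}$ from the data $(\underline{B},\underline{\mu},\phi)$ and check that $\nu \ll \lambda$. I would first set
$$ \nu(X) := \underline{\mu}(\underline{B} \cap \phi^{-1}(X)) $$
for Borel $X \subset \mathbb{R}^{d_G}$, and verify that this is well-defined on the Borel $\sigma$-algebra. For an open ball $x + B_r$ the set $\phi^{-1}(x+B_r)$ is, by the very definition of $\phi$ from \S \ref{Subsection: Models for small convex neighbourhoods}, an ascending union of internal subsets of the form $\prod_n (x_n \cdot R(e,r'\alpha_n,r'\epsilon_n))/\mathcal{F}$ for $r' \nearrow r$ and suitable $x_n \to x$. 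Intersecting with $\underline{B}$, which is itself internal and covered by finitely many translates of $\underline{A}\underline{A}^{-1}$, yields again an ascending union of internal subsets, hence is measured by $\underline{\mu}$. From there, $\sigma$-additivity extends $\nu$ to all Borel sets, and finiteness on bounded subsets follows from $\underline{\mu}(\underline{B}) < \infty$.

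The key quantitative input is the upper bound
$$ \nu(x+B_r) \leq \underline{\mu}(\phi^{-1}(x+B_r)) \leq C r^{d_G} $$
valid for some dimensional constant $C > 0$ and all $r$ small enough. To establish it, I would apply the Brunn--Minkowski inequality \eqref{Eq: Brunn--Minkowski in ultra-product} in the ultraproduct: since $\phi$ is a group homomorphism onto $\mathbb{R}^{d_G}$, we have $\phi^{-1}(B_r)\cdot \phi^{-1}(B_r) \subset \phi^{-1}(B_{2r})$, and iterating yields lower bounds on $\underline{\mu}(\phi^{-1}(B_R))$ in terms of $\underline{\mu}(\phi^{-1}(B_r))$. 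Combined with the direct computation of $\mu_G(R(e,r\alpha_n,r\epsilon_n))/\mu_G(R(e,\alpha_n,\epsilon_n)) \sim r^{d_G}$, this pins down $\underline{\mu}(\phi^{-1}(B_r))$ up to the dimensional constant $C$. Left-invariance of $\underline{\mu}$ then upgrades this to the same bound at every point $x$.

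With the bound in hand, absolute continuity $\nu \ll \lambda$ is immediate: any $\lambda$-null set is contained in a countable union of balls of arbitrarily small total volume, and $\nu$ of such a cover is controlled by $C$ times that volume. The Radon--Nikodym theorem now produces a measurable density $f_{\underline{B}} \in L^1(\mathbb{R}^{d_G},\lambda)$ such that
$$ \int_X f_{\underline{B}}(x)\, d\lambda(x) = \nu(X) = \underline{\mu}(\underline{B} \cap \phi^{-1}(X)) $$
for every Borel $X$. The pointwise limit formula then follows from the Lebesgue differentiation theorem applied to $f_{\underline{B}}$ with the family of balls $\{x + B_r\}_{r>0}$, using that $\lambda(x+B_r) = \lambda(B_r)$ and the above identity for $X = x+B_r$.

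The main obstacle is the careful handling of the first two steps: confirming that $\phi^{-1}(X)$ lies in the domain of $\underline{\mu}$ for all Borel $X$ (not merely open ones), and establishing the $C r^{d_G}$ upper bound uniformly in $x$. The latter hinges on the fact that, although $\phi$ has huge kernel, intersections with the internal set $\underline{B}$ are tame because $\underline{B}$ itself sits in finitely many translates of $\underline{A}\underline{A}^{-1}$; combining this with \eqref{Eq: Brunn--Minkowski in ultra-product} is what makes the density function of polynomial (in fact of the correct order $d_G$) growth, and therefore integrable.
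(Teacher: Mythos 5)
Your route (push $\underline{\mu}|_{\underline{B}}$ forward under $\phi$, prove absolute continuity directly, then apply Radon--Nikodym and Lebesgue differentiation) differs from the paper's, which passes through the locally compact model of the approximate group $(\underline{A}\underline{A}^{-1})^4$, cites the density result of Jing--Tran--Zhang on that model, and obtains $f_{\underline{B}}$ by integrating over the fibres of the induced homomorphism onto $\mathbb{R}^{d_G}$. Your measurability discussion and the final Radon--Nikodym/differentiation steps are fine; the problem lies in the central quantitative claim, which is exactly where all the content of the lemma sits.

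The gap is the bound $\underline{\mu}(\phi^{-1}(x+B_r)) \leq Cr^{d_G}$. First, the full preimage $\phi^{-1}(x+B_r)$ inside $\Gamma$ is a union over all powers $(\underline{A}\underline{A}^{-1})^m$, and nothing in your argument controls its measure (it is not even clearly finite); what is needed is a bound on $\underline{\mu}(\underline{B}\cap\phi^{-1}(x+B_r))$, exploiting that $\underline{B}$ sits in finitely many translates of $\underline{A}\underline{A}^{-1}$ --- you mention this at the end, but your displayed chain uses the full preimage. Second, the ``direct computation'' $\mu_G(R(e,r\alpha_n,r\epsilon_n))/\mu_G(R(e,\alpha_n,\epsilon_n))\sim r^{d_G}$ does not pin down any $\underline{\mu}$-measure: $\underline{\mu}$ is normalised by $\mu_G(A_n)$, not by the Haar measure of the ambient rectangle (these may differ by an unbounded factor), and the rescaled rectangles need not lie in powers of $A_nA_n^{-1}$, so they are not internal subsets of $\Gamma$ and their $\underline{\mu}$-measure is not given by that ratio; the inequality \eqref{Eq: Brunn--Minkowski in ultra-product} only transfers lower bounds from small scales to large ones, so without a valid upper bound at some fixed scale it yields nothing. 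Third, and most tellingly, you never use the standing non-degeneracy hypothesis of Proposition \ref{Proposition: Local stability under assumption} (that $\phi(\underline{A}\underline{A}^{-1})$ has non-empty interior), yet without it the conclusion is false: if the $A_n$ essentially concentrate in a rectangle with a shorter $\mathfrak{h}^{\perp}$-side, $\phi(\underline{A})$ lies in a proper subspace and the pushforward of $\underline{\mu}|_{\underline{A}}$ is singular with respect to $\lambda$, so no $L^1$ density exists; any correct proof must invoke this hypothesis. The bound you need can be repaired as follows: pick a ball of fixed radius inside $\phi(\underline{A}\underline{A}^{-1})$ and, for small $r$, choose $N\gtrsim r^{-d_G}$ elements $g_1,\dots,g_N\in\underline{A}\underline{A}^{-1}$ whose images under $\phi$ are $3r$-separated; the translates $g_i\left(\underline{A}\underline{A}^{-1}\cap\phi^{-1}(x+B_r)\right)$ are pairwise disjoint subsets of $(\underline{A}\underline{A}^{-1})^2$ of equal $\underline{\mu}$-measure by $\Gamma$-invariance, whence $\underline{\mu}\left(\underline{A}\underline{A}^{-1}\cap\phi^{-1}(x+B_r)\right)\leq \underline{\mu}\left((\underline{A}\underline{A}^{-1})^2\right)/N\leq Cr^{d_G}$, and the finite covering of $\underline{B}$ gives $\nu(x+B_r)\leq C_{\underline{B}}r^{d_G}$; with that in place your Radon--Nikodym and differentiation steps do go through.
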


\begin{proof}
    According to Lemma \ref{Lemma: Sets near equality are approximate subgroups}, $(\underline{A}\underline{A}^{-1})^4$ is an approximate subgroup. By \cite[Thm 4.2]{zbMATH06005474} (see also \cite[\S 6]{jing2023measure} for a detailed account), it has a locally compact model $\psi: \langle \underline{A} \rangle \rightarrow L$.  By \cite[Prop. 8.3]{jing2023measure} there is a measurable function $h_{\underline{B}}: L \rightarrow \mathbb{R}$ such that for all $X \subset L$ measurable we have
$$ \int_X h_{\underline{B}}(x) d\lambda(x) = \underline{\mu}(\underline{B} \cap \phi^{-1}(X)).$$

According to \cite[Thm 4.2]{zbMATH06005474} again, there is a continuous group homomorphism $\varphi: L \rightarrow \mathbb{R}^{d_G}$ such that $\phi = \varphi \circ \psi$. Define the map 
$$f_{\underline{B}}(x):= \int_{\ker \varphi} h
_{\underline{B}}(x'h)d\mu_{\ker \varphi}(h)$$
- where $x'$ is any element in the fibre of $\varphi$ above $x$ - when possible, and set $f_{\underline{B}}(x)=0$ otherwise. By the quotient formula \eqref{Eq: Quotient formula}, $f_{\underline{B}}$ satisfies the hypothesis. 
\end{proof}
In our case, we can prove that such density functions satisfy a striking inequality: 

\begin{lemma}\label{Lemma: Density functions are convex}
    Let $\underline{B}, \underline{C} \subset \Gamma$ two internal subsets. We have: 
    \begin{enumerate}
        \item $\forall t  \in [0;1]$, for almost every $x,y \in \mathbb{R}^{d_G}$, $$f_{\underline{B}\underline{C}}(x + y)^{\frac{1}{d_G}} \geq \mathbf{1}_{f_{\underline{B}}(x)f_{\underline{C}}(y) > 0 }\left((1-t) f_{\underline{B}}(x)^{\frac{1}{d_G}} + tf_{\underline{C}}(y)^{\frac{1}{d_G}}\right);$$
        \item for almost every $x \in \mathbb{R}^{d_G}$, $f_{\underline{A}^2}(2x) = f_{\underline{A}}(x)$.
    \end{enumerate}
\end{lemma}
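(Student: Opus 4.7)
The strategy is to bootstrap the Brunn--Minkowski inequality \eqref{Eq: Brunn--Minkowski in ultra-product} for internal subsets of the ultraproduct $\Gamma$ into a pointwise statement about the density functions, via the Lebesgue differentiation formula from the preceding lemma. Part (2) will then follow by specialising (1) to $\underline{B}=\underline{C}=\underline{A}$ with $t=1/2$ and matching total masses, using the hypothesis $\mu_G(A_n^2)/\mu_G(A_n) \to 2^{d_G}$ built into Proposition \ref{Proposition: Local stability under assumption}.

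For part (1), fix $t \in (0,1)$ (the endpoints follow by continuity in $t$) and work at a pair $(x,y)$ for which the limit representation of $f_{\underline{B}}(x)$ and $f_{\underline{C}}(y)$ from the preceding lemma holds; by Fubini this is the case for almost every $(x,y)$. Define
$$\underline{B}_r := \underline{B} \cap \phi^{-1}(x + B_{(1-t)r}), \qquad \underline{C}_r := \underline{C} \cap \phi^{-1}(y + B_{tr}).$$
Since $\phi$ is a group homomorphism and $|\cdot|$ is a norm, $\phi(\underline{B}_r \underline{C}_r) \subset x+y+B_r$, hence $\underline{B}_r \underline{C}_r \subset \underline{B}\underline{C} \cap \phi^{-1}(x+y+B_r)$. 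If both $f_{\underline{B}}(x) > 0$ and $f_{\underline{C}}(y) > 0$, then $\underline{B}_r,\underline{C}_r$ have positive $\underline{\mu}$-measure for all small $r$, and the Brunn--Minkowski inequality \eqref{Eq: Brunn--Minkowski in ultra-product}, applied to these ascending unions of internal subsets, gives
$$\underline{\mu}\bigl(\underline{B}\underline{C} \cap \phi^{-1}(x+y+B_r)\bigr)^{1/d_G} \geq \underline{\mu}(\underline{B}_r)^{1/d_G} + \underline{\mu}(\underline{C}_r)^{1/d_G}.$$
Dividing by $\lambda(B_r)^{1/d_G}$, using the scaling $\lambda(B_{sr}) = s^{d_G}\lambda(B_r)$ for $s \in \{t,1-t\}$, and letting $r\to 0$, the Lebesgue differentiation formula produces exactly the inequality of (1) at $(x,y)$. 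When $f_{\underline{B}}(x)f_{\underline{C}}(y)=0$, the indicator zeroes out the right-hand side and there is nothing to prove.

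For part (2), apply (1) with $\underline{B}=\underline{C}=\underline{A}$, $y=x$ and $t=1/2$ to obtain $f_{\underline{A}^2}(2x) \geq f_{\underline{A}}(x)$ for almost every $x$ (the claim is trivial where $f_{\underline{A}}(x)=0$). To upgrade this to equality, we compute both integrals. The normalisation $\underline{\mu}(\underline{A})=\lim_{\mathcal F}\mu_G(A_n)/\mu_G(A_n)=1$ gives $\int f_{\underline{A}}\,d\lambda = 1$, while the hypothesis $\mu_G(A_n^2)/\mu_G(A_n)\to 2^{d_G}$ forces $\underline{\mu}(\underline{A}^2)=2^{d_G}$. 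The change of variables $y=2x$ then yields
$$\int_{\mathbb{R}^{d_G}} f_{\underline{A}^2}(2x)\,d\lambda(x) = 2^{-d_G}\int f_{\underline{A}^2}(y)\,d\lambda(y) = 2^{-d_G} \cdot 2^{d_G} = 1 = \int f_{\underline{A}}\,d\lambda.$$
Two non-negative integrable functions with equal integrals, one dominating the other, must coincide almost everywhere; hence $f_{\underline{A}^2}(2x) = f_{\underline{A}}(x)$ for a.e.\ $x$.

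The only delicate point is the invocation of \eqref{Eq: Brunn--Minkowski in ultra-product} on $\underline{B}_r$ and $\underline{C}_r$: these are not themselves internal, but intersections of internal sets with preimages under $\phi$ of open balls. As flagged in the paragraph preceding the lemma, each such preimage is an ascending union of internal subsets, and $\sigma$-additivity of $\underline{\mu}$ then extends the Brunn--Minkowski inequality to them. Modulo this extension, the argument is the standard Pr\'ekopa--Leindler-type pairing of Brunn--Minkowski with Lebesgue differentiation, with part (2) being a saturation argument exploiting the extremal normalisation $\underline{\mu}(\underline{A}^2) = 2^{d_G}\underline{\mu}(\underline{A})$.
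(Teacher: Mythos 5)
Your proof is correct and follows essentially the same route as the paper: the ultraproduct Brunn--Minkowski inequality \eqref{Eq: Brunn--Minkowski in ultra-product} applied to $\underline{B}\cap\phi^{-1}(x+B_{(1-t)r})$ and $\underline{C}\cap\phi^{-1}(y+B_{tr})$, combined with the Lebesgue differentiation formula as $r\to 0$ (the paper takes radii $(1-t)2^{-n}$ and $t2^{-n}$, which is the same thing). Part (2) is likewise handled exactly as in the paper, by integrating $f_{\underline{A}^2}(2x)\geq f_{\underline{A}}(x)$ and using $\underline{\mu}(\underline{A}^2)\leq 2^{d_G}\underline{\mu}(\underline{A})$ to force equality almost everywhere.
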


\begin{proof}
    Remark that it suffices to prove (1) for $t \in (0;1)$. We know that for all $x, y \in \mathbb{R}^{d_G}$ and all open subsets $X,Y \subset \mathbb{R}^{d_G}$ that
    \begin{align} \underline{\mu}(\underline{B}\underline{C} \cap \phi^{-1}(x + y + X + Y))^{\frac{1}{d_G}} & \geq \underline{\mu}( (\underline{B} \cap \phi^{-1}(x + X )) (\underline{C} \cap \phi^{-1}( y + Y)))^{\frac{1}{d_G}} \\
    & \geq  \underline{\mu}( (\underline{B} \cap \phi^{-1}(x + X ))^{\frac{1}{d_G}} + \underline{\mu}(\underline{C} \cap \phi^{-1}( y + Y))^{\frac{1}{d_G}}
    \end{align}
    as a consequence of \eqref{Eq: Brunn--Minkowski in ultra-product}.
    
   Now, for $r, s > 0$,
    
    \begin{align} \left(\frac{\underline{\mu}(\underline{B}\underline{C}\cap \phi^{-1}(x + y + B_{r+s})}{\lambda(B_{r+s})}\right)^{\frac{1}{d_G}}  &
     \geq \left(\frac{\lambda(B_r)}{\lambda(B_{r+s})}\right)^{\frac{1}{d_G}} \left(\frac{\underline{\mu}( (\underline{B} \cap \phi^{-1}(x + B_r ))}{\lambda(B_r)}\right)^{\frac{1}{d_G}}  \\
     &\  \   \     + \left(\frac{\lambda(B_s)}{\lambda(B_{r+s})}\right)^{\frac{1}{d_G}} \left(\frac{\underline{\mu}( (\underline{C} \cap \phi^{-1}(y + B_s ))}{\lambda(B_s)}\right)^{\frac{1}{d_G}}.
    \end{align}
    This becomes
    \begin{align} & \left(\frac{\underline{\mu}(\underline{B}\underline{C} \cap \phi^{-1}(x + y + B_{r+s})}{\lambda(B_{r+s})}\right)^{\frac{1}{d_G}} \\ & \geq  \frac{r}{r+s}\left(\frac{\underline{\mu}( (\underline{B} \cap \phi^{-1}(x + B_r ))}{\lambda(B_r)}\right)^{\frac{1}{d_G}} + \frac{s}{r+s} \left(\frac{\underline{\mu}( (\underline{C} \cap \phi^{-1}(y + B_s ))}{\lambda(B_s)}\right)^{\frac{1}{d_G}}.
    \end{align}
    Thus, taking $r=(1-t)2^{-n}$ and $s=t2^{-n}$ we get (1) when $n \rightarrow \infty$.  

    Let us show (2). We have $f_{\underline{A}^2}(2x) \geq f_{\underline{A}}(x)$ for almost every $x \in \mathbb{R}^{d_G}$ according to (1). But \begin{align}\underline{\mu}(\underline{A}) & =\int_{\mathbb{R}^{d_G}}f_{\underline{A}}(x)d\lambda(x) \\ &\leq \int_{\mathbb{R}^{d_G}}f_{\underline{A}^2}(2x)d\lambda(x) \\ 
    &= 2^{-d} \int_{\mathbb{R}^{d_G}}f_{\underline{A}^2}(x)d\lambda(x) \leq \underline{\mu}(\underline{A}).
    \end{align}
    Hence, equality holds almost everywhere. 
\end{proof}

We therefore find: 

\begin{corollary}\label{Corollary: Density functions are indicators}
The function $f_{\underline{A}}$ is a scalar multiple of an indicator function of a convex set. 

Moreover, if $\underline{B}$ is such that $\underline{\mu}(\underline{A}\underline{B}) \leq 2^{d_G} \underline{\mu}(\underline{B})$ and $\underline{\mu}(\underline{B})=\underline{\mu}(\underline{A})$, $f_{\underline{B}}$ is a translate of $f_{\underline{A}}$.
\end{corollary}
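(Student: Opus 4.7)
\emph{Proof plan.} The first part combines items (1) and (2) of Lemma~\ref{Lemma: Density functions are convex}. Setting $g := f_{\underline{A}}^{1/d_G}$ and applying (1) with $\underline{B} = \underline{C} = \underline{A}$ together with (2), I obtain that for every $t \in [0,1]$ and almost every $x, y$ in the support of $f_{\underline{A}}$,
\[
g\bigl((x+y)/2\bigr) = f_{\underline{A}^2}(x+y)^{1/d_G} \geq (1-t)g(x) + tg(y).
\]
Specialising to $t = 1/2$ yields midpoint concavity of $g$; measurability upgrades this to genuine concavity on the essentially convex support. Taking the supremum over $t \in [0,1]$ produces the stronger inequality $g((x+y)/2) \geq \max(g(x), g(y))$. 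Since concavity forces $g((x+y)/2) \leq (g(x)+g(y))/2$, combining the two gives $\max(g(x),g(y)) \leq (g(x)+g(y))/2$, whence $g(x) = g(y)$ almost everywhere. Hence $g$ is essentially constant on its convex support, giving $f_{\underline{A}} = \alpha \mathbf{1}_K$ for some $\alpha > 0$ and convex $K$.

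For the second part, the hypotheses together with~\eqref{Eq: Brunn--Minkowski in ultra-product} force $\underline{\mu}(\underline{A}\underline{B}) = 2^{d_G}\underline{\mu}(\underline{A})$. Applying Lemma~\ref{Lemma: Density functions are convex}(1) with $(\underline{A},\underline{B})$ and taking the supremum over $t \in [0,1]$ yields, for $x \in K$ and $y \in L := \operatorname{supp}(f_{\underline{B}})$,
\[
f_{\underline{A}\underline{B}}(x+y) \geq \max\bigl(\alpha,\, f_{\underline{B}}(y)\bigr).
\]
Writing $L_r := \{f_{\underline{B}} \geq r\}$, this gives the level-set inclusions $\{f_{\underline{A}\underline{B}} \geq r\} \supset K + L$ for $r \leq \alpha$ and $\{f_{\underline{A}\underline{B}} \geq r\} \supset K + L_r$ for $r > \alpha$. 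Combining the layer-cake formula with the Euclidean Brunn--Minkowski inequality gives
\[
2^{d_G}\alpha\lambda(K) = \int f_{\underline{A}\underline{B}} \geq \alpha\bigl(\lambda(K)^{1/d_G}+\lambda(L)^{1/d_G}\bigr)^{d_G} + \int_\alpha^\infty\bigl(\lambda(K)^{1/d_G}+\lambda(L_r)^{1/d_G}\bigr)^{d_G}\,dr.
\]

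The hard step is a sharp constrained optimisation. Using the AM--GM bound $(1+u)^{d_G} \geq 2^{d_G}u^{d_G/2}$ together with the mass constraint $\int_0^\infty \lambda(L_r)\,dr = \int f_{\underline{B}} = \alpha\lambda(K)$ and the monotonicity of $r \mapsto \lambda(L_r)$, I would show that the displayed lower bound is always at least $2^{d_G}\alpha\lambda(K)$, with equality precisely when $\lambda(L_r) = \lambda(K)$ for $r \in (0,\alpha]$ and $\lambda(L_r) = 0$ for $r > \alpha$. This forces $f_{\underline{B}} = \alpha\mathbf{1}_L$ with $\lambda(L) = \lambda(K)$. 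Finally, the equality case of Euclidean Brunn--Minkowski $\lambda(K+L)^{1/d_G} = 2\lambda(K)^{1/d_G}$ with $\lambda(K) = \lambda(L)$ forces $L$ to be a translate of $K$, and so $f_{\underline{B}} = \alpha\mathbf{1}_{K+v}$ is a translate of $f_{\underline{A}}$. I expect the main obstacle to be carrying out this optimisation sharply, identifying the correct equality case so that both the indicator structure of $f_{\underline{B}}$ and the homothety with $K$ emerge simultaneously.
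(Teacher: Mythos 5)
Your first half has a genuine error at the decisive step. Concavity of $g=f_{\underline{A}}^{1/d_G}$ gives $g\bigl((x+y)/2\bigr)\ge\bigl(g(x)+g(y)\bigr)/2$, not $\le$; the inequality you invoke is the convexity inequality, so combining it with $g\bigl((x+y)/2\bigr)\ge\max\bigl(g(x),g(y)\bigr)$ produces no contradiction and does not force $g(x)=g(y)$. Moreover, your upgrade from almost-everywhere midpoint concavity to genuine concavity presupposes that the essential support is (essentially) convex, which is part of what has to be proved. The conclusion is true, but it needs a measure-theoretic argument rather than a two-line pointwise one: from the $t=1$ case of Lemma~\ref{Lemma: Density functions are convex}(1) combined with (2), the superlevel set $A_\epsilon:=\{f_{\underline{A}}\ge(1-\epsilon)\sup f_{\underline{A}}\}$ satisfies $\tfrac12\bigl(A_{>0}+A_\epsilon\bigr)\subset A_\epsilon$ up to null sets, where $A_{>0}$ is the essential support; the Brunn--Minkowski inequality then gives $\lambda(A_\epsilon)\ge\lambda(A_{>0})$, so $A_\epsilon$ is co-null in $A_{>0}$, the equality case of Brunn--Minkowski makes $A_{>0}$ co-null in its convex hull, and letting $\epsilon\to0$ yields $f_{\underline{A}}=\sup f_{\underline{A}}\cdot\mathbf{1}_{A_{>0}}$. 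Some argument of this type must replace your final deduction; this is exactly what the paper does.

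For the ``moreover'' part your route differs from the paper, which simply reruns the same superlevel-set argument, and your plan does work: the optimisation you flag as the main obstacle closes with precisely the AM--GM bound you name. Writing $k=\lambda(K)$, $s=\lambda(L)/k$, $m(r)=\lambda(L_r)$, one has $\bigl(k^{1/d_G}+m(r)^{1/d_G}\bigr)^{d_G}\ge 2^{d_G}\sqrt{k\,m(r)}\ge 2^{d_G}m(r)/\sqrt{s}$ since $m(r)\le sk$, while $\int_0^\alpha m\le\alpha sk$ forces $\int_\alpha^\infty m\ge\alpha k(1-s)$; hence for $s\le1$ your lower bound is at least $2^{d_G}\alpha k\bigl(\sqrt{s}+(1-s)/\sqrt{s}\bigr)=2^{d_G}\alpha k/\sqrt{s}$, and for $s\ge1$ the first term alone is at least $2^{d_G}\alpha k$. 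Comparing with $2^{d_G}\alpha k$ forces $s=1$, no mass of $f_{\underline{B}}$ above height $\alpha$, and $m\equiv k$ on $(0,\alpha)$, so $f_{\underline{B}}=\alpha\mathbf{1}_L$ with $\lambda(L)=\lambda(K)$, and the equality case of Euclidean Brunn--Minkowski for sets of equal measure gives the translate, as you say. Two repairs are needed: the integral in your display must be cut off at the essential supremum of $f_{\underline{B}}$ (as written the integrand equals $\lambda(K)>0$ wherever $\lambda(L_r)=0$, making it infinite; the inclusion $\{f_{\underline{A}\underline{B}}\ge r\}\supset K+L_r$ is only usable when $\lambda(L_r)>0$), and since Lemma~\ref{Lemma: Density functions are convex}(1) holds only for almost every $x,y$, the sumset inclusions hold up to null sets, so the essential-sumset form of Brunn--Minkowski should be invoked.
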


\begin{proof}
    By combining (1) and (2) of Lemma \ref{Lemma: Density functions are convex} we find that for all $x, y \in \mathbb{R}^{d_G}$ and $t \in [0;1]$ we have 
    $$f_{\underline{A}}\left(\frac{x+y}{2}\right)^{\frac{1}{d_G}}  \geq \mathbf{1}_{f_{\underline{A}}(x)f_{\underline{A}}(y) > 0 }\left((1-t) f_{\underline{A}}(x)^{\frac{1}{d_G}} + tf_{\underline{A}}(y)^{\frac{1}{d_G}}\right).$$
    Taking $t=1$ we have 
    $$f_{\underline{A}}\left(\frac{x+y}{2}\right)^{\frac{1}{d_G}}  \geq \mathbf{1}_{f_{\underline{A}}(x)f_{\underline{A}}(y) > 0 }f_{\underline{A}}(y)^{\frac{1}{d_G}}.$$
    Choose $\epsilon > 0$ and let $A_{\epsilon}$ be the subset of $\phi(\underline{A})$ such that $f_{\underline{A}}(x)\geq (1-\epsilon) \sup f_{\underline{A}}$ when $x \in A_{\epsilon}$. Define by $A_{>0}$ the essential support of $f_{\underline{A}}$. By the above inequality, we have that $\frac{A_{>0} + A_{\epsilon}}{2} \subset A_{\epsilon}$. Hence, $A_{\epsilon}$ is co-null in $A_{>0}$ and $A_{>0}$ is co-null in its convex hull by the equality case of the Brunn--Minkowski inequality. Taking $\epsilon \rightarrow 0$ we see that $f_{\underline{A}} = \sup f_{\underline{A}} \cdot \mathbf{1}_{A_{>0}}$ almost everywhere. Finally, we can see that $A_{>0}$ is simply $\phi(\underline{A})$ up to a null subset.

    A similar proof yields the result for $f_{\underline{B}}$.
\end{proof}

Corollary \ref{Corollary: Density functions are indicators} will be used via the following observation. Let $\phi_n: \mathfrak{g} \rightarrow \mathbb{R}^{d_G}$ be given by $\phi_n(x)= \frac{x_1}{\rho_n} + \frac{x_2}{\delta_n}$. Then if $\underline{A}$, $\underline{B}$ and $C$ are as in the statement of Corollary \ref{Corollary: Density functions are indicators} we have for all $U \subset \mathbb{R}^n$ open,

\begin{equation}
\mathcal{F}-\lim \frac{\lambda(U \cap \phi_n(A_n))}{\lambda(\phi_n(A_n))}= \frac{\lambda(U \cap C)}{\lambda(C)}. \label{Eq: Limit determines ratios}
\end{equation}
This is particularly useful in combination with the equality 
\begin{equation}
\mathcal{F}-\lim \frac{\lambda(U \cap \phi_n(A_n))}{\lambda(\phi_n(A_n))} = \mathcal{F}-\lim \frac{\mu_G(\phi_n^{-1}(U) \cap A_n)}{\mu_G(A_n)}
\end{equation}
which is a consequence of the uniqueness of the Lebesgue measure up to a scalar and the fact that $\mu_G$ is well approximated by $\lambda$ around $e$ (Lemma \ref{Lemma: Local measure is almost constant}).

\subsection{Typical intersections with one-parameter subgroups}
As explained earlier, the result of the limiting procedure is not usable ``out of the box". The density functions considered in \S \ref{Subsection: Taking limits without boundedness assumption} only explain how the mass spreads in a convex set, relative to the total mass of the sets $A_n$ considered. It does not, however, assert that the mass of the sets $A_n$ stays comparable to the mass of the convex set $C$.

We establish this thanks to a simple combinatorial argument - assuming first $e \in A$ - harnessing the one-dimensional stability results by considering intersections with one-parameter subgroups. It asserts that a set with near-minimal doubling intersects the rays going in "most directions" in a large subset of a somewhat long interval. 

\begin{lemma}\label{Lemma: Minimal subset contains a large neighbourhood of e}
    Let $A$ be a subset of the unit ball of $\mathbb{R}^d$ such that $0 \in A$, $\lambda(\frac{A \tilde{+} A}{2} \setminus A) \leq c\lambda(A)$ where $\tilde{+}$ denotes addition restricted to each ray starting at the origin. Suppose that there are moreover $ \rho, \delta >0 $ such that,
    $$ \lambda(A \setminus B(0, 4\rho)) \geq (1- \delta)\lambda(A).$$
    
    Let $\frac{1}{100} > \epsilon > 0$.  If $c < \epsilon 4\rho^{d-1}\delta$, then choosing $a \in A$ uniformly at random we have with probability $1- 4\delta$ that the set $\mathbb{R}_{\geq 0}a \cap A$ is a subset of an interval $I(a)$ of length at least $\rho$ and $\lambda(I(a) \setminus A) \leq \epsilon \lambda(I(a))$.
\end{lemma}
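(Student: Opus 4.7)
The plan is to decompose $A$ radially and apply the continuous Freiman $3k-4$ stability theorem on each ray, using Markov-type estimates to control the proportion of bad rays. For each $u \in S^{d-1}$ set $A_u := \{r \geq 0 : ru \in A\} \subset [0,1]$; since $\tilde+$ acts ray by ray, polar integration rewrites the hypothesis as
$$\int_{S^{d-1}} \tilde m(u)\, d\sigma(u) \leq c\lambda(A), \qquad \tilde m(u) := \int_{\frac{A_u+A_u}{2}\setminus A_u} r^{d-1}\, dr,$$
and gives $\lambda(A) = \int_{S^{d-1}} M(u)\, d\sigma(u)$ with $M(u) := \int_{A_u} r^{d-1}\, dr$. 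Drawing $a \in A$ uniformly is equivalent to drawing $u \in S^{d-1}$ with density $M(u)/\lambda(A)$ and then $|a| \in A_u$ with density $r^{d-1}/M(u)$.

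I then isolate three bad events for $a$, each of probability at most $\delta$, whose union has probability at most $4\delta$ (matching the slack in the statement): (B1) $|a| < 4\rho$, bounded directly by the volume hypothesis; (B2) $u(a)$ is \emph{origin-heavy} in the sense that $\int_{A_u\cap[0,4\rho]} r^{d-1}\,dr > M(u)/2$, bounded by Markov applied to the same hypothesis; (B3) the normalised radial defect satisfies $\tilde m(u)/M(u) > c/\delta$, bounded by Markov applied to the global defect. Outside these events, setting $A_u^* := A_u \cap [2\rho, 1]$, we have $|a| \in A_u^*$, $\int_{A_u^*} r^{d-1}\,dr \geq M(u)/2$, and $\tilde m(u) \leq (c/\delta)\,M(u)$.

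On the good event, a direct inclusion check gives $\frac{A_u^*+A_u^*}{2}\setminus A_u^* \subset \bigl(\frac{A_u+A_u}{2}\setminus A_u\bigr)\cap [2\rho,1]$, and since $r \geq 2\rho$ on this set the \emph{unweighted} $1$-dimensional defect of $A_u^*$ satisfies $\lambda_1\bigl(\frac{A_u^*+A_u^*}{2}\setminus A_u^*\bigr) \leq \tilde m(u)/(2\rho)^{d-1}$. The constraint $c < \epsilon \cdot 4\rho^{d-1}\cdot\delta$ makes this at most a constant multiple of $\epsilon$, small enough for the continuous Freiman $3k-4$ theorem to yield $\lambda_1\bigl(\mathrm{conv}(A_u^*) \setminus A_u^*\bigr) \leq \epsilon\cdot\lambda_1\bigl(\mathrm{conv}(A_u^*)\bigr)$. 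Taking $I(a) := \mathrm{conv}(A_u^*)\cdot u$, an interval on the ray containing $|a|u$, gives the desired conclusion, the isolated point $\{0\} \in \mathbb{R}_{\geq 0}a \cap A$ being absorbed into a null set.

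The main obstacle is the transfer from the $r^{d-1}$-\emph{weighted} radial defect $\tilde m(u)$, which the hypothesis controls, to the \emph{unweighted} $1$-dimensional defect that Freiman requires. The truncation to $A_u^*$ is the device that performs this transfer: on $[2\rho,1]$ the radial weight is uniformly bounded below by $(2\rho)^{d-1}$, and this $\rho^{d-1}$ factor is exactly what the hypothesis on $c$ compensates. A secondary delicate point is ensuring that $I(a)$ has length at least $\rho$: here one uses that $0 \in A$ gives the midpoint inclusion $A_u/2 \subset \frac{A_u+A_u}{2}$, so any large gap in $A_u^*$ just below $|a|$ would force a contribution of weight at least $(2\rho)^{d-1}$ times the gap length to $\frac{A_u+A_u}{2}\setminus A_u$, contradicting the smallness of $\tilde m(u)$ under (B3).
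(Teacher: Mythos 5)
Your proposal is correct and follows essentially the same route as the paper's proof: polar decomposition with the $r^{d-1}$ weight, Markov bounds on bad directions, truncation of each ray away from the origin to convert the weighted radial defect into an unweighted one-dimensional defect, the one-dimensional Freiman/Figalli--Jerison stability theorem on each good ray, and the midpoint-with-$0$ argument (exploiting $0\in A$ and the weight lower bound $(2\rho)^{d-1}$) to force the interval to have length of order $\rho$. The deviations are cosmetic -- truncation at $2\rho$ instead of $\rho$, a slightly different bookkeeping of the three bad events summing to $4\delta$, and constant-factor slack in the final $\epsilon$-bound and in the treatment of the point $0$ on the ray -- and these are at the same level of precision as the paper's own argument.
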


The main idea is elementary: we write the Lebesgue measure in polar coordinates to relate the measure of $A$ to the measure of its intersection with rays starting at $0$. We choose $a \in A$ uniformly at random and consider the intersection of $A$ with the ray $[0;a)$. Successive applications of the Markov inequality then show that these intersections also satisfy a small doubling assumption, and they must be sufficiently long. Combining these with well-known stability results for Brunn--Minkowski in dimension one yields Lemma \ref{Lemma: Minimal subset contains a large neighbourhood of e}.

\begin{proof}

    The Lebesgue measure can be written in polar coordinates in the form
        $$\lambda(f)=\int_{\theta \in \mathbb{S}^{d-1}}\int_{[0;1]}r^{d-1}f(r\theta)drd\theta.$$
    Given $x \in \mathbb{R}^{d}$ we will denote by $\lambda_x$ the Lebesgue measure on the ray starting at $0$ and going through $x$ appearing in the above decomposition. We therefore find that 
     $$c\lambda(A) \geq \lambda\left(\mathbf{1}_{\frac{A \tilde{+} A}{2}\setminus A}\right) \geq  \int_{\theta \in \mathbb{S}^{d-1}}\int_{[0;1]}r^{d-1}\mathbf{1}_{\frac{A\tilde{+}A}{2}\setminus A}(r\theta)drd\theta.$$
    Let us now draw $a \in A$ uniformly at random and let $\theta(a) \in \mathbb{S}^{d-1}$ denote the direction of $a$. The law of the random variable $\theta(a)$ has density $\left(\int_{[0;1]}r^{d-1}\mathbf{1}_{A}(r\theta)dr\right)d\theta$. By the Markov inequality, with probability at least $1-\frac{\delta}{2}$ we therefore have the inequality 
  $$\int_{[0;1]}r^{d-1}\mathbf{1}_{\frac{A\tilde{+}A}{2}\setminus A}(r\theta(a))dr \leq 2\delta^{-1}c\int_{[0;1]}r^{d-1}\mathbf{1}_{A}(r\theta(a))dr.$$
  In particular,
  
     $$\int_{[\rho;1]}\mathbf{1}_{\frac{A\tilde{+}A}{2}\setminus A}(r\theta(a))dr \leq 2\rho^{1-d}\delta^{-1}c\int_{[0;1]}\mathbf{1}_{A}(r\theta(a))dr.$$
      Denoting $A(a, \rho)$ the intersection of $A$ with the segment $[\rho;1]$ in the direction $\theta(a)$, the Markov inequality again implies that with probability $1-\frac{\delta}{2}$ we have 
      $$\lambda_a(A(a, 4\rho)) \geq \frac12\lambda_a(A(a)).$$
     Thus,  with probability $1-\delta$, we find 
     $$\lambda_a\left(\frac{A(a,\rho)+A(a,\rho)}{2} \setminus A(a,\rho)\right) \leq 4\rho^{1-d}\delta^{-1}c\lambda_a(A(a,\rho)).  $$
    By the stability in dimension one \cite[Thm 1.1]{FigalliJerison15}, we find that 
    $$\lambda_a(Co(A(a,\rho)) \setminus A(a,\rho)) \leq 4\rho^{1-d}\delta^{-1}c\lambda_a(A(a,\rho))$$ where $Co(A(a,\rho))$ denotes the convex hull of $A(a,\rho)$. 
    
    Since $0 \in A$, we have that $\lambda\left(\frac{0 \tilde{+} A\setminus B(0,4\rho)}{2} \setminus A\right) \leq c\lambda(A)$. Proceeding as above, we find that with probability $1-\delta $ as well
    $$\lambda_a \left( \frac{A(a,4\rho)}{2}\setminus A\right) \leq 4\rho_2^{1-d}\delta^{-1}c\lambda_a(A(a,4\rho)).$$
    But $\frac{A(a,4\rho)}{2}\cap A \subset A(a,\rho)$ and all elements of $\frac{A(a,4\rho)}{2}\cap A$ have size at most $\frac{\max A(a)}{2}$. Therefore, since  $\lambda_a(\frac{A(a,4\rho)}{2})=\frac12\lambda_a(A(a,4\rho)) $,   $A(a, \rho)$ contains an element of size at most $\frac{\max A(a)}{2}$ as soon as $c \leq \frac{\rho^{d-1}\delta}{16}$.

    Finally, with probability at least $1-\delta > 0$, $A(a, 4\rho)$ is not empty, meaning that $A(a,\rho)$ contains an element of size at least $4\rho$. Putting all of this together, with probability at least $1- 4\delta$, $Co(A(a, \delta))$ contains an interval of length at least $2\rho$ and $$\lambda_a(Co(A(a,\rho)) \setminus A(a,\rho)) \leq 4\rho^{1-d}\delta^{-1}c\lambda_a(A(a,\rho)).$$
\end{proof}

Lemma \ref{Lemma: Minimal subset contains a large neighbourhood of e} can in particular be applied to understand the doubling of a subset $A$ in the neighbourhood of $e$ of some Lie group. Indeed, the raywise sum of $\log A$ is contained in $\log A^2$. Thus, if $A$ has close to optimal doubling, Lemma \ref{Lemma: Minimal subset contains a large neighbourhood of e} shows that intersections with most one-parameter subgroups going through $A$ look like a long interval. In fact, the simplex spanned by any number of these one-parameter subgroups has a large intersection with $A$ - that is the idea we exploit now. 

\begin{lemma}\label{Lemma: Fills in a simplex}
Suppose $(A_n)_{n \geq 0}$ and $\rho_n, \delta_n > 0$ are as in the statement of Proposition \ref{Proposition: Local stability under assumption}. Let $\phi_n$ be as in \S \ref{Subsection: Models for small convex neighbourhoods}. Then there is an open subset $U \subset C$ such that $$\mathcal{F} - \lim \frac{\mu_G(\phi_n^{-1}(U))}{\mu_G(R(e,\rho_n, \delta_n))} > 0$$ and
$$\mathcal{F} - \lim \frac{\mu_G(\phi_n^{-1}(U) \cap A_n)}{\mu_G(\phi_n^{-1}(U))} = 1.$$
\end{lemma}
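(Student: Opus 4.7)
The plan is to transfer to the Lie algebra via $\log$ and $\phi_n$, apply the one-dimensional stability result (Lemma \ref{Lemma: Minimal subset contains a large neighbourhood of e}) along rays emanating from an interior point of $C$, and assemble the resulting long nearly-full intervals into an open spherical sector essentially covered by $A_n$. Setting $E_n := \phi_n(\log A_n)$, the BCH formula together with Propositions \ref{Proposition: BCH formula} and \ref{Proposition: Haar measure in exponential coordinates} and $\rho_n, \delta_n \to 0$ convert $\mu_G(A_n^2)/\mu_G(A_n) \to 2^{d_G}$ into $\lambda(E_n + E_n)/\lambda(E_n) \to 2^{d_G}$. By the hypothesis that $\phi(\underline{A}\underline{A}^{-1}) = C - C$ has non-empty interior, $C$ itself has non-empty interior; pick $x_0$ in this interior and select $a_n \in A_n$ with $\mathcal{F}$-$\lim \phi_n(\log a_n) = x_0$ (possible since $f_{\underline{A}} = \lambda(C)^{-1}\mathbf{1}_C$ is positive near $x_0$ by Corollary \ref{Corollary: Density functions are indicators}). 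Shift to $E_n' := E_n - \phi_n(\log a_n)$, so that $0 \in E_n'$ while the doubling estimate is preserved. Since the raywise sum from the new origin is contained in the ordinary Minkowski sum, $\lambda(\tfrac{1}{2}(E_n' \tilde{+} E_n') \setminus E_n')/\lambda(E_n') = o(1)$, and Lemma \ref{Lemma: Minimal subset contains a large neighbourhood of e} applies along $\mathcal{F}$ with parameter $c_n \to 0$.

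Fixing small $\rho, \delta > 0$ and applying Lemma \ref{Lemma: Minimal subset contains a large neighbourhood of e} with $\epsilon_n \to 0$, for $\mathcal{F}$-most $n$ a $(1-4\delta)$-fraction of $E_n'$ lies on good rays $\mathbb{R}_{\geq 0}\theta$ along which $E_n'$ fills an interval $I_n(\theta) \subset \mathbb{R}_{\geq 0}\theta$ of length $\geq \rho$ up to an $\epsilon_n$-fraction. Since $E_n'$ is bounded, the endpoints of $I_n(\theta)$ live in a fixed compact interval; quantize them to a net of spacing $\rho/10$ and cover $\mathbb{S}^{d_G-1}$ by finitely many small open cones. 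A double pigeonhole along the ultrafilter $\mathcal{F}$ produces a fixed open angular cone $\Theta_0$ and radial strip $[r_1, r_2]$ with $r_2 - r_1 \geq \rho/2$ such that for $\mathcal{F}$-most $n$ a positive spherical-measure subset $\Theta_n \subset \Theta_0$ consists of good directions with $I_n(\theta) \supset [r_1, r_2]\theta$. Bounded convergence along $\mathcal{F}$ identifies $\Theta^{\infty} := \{\theta \in \Theta_0 : \theta \in \Theta_n \text{ for } \mathcal{F}\text{-most } n\}$ as a measurable set of positive spherical measure, and the Lebesgue density theorem extracts an open sub-cone $\Theta \subset \Theta_0$ contained in $\Theta^{\infty}$ up to arbitrarily small loss. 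Setting $U' := \{r\theta : \theta \in \Theta, r \in (r_1, r_2)\}$ and $U := U' + x_0$, which lies in $C$ for $\rho$ small, polar integration yields $\lambda(U' \setminus E_n')/\lambda(U') \to 0$. Pulling back to $G$ via $\phi_n^{-1}$ and $\exp$ (whose Jacobians tend to $1$) gives condition (b); condition (a) is immediate since $\mu_G(\phi_n^{-1}(U)) \sim \rho_n^{d_H}\delta_n^{d_G - d_H}\lambda(U)$ is comparable to $\mu_G(R(e,\rho_n,\delta_n))$.

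The main obstacle is the pigeonhole step producing a fixed open cone $\Theta$ consistent across $\mathcal{F}$-most $n$: Lemma \ref{Lemma: Minimal subset contains a large neighbourhood of e} outputs only measurable good-direction sets that may shift with $n$, so extracting an open subset without destroying the filling ratio requires combining the Lebesgue density theorem with the freedom to take $\rho, \delta, \epsilon$ arbitrarily small along a diagonal. An alternative cleaner route is to first upgrade the pointwise identity $f_{\underline{A}} = \lambda(C)^{-1}\mathbf{1}_C$ to the global statement $\lambda(E_n) \to \lambda(C)$ via raywise stability combined with a convexity argument on the essential support, after which any open ball $U \subset C$ around an interior point would automatically satisfy both (a) and (b) by the density function identity alone.
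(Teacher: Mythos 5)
Your reduction to the one-dimensional stability lemma along rays is in the spirit of the paper, but the assembly step --- turning ray-wise information into a \emph{fixed} open set of full relative density --- contains a genuine gap. After pigeonholing the radial strip you only control a set $\Theta_n$ of good directions of spherical measure bounded below by a fixed fraction (roughly $1/N$ of the good mass), and these sets may drift with $n$. The step ``bounded convergence along $\mathcal{F}$ identifies $\Theta^{\infty}=\{\theta:\theta\in\Theta_n\ \text{for $\mathcal{F}$-most }n\}$ as a set of positive measure'' is not valid: pointwise ultrafilter limits of indicator functions obey no Fatou-type inequality (and need not even be measurable), so $\Theta^{\infty}$ can be null even though each $\Theta_n$ has measure $\geq c_0$. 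Moreover, even granting a cone $\Theta$ meeting the good directions in relative measure $1-\eta$, you would only get $\mathcal{F}\text{-}\lim \mu_G(\phi_n^{-1}(U)\cap A_n)/\mu_G(\phi_n^{-1}(U))\geq 1-C\eta$ for a $U$ depending on $\eta$, whereas the lemma demands the limit be exactly $1$ for a single fixed open $U$; the suggested diagonalisation cannot produce such a $U$ because the good-direction sets are not stable in $n$. There are also two secondary points you pass over: transferring the doubling hypothesis to $\lambda(E_n'+E_n')$ after recentring at $a_n$ is not automatic (the BCH error forces you into an $o(1)$-thickening of $\phi_n(\log A_n^2)$, whose measure you do not control; the paper avoids this by using rays through the origin, where $\exp(X)\exp(Y)=\exp(X+Y)$ exactly, hence works with $e\in A$), and converting ``probability $1-4\delta$ for $a$ drawn from $A$'' into spherical measure of directions needs the asymptotic comparability of the directional mass of $E_n'$ with the cone measure of $C-x_0$, which you assert implicitly.

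The paper circumvents exactly this obstacle by exploiting the group structure in all $d_G$ directions at once rather than gluing rays: it picks $d_G$ spanning interior points $c_1,\dots,c_{d_G}$ of $C$, applies Lemma \ref{Lemma: Minimal subset contains a large neighbourhood of e} to get long, nearly full segments of $\phi_n(A_n)$ near each $c_i$, and then uses the iterated midpoint maps $\varphi_k$ together with a set $\Omega_n\subset A_n^{d_G}$ of relative measure $1-o(1)$ on which $\varphi_{d_G}(\Omega_n)\subset A_n$ (a consequence of the identity $f_{\underline{A}^2}(2x)=f_{\underline{A}}(x)$ from Lemma \ref{Lemma: Density functions are convex} and Corollary \ref{Corollary: Density functions are indicators}) to map the whole product of segments into $A_n$ up to $o(1)$; the image is, in the limit, a fixed open simplex $S\subset C$, which yields both assertions at once. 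Your ``alternative cleaner route'' (first upgrading to $\lambda(E_n)\to\lambda(C)$) is circular: that global statement is precisely what Lemma \ref{Lemma: Fills in a simplex} is introduced to prove in the proof of Proposition \ref{Proposition: Local stability under assumption}, since the density function only normalises by $\mu_G(A_n)$ and says nothing about $\mu_G(A_n)$ versus the volume of $\phi_n^{-1}(C)$.
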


\begin{proof}
    Recall $\phi_n:\mathfrak{g} \rightarrow \mathbb{R}^{d_G}$ is defined by $\phi_n(x_1 + x_2) = \frac{x_1}{\rho_n} + \frac{x_2}{\delta_n}$ for  $x_1 \in \mathfrak{h}$ and $x_2 \in \mathfrak{h}^{\perp}$. Let $C$ be the convex subset given by Corollary \ref{Corollary: Density functions are indicators} applied to the ultraproduct of the $A_n$'s. 
     
    Choose $d_G$ elements $c_1, \ldots, c_{d_G}$ in the interior of $C$  that span $\mathbb{R}^{d_G}$. Choose $\rho_0 > 0$ such that $\tilde{c_i} + B_{\rho_0} \subset C$ for all $i$ and for every $\tilde{c_1} \in c_1 + B_{\rho_0}, \ldots,\tilde{c_{d_G}} \in c_{d_G} + B_{\rho_0} $, the simplex spanned by $\tilde{c_1}, \ldots, \tilde{c_n}$ has volume at least some constant $v > 0$. Choose $\rho > 0$ such that $\lambda(C \setminus B_{4\rho}) \geq \left(1 - \frac{\lambda(B_{\rho_0})}{4}\right)\lambda(C)$.

    Define by induction the map 
    $\varphi_k:G^k \rightarrow G$ for $k=1, \ldots, d_G$ by
    $\varphi_1(g) =g$ and $\varphi_{k+1}(\underline{g},g)=\frac{\varphi_k(\underline{g})g}{2}$ where $\underline{g} \in G^k$, $g \in G$.
    We see by induction that there is a measurable subset $\Omega_n \subset A_n^{d_G}$ such that $\varphi_{d_G}(\Omega_n) \subset A_n$ and $$\lim \frac{\lambda^{\otimes d_G}(\Omega_n)}{\lambda(A_n)^{d_G}}=1$$
    where $\lambda^{\otimes d_G}$ is the $d_G$-fold power of the Lebesgue measure i.e. a $d_G^2$-dimensional Lebesgue measure.
    
    Therefore, according to the polar coordinates for the Lebesgue measure and the Markov inequality, drawing $(a_1, \ldots, a_{d_G}) \in A_n^{d_G}$ uniformly at random we have with probability at least $1-\delta_n$,  $$\lambda^{\otimes d_G}(A_n(a_1)\times \cdots \times A_n(a_{d_G}) \cap \Omega_n) \geq (1-\delta_n) \lambda^{\otimes d_G}(A_n(a_1)\times \cdots \times A_n(a_{d_G}))$$ for some $\delta_n \rightarrow 0$. In the above, we are using $\lambda^{\otimes d_G}$ to denote the product of the Lebesgue measures $\lambda_{a_i}$ appearing in the polar decomposition of the relevant ray in the direction $a_i$, see the proof of Lemma \ref{Lemma: Minimal subset contains a large neighbourhood of e}.
    
    We will now apply the above Lemma \ref{Lemma: Minimal subset contains a large neighbourhood of e} to the sets $\phi_n(A_n)$. Notice that the ray-wise addition $\phi_n(A_n) \tilde{+}\phi_n(A_n)$ is contained in $\phi_n(A_n^2)$. When $\delta_n < \frac{\lambda(B_{\rho_0})}{4}$ we can find $c_{i,n} \in c_i + B_{\rho_0} \cap \phi_n(A_n)$ for $i=1, \ldots, d_G$ such that $$\frac{\lambda_{c_{i,n}}(Co(A_n(c_{i,n})) \setminus A_n(c_{i,n}))}{\lambda_{c_{i,n}}(A_n(c_{i,n}))} \longrightarrow 0$$ and $Co(A_n(c_{i,n}))$ is a segment of length at least $2\rho$ (Lemma \ref{Lemma: Minimal subset contains a large neighbourhood of e}). Denote also $m_{i,n}$ and $M_{i,n}$ the infimum and supremum respectively of $Co(A_n(c_{i,n}))$. Upon considering a subsequence, we may assume that $m_{i,n}$ and $M_{i,n}$ converge to $m_i \in \mathfrak{g}$ and $M_i \in \mathfrak{g}$ respectively for all $i$. Moreover, $|M_{i,n}  - m_{i,n}| \geq \frac{\rho_2}{2}$.

    Now, the set we are looking for is the Gromov--Haudorff limit $S$ of $$S_n:=\phi_n\left[\varphi_{d_G}\left(Co(A_n(c_{1,n})) \times \cdots \times Co(A_n(c_{d,n}))\right)\right].$$ From the BCH formula $S \subset \mathfrak{g}$ is the simplex spanned by $\frac{M_1}{2^{d_G}}, \ldots,\frac{M_{d_G}}{2}$ and $\sum_{i=1}^{d_G} \frac{m_i}{2^i}$. Another application of the BCH formula to compute the differential of $\varphi_{d_G}$ yields moreover $$\frac{\lambda\left(\phi_n\left[\varphi_{d_G}(A_n(\tilde{c}_1) \times \cdots \times A_n(\tilde{c}_{d_G}) \cap \Omega_n) \right]\Delta S\right)}{\lambda(S_{d_G})} \rightarrow 0.$$ Since $\varphi_{d_G}(A_n(\tilde{c}_1) \times \cdots \times A_n(\tilde{c}_{d_G}) \cap \Omega_n) \subset A_n$, this yields the result. 
\end{proof}

\subsection{Proof of local stability}
\subsubsection{The case $e \in A$}

We will now apply Lemma \ref{Lemma: Minimal subset contains a large neighbourhood of e} in combination with results of section \S \ref{Subsection: Taking limits without boundedness assumption}. 

\begin{proof}[Proof of Proposition \ref{Proposition: Local stability under assumption} when $e \in A$.]
Let $(A_n)_{n \geq 0}$, $(\delta_n)_{n \geq 0}$ and $(\rho_n)_{n \geq 0}$ be as in the statement of Proposition \ref{Proposition: Local stability under assumption}. Let $f_{\underline{A}}$ be the density function of $\underline{A}$ studied in \S \ref{Subsection: Taking limits without boundedness assumption}. Then $f_{\underline{A}}$ is a scalar multiple of the indicator function of a convex subset $C \subset \mathbb{R}^{d_G}$ (Corollary \ref{Corollary: Density functions are indicators}). 

Let $S$ be given by Lemma \ref{Lemma: Fills in a simplex}. We know by \eqref{Eq: Limit determines ratios} that $$\mathcal{F}-\lim \frac{\lambda(S \cap \phi_n(A_n))}{\lambda(\phi_n(A_n))} = \frac{\lambda(S)}{\lambda(C)} .$$ 
But $\mathcal{F}-\lim\lambda(S_d \cap \phi_n(A_n))) = \lambda(S_d)$. So $$\mathcal{F}-\lim\lambda(\phi_n(A_n))) = \lambda(C). $$ Since $\mathcal{F}-\lim\frac{\lambda(\tilde{A}_n \setminus C)}{\lambda(\phi_n(A_n)))}=0$ by \eqref{Eq: Limit determines ratios}, the result is established. 
\end{proof}
\subsubsection{The general case}
In the general case, we have to reduce to the case $e \in A$. This will be a consequence of Corollary \ref{Corollary: Density functions are indicators}. 

\begin{lemma}\label{Lemma: Subset with minimal expansion is normal}
Let $\underline{A}$ be as above. Then for every $a \in \underline{A}$, $\mu(a\underline{A} \Delta \underline{A}a)=0$.
\end{lemma}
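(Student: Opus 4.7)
The plan proceeds in two steps: first I will establish the density-function equality $f_{a\underline{A}} = f_{\underline{A}a}$ as a consequence of Corollary \ref{Corollary: Density functions are indicators}, then I will upgrade this to set equality modulo $\underline{\mu}$-null using a finitary commutator estimate together with the convex structure of $\underline{A}$.

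For the first step, applying the second part of Corollary \ref{Corollary: Density functions are indicators} to $\underline{B} = \underline{A}a$ works because right-invariance of $\underline{\mu}$ gives
\[\underline{\mu}(\underline{A}\cdot\underline{A}a) = \underline{\mu}(\underline{A}^2 a) = \underline{\mu}(\underline{A}^2) = 2^{d_G}\underline{\mu}(\underline{A}),\]
so $f_{\underline{A}a}$ is a translate of $f_{\underline{A}}$. A symmetric argument applied on the right with $\underline{B} = a\underline{A}$ (using $\underline{\mu}(a\underline{A}\cdot\underline{A}) = \underline{\mu}(a\underline{A}^2) = 2^{d_G}\underline{\mu}(\underline{A})$) yields the same for $f_{a\underline{A}}$. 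A direct computation using left-invariance of $\underline{\mu}$ together with the fact that $\phi$ is a homomorphism into the abelian group $\mathbb{R}^{d_G}$ pins down both translates explicitly as $f_{\underline{A}}(\,\cdot - \phi(a))$, giving $f_{a\underline{A}} = f_{\underline{A}a}$.

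For the second step, I write $a = (a_n)/\mathcal{F}$ with $a_n \in R(e,\alpha_n,\epsilon_n)$ and apply the Baker--Campbell--Hausdorff formula to the decomposition $\mathfrak{g} = \mathfrak{h}\oplus\mathfrak{h}^{\perp}$ (using $[\mathfrak{h},\mathfrak{h}]\subset\mathfrak{h}$ and $[\mathfrak{h},\mathfrak{h}^{\perp}]\subset\mathfrak{h}^{\perp}$): for any $y_n \in A_n$, the commutator $[y_n^{-1}, a_n^{-1}]$ has $\mathfrak{h}$-component of size $O(\alpha_n^2)$ and $\mathfrak{h}^{\perp}$-component of size $O(\alpha_n\epsilon_n)$, so that its image under $\phi_n$ has norm $O(\alpha_n)\to 0$. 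The identity $a_ny_n = y_na_n\cdot[y_n^{-1},a_n^{-1}]$ then yields $a_nA_n \subset A_na_n\cdot B_G(e,r_n)$ with $r_n\to 0$ after rescaling, whence
\[\mu_G(a_nA_n\,\Delta\, A_na_n) \leq 2\bigl(\mu_G(A_nB_G(e,r_n)) - \mu_G(A_n)\bigr).\]
The main obstacle is the resulting perimeter-type bound: showing $\mu_G(A_nB_G(e,r_n))/\mu_G(A_n)\to 1$. For this I plan to combine Corollary \ref{Corollary: Density functions are indicators} (which yields $\phi_n(A_n)\to C$ in measure for a convex set $C$ of finite perimeter) with Lemma \ref{Lemma: Sets near equality are approximate subgroups} (containment of $\underline{A}$ in a bounded approximate subgroup, providing tightness) to upgrade the weak convergence $\phi_n(A_n) \to C$ into genuine control of the small rescaled neighborhoods $\phi_n(A_n) + B_{O(\alpha_n)}$, concluding the lemma.
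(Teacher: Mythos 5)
Your first step is correct (and in fact easier than you make it: invariance of $\underline{\mu}$ plus the homomorphism property of $\phi$ already give $f_{a\underline{A}}=f_{\underline{A}a}=f_{\underline{A}}(\cdot-\phi(a))$ without the Corollary), but as you note it only identifies pushforward densities, and your second step, which is supposed to close the gap, does not go through. The pivot claim that Corollary \ref{Corollary: Density functions are indicators} "yields $\phi_n(A_n)\to C$ in measure" is false as stated: the corollary controls only the density $f_{\underline{A}}$, i.e.\ how the mass of $A_n$, \emph{normalised by} $\mu_G(A_n)$, is distributed over $\mathbb{R}^{d_G}$ after pushing forward through $\phi$ (integrating out the kernel fibres of the model); it says nothing about the Lebesgue measure of the rescaled sets $\phi_n(A_n)$ being comparable to $\lambda(C)$. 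This is exactly the point the paper flags at the start of \S 5.4, and the non-collapse statement you need is precisely the content of Proposition \ref{Proposition: Local stability under assumption}, which at this stage is only proved when $e\in A$ (via Lemma \ref{Lemma: Fills in a simplex}) and whose general case is deduced in the paper \emph{from} the present lemma — so invoking it here is circular. Nor does Lemma \ref{Lemma: Sets near equality are approximate subgroups} rescue the argument: containment in a $C(d_G)$-approximate subgroup of comparable measure gives at best bounds up to a multiplicative constant $C(d_G)$, whereas your perimeter-type estimate needs $\mu_G(A_nB_G(e,r_n))=(1+o(1))\mu_G(A_n)$; and even granting convergence in measure of $\phi_n(A_n)$ to $C$, a set of small symmetric difference can still have a proportionally huge $r_n$-thickening, so the thickening bound would require genuine set-level (not density-level) structure of $A_n$.

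The paper avoids all of this by working entirely inside the ultraproduct and using the density of the \emph{product} set. Both $a\underline{A}$ and $\underline{A}a$ are contained in $\underline{B}:=\underline{A}^2\cap\phi^{-1}\bigl(\phi(a)+\phi(\underline{A})\bigr)$; since $f_{\underline{A}^2}$ is a constant multiple of the indicator of $\phi(\underline{A}^2)$ and $\phi(\underline{A})$ is convex, $\lambda\bigl(\phi(a)+\phi(\underline{A})\bigr)=2^{-d_G}\lambda(\phi(\underline{A}^2))$, whence $\underline{\mu}(\underline{B})=2^{-d_G}\underline{\mu}(\underline{A}^2)=\underline{\mu}(\underline{A})$ by Corollary \ref{Corollary: Density functions are indicators}. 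Invariance of $\underline{\mu}$ gives $\underline{\mu}(a\underline{A})=\underline{\mu}(\underline{A}a)=\underline{\mu}(\underline{B})$, so each of $a\underline{A}$, $\underline{A}a$ is co-null in the common superset $\underline{B}$ and their symmetric difference is $\underline{\mu}$-null. If you want to salvage your approach, you would have to replace the appeal to convergence in measure by an argument of this kind; as written, the second step contains a genuine gap.
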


\begin{proof}
The sets $\phi(a\underline{A})$ and $\phi(\underline{A}a)$ are both contained in $\phi(a) + \phi(\underline{A})$. According to Corollary \ref{Corollary: Density functions are indicators}, $\lambda(\phi(a) + \phi(\underline{A})) = 2^{-d_G}\lambda(\phi(\underline{A}^2))$ since $\phi(\underline{A})$ is convex. By Corollary \ref{Corollary: Density functions are indicators} again, 
$$\underline{\mu}\left(\underline{A} \cap \phi^{-1}\left(\phi(a) + \phi(\underline{A})\right) \right) = 2^{-d}\underline{\mu}(\underline{A}^2)= \underline{\mu}(\underline{A}).$$
Write $\underline{B}:=\underline{A} \cap \phi^{-1}\left(\phi(a) + \phi(\underline{A})\right)$. We have $a \underline{A}, \underline{A}a \subset \underline{B}$ and $\underline{\mu}(a \underline{A})= \underline{\mu}(\underline{A}a)= \underline{\mu}(\underline{B})=\underline{\mu}(\underline{A})$. This proves our result.
\end{proof}

We can now conclude the proof of Proposition \ref{Proposition: Local stability under assumption} in the general case.

\begin{proof}[Proof of Proposition \ref{Proposition: Local stability under assumption}.]
    Choose $a \in \underline{A}$ such that $\phi(a)$ lies at the barycentre of $\phi(\underline{A})$. According to Lemma \ref{Lemma: Subset with minimal expansion is normal} we know that $\mu(a\underline{A} \Delta \underline{A}a)=0$. Equivalently, $\mu(a^{-1}\underline{A} \Delta \underline{A}a^{-1})=0$. Now, the set $\tilde{A}=a^{-1}\underline{A} \cap \underline{A}a^{-1}$ is internal, contains the identity, has doubling $2^{d_G}$ and is a co-null subset of $\underline{A}$. So we are done according to the case $e \in \underline{A}$. 
\end{proof}

\section{Global stability}\label{Section: Global stability}
We finally show how the local stability results imply stability in $G$. 

\subsection{Sets with small doubling are contained in one neighbourhood of a subgroup}
We first reduce to subsets contained in a neighbourhood of a proper closed subgroup (whereas Proposition \ref{Proposition: approximate subgroups of small measure} asserts that an approximate subgroup is contained in a finite collection of translates of such a neighbourhood). 

    \begin{lemma}\label{Lemma: Exactly one neighbourhood of a subgroup}
     Let $\delta > 0$. There is $ \epsilon > 0$ such that if $A \subset G$ is a compact subset of measure at most $\epsilon$ and $\mu_G(A^2) \leq \left(2^{d_G -d_H} + \epsilon\right) \mu_G(A)$, then there is a compact subgroup $H$ of dimension $d_H$ such that $A \subset H_{\delta}.$
\end{lemma}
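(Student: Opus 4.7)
The plan is to produce a proper closed subgroup $H \leq G$ of dimension $d_H$ with $A \subset H_\delta$. Setting $K := 2^{d_G - d_H} + \epsilon$ and applying Proposition~\ref{Proposition: Tao} with $A = B$ gives a $K^{O(1)}$-approximate subgroup $\Lambda$ with $A \subset F_0 \Lambda$, $|F_0| \leq K^{O(1)}$, and $\mu_G(\Lambda) \leq K^{O(1)} \mu_G(A) \leq K^{O(1)} \epsilon$. For $\epsilon$ sufficiently small, Proposition~\ref{Proposition: approximate subgroups of small measure} yields a proper connected subgroup $H'$ and $F_1 \subset G$ of size $\leq C(d_G)$ with $\Lambda \subset F_1 H'_{\delta'}$, for any prescribed $\delta' > 0$. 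Composing the covers and invoking Lemma~\ref{Lemma: Disjoint translates} produces a finite set $F$ of bounded size and a scale $\delta_* \leq \delta/2$ with $A = \bigsqcup_{f \in F}(A \cap f H'_{\delta_*})$, where the dilated translates $f H'_{2\delta_*}$ are pairwise disjoint.

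The chain of inequalities (17)--(23) of Theorem~\ref{Theorem: BM in compact groups}, combined with Proposition~\ref{Proposition: BM close to a subgroup}, yields $\mu_G(A^2) \geq (1-\eta)\,2^{d_G - \dim H'}\,\mu_G(A)$ where $\eta = \eta(\delta_*) \to 0$ as $\delta_* \to 0$. If $\dim H' < d_H$, this contradicts the hypothesis for $\epsilon$ and $\eta$ small, so $\dim H' = d_H$; set $d := d_G - d_H$. Using further the elementary inequality $(1 + u)^d \geq 1 + (2^d - 1) u^d$ for $u \in [0,1]$, the same analysis shows that $|F| \cdot \max_f \mu_G(A \cap f H'_{\delta_*}) \leq (1 + O(\epsilon + \eta))\,\mu_G(A)$, so the pieces all have comparable measure.

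It remains to show that $F \cdot H' \subset G$ is a subgroup; then $H := FH'$ is a closed subgroup of dimension $d_H$, proper because $d_H < d_G$, and $A \subset H \cdot B(e, \delta_*) = H_{\delta_*} \subset H_\delta$. This requires two claims: (i) $F \subset N(H')$, and (ii) $|F \cdot F \bmod H'| = |F|$. If (i) fails, some $f \in F$ satisfies $f^{-1} H' f \neq H'$; since both are connected subgroups of dimension $d_H$, their intersection has strictly smaller dimension, so the product $(f^{-1} H' f) \cdot H'$ has dimension strictly greater than $d_H$. Then $A_e \cdot A_f \subset f \cdot ((f^{-1} H' f) \cdot H') \cdot B(e, 2\delta_*)$ lies in a $\delta_*$-neighbourhood of a submanifold of dimension $>d_H$; its Haar measure is at least of order $\delta_*^{-1}$ times the baseline $\mu_G(H'_{\delta_*})$, forcing the doubling ratio above $2^d + \epsilon$ once $\delta_*$ is small compared to $\epsilon$. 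If (ii) fails while (i) holds, the cosets $f f' H'_{2\delta_*}$ occupy strictly more than $|F|$ distinct $H'$-cosets; summing the local Brunn--Minkowski estimates over these disjoint cosets gives $\mu_G(A^2) \geq (1 + |F|^{-1})\,2^d\,\mu_G(A)\,(1 - O(\epsilon))$, again contradicting the hypothesis for $\epsilon$ small.

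The main technical obstacle lies in quantifying these two contradictions so that the shrinking of $\delta_*$ (which controls both $\eta$ and the extra-dimensional factor) can be arranged independently of $|F|$ and the other parameters. This can be achieved either by careful book-keeping in Propositions~\ref{Proposition: approximate subgroups of small measure} and~\ref{Proposition: BM close to a subgroup}, or more cleanly by a compactness/ultraproduct argument in the spirit of \S\ref{Section: A stability result for the local Brunn--Minkowksi}, reducing to the analysis of the limiting convex set in the model and forcing $F$ into a subgroup at the limit.
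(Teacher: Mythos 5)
The first half of your argument (covering $A$ by translates of $H'_{\delta_*}$ via Propositions~\ref{Proposition: Tao} and~\ref{Proposition: approximate subgroups of small measure}, then running the chain of inequalities with Proposition~\ref{Proposition: BM close to a subgroup} to force $\dim H'=d_H$ and pieces of comparable measure) is essentially the paper's proof. The second half has a genuine gap, located in your claim (i). You argue that if some $f\notin N(H')$ then $A_eA_f$ lies in a $\delta_*$-neighbourhood of a submanifold of dimension $>d_H$ and that therefore ``its Haar measure is at least of order $\delta_*^{-1}$ times $\mu_G(H'_{\delta_*})$''. Containment in a larger set gives an upper bound on measure, never a lower bound; and no lower bound of that shape can possibly hold, since $\mu_G(A_eA_f)\le\mu_G(A^2)\le(2^{d_G-d_H}+\epsilon)\mu_G(A)$, which is arbitrarily small once $\delta_*$ is fixed (the lemma only assumes $\mu_G(A)\le\epsilon$). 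To convert non-normalisation into a doubling contradiction you must compare $\mu_G(A_eA_f)$ with $\mu_G(A)$, and for that you need to know that each piece $A\cap fH'_{\delta_*}$ spreads over essentially the whole coset $fH'$; otherwise, for all your argument knows, $A_e$ and $A_f$ could concentrate near cosets of the lower-dimensional subgroup $H'\cap fH'f^{-1}$ and the product would never ``see'' the extra directions. This spreading statement is precisely the input your proposal is missing; in the paper it is the coarse-stability step (Lemma~\ref{Lemma: Coarse stability}, proved with Kemperman's inequality \cite{Kemperman64} inside $H$, and its consequence in the proof of Lemma~\ref{Lemma: Exactly one neighbourhood of a subgroup} that $\mu_G\bigl(A\cap fT_{\delta'}B_H(h,\rho)\bigr)\ge\tfrac12\,\mu_G(A\cap fH_{\delta'})\,\mu_H(B_H(e,\rho))$ for every $h\in H$ and every $f$).

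A second, related problem is the endgame. The covering elements $f$ are only defined up to perturbation, so the exact algebraic claims (i) $F\subset N(H')$ and (ii) $|F\cdot F\bmod H'|=|F|$ are neither provable nor needed. The paper instead works with a two-sided covering ($A\subset F_1H_{\delta'}$ and $A\subset H_{\delta'}F_2$), uses the spreading above to get $fH\subset AB_G(e,\delta'+\rho)\subset H_{2\delta'+\rho}F_2$, and then uses rigidity of the connected component $H_0$ to place each $f$ in the $\delta$-neighbourhood of $N(H_0)$, the subgroup $H$ of the conclusion being a maximal subgroup equal to the normaliser of its identity component. Your claim (ii), even granting (i), also needs the finitely many cosets $ff'H'$ to be separated at scale $\delta_*$, a separation depending on $A$ and hence not available when $\delta_*$ is chosen, and it can fail outright when $H'$ is normal (a simple factor of $G$), where $N(H')=G$ and distinct cosets may be arbitrarily close. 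The uniformity issue you flag at the end is real, but it is not the main obstruction: the missing Kemperman-type spreading argument is.
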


    \begin{proof}
        Fix $\rho > 0$. According to the first part of this paper (Proposition \ref{Proposition: approximate subgroups of small measure}) for $\delta' > 0$ there are a closed subgroup $H$, $\epsilon_1 > 0$ and $C(d_G)$ such that if $\epsilon \leq \epsilon_1$ then $A$ is covered by $C(d_G)$ left-translates of $H_{\delta'}$ and $B$ is covered by $C(d_G)$  right-translates of $H_{\delta'}$. Choose $H$ to be a maximal proper subgroup, in particular $H$ is the normaliser of its connected component of the identity. Choose $F_1, F_2 \subset G$ such that $A \subset F_1 H_{\delta'}$ and $A \subset H_{\delta'}F_2$.  By Lemma \ref{Lemma: Disjoint translates} we may assume that for all $f,f' \in F_1$, $fH_{2\delta' + \rho} \cap f'H_{2\delta' + \rho}=\emptyset$ and a symmetric property for $f,f' \in F_2$. Suppose as we may that $\sup_{f \in F_2} \mu_G(A \cap H_{\delta'}f) \geq \sup_{f \in F_1} \mu_G(A \cap fH_{\delta'})$ and choose $f_0 \in F_2$ so that $\mu_G(A \cap H_{\delta'}f_0)$ is maximum. 

        According to Proposition \ref{Proposition: BM close to a subgroup} for all $\alpha > 0$ we find as soon as $\epsilon > 0$ is sufficiently small:
        \begin{align}
            \mu_G(A^2) & \geq \mu_G\left(A(A \cap H_{\delta'}f_0)\right) \\
            & \geq \sum_{f \in F_1}\mu_G\left((A \cap fH_{\delta'})(A \cap H_{\delta'}f_0)\right) \\
             & \geq \sum_{f \in F_1}(1-\alpha)\left(\mu_G\left(A \cap fH_{\delta'}\right)^{\frac{1}{d_G-d_H}} + \mu_G(A \cap H_{\delta'}f_0)^{\frac{1}{d_G-d_H}} \right)^{d_G-d_H} \\
            & \geq (1-\alpha)2^{d_G -d_H}\mu_G(A) +  \sum_f (1-\alpha)R_f.
        \end{align}
        where $R_f$ is defined as  \begin{align*}\left(\mu_G\left(A \cap fH_{\delta'}\right)^{\frac{1}{d_G-d_H}} + \mu_G(A \cap H_{\delta'}f_0)^{\frac{1}{d_G-d_H}} \right)^{d_G-d_H} - 2^{d_G-d_H}\mu_G(A \cap fH_{\delta'}).\end{align*}
        Since $\mu_G(A \cap H_{\delta'}f_0) \geq \mu_G\left(A \cap fH_{\delta'}\right)$, $R_f$ is positive for all $f \in F_1$.
        So $R_f \leq \frac{(\epsilon + 2^{d_G-d_H}\alpha)}{1-\alpha}\mu_G(A)$. Since $\mu_G(A \cap H_{\delta'}f_0) \geq \frac{\mu_G(A)}{|F_2|} \geq \frac{\mu_G(A)}{C(d_G)} $ we find that for all $\alpha'> 0$ if $\epsilon$ (and, hence, $\alpha$) is sufficiently small, then for all $f \in F_1$
        $$1-\alpha' \leq  \frac{\mu_G(A \cap fH_{\delta'})}{\mu_G(A \cap H_{\delta'}f_0)} \leq 1.$$
        
        All in all, for all $\alpha'>0$ we find
        $$\mu_G\left((A \cap fH_{\delta'}) )(A \cap H_{\delta'}f_0)\right) \leq 2^{d_G - d_H}\mu_G(A \cap fH_{\delta'}) + \alpha' \mu_G(A \cap fH_{\delta'})$$
        as soon as $\epsilon$ is sufficiently small. Write $A_{h,\rho,f}:=A \cap fT_{\delta'}B_H(h,\rho)$. For $\epsilon, \delta', \alpha'$ sufficiently small we find by Lemma \ref{Lemma: Coarse stability} that \begin{equation}
            \mu_G(A_{h,\rho,f}) \geq \frac{\mu_G(A \cap fH_{\delta'})\mu_H(B_H(e,\rho))}{2} \label{Eq: Eq12}
        \end{equation} for all $h \in H$, $f \in F_1$. 

        Thus, $fH \subset AB_G(e,\delta'+\rho) \subset H_{2\delta' +\rho}F_2$. Write $H_0$ the connected component of $H$. Then there is $f' \in F_2$ such that $fH_0 \subset H_{2\delta' +\rho}f'$ according to the first paragraph. For $\delta', \rho$ sufficiently small, this implies that $f$ is contained in the $\delta$-neighbourhood of the normaliser of $H_0$ i.e. in $H_\delta$. Since this is true for all $f \in F_1$, $F_1 \subset H_\delta$. In turn, this implies $A \subset H_{\delta +\delta'}$.
    \end{proof}

\subsection{A coarse stability result}\label{Subsection: A coarse stability result}

Consider $\delta, \epsilon > 0$ and assume from now on that we are given $A,B, AB \subset H_\delta$ for some proper closed subgroup $H$ of maximal dimension such that 
$$\mu_G(AB) \leq \left(2^{d_G - d_H} + \epsilon\right)\min\left( \mu_G(A), \mu_G(B)\right).$$ Our goal in this section is to prove that the mass of such $A$'s and $B$'s is spread evenly around $H$. To do so we study the subsets $T_{\delta}B_H(h,\rho) \cap B$,  $T_{\delta}B_H(h,\rho) \cap A$ and $T_{\delta}B_H(h,\rho) \cap AB$ for varying $\rho>0$ and $h \in H$. We write 
\begin{align*}
     A_{h,\rho}:= T_{\delta}& B_H(h,\rho) \cap A,\      \     \     \      B_{h,\rho}:= T_{\delta}B_H(h,\rho) \cap B, \\&(AB)_{h,\rho}:= T_{\delta}B_H(h,\rho) \cap AB.
\end{align*}

The main result of this section is:
\begin{proposition}\label{Proposition: Sets near equality are evenly spread}
     Let $\rho, \alpha > 0$. There are $\delta, \epsilon$ such that the following holds. 
     Write $m(\rho):=\sup_{h \in H} \max\left(A_{h,\rho}, B_{h,\rho}\right).$ 
     For every $h \in H$, $$(1-\alpha) m(\rho) \leq A_{h,\rho} \leq m(\rho).$$
     Moreover, $$\mu_H(B_H(e,\rho))\mu_G(A) \leq m(\rho) \leq  (1-\alpha)^{-1}\mu_H(B_H(e,\rho))\mu_G(A).$$
\end{proposition}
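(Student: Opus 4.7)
The plan is to mirror the chain of inequalities appearing in the proof of Proposition~\ref{Proposition: Lower bound on expansion close to a subgroup} and to extract quantitative information from its being close to an equality. Set $a(h):=\mu_G(A_{h,\rho})$; by Lemma~\ref{Lemma: Double counting} its $\mu_H$-average equals $\bar a:=\mu_G(A)\mu_H(B_H(e,\rho))$, so the bound $m(\rho)\geq \bar a$ in the second conclusion is immediate. Up to exchanging the roles of $A$ and $B$, I will assume that $m(\rho)$ is realised on the $B$-side, say $m(\rho)=\mu_G(B_{h_0,\rho})$ for some $h_0\in H$. Applying the local Brunn--Minkowski inequality (Theorem~\ref{Theorem: Local Brunn--Minkowski}) pointwise to $A_{h,\rho}B_{h_0,\rho}$, integrating against $\mu_H$, and invoking the containment $A_{h,\rho}B_{h_0,\rho}\subset(AB)_{hh_0,2\rho+c\delta^2}$ together with Lemma~\ref{Lemma: Double counting}, Lemma~\ref{Lemma: Growth in balls} and the doubling hypothesis, one obtains
\[
\int_H \phi\bigl(t(h)\bigr)\,d\mu_H(h)\;\leq\;(2^{d_G}+O(\epsilon))\,\bar t,
\]
where $t(h):=a(h)/m(\rho)\in[0,1]$, $\bar t:=\bar a/m(\rho)$ and $\phi(t):=(1+t^{1/d_G})^{d_G}$.

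The analytic input I would rely on is the Taylor expansion
\[
\phi(t)-2^{d_G}t=2^{d_G-1}(1-t)+O\bigl((1-t)^2\bigr),
\]
which together with strict positivity of $\phi(t)-2^{d_G}t$ on $[0,1)$ and compactness yields a universal linear lower bound $\phi(t)-2^{d_G}t\geq c_1(d_G)(1-t)$ valid on $[0,1]$. Integrating and rearranging then gives $c_1(d_G)(1-\bar t)\leq O(\epsilon)$, from which the upper bound $m(\rho)\leq (1-\alpha)^{-1}\bar a$ of the second conclusion follows. Restricting the same inequality to the sublevel set $\{h:t(h)\leq 1-\beta\}$, where $\phi(t)-2^{d_G}t\geq c_1(d_G)\beta$, one obtains the ``in measure'' control
\[
\mu_H\bigl(\{h\in H:a(h)<(1-\beta)m(\rho)\}\bigr)\;\leq\;O(\epsilon/\beta).
\]

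The main obstacle is to upgrade this ``in measure'' bound to a pointwise lower bound valid for every $h\in H$. To do so I would run the same argument at an auxiliary scale $\rho'\ll\rho$, obtaining an exceptional set $E(\rho')\subset H$ of $\mu_H$-measure at most $O(\epsilon/\beta)$ outside of which $\mu_G(A_{h',\rho'})\geq (1-\beta)m(\rho')$. The inclusion $A_{h',\rho'}\subset A_{h,\rho}$ valid whenever $d_H(h,h')<\rho-\rho'$ combined with a Fubini computation dual to Lemma~\ref{Lemma: Double counting} yields
\[
\mu_G(A_{h,\rho})\;\geq\;\frac{1}{\mu_H(B_H(e,\rho'))}\int_{B_H(h,\rho-\rho')}\mu_G(A_{h',\rho'})\,d\mu_H(h').
\]
Lower-bounding the right-hand side outside $E(\rho')$, using the upper bound $m(\rho')\leq (1+O(\epsilon))\mu_G(A)\mu_H(B_H(e,\rho'))$ already obtained at the auxiliary scale, and invoking Lemma~\ref{Lemma: Growth in balls} to compare $\mu_H(B_H(e,\rho-\rho'))$ with $\mu_H(B_H(e,\rho))$, delivers $\mu_G(A_{h,\rho})\geq (1-\alpha)m(\rho)$ for every $h\in H$, provided $\beta$, $\rho'/\rho$ and $\epsilon$ are chosen successively small in terms of $\alpha$. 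The delicate point will be the simultaneous optimisation of these three scales: $\beta$ controls the deficit on $E(\rho')^c$, $\rho'/\rho$ controls the loss in comparing the two balls in $H$, and $\epsilon$ must dominate $\beta\,\mu_H(B_H(e,\rho'))$ so that $E(\rho')$ is genuinely negligible inside $B_H(h,\rho-\rho')$.
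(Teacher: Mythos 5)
Your overall strategy -- averaging the local Brunn--Minkowski inequality against a near-maximal fibre $B_{h_0,\rho}$, extracting an $L^1$ deficit bound from the elementary inequality $(1+t^{1/d_G})^{d_G}-2^{d_G}t\geq c_1(d_G)(1-t)$, running Markov at an auxiliary scale $\rho'\ll\rho$, and transferring to scale $\rho$ by a Fubini identity -- is essentially the paper's (your Markov step is Lemma~\ref{Lemma: Local measure is almost constant}, and your displayed inequality is exactly \eqref{Eq: Redecoupage en plus petit morceau}). But there is a genuine gap: you never control the set of \emph{empty} fibres. The local Brunn--Minkowski inequality cannot be applied when $A_{h,\rho'}=\emptyset$, so your integrated inequality is only valid with the integral restricted to $H_{A,\rho'}:=\{h\in H: A_{h,\rho'}\neq\emptyset\}$; the deficit then bounds the measure of deficient fibres \emph{among the non-empty ones} and gives no bound whatsoever on $\mu_H(H\setminus H_{A,\rho'})$. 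Consequently neither your upper bound $m(\rho')\leq(1+O(\epsilon))\mu_G(A)\mu_H(B_H(e,\rho'))$ nor the smallness of your exceptional set inside all of $H$ follows: a priori $A$ could concentrate over a small portion of $H$ (e.g.\ $A\subset T_\delta B_H(e,r)$ with $r$ small), in which case $m(\rho')\gg\mu_G(A)\mu_H(B_H(e,\rho'))$ and most fibres are empty -- and ruling this out is precisely the content of the ``for every $h\in H$'' conclusion. The paper needs a separate ingredient here, Lemma~\ref{Lemma: Coarse stability}, which applies Kemperman's inequality \emph{inside $H$} to the projected sets $H_{A,\rho}^{\alpha}$ and $H_{B,\rho}$ to force $\mu_H(H_{B,\rho})\geq 1-O(\epsilon)$ and hence non-emptiness of every fibre at the auxiliary scale; some such global non-concentration argument is indispensable and is missing from your proposal.

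A secondary accounting issue: your scale-$\rho$ derivation of $m(\rho)\leq(1-\alpha)^{-1}\mu_G(A)\mu_H(B_H(e,\rho))$ invokes Theorem~\ref{Theorem: Local Brunn--Minkowski} and Lemma~\ref{Lemma: Growth in balls} at scale $\rho$, whose error terms are of size $C(d_G)\rho^2$ and cannot be beaten by shrinking $\delta$ and $\epsilon$; since the proposition quantifies $\rho$ and $\alpha$ independently, your argument only yields the statement when $\alpha\gtrsim\rho^2$. The paper never applies Brunn--Minkowski at scale $\rho$: it proves the pointwise lower bound first, using only the auxiliary scale together with the exact comparison $m(\rho)\leq m(\rho')\,\mu_H(B_H(e,\rho))/\mu_H(B_H(e,\rho'))$ (itself a consequence of the same Fubini identity you state), and then deduces the ``moreover'' upper bound by integrating the pointwise bound over $H$. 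Reorganising your final step in this order -- and restricting all Brunn--Minkowski applications to the small auxiliary scale -- removes the $O(\rho^2)$ loss; the non-emptiness of fibres, however, still requires the Kemperman-type input above.
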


Our proof of Proposition \ref{Theorem: Expansion inequality} relied on a series of inequalities we recall now. For all $\alpha_0, \alpha_1 > 0$, we had for all $\rho > 0$ small enough and all $\epsilon, \delta > 0$ sufficiently small: 
\begin{align}
    \mu_G(A)\mu_H(B_H(e,\rho)) = & 
    \int_H \mu_G(A_{h,\rho}) d\mu_H(h)\\ \leq & (2^{d_G} - \alpha_0)^{-1}\int_H \mu_G( A_{h,\rho} B_{h_0,\rho}) d\mu_H(h) \label{Eq1}\\
    \leq & (2^{d_G} - \alpha_0)^{-1}\int_{H_{A, \rho}} \mu_G( (AB)_{hh_0,2\rho + c\delta^2}) \mu_H(h) \label{Eq2} \\
    = & (2^{d_G} - \alpha_0)^{-1} \left(\mu_G(AB) \mu_H(B_H(2\rho +c\delta^2))\right. \\
    &\  \  \  \  \  \  \   \  \  \  \  \  \  \  \  \  - \int_{H\setminus H_{A,\rho}h_0} \mu_G((AB)_{h,2\rho + c\delta^2})d\mu_H(h)). \label{Eq: expansion with error term one}
\end{align}
where $H_{A,\rho}$ is the subset of those $h \in H$ such that $ A_{h,\rho} \neq \emptyset$ and $h_0$ is chosen so that $\mu_G(B_{h_0,\rho})$ is at least $(1-\alpha_1)m(\rho)$ (note that by a symmetry argument we can always assume existence of at least one such $h_0$). 

We suppose given from now on $A,B$ as above with respect to parameters $\delta, \epsilon >0$. We will establish below a series of results holding whenever $\delta,\epsilon >0$ are sufficiently small. When doing so, we will also implicitly mean that $\alpha_0,\alpha_1$ are also chosen judiciously - we leave to the reader the verification that this is always possible.

Since all the inequalities above are close to equality, we can show that the assumption on the choice of $h_0$ is justified: 

\begin{lemma}\label{Lemma: Local measure is almost constant}
Let $\alpha > 0$. There is $\rho_0 > 0$ such that for all $\rho_0 >\rho > 0$ the following holds for all $\delta, \epsilon >0$ sufficiently small: for all $h$ in a subset of measure at least $\left(1-\alpha\right)\mu_G(H_{A,\rho})$ of $H_{A,\rho}$ we have  $\mu_G(A_{h, \rho}) \geq (1-\alpha)m(\rho)$ and a similar statement holds for $B$.
\end{lemma}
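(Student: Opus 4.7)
My plan is to revisit the sandwich of inequalities $(\ref{Eq1})$--$(\ref{Eq: expansion with error term one})$ that precede the lemma and extract a quantitative deficit from the slack between the lower and upper bounds. The hypothesis $\mu_G(AB) \leq (2^{d_G-d_H}+\epsilon)\mu_G(A)$, combined with Lemma~\ref{Lemma: Growth in balls} (which gives $\mu_H(B_H(e,2\rho+c\delta^2)) = 2^{d_H}(1+O(\rho^2))\mu_H(B_H(e,\rho))$ as soon as $\delta^2 \ll \rho$), forces
\[
\int_{H_{A,\rho}} \mu_G(A_{h,\rho}B_{h_0,\rho})\,d\mu_H(h) \;\leq\; 2^{d_G}\mu_G(A)\mu_H(B_H(e,\rho))\bigl(1+O(\epsilon+\rho^2)\bigr).
\]
On the other hand, Theorem~\ref{Theorem: Local Brunn--Minkowski} applied slice-by-slice yields
\[
\mu_G(A_{h,\rho}B_{h_0,\rho}) \geq (1-O(\rho^2))\bigl(\mu_G(A_{h,\rho})^{1/d_G} + \mu_G(B_{h_0,\rho})^{1/d_G}\bigr)^{d_G}.
\]
Setting
\[
F(a,b) := (a^{1/d_G}+b^{1/d_G})^{d_G} - 2^{d_G}a \geq 0 \qquad (0 \leq a \leq b)
\]
and using the choice $\mu_G(B_{h_0,\rho}) \geq (1-\alpha_1)m(\rho) \geq (1-\alpha_1)\mu_G(A_{h,\rho})$ together with the double-counting identity $\int_{H_{A,\rho}}\mu_G(A_{h,\rho})\,d\mu_H = \mu_G(A)\mu_H(B_H(e,\rho))$ (Lemma~\ref{Lemma: Double counting}), I expect to obtain
\[
\int_{H_{A,\rho}} F\bigl(\mu_G(A_{h,\rho}),\,\mu_G(B_{h_0,\rho})\bigr)\,d\mu_H(h) \;\leq\; C(d_G)(\epsilon + \rho^2)\,\mu_G(A)\mu_H(B_H(e,\rho)).
\]

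The remaining step is a quantitative convex analysis fact: $a \mapsto F(a,b)$ is monotone decreasing on $[0,b]$ with $F(b,b)=0$, and a Taylor expansion at $a=b$ gives $F((1-t)b,b) = 2^{d_G-1}t\,b + O(t^2 b)$. By monotonicity, $F((1-\alpha)b,b) \geq c(\alpha,d_G)\cdot b$ for any fixed $\alpha > 0$. Markov's inequality then bounds the measure of $\{h \in H_{A,\rho} : \mu_G(A_{h,\rho}) \leq (1-\alpha)\mu_G(B_{h_0,\rho})\}$ by a multiple of $(\epsilon+\rho^2)\mu_G(A)\mu_H(B_H(e,\rho))/\mu_G(B_{h_0,\rho})$. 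Since the trivial lower bound $m(\rho) \geq \mu_G(A)\mu_H(B_H(e,\rho))/\mu_H(H_{A,\rho})$ follows from $\mu_H(H_{A,\rho}) \leq 1$ and the double-counting identity, the previous ratio is $\leq (1-\alpha_1)^{-1}\mu_H(H_{A,\rho})$. Choosing $\rho_0 = \rho_0(\alpha)$ so that $C(d_G)\rho_0^2/c(\alpha,d_G) < \alpha/3$, then $\delta^2 \ll \rho$ and $\epsilon$ small, and finally $\alpha_1 \ll \alpha$, reduces the exceptional set to measure $<\alpha\,\mu_H(H_{A,\rho})$, which is the $A$-statement. The $B$-statement is symmetric: either $m(\rho)$ is realised (up to $1-\alpha_1$) by some $A_{h_0',\rho}$ and we repeat the argument with the roles of $A$ and $B$ interchanged, or the $A$-statement already produces many $h$ with $\mu_G(A_{h,\rho})$ close to $m(\rho)$ and any such $h$ plays the role of $h_0'$.

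The main obstacle is really the quantitative lower bound $F((1-\alpha)b,b) \geq c(\alpha,d_G)\,b$; this is only a one-dimensional computation but it governs the dependence $\rho_0(\alpha)$ and hence the order of quantifiers in the conclusion. Beyond this, the proof is bookkeeping: one must verify that the error terms ($c\delta^2$ from the rectangle product law of \S\ref{Subsection: Multiplicative properties of rectangles}, the $O(\rho^2)$ loss from local Brunn--Minkowski, the $O(\rho^2)$ from the ball-volume estimate, and the $\epsilon$ from the doubling assumption) combine consistently and that the hierarchy $\alpha \mapsto \rho_0 \mapsto (\delta,\epsilon) \mapsto \alpha_1$ respects the quantifier order in the lemma.
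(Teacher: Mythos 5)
Your proposal is correct and is essentially the paper's argument: both run the slice-by-slice application of Theorem \ref{Theorem: Local Brunn--Minkowski} through the double-counting identity of Lemma \ref{Lemma: Double counting}, use the doubling hypothesis together with Lemma \ref{Lemma: Growth in balls} to bound the averaged slack, and finish with Markov's inequality, and your pointwise bound $F(a,b)\geq c(\alpha,d_G)\,b$ for $a\leq(1-\alpha)b$ plays exactly the role that Lemma \ref{Lemma: Near equality BM implies equal measure} plays in the paper. One small slip to fix: $a\mapsto F(a,b)$ is \emph{not} monotone decreasing on $[0,b]$ (it equals $b$ at $a=0$ and increases near $0$), but $\partial_a F=(1+(b/a)^{1/d_G})^{d_G-1}-2^{d_G}$ is decreasing in $a$, so $F(\cdot,b)$ is concave and its minimum over $[0,(1-\alpha)b]$ is attained at an endpoint, namely $\min\bigl(b,\,\bigl((1+(1-\alpha)^{1/d_G})^{d_G}-2^{d_G}(1-\alpha)\bigr)b\bigr)$, which still yields the lower bound $c(\alpha,d_G)\,b$ you need.
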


This will be a simple consequence of the Markov inequality and the following observation: 

\begin{lemma}\label{Lemma: Near equality BM implies equal measure}
     Let $\alpha, C > 0$. Let $X,Y$ be subsets of $G$ such that $\mu_G( XY)^{\frac{1}{d_G}} \geq (1-\alpha) \left(\mu_G(X)^{\frac{1}{d_G}} + \mu_G(Y)^{\frac{1}{d_G}}\right)$. Suppose that $\mu_G(XY) \leq C\min(\mu_G(X), \mu_G(Y))$. Then 
    $$ \left(\frac{C^{\frac{1}{d_G}}-1+\alpha}{1 - \alpha}\right)^{d_G}\min(\mu_G(X), \mu_G(Y)) \geq \max(\mu_G(X), \mu_G(Y))$$
\end{lemma}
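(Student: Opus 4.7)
The plan is to reduce to a direct computation. Assume without loss of generality that $\mu_G(X) \leq \mu_G(Y)$, so $\min(\mu_G(X),\mu_G(Y)) = \mu_G(X)$ and $\max(\mu_G(X),\mu_G(Y)) = \mu_G(Y)$. The two hypotheses then chain into
\[
(C\,\mu_G(X))^{\frac{1}{d_G}} \;\geq\; \mu_G(XY)^{\frac{1}{d_G}} \;\geq\; (1-\alpha)\left(\mu_G(X)^{\frac{1}{d_G}} + \mu_G(Y)^{\frac{1}{d_G}}\right),
\]
where the first inequality uses the upper bound $\mu_G(XY) \leq C\mu_G(X)$ and the second is the near-equality Brunn--Minkowski assumption.

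Dividing through by $\mu_G(X)^{1/d_G}$ (which is harmless since if $\mu_G(X) = 0$ the conclusion is trivial) gives
\[
C^{\frac{1}{d_G}} \;\geq\; (1-\alpha)\left(1 + \left(\frac{\mu_G(Y)}{\mu_G(X)}\right)^{\frac{1}{d_G}}\right),
\]
and solving for $(\mu_G(Y)/\mu_G(X))^{1/d_G}$ yields
\[
\left(\frac{\mu_G(Y)}{\mu_G(X)}\right)^{\frac{1}{d_G}} \;\leq\; \frac{C^{\frac{1}{d_G}} - (1-\alpha)}{1-\alpha} \;=\; \frac{C^{\frac{1}{d_G}} - 1 + \alpha}{1-\alpha}.
\]
Raising both sides to the $d_G$-th power gives exactly the claimed inequality.

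There is no real obstacle here: the lemma is a purely algebraic rearrangement of the Brunn--Minkowski defect inequality under the extra boundedness hypothesis $\mu_G(XY) \leq C\min(\mu_G(X),\mu_G(Y))$. The only subtlety worth flagging is that the right-hand side is only meaningful (i.e.\ nonnegative) when $C^{1/d_G} \geq 1 - \alpha$, which is automatic whenever the two hypotheses are compatible, since $\mu_G(XY) \geq (1-\alpha)^{d_G} \cdot 2^{d_G}\min(\mu_G(X),\mu_G(Y))$ forces $C \geq (2(1-\alpha))^{d_G}$. This is why the estimate will be used in practice only when $C$ is close to $2^{d_G}$ and $\alpha$ is small, precisely the regime arising in the proof of Lemma \ref{Lemma: Local measure is almost constant}.
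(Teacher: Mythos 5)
Your proof is correct and is essentially identical to the paper's: assume $\mu_G(X)\leq\mu_G(Y)$, chain the upper bound $\mu_G(XY)\leq C\mu_G(X)$ with the near-equality Brunn--Minkowski hypothesis, and rearrange to isolate $(\mu_G(Y)/\mu_G(X))^{1/d_G}$. The extra remarks on nonnegativity of the right-hand side are harmless and consistent with how the lemma is used.
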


\begin{proof}
 Suppose $\mu_G(X) \leq \mu_G(Y)$. We have 
$$ C^{\frac{1}{d_G}}\mu_G(X)^{\frac{1}{d_G}} \geq (1-\alpha) \left(\mu_G(X)^{\frac{1}{d_G}} + \mu_G(Y)^{\frac{1}{d_G}}\right)$$
 which yields
 $$ (C^{\frac{1}{d_G}}-1 + \alpha)\mu_G(X)^{\frac{1}{d_G}} \geq (1-\alpha)\mu_G(Y)^{\frac{1}{d_G}}.$$
\end{proof}

\begin{proof}[Proof of Lemma \ref{Lemma: Local measure is almost constant}.]
 We will assume for notational simplicity that the parameters $\alpha_0,\alpha_1$ are smaller than $\alpha$. For all $h \in H_{A,\rho}$ we have
    \begin{align}
        \mu_G( (AB)_{hh_0,2\rho + c(d_G)\delta^2})  & \geq \mu_G((A_{h, \rho})(B_{h_0,\rho})) \\
        & \geq (2^{d_G} - \alpha)\mu_G(A_{h,\rho}).
    \end{align}
    But we have
    \begin{align*}
        (2^{d_G-d_H}+\alpha)\mu_G(A) &\geq \mu_G(AB) \\ &\geq \frac{\int_{H_{A,\rho}}\mu_G( (AB)_{hh_0,2\rho + c(d_G)\delta^2})}{\mu_H(B_H(e,2\rho + c(d_G)\delta^2))}
    \end{align*}
    and 
    \begin{align*}
        (2^{d_G} - \alpha)\mu_G(A) = \frac{\int_{H_{A,\rho}}(2^{d_G}-\alpha)\mu_G( A_{h,\rho })}{\mu_H(B_H(e,\rho))}
    \end{align*}
    Write $p\mu_G(H_{A,\rho})$ for some $p \in [0;1]$ the measure of the set of those $h \in H_{A,\rho}$ such that 
    \begin{align}
        (2^{d_G}+c\alpha)\mu_G(A_{h,\rho}) \geq \mu_G((AB)_{hh_0,2\rho + c(d_G)\delta^2}). 
    \end{align}
    
    We see that by the Markov inequality, we have 
    \begin{align}
        p \geq 1 - \frac{\left(2^{d_G-d_H}+\alpha\right)\frac{\mu_H(B_H(e,2\rho + c(d_G)\delta^2)) }{\mu_H(B_H(e,\rho))}- (2^{d_G}-\alpha)}{(1+c)\alpha}.
    \end{align}
    But $\frac{\mu_H(B_H(e,2\rho + c(d_G)\delta^2))}{\mu_H(B_H(e,\rho))} \leq 2^{d_H}$ for $\delta$ sufficiently small (Lemma \ref{Lemma: Growth in balls}). So 
    \begin{align}
        p &\geq 1 - \frac{ \left(2^{d_G-d_H}+\alpha\right)2^{d_H}- (2^{d_G}-\alpha)}{(1+c)\alpha} \\
        & = 1 - \frac{(2^{d_H}+1)}{(1+c)}.
    \end{align}
    By Lemma \ref{Lemma: Near equality BM implies equal measure}, for any such $h$ we have moreover 
    $$  \left(\frac{1+c\alpha}{1 - c\alpha}\right)^{d}\mu_G(A_{h,\rho}) \geq \mu_G(B_{h_0,\rho}).$$
    So we are done (e.g. with a choice $c=\alpha^{-\frac12}$). A symmetric argument shows the estimate for 
    $\mu_G(B_{h,\rho})$.
    
\end{proof}

As a first consequence, we can prove that $A$ and $B$ cannot avoid a large chunk of $H_{\delta}$. This will be crucial later on when we show that sets near equality are contained in $H_{\delta}$ - and not merely in a collection of translates of $H_{\delta}$. 

\begin{lemma}\label{Lemma: Coarse stability}
Let $\rho > 0$. There are $\epsilon, \delta > 0$ such that for every $h \in H$, $A_{h,\rho}$ and $B_{h,\rho}$ are non-empty. 
\end{lemma}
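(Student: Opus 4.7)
I argue by contradiction. Suppose $A_{h^*,\rho}=\emptyset$ for some $h^*\in H$; the case $B_{h^*,\rho}=\emptyset$ is handled by a symmetric argument.

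\textbf{Enlarging the violation.} Because the tube $T_\delta B_H(h,\rho/2)$ is contained in $T_\delta B_H(h^*,\rho)$ for every $h\in B_H(h^*,\rho/2)$, the vacuity at $h^*$ propagates: $A_{h,\rho/2}=\emptyset$ throughout the open ball $V:=B_H(h^*,\rho/2)$. In particular $\mu_H(H\setminus H_{A,\rho/2})\geq\mu_H(V)$, a strictly positive constant depending only on $\rho$ and $G$.

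\textbf{Upper bound via the inequality chain.} Running the chain of inequalities (13)--(16) from the proof of Proposition \ref{Proposition: Lower bound on expansion close to a subgroup} at scale $\rho/2$, with $h_0$ maximizing $\mu_G(B_{h,\rho/2})$ and $\rho':=\rho+c(d_G)\delta^2$, and combining with the doubling hypothesis, one obtains
\[
\int_{H\setminus H_{A,\rho/2}h_0}\mu_G((AB)_{h,\rho'})\,d\mu_H(h)\leq \bigl[(2^{d_G-d_H}+\epsilon)\mu_H(B_H(e,\rho'))-(2^{d_G}-\alpha_0)\mu_H(B_H(e,\rho/2))\bigr]\mu_G(A).
\]
Lemma \ref{Lemma: Growth in balls} simplifies the bracket to $O(\epsilon+\alpha_0+\rho^2)\mu_H(B_H(e,\rho/2))$; by choosing $\epsilon,\delta,\alpha_0$ small enough relative to $\mu_H(V)\sim\rho^{d_H}$, this quantity becomes arbitrarily smaller than $\mu_G(A)\mu_H(V)$. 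Since $Vh_0\subset H\setminus H_{A,\rho/2}h_0$, the same small upper bound applies to the integral over $Vh_0$.

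\textbf{Contradictory lower bound.} The plan is to exhibit a lower bound of order $\mu_G(A)\mu_H(V)$ on the same integral. I would first bootstrap the non-emptiness statement to $B$ (either through a joint induction on $\mu_G(A)+\mu_G(B)$, or through a compactness/ultraproduct argument along a sequence of counter-examples in the spirit of Section~\ref{Section: A stability result for the local Brunn--Minkowksi}), so that $B_{h,\rho/2}$ is non-empty for every $h$ with measure $\gtrsim m(\rho/2)$ by Lemma \ref{Lemma: Local measure is almost constant}. Then, using Kemperman's inequality in $H$ applied to the projections of $A$ and $B$ to $H$ — whose Haar measures must sum to at least $1$, since otherwise $A$ (respectively $B$) would be close to a tube around a proper subgroup of $H$, hence of smaller dimension than $d_H$, contradicting the maximality of $\dim H$ via Proposition \ref{Proposition: Lower bound on expansion close to a subgroup} — one concludes that $\pi(A)\pi(B)$ equals $H$ up to a null set. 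For every $h\in Vh_0$ one can then pick $a\in A,b\in B$ with $\pi(a)\pi(b)$ close to $h$; the translate $a\cdot B_{\pi(b),\rho/2}$ lies inside $(AB)_{h,\rho'}$ and produces the pointwise bound $\mu_G((AB)_{h,\rho'})\gtrsim m(\rho/2)\gtrsim\mu_G(A)\mu_H(B_H(e,\rho/2))/\mu_H(H_{A,\rho/2})$. Integrating over $Vh_0$ yields the desired lower bound of order $\mu_G(A)\mu_H(V)$, which contradicts the upper bound of the previous paragraph.

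\textbf{Main obstacle.} The principal difficulty is the apparent circularity in invoking the non-emptiness statement for $B$ while trying to establish it for $A$; passing to an ultraproduct of hypothetical counter-examples $(A_n,B_n,\delta_n,\epsilon_n)$ with $\delta_n,\epsilon_n\to0$ circumvents this, since at the limit both assertions become simultaneously accessible and the Kemperman-plus-maximality argument in $H$ rules out any proper-tube alternative. Choosing the auxiliary scale $\rho/2$ judiciously (or a fraction $\rho/n$ for some larger $n$) ensures that the thickenings $\rho'$ and $\rho''$ coming from Steps 2 and 3 are compatible.
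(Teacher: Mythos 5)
Your overall frame -- argue by contradiction, run the chain (13)--(16) and finish with Kemperman in $H$ -- points in the right direction, but the crucial step, the lower bound on $\int_{Vh_0}\mu_G((AB)_{h,\rho'})\,d\mu_H(h)$, is circular, as you yourself acknowledge: for $h\in Vh_0$ the $A$-fibres over the corresponding base points are empty by construction, so you must produce mass in $(AB)_{h,\rho'}$ from decompositions $h=h_1h_2$ with $h_1$ in the support of $A$ and $h_2$ in the support of $B$, and for that you invoke everywhere non-emptiness (indeed near-maximality) of the $B$-fibres, which is exactly the statement being proved. Neither of your proposed escapes is sound as stated. The ``joint induction / ultraproduct'' fix is not worked out: at the limit you would still have to exclude the same counterexample configuration, and nothing explains how the two assertions become ``simultaneously accessible'' there. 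Moreover the claim that $\mu_H(\pi(A))+\mu_H(\pi(B))\geq 1$ ``since otherwise $A$ would be close to a tube around a proper subgroup of $H$'' is unjustified: a set whose projection to $H$ has measure $1/10$ need not be anywhere near a subgroup tube, and the fact that near-minimal doubling forces the supports in $H$ to be large is precisely the content of the lemma, not something that may be assumed.

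The paper's proof avoids this circularity altogether and needs no a priori information on the fibres of $B$. It works with the support sets $H_{A,\rho}^{\alpha}=\{h\in H_{A,\rho}:\mu_G(A_{h,\rho})\geq(1-\alpha)m(\rho)\}$ (of nearly full relative measure in $H_{A,\rho}$ by Lemma \ref{Lemma: Local measure is almost constant}) and $H_{B,\rho}$: every product point $h_1h_2$ with $h_1\in H_{A,\rho}^{\alpha}$ and $h_2\in H_{B,\rho}$ contributes mass at least $(1-\alpha)m(\rho)$ to the error integral in \eqref{Eq: expansion with error term one}, so the doubling hypothesis $\mu_G(AB)\leq(2^{d_G-d_H}+\epsilon)\mu_G(A)$ forces $\mu_H\bigl(H_{A,\rho}^{\alpha}H_{B,\rho}\setminus h_0H_{B,\rho}\bigr)=O(\epsilon)\mu_H(H_{A,\rho})$; Kemperman applied to $H_{A,\rho}^{\alpha}$ and $H_{B,\rho}$ then gives $1-\mu_H(H_{B,\rho})=O(\epsilon)$, and running the argument at a smaller scale (so that near-full measure of the support at scale $\rho/2$ meets every ball $B_H(h,\rho/2)$) upgrades this to non-emptiness of every fibre at scale $\rho$, with a symmetric argument for $A$. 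In short, Kemperman is applied to the supports \emph{after} the doubling budget has been spent, rather than to the projections with an assumed lower bound on their measure; that is the idea missing from your write-up, and without it the contradiction in your third step does not go through.
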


The idea is to relate this to doubling in $H$ and apply Kemperman's inequality.

\begin{proof}
 Let $\alpha > 0$, and suppose $\epsilon_0$ is given by Lemma \ref{Lemma: Local measure is almost constant}. Recall that 
$$m(\rho):=\sup_h \max \left(\mu_G(A_{h,\rho}),\mu_G(B_{h,\rho})\right).$$
Define the subset 
$$ H_{A,\rho}^{\alpha}:=\{h \in H_{A,\rho} : \mu_G(A_{h,\rho}) \geq (1-\alpha)m(\rho)\}.$$
For any $h_1 \in H_{A,\rho}^{\alpha}, h_2 \in H_{B,\rho}$ we have 
$$
    \mu_G(T_\delta B_H(h_1h_2,2\rho + c\delta^2) \cap AB) \geq \mu_G(A_{h_1,\rho}B_{h_2,\rho}) 
    \geq \mu_G(A_{h_1,\rho})
    \geq (1-\alpha)m(\rho).
$$
Furthermore, according to Lemma \ref{Lemma: Double counting}, 
$$ m(\rho) \geq \frac{\mu_H(B_H(e,\rho))\mu_G(A)}{\mu_H(H_{A,\rho})}.$$
So \eqref{Eq: expansion with error term one} implies
\begin{align*}
    \mu_G(AB) 
    \geq \left(2^{d_G}-\alpha_0\right)&\frac{\mu_H(B_H(h,\rho))}{\mu_H(B_H(h, 2\rho + c\delta^2))} \\
    &\times \left(1  + (1-\alpha)\frac{\mu_H(H_{A,\rho}^{\alpha}H_{B,\rho} \setminus h_0 H_{B,\rho})}{\mu_H(H_{A,\rho})}\right)\mu_G(A). 
\end{align*}
But $$\mu_G(AB) \leq (2^{d_G-d_H} + \epsilon) \mu_G(A).$$
Suppose that $\alpha < \frac12$. Then, according to  Lemma \ref{Lemma: Growth in balls}, for $\delta << \rho$ and $\alpha$ sufficiently small, we have
$$\mu_H(H_{A,\rho}^{\alpha}H_{B,\rho} \setminus h_0 H_{B,\rho}) \leq 4^{d_G}\epsilon\mu_H(H_{A,\rho}).$$

By Kemperman's inequality \cite{Kemperman64} in $H$ we have 
$$\mu_H(H_{A,\rho}^{\alpha}H_{B,\rho} \setminus h_0 H_{B,\rho}) \geq \min\left(\mu_H(H_{A,\rho}^{\alpha}), 1 - \mu_H(H_{B,\rho})\right). $$
For any $\alpha' \in (0;\frac12)$, if $\delta,\epsilon$ are sufficiently small
$$(1-\alpha') \mu_H(H_{A,\rho}) \leq \mu_H(H_{A,\rho}^{\alpha})$$
according to Lemma \ref{Lemma: Local measure is almost constant}.
So 
$$\min\left(\mu_H(H_{A,\rho}), 1 - \mu_H(H_{B,\rho})\right) \leq (1-\alpha')^{-1} 4^{d_G}\epsilon \mu_H(H_{A,\rho}) \leq 4^{d_G+1}\epsilon \mu_H(H_{A,\rho}),$$
and, hence, 
$$ 1-\mu_H(H_{B,\rho}) \leq 4^{d_G+1}\epsilon$$
as soon as $\epsilon <4^{-d_G - 1}$. 
 \end{proof}

\begin{proof}[Proof of Proposition \ref{Proposition: Sets near equality are evenly spread}.]
    Recall the following formula valid for all $\rho' > 0, h' \in H$:
    \begin{equation}
        \mu_G(A_{h',\rho}) = \frac{\int_H \mu_G\left(A_{h',\rho} \cap T_\delta B_H(h,\rho')\right)d\mu_H(h)}{\mu_H(B_H(e,\rho'))} \label{Eq: Redecoupage en plus petit morceau}
    \end{equation}
    which was already used in the proof of Lemma \ref{Lemma: Double counting}. For any $\rho' > 0$, if $\delta, \epsilon > 0$ are sufficiently small, we know that $A_{h,\rho'}$ is not empty for all $h \in H$ (Lemma \ref{Lemma: Coarse stability}).  Thus, for any $\alpha' > 0$, as soon as $\delta, \epsilon > 0$ are sufficiently small we have by  Lemma \ref{Lemma: Local measure is almost constant}, that there is a subset $X \subset H$ of measure at least $1-\alpha',$ such that $\mu_G(A_{h,\rho'}) \geq (1-\alpha')m(\rho')$ for all $ h \in X$. Hence:
    \begin{align}
     \mu_H(B_H(e,\rho'))\mu_G(A_{e,\rho})  & = \int_H \mu_G\left(A_{e,\rho} \cap T_\delta B_H(h,\rho')\right)d\mu_H(h) \\
     & \geq \int_{B_H(e, \rho -\rho')} \mu_G\left(A_{h,\rho'}\right)d\mu_H(h)\\
     & \geq \int_{B_H(e, \rho -\rho') \cap X} \mu_G\left(A_{h,\rho'}\right)d\mu_H(h) \\
     & \geq (1 - \alpha')\left(\mu_H(B(e, \rho - \rho')) - \alpha' \right)m(\rho').
 \end{align}
But \eqref{Eq: Redecoupage en plus petit morceau} implies that $m(\rho) \leq m(\rho')\frac{\mu_H(B(e, \rho ))}{\mu_H(B_H(e,\rho'))} $. In turn, this yields: 
     $$\mu_G(A_{e,\rho})   \geq  (1 - \alpha')\frac{\mu_H(B(e, \rho - \rho')) - \alpha' }{\mu_H(B(e, \rho))}m(\rho).$$
     And the first inequality follows once we choose $\alpha ', \rho', \delta, \epsilon$ sufficiently small relative to one another.

     Now, the moreover part follows from the above and the equality 
 \begin{equation}
        \mu_G(A) = \frac{\int_H \mu_G\left(A_{h,\rho'}\right)d\mu_H(h)}{\mu_H(B_H(h,\rho'))}.
    \end{equation}
\end{proof}

 \subsection{Approximate subgroup at the identity and commensurability under rotations}
 We keep the notations of \S \ref{Subsection: A coarse stability result}. In this section, we focus more specifically on the case $A=B$ i.e. assuming that we are given a compact subset with $A,A^2 
\subset H_\delta$ and $$\mu_G(A^2) \leq \left(2^{d_G-d_H}+\epsilon\right)\mu_G(A).$$
 
 \begin{lemma}\label{Lemma: Approximate subgroup at the identity}
    Let $\epsilon' >0$. There is $\rho_0> 0$ such that for all $\rho_0 > \rho > 0$ the following inequality holds for all $\delta,\epsilon>0$ sufficiently small.  
    
     $$\mu_G(A_{e,\rho}^2) \leq (2^{d_G} + \epsilon')\mu_G(A_{e,\rho}).$$
     Furthermore, $A_{e,\rho}$ is a $C(d_G)$-approximate subgroup. 
\end{lemma}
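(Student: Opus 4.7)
The plan is to exploit the near-equality in the chain of inequalities recalled in Section \ref{Subsection: A coarse stability result}, applied with the privileged translate taken to be $h_0 = e$. This substitution is permissible because Proposition \ref{Proposition: Sets near equality are evenly spread} guarantees $\mu_G(A_{e,\rho}) \geq (1-\alpha) m(\rho)$, so the piece $A_{e,\rho}$ is within a $(1-\alpha)$ factor of the supremum and can play the role of $B_{h_0,\rho}$ in the chain without degrading the constants beyond a controllable amount.

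The crucial containment is $A_{e,\rho}^2 \subset (A^2)_{e,\, 2\rho + c\delta^2}$, obtained from the multiplicative property of rectangles established in Section \ref{Subsection: Multiplicative properties of rectangles}. Hence it suffices to bound $\mu_G((A^2)_{e,\,2\rho + c\delta^2})$ from above by $(2^{d_G}+\epsilon')\mu_G(A_{e,\rho})$. Rerunning the chain of inequalities with $h_0 = e$ yields
\begin{align*}
(2^{d_G} - \alpha_0)\mu_G(A)\mu_H(B_H(e,\rho)) &\leq \int_H \mu_G(A_{h,\rho} A_{e,\rho}) \, d\mu_H(h) \\
&\leq \int_H \mu_G((A^2)_{h,\, 2\rho + c\delta^2}) \, d\mu_H(h) \\
&= \mu_G(A^2) \mu_H(B_H(e,\, 2\rho + c\delta^2)).
\end{align*}
Thanks to the doubling hypothesis on $A$ and to Lemma \ref{Lemma: Growth in balls}, the right-hand side equals $(2^{d_G} + O(\alpha + \rho^2))\mu_G(A)\mu_H(B_H(e,\rho))$, so the whole chain is near-equal.

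Near-equality in the integral, combined with the pointwise pinning $\mu_G(A_{h,\rho}) \approx m(\rho)$ from Proposition \ref{Proposition: Sets near equality are evenly spread}, yields by a Markov-type concentration argument that $\mu_G((A^2)_{h,\,2\rho + c\delta^2}) \leq (2^{d_G} + \alpha')\mu_G(A_{h,\rho})$ for $h$ in a subset of $H$ of near-full Haar measure. To promote this ``for most $h$'' statement to a bound at the specific point $h = e$, I would use the smaller-scale covering trick from the proof of Proposition \ref{Proposition: Sets near equality are evenly spread}: write
\begin{equation*}
\mu_G((A^2)_{e,\,2\rho + c\delta^2}) \mu_H(B_H(e, \rho')) \leq \int_{B_H(e,\, 2\rho + c\delta^2 + \rho')} \mu_G((A^2)_{h, \rho'}) \, d\mu_H(h)
\end{equation*}
for an auxiliary scale $\rho' \ll \rho$, apply the ``for most $h$'' estimate to the integrand at the finer scale $\rho'$, and invoke even spread of $A$ at scale $\rho'$ to replace $\mu_G(A_{h,\rho'})$ by $\mu_H(B_H(e, \rho'))\mu_G(A)$ up to $(1-\alpha)$-factors. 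Assembling everything then gives $\mu_G(A_{e,\rho}^2) \leq (2^{d_G} + \epsilon')\mu_G(A_{e,\rho})$.

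For the ``approximate subgroup'' claim, with the local doubling bound in hand, the hypothesis of Lemma \ref{Lemma: Sets near equality are approximate subgroups} holds for $A_{e,\rho}$ (local Brunn--Minkowski is available via Theorem \ref{Theorem: Local Brunn--Minkowski} at scale $\rho$, and $\mu_G(A_{e,\rho}^2)/\mu_G(A_{e,\rho}) \leq 2^{d_G}(1+\alpha)$), so we conclude that $A_{e,\rho}$ is contained in a $C(d_G)$-approximate subgroup of measure at most $C(d_G)\mu_G(A_{e,\rho})$; alternatively, Proposition \ref{Proposition: Tao} suffices. The principal technical obstacle is the scale-reduction step propagating pointwise control of $\mu_G((A^2)_{h,\cdot})$ from ``most $h$'' to the specific point $h = e$: Proposition \ref{Proposition: Sets near equality are evenly spread} only provides even spread for $A$, not for $A^2$, so one cannot simply invoke it at the level of $A^2$, and one has to carry the bookkeeping through the finer scale $\rho'$ while keeping the loss from Lemma \ref{Lemma: Growth in balls} under control.
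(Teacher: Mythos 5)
Your skeleton shares the paper's starting point: the rectangle containment $A_{e,\rho}^2 \subset (A^2)_{e,2\rho+c\delta^2}$, the near-equality chain of \S \ref{Subsection: A coarse stability result} run with $h_0=e$ (legitimate by Proposition \ref{Proposition: Sets near equality are evenly spread}), even spread to convert fibre measures, and Lemma \ref{Lemma: Sets near equality are approximate subgroups} for the ``furthermore'' part. But the step you yourself flag as the principal obstacle --- promoting the ``for most $h$'' upper bound on $\mu_G((A^2)_{h,\cdot})$ to the specific point $h=e$ --- is not closed by the finer-scale averaging you propose, and this is a genuine gap. The Markov argument applied to the global chain only makes the exceptional set small relative to all of $H$ (measure $1$); you then integrate over the ball $B_H(e,2\rho+c\delta^2+\rho')$, whose measure is $\asymp \rho^{d_H}$, so the exceptional set could occupy essentially all of that ball. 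Worse, its measure cannot be driven to zero by shrinking $\epsilon,\delta$: the pointwise lower bound feeding the Markov step carries the local Brunn--Minkowski error at scale $\rho'$, which is $\Theta(\rho'^2)$ (a curvature effect, irremovable for fixed $\rho'$), so the exceptional measure is stuck at order $\rho'^2/(\text{slack})$. On the exceptional set the only available bound on the integrand is the global one, $\mu_G((A^2)_{h,\rho'}) \leq \mu_G(A^2) \approx 2^{d_G-d_H}\mu_G(A)$ (there is no even-spread statement for $A^2$, as you note, and running Markov relative to the small ball would presuppose exactly the localized bound you are proving). Dividing by $\mu_H(B_H(e,\rho'))$ and comparing with the target $2^{d_G}\mu_G(A_{e,\rho}) \approx 2^{d_G}\mu_H(B_H(e,\rho))\mu_G(A)$ forces the exceptional measure to be $\lesssim \epsilon' \rho^{d_H}\rho'^{d_H}$, i.e. $\rho'^{2-d_H} \lesssim \epsilon'^2\rho^{d_H}$, which is impossible as soon as $d_H \geq 2$ --- the generic case, as the table in \S \ref{Section: Subgroup of maximal dimension} shows.

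The paper's proof is arranged precisely to avoid ever needing a pointwise (or ``most $h$'') upper bound on fibres of $A^2$. It bounds the \emph{complement} from below: integrating the pointwise local Brunn--Minkowski estimate $\mu_G((A^2)_{h,2\rho'+c\delta^2}) \geq (2^{d_G}-\alpha')\mu_G(A_{h,\rho'})$ over all of $H\setminus B_H(e,\rho_0+2\rho'+c\delta^2)$ --- a set of near-full measure, so no exceptional set is needed --- and using Lemma \ref{Lemma: Double counting} and Lemma \ref{Lemma: Growth in balls} yields $\mu_G(A^2\setminus T_\delta B_H(e,\rho_0)) \geq (2^{d_G-d_H}-\alpha')\mu_G(A\setminus T_\delta B_H(e,\rho_0+\rho'))$. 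Subtracting this from the global hypothesis $\mu_G(A^2)\leq (2^{d_G-d_H}+\epsilon)\mu_G(A)$, and using the containment $A_{e,\rho}^2 \subset (A^2)_{e,\rho_0}$ with $\rho_0=2\rho+c\delta^2$, gives $\mu_G(A_{e,\rho}^2) \leq 2^{d_G-d_H}\mu_G(A_{e,\rho_0+c\delta^2+2\rho'})+\alpha''\mu_G(A)$, which even spread at scales $\rho'$ and $\rho$ converts into $(2^{d_G}+\epsilon')\mu_G(A_{e,\rho})$. You should restructure your argument along these lines (upper bound by subtraction of a complement lower bound) rather than attempting a direct localized upper bound; your treatment of the ``furthermore'' claim via Lemma \ref{Lemma: Sets near equality are approximate subgroups} is fine, though the aside that Proposition \ref{Proposition: Tao} alone suffices is not accurate, since it only yields a covering by translates of an approximate subgroup rather than the commensurability conclusion needed.
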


\begin{proof} 
    Let $\rho > \rho' >0$, $\alpha >0$ to be chosen later. According to Proposition \ref{Proposition: Sets near equality are evenly spread}, if $\delta, \epsilon$ are sufficiently small, then 
    $$  (1-\alpha)\mu_H(B_H(e,\rho'))\mu_G(A)\leq \mu_G(A_{h,\rho'}) \leq (1-\alpha)^{-1}\mu_H(B_H(e,\rho'))\mu_G(A).$$

    Notice moreover that for all $\rho_0,\alpha' > 0$, as soon as $\rho', \delta, \alpha >0$ are sufficiently small, 
    \begin{align}
    \mu_G(A^2 \setminus T_\delta B_H(e,\rho_0)) & \geq \frac{\int_{H\setminus B_H(e,\rho_0+2\rho' + c\delta^2)} \mu_G((A^2)_{h,2\rho'+c\delta^2} ) dh }{\mu_H(B_H(e, 2\rho' + c\delta^2))} \\
    &\geq \frac{\int_{H\setminus B_H(e,\rho_0+2\rho' + c\delta^2)} \mu_G(A_{h,\rho'}A_{e,\rho'}) dh}{\mu_H(B_H(e, 2\rho' + c\delta^2))} \\
    & \geq \frac{(2^{d_G}-\alpha')\int_{H\setminus B_H(e,\rho_0 +2\rho' + c\delta^2)} \mu_G(A_{h,\rho'}) dh}{\mu_H(B_H(e, 2\rho' +c\delta^2))} \\
    & \geq \frac{\mu_H(B_H(e, \rho'))(2^{d_G} - \alpha')\mu_G(A \setminus A_{e,\rho_0 + 2\rho' + c\delta^2}))}{\mu_H(B_H(e, 2\rho' + c\delta^2))}. 
    \end{align}
    Indeed,  Lemma \ref{Lemma: Double counting} was applied in the first line and to obtain the last line; Theorem \ref{Theorem: Local Brunn--Minkowski} and the first inequality of the proof then enable us to go from the second to the third line. This becomes 
    $$ \mu_G(A^2 \setminus T_\delta B_H(e,\rho_0)) \geq (2^{d_G -d_H} - \alpha')\mu_G(A \setminus T_\delta B_H(e, \rho_0 + \rho'))$$
    according to Lemma \ref{Lemma: Growth in balls}.
    
    But for any $\rho > 0$,
    $$ \mu_G(A_{e,\rho}^2) + \mu_G(A^2 \setminus T_\delta B_H(e, 2\rho + c\delta^2))\leq \mu_G(A^2) \leq (2^{d_G - d_H} + \epsilon) \mu_G(A)$$
    so if $2\rho + c\delta^2 = \rho_0$,
    $$\mu_G(A_{e,\rho}^2) \leq 2^{d_G - d_H}\mu_G(A_{e,\rho_0+ c \delta^2 + 2\rho'}) + \alpha'' \mu_G(A)$$ where $\alpha'' = \epsilon + 2^{d_G-d_H} - (2^{d_G} - \alpha')\frac{\mu_H(B_H(e,\rho'))}{\mu_H(B_H(e,2\rho' + c\delta^2))}$. By Lemma \ref{Lemma: Growth in balls}, if $\rho' > \delta$ are sufficiently small, then $\alpha'' \leq \epsilon + 2^{-d_H}\alpha' +C(d_G)\rho'^2 .$

    Now, for any $\rho'' > \rho'$,
    \begin{align}\mu_H(B_H(e,\rho'))\mu_G(A_{e,\rho''})& =\int_{H}\mu_G(A_{e,\rho''} \cap T_{\delta}B_H(h,\rho'))d\mu_H(h) \\
    & \geq \mu_H(B_H(e, \rho''-\rho'))(1-\alpha)m(\rho') \label{Eq10}
    \end{align}
    by Lemma \ref{Lemma: Local measure is almost constant}. And, similarly, 
    \begin{align}
        \mu_H(B_H(e,\rho'))\mu_G(A_{e,\rho''}) &= \int_{H}\mu_G(A_{e,\rho''} \cap T_{\delta}B_H(h,\rho'))d\mu_H(h) \\
        &\leq \mu_H(B_H(e, \rho''+\rho'))m(\rho').\label{Eq12}
    \end{align}
    
     Therefore, \eqref{Eq12} with $\rho''=\rho_0 + c\delta^2 + 2\rho'$ implies 
     $$ \mu(A_{e,\rho}^2) \leq (2^{d_G-d_H})\frac{\mu_H(B_H(e, \rho_0 + c\delta^2 + 3\rho'))}{\mu_H(B_H(e, \rho'))}\mu(A_{e,\rho'}) + \alpha''\mu_G(A).$$ 
     Applying \eqref{Eq12} with $\rho''=1$ we find in addition
     $$ \mu(A_{e,\rho}^2) \leq (2^{d_G-d_H})\frac{\mu_H(B_H(e, \rho_0 + c\delta^2 + 3\rho'))}{\mu_H(B_H(e, \rho'))}\mu(A_{e,\rho'}) + \alpha''\frac{m(\rho')}{\mu_H(B_H(e,\rho'))}.$$ 
     Which by \eqref{Eq10} yields 
     $$ \mu(A_{e,\rho}^2) \leq (2^{d_G-d_H})\frac{\mu_H(B_H(e, \rho_0 + c\delta^2 + 3\rho'))}{\mu_H(B_H(e, \rho - \rho'))}\mu_G(A_{e,\rho}) + \alpha''\frac{\mu_G(A_{e,\rho})}{\mu_H(B_H(e,\rho-\rho'))}.$$
     So by Lemma \ref{Lemma: Growth in balls} and our estimate of $\alpha''$ above , we have that if $\alpha', \rho', \delta$ are sufficiently small
     $$\mu(A_{e,\rho}^2) \leq (2^{d_G} + \epsilon')\mu_G(A_{e,\rho}).$$
     The ``furthermore" statement is now a direct application of Lemma \ref{Lemma: Sets near equality are approximate subgroups}.
    \end{proof}

\begin{lemma}\label{Lemma: Commensurator}
   Let $\epsilon' >0$. There is $\rho_0> 0$ such that for all $\rho_0 > \rho > 0$ the following holds for all $\delta,\epsilon>0$ sufficiently small.  
    
    For all $h_1,h_2 \in H$, 
    $$ \mu_G(A_{h_1,\rho} A_{h_2,\rho}) \leq (2^{d_G} + \epsilon') \min \mu_G(A_{h_1,\rho}), \mu_G( A_{h_2,\rho}).$$
    
    Moreover, for all $h \in H$ the approximate subgroup $A_{e,\rho}A_{e,\rho}^{-1}$ is commensurated by an element of $B_g(h, C(d_G)\rho)$. 
\end{lemma}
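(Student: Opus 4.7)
The plan for the first inequality is to mimic the proof of Lemma \ref{Lemma: Approximate subgroup at the identity} with the pair $(h_1,h_2)$ in place of $(e,e)$. The key inclusion $A_{h_1,\rho}A_{h_2,\rho} \subset R(h_1h_2,2\delta,2\rho+c(d_G)\delta^2)$ from \S \ref{Subsection: Multiplicative properties of rectangles} yields
$$\mu_G(A_{h_1,\rho}A_{h_2,\rho}) \leq \mu_G(A^2) - \mu_G(A^2 \setminus R(h_1h_2,2\delta,2\rho+c(d_G)\delta^2)),$$
where the first term is at most $(2^{d_G-d_H}+\epsilon)\mu_G(A)$ by hypothesis. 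The second term is bounded below, exactly as in the proof of Lemma \ref{Lemma: Approximate subgroup at the identity}, by combining Lemma \ref{Lemma: Double counting}, Theorem \ref{Theorem: Local Brunn--Minkowski} at an auxiliary scale $\rho' \ll \rho$, and Proposition \ref{Proposition: Sets near equality are evenly spread} to turn slicewise estimates into integrals comparable to $\mu_G(A)$. Dividing by $\mu_G(A_{h_i,\rho}) \approx \mu_H(B_H(e,\rho))\mu_G(A)$ (Proposition \ref{Proposition: Sets near equality are evenly spread}) and using the expansion $\mu_H(B_H(e,2\rho))/\mu_H(B_H(e,\rho)) = 2^{d_H}(1+O(\rho^2))$ from Lemma \ref{Lemma: Growth in balls} promotes the prefactor $2^{d_G-d_H}$ into the target $2^{d_G}$; all residual error terms are absorbed into $\epsilon'$ by choosing $\rho,\delta,\epsilon>0$ small in the appropriate order.

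For the commensurator assertion, I pick $a \in A_{h,\rho}$, which is non-empty by Lemma \ref{Lemma: Coarse stability}. Since $A_{h,\rho} \subset R(h,\delta,\rho) \subset B_G(h,\rho+\delta) \subset B_G(h,C(d_G)\rho)$ whenever $\delta \leq \rho$, it suffices to show that any such $a$ commensurates the symmetric approximate subgroup $\Lambda := A_{e,\rho}A_{e,\rho}^{-1}$. To establish this, I bound the measure of $a\Lambda a^{-1}\cdot \Lambda$: since $a \in A$ and $A_{e,\rho} \subset A$, one has $a\Lambda a^{-1}\cdot \Lambda \subset A^2 A^{-2}A A^{-1}$, and the non-commutative Pl\"unnecke--Ruzsa inequality, a direct consequence of Proposition \ref{Proposition: Tao} applied to the pair $(A,A)$ with doubling $2^{d_G-d_H}+\epsilon$, gives $\mu_G(A^2A^{-2}AA^{-1}) \leq C(d_G)\mu_G(A)$. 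On the other hand, $\mu_G(\Lambda) \geq \mu_G(A_{e,\rho}) \geq (1-\alpha)\mu_H(B_H(e,\rho))\mu_G(A)$ by Proposition \ref{Proposition: Sets near equality are evenly spread}, hence $\mu_G(a\Lambda a^{-1}\cdot \Lambda) \leq K(d_G,\rho)\mu_G(\Lambda)$ for a finite $K$. Ruzsa's covering lemma (Lemma \ref{Lemma: Covering lemma}), together with $\Lambda^{-1}=\Lambda$, then produces a finite set $F$ with $a\Lambda a^{-1} \subset F\Lambda^2$, which is precisely the commensurability condition for approximate subgroups.

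The main obstacle lies in the first paragraph, where the layering of error terms coming from the doubling assumption, the Local Brunn--Minkowski estimate, and the ball-volume expansion must be tracked with care so that their cumulative contribution collapses into the single target error $\epsilon'$; the mechanism is identical to that used in the proof of Lemma \ref{Lemma: Approximate subgroup at the identity} and presents no new conceptual difficulty. The commensurator argument is, by contrast, a fairly mechanical consequence of the doubling hypothesis via Pl\"unnecke--Ruzsa, the essential observation being that choosing $a$ inside $A$ itself (rather than merely near $h$) is what makes the iterated-products machinery directly applicable.
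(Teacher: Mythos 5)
Your treatment of the first inequality is fine and is exactly what the paper does: it simply reruns the proof of Lemma \ref{Lemma: Approximate subgroup at the identity} with the rectangle recentred at $h_1h_2$, using Lemma \ref{Lemma: Double counting}, Theorem \ref{Theorem: Local Brunn--Minkowski} at an auxiliary scale and Proposition \ref{Proposition: Sets near equality are evenly spread}.

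The ``moreover'' part, however, contains a genuine gap. Your key step is the bound $\mu_G(A^2A^{-2}AA^{-1}) \leq C(d_G)\mu_G(A)$, presented as a ``non-commutative Pl\"unnecke--Ruzsa inequality, a direct consequence of Proposition \ref{Proposition: Tao} applied to $(A,A)$''. No such consequence exists: in non-commutative groups a doubling bound $\mu_G(A^2)\le K\mu_G(A)$ does not control longer products (the continuous analogue of the classical example $H\cup\{x\}$, namely $A=H_\delta\cup B(x,\delta)$ with $H$ a positive-dimensional proper subgroup and $x$ generic, has bounded doubling but $\mu_G(A^3)\gg\mu_G(A)$), and the ``moreover'' clause of Proposition \ref{Proposition: Tao} only bounds $\mu_G(A\Lambda^m B)$ for the specific approximate subgroup $\Lambda$ it outputs, not iterated products of $A$ and $A^{-1}$: in the coverings $A\subset F_1\Lambda$, $A\subset \Lambda F_2$ the finite sets get trapped in the middle of the product and conjugation by them is uncontrolled. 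Whether the six-fold product of a \emph{near-extremal} set is of size $O(\mu_G(A))$ is essentially part of what this section is in the process of proving, so invoking it here is circular. Consequently the inequality $\mu_G(a\Lambda a^{-1}\Lambda)\le K\mu_G(\Lambda)$ that feeds Ruzsa's covering lemma is unjustified; note also that even granting your bound, the resulting constant would be of order $\mu_G(A)/\mu_G(A_{e,\rho})\sim \mu_H(B_H(e,\rho))^{-1}$, i.e.\ $\rho$-dependent, which is useless in the application inside the proof of Theorem \ref{Theorem: Stability}, where the lemma is applied along $\rho_n\to0$ and the commensuration must survive an ultralimit. The paper's argument stays at the level of slices: the first inequality (with $h_1=h$, $h_2=e$) together with Proposition \ref{Proposition: Sets near equality are evenly spread} shows the pair $(A_{h,\rho},A_{e,\rho})$ has product measure at most $(2^{d_G}+\epsilon')$ times either factor; Lemma \ref{Lemma: Sets near equality are approximate subgroups} and the lemma following it then produce $g$ with $A_{h,\rho}g^{-1}\subset (A_{e,\rho}A_{e,\rho}^{-1})^{C(d_G)}$, hence $g\in B_g(h,C(d_G)\rho)$, and Proposition \ref{Proposition: Tao} applied to the pair $(A_{h,\rho}g^{-1}, gA_{e,\rho})$ gives that $g$ commensurates $A_{e,\rho}A_{e,\rho}^{-1}$ with constants depending only on $d_G$. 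If you wish to keep your choice of commensurating element $a\in A_{h,\rho}$, the required measure bound must likewise be extracted from slice-level products via the first inequality, not from global products of $A$.
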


\begin{proof}
The proof of the inequality is obtained exactly as in the proof of Lemma \ref{Lemma: Approximate subgroup at the identity}. Let us prove the moreover part. According to Lemma \ref{Lemma: Sets near equality are approximate subgroups}, for all $h \in H$ there is $g \in G$ such that $A_{h,\rho}g^{-1} \subset (A_{e,\rho}A_{e,\rho}^{-1})^{C(d_G)}$. In particular, $hg^{-1} \in B_g(e, C(d_G)\rho)$. Since in addition $\mu_G(A_{h,\rho}g^{-1}gA_{e,\rho}) \leq (2^{d_G}+\epsilon')\min \mu_G(A_{h_1,\rho}), \mu_G( A_{h_2,\rho})$ we deduce using Proposition \ref{Proposition: Tao} -  as in the proof of Lemma \ref{Lemma: Sets near equality are approximate subgroups} - that $g$ commensurates $A_eA_e^{-1}$. 
\end{proof}

    \subsection{Proof of stability}

    We conclude now the proof of the stability result.

    \begin{proof}[Proof of Theorem \ref{Theorem: Stability}.] We will prove qualitative stability by a compactness argument. Suppose that we are given a sequence $A_n \subset G$ with $\mu_G(A_n) \rightarrow 0$ and $\frac{\mu_G(A_n^2)}{\mu_G(A_n)} \rightarrow 2^{d_G - d_H}$ and let $\delta_n>0$ be the least real such that there is a proper closed subgroup $H_n \subset G$ such that $A_n \subset (H_n)_{\delta_n}$. Upon considering conjugates of the $A_n$'s, we can suppose that $H_n=H$ for some fixed subgroup. By Lemma \ref{Lemma: Exactly one neighbourhood of a subgroup}, we have that $\delta_n \rightarrow 0$.

        Choose now $\rho_n,  \alpha_n > 0$ going to $0$ as $n$ goes to $\infty$ and such that for all $h_1,h_2 \in H$, $$\mu_G((A_n)_{h_1,\rho_n}) \geq (1-\alpha_n)m(\rho_n)$$ and $$\mu_G((A_n)_{h_1,\rho_n}(A_n)_{h_2,\rho_n}) \leq (2^{d_G-d_H} + \alpha_n)m(\rho_n).$$
        Choose moreover $\rho_n' > 0$ such that $\frac{\rho_n'}{\rho_n} \rightarrow 0$ and $(A_n)_{h,\rho_n'}$ is non-empty for all $h \in H$. Such $\alpha_n, \rho_n, \rho_n'$ exist by Proposition \ref{Proposition: Sets near equality are evenly spread} and Lemma \ref{Lemma: Commensurator}. To simplify notations write for all $h \in H$, $A_{n,h}:=T_{\delta_n}B_H(h,\rho_n) \cap A$. 
        
        Our first goal is to prove 

       \begin{claim}\label{Claim: Fin}
           $A_{n,e}$ satisfies the conditions of Proposition \ref{Proposition: Local stability under assumption}.
       \end{claim}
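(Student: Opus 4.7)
The plan is to verify the three hypotheses of Proposition \ref{Proposition: Local stability under assumption} for the sequence $(A_{n,e})$: the inclusion $A_{n,e} \subset R(e,\alpha_n,\epsilon_n)$ with $\alpha_n \geq \epsilon_n \to 0$; the asymptotic local doubling $\mu_G(A_{n,e}^2)/\mu_G(A_{n,e}) \to 2^{d_G}$; and the non-degeneracy of the image $\phi(\underline{A_e}\,\underline{A_e}^{-1})$ in $\mathbb{R}^{d_G}$.

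The first two conditions are comparatively soft. Taking $\rho_n$ to shrink to $0$ slowly enough that $\rho_n \geq \delta_n$ (which is compatible with the constraints already placed on $\rho_n$ earlier in the proof), I would identify $T_{\delta_n}B_H(e,\rho_n)$ with $R(e,\rho_n,\delta_n)$ to conclude $A_{n,e} \subset R(e,\rho_n,\delta_n)$ with $\rho_n \geq \delta_n \to 0$. For the local doubling, Lemma \ref{Lemma: Approximate subgroup at the identity} applied with a vanishing $\epsilon'$ supplies the upper bound $\mu_G(A_{n,e}^2) \leq (2^{d_G}+o(1))\mu_G(A_{n,e})$, using that $A_n$ has near-minimal global doubling; the matching lower bound $(2^{d_G}-o(1))\mu_G(A_{n,e})$ follows from the local Brunn--Minkowski inequality (Theorem \ref{Theorem: Local Brunn--Minkowski}), since $A_{n,e}$ lies in a shrinking neighborhood of the identity.

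The crux is the non-degeneracy, and my plan there is to invoke Steinhaus's theorem: if $\phi(\underline{A_e})$ has positive $d_G$-dimensional Lebesgue measure, then the symmetric difference set $\phi(\underline{A_e})-\phi(\underline{A_e}) = \phi(\underline{A_e}\,\underline{A_e}^{-1})$ contains a neighborhood of the origin. By the density-function formalism of \S \ref{Subsection: Taking limits without boundedness assumption}, positivity of $\lambda(\phi(\underline{A_e}))$ is equivalent to essential boundedness of $f_{\underline{A_e}}$, which --- together with the volume estimate $\mu_G(R(e,\rho_n,\delta_n)) \asymp \rho_n^{d_H}\delta_n^{d_G-d_H}$ and the evenness of spread $\mu_G(A_{n,e}) \asymp \rho_n^{d_H}\mu_G(A_n)$ from Proposition \ref{Proposition: Sets near equality are evenly spread} --- reduces to the scale bound $\mu_G(A_n) \gtrsim \delta_n^{d_G-d_H}$.

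The hard part will be establishing this last lower bound, where I expect the technical effort to concentrate. The approach will be to show that near-minimal doubling $2^{d_G-d_H}$ combined with the minimality of $\delta_n$ forces the transverse marginal of $A_n$ along $\mathfrak{h}^\perp$ to be essentially a full-dimensional convex body of inradius comparable to $\delta_n$. Convexity is a transverse Brunn--Minkowski stability statement that I would extract from Proposition \ref{Proposition: BM close to a subgroup} applied to sub-scales in the $\mathfrak{h}^\perp$-direction; bi-invariance of $\mu_G$ together with evenness of spread forces the marginal to be approximately $Ad(H)$-invariant; and an $Ad(H)$-invariant full-dimensional convex body of diameter $\delta_n$ has volume $\asymp \delta_n^{d_G-d_H}$, e.g.\ via John's theorem applied to an $Ad(H)$-symmetrized John ellipsoid. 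Once this transverse analysis transfers the global doubling control into the desired quantitative statement about the transverse profile, Steinhaus provides the non-empty interior, and Proposition \ref{Proposition: Local stability under assumption} then applies to the sequence $(A_{n,e})$.
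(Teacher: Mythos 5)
Your handling of the two soft hypotheses matches the paper: the inclusion $A_{n,e}\subset R(e,\rho_n+c\rho_n\delta_n,\delta_n)$ and the doubling $\mu_G(A_{n,e}^2)/\mu_G(A_{n,e})\to 2^{d_G}$ do come from Lemma \ref{Lemma: Approximate subgroup at the identity}/Lemma \ref{Lemma: Commensurator} together with Theorem \ref{Theorem: Local Brunn--Minkowski}. The problem is the non-degeneracy, which is the whole content of the claim, and there your plan has a genuine gap. Via Steinhaus you need $\lambda(\phi(\underline{A}))>0$, and as you yourself compute (using Proposition \ref{Proposition: Sets near equality are evenly spread} and $\mu_G(R(e,\rho_n,\delta_n))\asymp\rho_n^{d_H}\delta_n^{d_G-d_H}$) this amounts to the volume lower bound $\mu_G(A_n)\gtrsim\delta_n^{d_G-d_H}$, i.e.\ that $A_n$ occupies a definite proportion of $H_{\delta_n}$ where $\delta_n$ is the minimal width. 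But that statement is essentially the conclusion of Theorem \ref{Theorem: Stability} itself, and at this stage of the proof nothing of the sort is available. The tool you propose for it --- a ``transverse Brunn--Minkowski stability statement extracted from Proposition \ref{Proposition: BM close to a subgroup}'' forcing the transverse marginal to be a full-dimensional, nearly convex, nearly $Ad(H)$-invariant body of inradius $\asymp\delta_n$ --- does not exist in the paper: Proposition \ref{Proposition: BM close to a subgroup} is an inequality, not a stability result, and proving such a transverse stability statement is precisely what Proposition \ref{Proposition: Local stability under assumption} combined with the later Kazhdan cocycle argument is designed to deliver. So the decisive step of your proposal is circular: it assumes (in the guise of the scale bound) the quantitative filling of $H_{\delta_n}$ that the claim and the rest of \S 7 are supposed to produce.

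The paper's proof of the claim avoids any measure lower bound at scale $\delta_n$. It treats $\Lambda=\phi(\underline{A}\,\underline{A}^{-1})^2$ as a closed approximate subgroup of $\mathbb{R}^{d_G}$ and invokes the structure theorem of \cite{machado2020closed}: $\Lambda$ meets a linear subspace $V$ in a neighbourhood of the origin, and $V$ is invariant under every linear map commensurating $\Lambda$. By Lemma \ref{Lemma: Commensurator} the adjoint action of $H$ commensurates $\Lambda$; since $H$ has maximal dimension, the isotropy representation on $\mathfrak{h}^{\perp}$ is irreducible (Dynkin--Wolf tables), so $V=\mathfrak{g}$ once the projection of $\Lambda$ to $\mathfrak{h}^{\perp}$ is known to be infinite. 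The finite case is excluded by an equality analysis in the ultraproduct (splitting $\underline{A}$ into fibres, applying the limiting Brunn--Minkowski inequality, and using Kemperman), where the only input about $\delta_n$ is its minimality --- the transverse projection is not a single point --- rather than any volume estimate. If you wish to pursue your Steinhaus route, you would first have to prove the lower bound $\mu_G(A_n)\gtrsim\delta_n^{d_G-d_H}$ by independent means, which is exactly the hard content your sketch leaves unproved.
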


       The proof of this claim will appeal to the theory of compact approximate subgroups and ideas reminiscent of Lemma \ref{Lemma: Exactly one neighbourhood of a subgroup}. 
       
       \begin{proof}[Proof of Claim \ref{Claim: Fin}.]
           Using the notations of \S \ref{Subsection: Models for small convex neighbourhoods}, one can check that $A_{n,e} \subset R(e, \rho_n + c\rho_n\delta_n, \delta_n)$ for some constant $c > 0$. Let $\phi$ be the limit map for a choice of non-principal ultrafilter and let $\underline{A}$ denote the ultraproduct of the $A_{n,e}$ for the same ultrafilter. The image of $\phi$ can be identified with $\mathfrak{g}$ using a map constructed as follows. Take $x \in \mathfrak{g}$ and write $x=x_1 + x_2$ with $x_1 \in \mathfrak{h}$ and $x_2 \in \mathfrak{h}^{\perp}$. Define $\varphi_n(x):=\left(\rho_n + c\rho_n\delta_n\right)x_1 + \delta_nx_2$. Define now $\varphi(x):=\phi\left( (\varphi_n(x))_{n \geq 0}\right)$. Then $\varphi: \mathfrak{g} \rightarrow \mathbb{R}^{d_G}$ is a linear isomorphism and we use $\varphi$ to identify the range of $\phi$ with $\mathfrak{g}$. Remark from this construction that if $h \in H$ and $x_n \in R(e,\rho_n + c\rho_n\delta_n, \delta_n)$ for all $n \geq 0$, then $\phi((hx_nh^{-1})_{n \geq 0})$ is well defined and equal to $Ad(h)(\phi(x))$.
       
        Now let us show that $\phi(\underline{A})$ satisfies the conditions of Proposition \ref{Proposition: Local stability under assumption}. First of all, as $\rho_n'$ is negligible compared to $\rho_n$, the projection of $\phi(\underline{A})$ to $\mathfrak{h}$ is dense in the projection of $\phi((R(e,\rho_n + c\rho_n\delta_n, \delta_n))$ to $\mathfrak{h}$. Moreover, by the choice of $\delta_n$, the projection of $\phi(\tilde{A})$ to $\mathfrak{h}^{\perp}$ is not reduced to a point. Suppose that it is not finite. The approximate subgroup $\Lambda = \phi(\underline{A}\underline{A}^{-1})^2$ intersects a vector subspace $V \subset \mathfrak{g}$ in a neighbourhood of the origin \cite[thm 1.4]{machado2020closed} and $V$ contains $\mathfrak{h}$ as a proper subspace. By \cite[Thm 1.4]{machado2020closed}, $V$ is moreover invariant under any linear map commensurating $\Lambda$. But $\Lambda$ is commensurated by the elements of the group $H$ acting on $\mathfrak{h}$ via the adjoint representation (Lemma \ref{Lemma: Commensurator}). The restriction of the adjoint representation to $\mathfrak{h}^{\perp}$ - the so-called isotropy representation - is irreducible as $H$ is a proper closed subgroup of maximal dimension. This can be checked from the tables \cite{Dynkin57} - see the table in \S \ref{Section: Subgroup of maximal dimension} below - and the classification from \cite{zbMATH03253691} of isotropy irreducible homogeneous spaces. Checking again the tables from \cite{Dynkin57} (or, \S \ref{Section: Subgroup of maximal dimension}), the only subrepresentation for the adjoint action of $H$ on $\mathfrak{g}$ that projects surjectively to both $\mathfrak{h}$ and $\mathfrak{h}^{\perp}$ is $\mathfrak{g}$.   So $V=\mathfrak{g}$.

        It remains to check that the projection $\Lambda$ to $\mathfrak{h}^{\perp}$ cannot be finite. Suppose otherwise, then there are $r \geq 2$ of minimal size and $h_1, \ldots, h_r$ such that $\phi(\underline{A}) \subset \bigsqcup_i \left(h_i + \mathfrak{h}\right) \cap\phi(\underline{A}) $. Define $\underline{A}_i:=\phi^{-1}(h_i + \mathfrak{h}) \cap \underline{A}$ for all $i=1, \ldots, r$. Then $\underline{A}_i$ is internal and non-empty. Choose $i_0$ such that $\underline{A}_{i_0}$ has maximal $\underline{\mu}$-measure. We find 
        \begin{align} \underline{\mu}(\underline{A}^2) & \geq \sum_{i=1}^r \underline{\mu}(\underline{A}_i\underline{A}_{i_0}) \\
        & \geq 2^{d_G}\underline{\mu}(\underline{A}) + \sum_{i=1}^r \underline{\mu}(\underline{A}_{i})\left[\left(1 + \frac{\underline{\mu}(\underline{A}_{i_0})}{\underline{\mu}(\underline{A}_{i})}\right)^{d_G} - 2^{d_G}\right] \\
        & \geq 2^{d_G}\underline{\mu}(\underline{A}).
        \end{align}
        according to Theorem \ref{Theorem: Local Brunn--Minkowski}. But $\underline{\mu}(\underline{A}^2)=2^{d_G}\underline{\mu}(\underline{A})$.  So $\underline{A}\underline{A}_{i_0}$ has full measure in $\underline{A}^2$ and $\underline{\mu}(\underline{A}_{i_0}) = \underline{\mu}(\underline{A}_{i})$ for all $i=1, \ldots, r$. In turn, this implies that $\underline{A}\underline{A}_{i}$ has full measure in $\underline{A}^2$ for all $i=1, \ldots, r$. Projecting to $\mathfrak{h}^{\perp}$ we obtain that $\{h_i + h_j\}$ has size $r$. By Kemperman's inequality, we thus have $r=1$. A contradiction.
       \end{proof}
        
        We can thus apply the local stability result (Proposition \ref{Proposition: Local stability under assumption}). We thus have a convex subset $C \subset \mathfrak{g}$ such that $$\mu_G\left(A_{n,e} \Delta \varphi_n(C)\right) =o(\mu_G(A_{n,e})).$$
        Here, $\varphi_n$ denotes the linear endomorphism already studied in the proof of Claim \ref{Claim: Fin}. As in the proof of Claim \ref{Claim: Fin}, Lemma \ref{Lemma: Commensurator} and Corollary \ref{Corollary: Density functions are indicators} also implie invariance of $C$ under the adjoint action of $H$. Since $\varphi_n$ commutes with this action for all $h \in H$, $\varphi_n(C)$ is also invariant under this action.

        By Corollary \ref{Corollary: Density functions are indicators} and Lemma \ref{Lemma: Commensurator} again, we know that there is $n_0$ such that for all $n \geq n_0$, all $h \in H$ there is $g_{h,n} \in R(e, 0, \delta_n)$ such that we have $$\mu_G\left(A_{n,h} \Delta \varphi_n(C)hg_{n,h}\right) = o(\mu_G(A_{n,e})).$$ Now, for $h,h' \in H$ we have $$\mu_G\left(A_{e,n}A_{hh',n} \Delta A_{h,n}A_{h',n}\right) = o(\mu_G(A_{e,n}))$$
        by Lemma \ref{Lemma: Commensurator}. So $d(g_{hh'},g_hhg_{h'}h^{-1}) = o(\delta_n)$ i.e. $h \mapsto g_h$ is a $\delta_n$-$1$-cocycle in the language of \cite{KazhdaneRep82}. By \cite[Lem. 2]{KazhdaneRep82} there is continuous group homomorphism $f_n:H \rightarrow G$ such that $d_g(f_n(h),hg_{n,h})=o(\delta_n)$ for all $h \in H$. Thus, $$\mu_G\left(\varphi_n(C)hg_{n,h} \Delta\varphi_n(C)f_n(h)\right)=o(\mu_G(A_{n,e}))$$ which implies 
        $$\mu_G\left(A_{n,h}\Delta \varphi_n(C)f_n(h)\right)=o(\mu_G(A_{n,h})).$$ 
        Remark that therefore, 
        $$ \mu_G\left(f_n(h)^{-1}\varphi_n(C)f_n(h) \Delta \varphi_n(C)\right)=o(\mu_G(A_{n,e})).$$
        Now,  $\mu_G(\varphi_n(C)f_n(H) \Delta f_n(H)_{\delta_n})=o(\mu_G(A_n'))$ by invariance under $H$ of $\phi_n(C)$ and because $d(h,f_n(h)) \leq \delta_n$. In addition, $\mu_G(A_n \Delta A_n') = o(\mu_G(A_n))$ by the previous paragraph.
        Hence, $\mu_G(A_n \Delta f_n(H)_{\delta_n})=o(\mu_G(A_n))$.

        So our result follows.
    \end{proof}

\section{Closing remarks}\label{Section: Closing remarks}

\subsection{From qualitative stability to quantitative results}\label{Subsection: Quantitative results}
The above considerations are focused on establishing a  sharp bound for small doubling at the first order and for small subsets. Many of our arguments exploit non-quantitative results and/or with far from optimal explicit constants  - some for inherent reasons and others for incidental ones. We mention among those the compactness argument used in \S \ref{Section: A stability result for the local Brunn--Minkowksi} and the far from optimal constants obtained in Proposition \ref{Proposition: approximate subgroups of small measure}.

Obtaining some amount of quantitative information is however not hopeless with our techniques. We can exploit the stability result (Theorem \ref{Theorem: Stability}) to operate a bootstrap argument yielding the proof of the quantitative estimate in Theorem \ref{Theorem: Expansion inequality}.

 \begin{proof}[Proof of error term in Theorem \ref{Theorem: Expansion inequality}.]
    The argument is simple. According to Theorem \ref{Theorem: Stability}, there is $\epsilon > 0$ such that if $\mu_G(A) \leq \epsilon$ and $\mu_G(A^2) \leq 2^{d_G-d_H}\mu_G(A)$, then there is $\delta > 0$ such that $\mu_G(A) \subset H_{\delta}$ and $\mu_G(A) \geq \frac{\mu_G(H_{\delta})}{2}$. So $\delta = O(\mu_G(A)^{\frac{1}{d_G-d_H}})$ (e.g. \cite[Fact 2.17]{jing2023measure}). By Proposition \ref{Proposition: Lower bound on expansion close to a subgroup} now, $$\mu_G(A^2) \geq \left(2^{d_G-d_H}-C(d_G)\mu_G(A)^{\frac{2}{d_G-d_H}}\right)\mu_G(A).$$
\end{proof}

\begin{remark}
    Since we do not have at our disposal a stability result for $A \neq B$, we do not get a similar estimate for the Brunn--Minkowski inequality. Tracking down estimates along the proof we believe one could hope to prove $\epsilon \leq C(d_G)\mu_G(A)^{C(d_G)}$.
\end{remark}


\subsection{The quest for minimisers}
A combination of the above and the stability theorem \ref{Theorem: Stability} corroborate a conjecture first raised in \cite{jing2023measure}. Namely: 

\begin{conjecture}
    The subset $H_{\delta}$ minimises the doubling constant $\frac{\mu_G(A^2)}{\mu_G(A)}$ for $A$ ranging through measurable subsets with $\mu_G(A) = \mu_G(H_{\delta})$.
\end{conjecture}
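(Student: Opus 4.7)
The plan is to combine the sharp doubling inequality of Theorem \ref{Theorem: Expansion inequality} with the stability result of Theorem \ref{Theorem: Stability} via a two-scale dichotomy, and then to close the near-extremal regime by a perturbative/variational argument around the template $H_\delta$.

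First, I would work out the baseline: compute $\mu_G(H_\delta)$ and $\mu_G(H_\delta^2)$ sharply in $\delta$. Using the tube formula together with $H_\delta^2=H B_d(e,\delta)B_d(e,\delta)H\subset H_{2\delta}$ (with the reverse inclusion modulo commutators of order $\delta^2$ carrying negligible mass), one obtains an asymptotic of the form
\[
\mu_G(H_\delta)=\omega_{d_G-d_H}\,\delta^{d_G-d_H}\bigl(1-\kappa\delta^2+o(\delta^2)\bigr),\qquad \frac{\mu_G(H_\delta^2)}{\mu_G(H_\delta)}=2^{d_G-d_H}\bigl(1-c_H\,\delta^2+o(\delta^2)\bigr),
\]
for explicit $c_H>0$ depending on the scalar curvature of the symmetric space $G/H$ (cf.\ Lemma \ref{Lemma: Growth in balls}). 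Writing $V:=\mu_G(H_\delta)$, this identifies the target value: for any $A$ with $\mu_G(A)=V$ we must show $\mu_G(A^2)\geq\mu_G(H_\delta^2)$.

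Next I would run a dichotomy on a competitor $A$ with $\mu_G(A)=V$. Let $\eta>0$ be a threshold to be chosen small relative to $c_H\delta^2$. If the doubling $\mu_G(A^2)/V$ exceeds $2^{d_G-d_H}(1-c_H\delta^2+\eta)$ we are done, since this is already $\geq \mu_G(H_\delta^2)/V$ (up to the $o(\delta^2)$ term). Otherwise we are in the near-extremal regime and Theorem \ref{Theorem: Stability} applies: there is a proper closed subgroup $H'$ of maximal dimension, a scale $\delta'$ with $\mu_G(H'_{\delta'})\sim V$, and a translate such that $\mu_G(A\triangle H'_{\delta'})=o(V)$. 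Since $V$ is fixed, by bi-invariance of $d$ one may assume $H'=H$, so $A$ is an $L^1$-perturbation of $H_\delta$.

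The main step, and the principal obstacle, is to finish off this near-extremal regime by a sharp local stability estimate with matching constants. I would write $A=(H_\delta\cup E^+)\setminus E^-$ with $\mu_G(E^+)=\mu_G(E^-)+O(|\delta-\delta'|\cdot V/\delta)$, both of measure $o(V)$, and expand
\[
\mu_G(A^2)-\mu_G(H_\delta^2)=\Phi_1(E^+,E^-)+\Phi_2(E^+,E^-)+\cdots,
\]
where $\Phi_k$ is $k$-homogeneous in $E^\pm$. The first-order term $\Phi_1$ is essentially $\int (\mathbf 1_{E^+}-\mathbf 1_{E^-})\,(\mathbf 1_{H_\delta}\ast\check{\mathbf 1}_{H_\delta})$, which is already shown to be non-negative by the stratification argument of \S \ref{Subsection: A coarse stability result}: the convolution $\mathbf 1_{H_\delta}\ast\check{\mathbf 1}_{H_\delta}$ is maximal on $H_\delta^2$ and any symmetric-difference mass is either moved inside $H_\delta^2$ (contributing nothing to $\Phi_1$) or outside (contributing positively). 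To control $\Phi_2$ I would appeal to the density-function framework of \S \ref{Subsection: Taking limits without boundedness assumption} applied to the renormalised slices $A_{h,\rho}$, which reduces the problem along $H$-orbits to a Euclidean Brunn--Minkowski question with prescribed volume where the slab (i.e.\ the image of $H_\delta$) is the unique extremiser, and use Lemma \ref{Lemma: Commensurator} to transport this $H$-equivariantly across cosets.

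The hard part will be this last step: producing matching sharp constants in the local stability that control $\Phi_2$ against the curvature correction $c_H\delta^2$. A plausible route is to upgrade Proposition \ref{Proposition: Local stability under assumption} to a quantitative Figalli--Jerison-type estimate of the form $\mu_G(A\triangle\text{slab})\leq C\,(\mu_G(A^2)-\mu_G(H_\delta^2))^{1/2}$, then bootstrap as in \S \ref{Subsection: Quantitative results}. Everything else reduces either to the sharp computation of tube volumes or to already-proven product-set estimates from the body of the paper.
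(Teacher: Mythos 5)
The statement you are trying to prove is not a theorem of the paper at all: it is stated as an open conjecture (in \S \ref{Subsection: Quantitative results}' companion section on minimisers), and the paper explicitly offers no proof of it, only the remark that Theorems \ref{Theorem: Expansion inequality} and \ref{Theorem: Stability} ``corroborate'' it. So there is no argument in the paper for your proposal to match, and your plan does not close the gap either. The crux is your dichotomy threshold: to conclude in the ``far from extremal'' branch you need a lower bound on $\mu_G(A^2)/\mu_G(A)$ of the form $2^{d_G-d_H}(1-c_H\delta^2+o(\delta^2))$ with the \emph{same} constant $c_H$ as in the expansion of $\mu_G(H_\delta^2)/\mu_G(H_\delta)$. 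Theorem \ref{Theorem: Expansion inequality} only gives $2^{d_G-d_H}-C\mu_G(A)^{2/(d_G-d_H)}$ with a non-explicit dimensional constant $C$ (obtained by a compactness/bootstrap argument), and since $\mu_G(A)^{2/(d_G-d_H)}\asymp\delta^2$, this error term is of exactly the same order as the curvature correction but with an unknown and a priori much larger constant. Hence the first branch of your dichotomy does not terminate, and the second branch is where the real difficulty sits.

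In that near-extremal branch you invoke ``a sharp local stability estimate with matching constants'' of Figalli--Jerison type, i.e.\ $\mu_G(A\triangle H_{\delta'})\leq C(\mu_G(A^2)-\mu_G(H_\delta^2))^{1/2}$, plus second-order control of $\Phi_2$. Nothing of this kind is available in the paper: Theorem \ref{Theorem: Stability} and Proposition \ref{Proposition: Local stability under assumption} are purely qualitative, proved via ultraproducts, with no effective dependence of $\epsilon,\alpha$ on $\delta$, and the paper itself lists quantitative stability (and the minimiser question you are addressing) among the open problems in \S \ref{Section: Closing remarks}. Your treatment of the first variation is also unsubstantiated: the claim that $\Phi_1\geq 0$ ``is already shown by the stratification argument of \S \ref{Subsection: A coarse stability result}'' misreads that section, which establishes that mass of a near-extremal set is spread evenly along $H$, not that $\mathbf{1}_{H_\delta}*\check{\mathbf{1}}_{H_\delta}$ dominates on the complement of $H_\delta^2$ in the way a first-variation inequality for the functional $A\mapsto\mu_G(A^2)$ would require (and for Minkowski-product functionals the first variation is itself delicate, since $\mu_G(A^2)$ is not a smooth functional of $\mathbf{1}_A$). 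So the proposal is a reasonable research programme, but its central steps are precisely the open ingredients, and it does not constitute a proof.
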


If true, this conjecture would yield a sharp estimate on the constant $C(d_G)$ appearing in Theorem \ref{Theorem: Expansion inequality}.

It might appear odd at first that stability results are known without knowledge of such minimisers. While this contrasts with results around the Brunn--Minkowski inequality in Euclidean spaces, we believe that this situation is the norm rather than the exception for results in additive combinatorics in non-commutative groups and/or discrete groups. We mention in that direction the work of Keevash--Lifshitz--Minzer \cite{keevash2022largest} on product-free sets, the \emph{polynomial Freiman Ruzsa} conjecture \cite{Sanders13Survey,gowers2023conjecture} and doubling minimisers in the Heisenberg group (resp. nilpotent Lie groups) \cite{zbMATH03318704}.  A similar state of affairs can be found in non-compact simple Lie groups, where Kunze--Stein phenomena provide lower bounds of (almost) the right magnitude, but no minimiser of the doubling constant is known (knowledge of such a minimiser could imply improvements of \cite{zbMATH01545114, zbMATH03565261}). We hope our work will provide new ideas towards finding doubling minimisers in non-commutative Lie groups. More precisely:

\begin{problem}
      Let $G$ be a connected Lie group and $\alpha > 0$. What is the structure of a minimiser of the ratio $\frac{\mu_G(A^2)}{\mu_G(A)}$ for $A$ ranging through measurable subsets with $\mu_G(A) = \alpha$?
\end{problem}

Concrete groups $G$ of particular interest are $\SO_3(\mathbb{R})$, $\SL_2(\mathbb{R})$. Note however that such minimizers never exist in the Heisenberg group $\mathcal{H}_3(\mathbb{R})$ \cite{zbMATH03318704}. 

\subsection{Babai's conjecture for compact Lie groups}\label{Subsection: Babai's conjecture for compact Lie groups}

Babai's conjecture is an influential open statement that predicts the rate of growth in finite simple groups \cite{zbMATH03528264}. It has served as motivation for many formidable works related to product theorems, expansion and random walks. Large parts of Babai's conjecture are now known, mostly focusing on groups with bounded rank. For unbounded rank, Babai's conjecture is known for random generators picked according to a measure with support of exponential size, see the introduction of \cite{zbMATH07470582} and references therein. 

In a recent paper Ellis, Kindler,  Lifshitz and Minzer \cite{ellis2024product}  put forward a continuous analogue to Babai's conjecture in the unbounded rank setting: 

\begin{conjecture}\label{Babai's conjecture in connected groups}
Let $G$ be a compact simple group and $r$ the dimension of one of its maximal tori. Let $A \subset G$ be a measurable set. We have $A^m =G$ for $m =O(\frac{\log \mu_G(A)}{r_G})$ where the implied constants do not depend on $G$. 
\end{conjecture}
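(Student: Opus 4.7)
The conjecture asks for a bound of order $|\log \mu_G(A)|/r_G$ on the word length in which $A$ generates $G$, and this is qualitatively stronger than what iterating the Brunn--Minkowski inequality of Theorem \ref{Theorem: BM in compact groups} alone can deliver. Indeed, that inequality only gives $\mu_G(A^m) \geq m^{d_G - d_H}\mu_G(A)$, so that $\mu_G(A^m)$ becomes comparable to $1$ after $m \sim \mu_G(A)^{-1/(d_G - d_H)}$ products---polynomial rather than polylogarithmic in $1/\mu_G(A)$, and paying the dimension $d_G - d_H$ (which for $\SO_n$ is linear in $n$, hence larger than $r_G$) rather than the rank itself. The plan is therefore to combine the results of this paper with a dichotomy argument based on the stability theorem, supplemented by a spectral endgame to cover the final constant-measure regime.

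The core of the argument would be an iterative dichotomy at each stage $k$. Either $A^{2^k}$ is essentially contained in a neighborhood $H_{\delta_k}$ of a proper closed subgroup $H$ of maximal dimension---which, by Theorem \ref{Theorem: Stability}, is the only way the doubling at stage $k$ can fail to be substantially larger than $2^{d_G - d_H}$---or $A^{2^k}$ is already genuinely spread across $G$. In the first, stable, case, the simplicity of $G$ guarantees that $H$ is not normal, and I would replace $A^{2^k}$ with a bounded product of controlled conjugates $\prod_{j=1}^{\ell}(g_j A^{2^k} g_j^{-1})$ with the $g_j$ chosen so that $\bigcap_j g_j H g_j^{-1}$ has strictly smaller dimension than $H$, a continuous analogue of the escape-from-subvarieties trick of Helfgott and Pyber--Szab\'o. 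In the non-degenerate case, one would establish a quantitative converse to Theorem \ref{Theorem: Stability}, namely an improved doubling of the form $\mu_G(A^2)\geq 2^{(1-o(1))d_G}\mu_G(A)$ whenever $A$ is far from every subgroup neighborhood, so that the effective dimension governing growth becomes the full $d_G$ and a few squarings bring the measure up to a constant.

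The main obstacle is bridging the final gap with constants \emph{uniform in the rank}. Even granting the dichotomy above, the iteration naturally produces a bound of order $\mu_G(A)^{-O(1/d_G)}$, not $|\log \mu_G(A)|/r_G$. Closing this gap seems to require leaving the transport-based framework of this paper in favor of non-abelian Fourier tools: once $\mu_G(A^m)$ exceeds a small absolute constant, Kunze--Stein-type convolution estimates combined with Frobenius--Schur or Liebeck--Shalev character bounds should let $O(r_G^{-1}|\log \mu_G(A)|)$ further multiplications fill out $G$, the rank entering precisely through the smallest dimension of a non-trivial irreducible representation of $G$. The deepest step---and the place where the methods of the present paper hit a clear ceiling---is achieving uniformity in $\dim G$, since the ultraproduct-based arguments powering Proposition \ref{Proposition: approximate subgroups of small measure} yield only qualitative estimates whose implicit constants degrade with dimension, while the conjecture requires constants completely independent of $G$.
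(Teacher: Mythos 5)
You were asked to prove Conjecture \ref{Babai's conjecture in connected groups}, but be aware that the paper itself does not prove it: it is an open conjecture of Ellis--Kindler--Lifshitz--Minzer recorded in \S \ref{Subsection: Babai's conjecture for compact Lie groups}, and the paper's contribution there is only the much weaker proposition obtained by iterating Theorem \ref{Theorem: Expansion inequality} until the measure reaches a threshold $\epsilon(G)$ that is neither explicit nor uniform in $G$; the paper then explicitly isolates the missing rank-uniform lower bound on $\epsilon(G)$ (equivalently its Problem on $\exp(c(d_G-d_H))$-approximate subgroups) as an open problem requiring novel ideas. So there is no proof to compare against, and your text is, as you yourself say, a research outline rather than a proof.

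Concretely, each load-bearing step of your outline is itself unproven. (i) Your dichotomy needs a quantitative converse to Theorem \ref{Theorem: Stability}, namely doubling at least $2^{(1-o(1))d_G}$ for sets far from every $H_\delta$; the paper gives nothing of the sort --- Theorem \ref{Theorem: Stability} only describes sets with doubling below $2^{d_G-d_H}+\alpha$, with $\alpha$ produced by a compactness/ultraproduct argument and hence non-quantitative, and its contrapositive yields only doubling $>2^{d_G-d_H}+\alpha$. (ii) In the stable branch you multiply conjugates $g_jA^{2^k}g_j^{-1}$; to control the word length these conjugators must be exhibited inside a bounded power of $A$, and no such escape argument is supplied, nor is it clear how to run escape-from-subgroups in the measure category uniformly in $G$. (iii) Even granting both branches, growth-of-measure arguments of the kind used in this paper cannot reach a logarithmic word length: for $A=B(e,\rho)$ one has $A^m\subset B(e,m\rho)$, so $m\gtrsim 1/\rho$ is necessary to cover $G$ while $|\log\mu_G(A)|/r_G\sim (d_G/r_G)\log(1/\rho)$; this shows both that no bootstrapping of Theorems \ref{Theorem: Expansion inequality} and \ref{Theorem: BM in compact groups} can deliver the conjectured bound and that the conjecture must in any case be read in its intended large-rank regime rather than literally for every $A$ in a fixed $G$. (iv) Finally, the uniformity in $G$ that you correctly identify as the crux is exactly what is open: the constants in Proposition \ref{Proposition: approximate subgroups of small measure}, Theorem \ref{Theorem: BM in compact groups} and Theorem \ref{Theorem: Stability} all degrade with $d_G$, so none of the paper's machinery can be invoked with $G$-independent constants, and your spectral endgame (Kunze--Stein plus character bounds) is only named, not carried out. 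Your diagnosis of the obstructions is sound, but no step of the statement has actually been established.
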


They proved in \cite{ellis2024product} the conjecture for sets of at least exponential size. Our work provides a counterpart for sets of "infinitesimal" size: 

\begin{proposition}
Let $G$ be a compact simple Lie group. There is $\epsilon(G) > 0$ such that for all $A \subset G$ we have $\mu_G(A^m) \geq \epsilon(G)$ for $m \simeq \frac{\log \mu_G(A)^{-1}}{d_G-d_H}$.
\end{proposition}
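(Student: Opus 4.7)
The plan is to iterate the Brunn--Minkowski inequality (Theorem~\ref{Theorem: BM in compact groups}) by repeatedly squaring $A$, stopping as soon as the measure leaves the small-measure regime in which the theorem applies. I would first fix some $\alpha \in (0, \tfrac{1}{2})$, say $\alpha = \tfrac{1}{4}$, and let $\epsilon_0 = \epsilon_0(G) > 0$ denote the corresponding threshold from Theorem~\ref{Theorem: BM in compact groups}: for all compact $X, Y \subset G$ with $\mu_G(X), \mu_G(Y) \leq \epsilon_0$,
\[
  \mu_G(XY)^{\frac{1}{d_G - d_H}} \geq (1-\alpha)\bigl(\mu_G(X)^{\frac{1}{d_G-d_H}} + \mu_G(Y)^{\frac{1}{d_G-d_H}}\bigr).
\]
I may assume $\mu_G(A) \leq \epsilon_0$, otherwise the conclusion is immediate with $m = 1$. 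I would then define $a_k := \mu_G(A^{2^k})^{1/(d_G-d_H)}$ and let $k_0$ be the least integer with $\mu_G(A^{2^{k_0}}) > \epsilon_0$.

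For every $k < k_0$ I would apply the inequality with $X = Y = A^{2^k}$ to obtain $a_{k+1} \geq 2(1-\alpha) a_k = \tfrac{3}{2} a_k$. Iterating this yields
\[
  \mu_G(A^{2^k}) \geq \bigl(\tfrac{3}{2}\bigr)^{k(d_G-d_H)} \mu_G(A) \qquad \text{for all } k \leq k_0,
\]
so the right-hand side already exceeds $\epsilon_0$ once $k \geq C(G) \log \mu_G(A)^{-1}/(d_G-d_H)$ for a constant $C(G) > 0$ depending only on $G$. Hence $k_0$ is of this order, and setting $m := 2^{k_0}$ gives $\mu_G(A^m) \geq \epsilon_0 =: \epsilon(G)$ with the number of squarings of the required size.

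The main (and essentially the only) obstacle is verifying that Theorem~\ref{Theorem: BM in compact groups} applies at each step of the iteration, which is automatically built into the stopping-time choice of $k_0$: for $k < k_0$ we still have $\mu_G(A^{2^k}) \leq \epsilon_0$, so the Brunn--Minkowski inequality remains valid; and once the iteration terminates the desired lower bound is reached. No additional ingredient beyond Theorem~\ref{Theorem: BM in compact groups} is required.
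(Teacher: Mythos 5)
Your iteration is indeed the argument this proposition is meant to rest on (the paper states it in the closing remarks without an explicit proof; it is intended as a direct consequence of Theorem~\ref{Theorem: Expansion inequality}/Theorem~\ref{Theorem: BM in compact groups} via repeated squaring with a stopping time), and the estimate $\mu_G(A^{2^k}) \geq (2(1-\alpha))^{k(d_G-d_H)}\mu_G(A)$ for $k \leq k_0$ is correct, modulo the routine reduction to compact $A$ by inner regularity. The problem is the last step: what you have shown to be of order $\frac{\log \mu_G(A)^{-1}}{d_G-d_H}$ is $k_0$, the \emph{number of squarings}, whereas the statement as written concerns the exponent $m$ of the product set $A^m$, and your choice $m := 2^{k_0}$ is exponentially larger --- with $\alpha = \tfrac14$ it is roughly $\bigl(\epsilon_0^{-1}\mu_G(A)^{-1}\bigr)^{\log 2/((d_G-d_H)\log(3/2))}$, i.e.\ a power of $\mu_G(A)^{-1}$, not $\simeq \frac{\log\mu_G(A)^{-1}}{d_G-d_H}$. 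The closing sentence (``with the number of squarings of the required size'') conflates the two quantities, so as written the proposal does not establish the literal statement.

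This is not something a cleverer iteration can repair: read literally, with $m$ the length of the product, the statement is false. Take $A = H_\delta$ with $H$ a proper closed subgroup of maximal dimension; then $\mu_G(A) \asymp \delta^{d_G-d_H}$ while $A^m \subset H_{m\delta}$, so $\mu_G(A^m) \geq \epsilon(G)$ forces $m \gtrsim \epsilon(G)^{1/(d_G-d_H)}\delta^{-1} \asymp \mu_G(A)^{-1/(d_G-d_H)}$, which dwarfs $\frac{\log\mu_G(A)^{-1}}{d_G-d_H} \asymp \log(1/\delta)$ as $\delta \to 0$. The reading consistent with both your computation and this example is that the number of doubling steps is $\simeq \frac{\log\mu_G(A)^{-1}}{d_G-d_H}$, i.e.\ $\mu_G(A^{2^m}) \geq \epsilon(G)$ for such $m$; equivalently, in terms of word length, $\mu_G(A^m) \geq \epsilon(G)$ once $m \gtrsim \mu_G(A)^{-1/(d_G-d_H)}$, and the $H_\delta$ example shows that exponent is sharp (taking $\alpha$ small even recovers the optimal power $\mu_G(A)^{-(1+o(1))/(d_G-d_H)}$ from your scheme). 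Under that reading your proof is complete and is essentially the intended one; but you should state explicitly which quantity is being bounded instead of letting ``$m := 2^{k_0}$'' stand in for the $m$ of the statement.
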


In combination with results from \cite{ellis2024product}, one would need to provide the estimate $\epsilon(G) \geq e^{cr_G}$ for some $c > 0$ to prove Conjecture \ref{Babai's conjecture in connected groups}. In particular, it would follow from a positive answer to the following problem: 

\begin{problem}
    Let $c > 0$ is there $C> 0$ such that if $A \subset G$ is a measurable $\exp(c(d_G-d_H))$-approximate subgroup of measure at most $\exp(-C(d_G-d_H))$, then $A$ is contained in $H_{\frac{1}{2}}$ for some proper closed subgroup $H$?
\end{problem}

Such a result would be optimal as considering $H_{1/2}$ shows. Any progress on this problem would be significant and need novel ideas. Refinements of our method of proof of Proposition \ref{Proposition: approximate subgroups of small measure} suggest that one could answer the weaker problem where $d_G-d_H$ is replaced with $(d_G-d_H)^{O(1)}$ with the $O(1)$ constant being potentially large.

\subsection{Local Brunn--Minkowski improved}\label{Subsection: Local Brunn--Minkowski improved}

The local Brunn--Minkowski inequality (Theorem \ref{Theorem: Local Brunn--Minkowski}) and the relevant stability result (Proposition \ref{Proposition: Local stability under assumption}) proved in this paper are used as tools in the proof of our two main theorems. These results are nonetheless interesting in and of themselves and we believe they could be improved substantially.

First of all, Proposition \ref{Proposition: Local stability under assumption} could be turned into a quantitative statement. Let $A \subset R(e,\delta, \rho)$. To establish quantitative results, one could replace the study of density functions with the study of functions of the form $\mathbf{1}_{A} * \mathbf{1}_{R(e,\delta^\alpha, \rho^\alpha)}$ for $1> \alpha > 0$. These maps also satisfy functional equalities reminiscent of Lemma \ref{Lemma: Density functions are convex} and related to \cite{vanhintum2023sharp}, but their study becomes more technical and tedious. One could then obtain a quantitative stability result by following the same strategy as above. 

In another direction, the \emph{non-degeneracy} assumption from Proposition \ref{Proposition: Local stability under assumption} could be removed. This could perhaps be achieved by adopting another way of rescaling (i.e. changing the rescaling map $\phi$ in \S \ref{Subsection: Models for small convex neighbourhoods}). Another way one could do so is suggested by considering the John ellipsoid of the convex set provided by Carolino's "finite + convex" decomposition \cite{carolino2015structure}. The Lie model $\phi$ obtained mimicking section \S \ref{Subsection: Models for small convex neighbourhoods} would most probably become nilpotent - hence less tractable. Using as additional input McCrudden's work on the (lack of) equality case for Brunn--Minkowski inequality in nilpotent Lie groups \cite{zbMATH03318704} might be a key to conclude.

\subsection{Quantitative and asymmetric stability}

By utilising the quantitative local stability approach mentioned above, one could potentially show that in Theorem \ref{Theorem: Stability} $\delta_1 = C(d_G)\alpha^c$ for some constant $0<c<1$ depending on $G$. When comparing with stability results in Euclidean spaces, this raises the following question:

\begin{problem}
    Can one obtain a dependency  $\delta_1 = C(d_G)\alpha^c$ with $c$ independent of $G$ in Theorem \ref{Theorem: Stability}?
\end{problem}

But the most obvious open problem remains the asymmetric stability result:

\begin{problem}
    Is it possible to prove a stability result for two subsets $A,B \subset G$ such that 
    $ \mu_G(AB)^{\frac{1}{d_G-d_H}} \leq (1+\epsilon)\left(\mu_G(A)^{\frac{1}{d_G-d_H}} + \mu_G(B)^{\frac{1}{d_G-d_H}} \right)^{d_G}$ ?
\end{problem}

The only part of our method that would have to be replaced is Lemma \ref{Lemma: Minimal subset contains a large neighbourhood of e}. One would be tempted to adapt proofs of stability from the Euclidean spaces case \cite{FigalliJerison15,figalli2023sharp}, but some non-trivial challenges arise in such attempts.

\subsection{The use of ultrafilters}

Seeing that our method indicates what seems to be a clear path to quantitative inequalities and stability results, one could wonder why we decided to use ultrafilters in our proof. The reason is threefold. 

First of all, ultrafilters allow for streamlining of the proof and several steps become relatively simple consequences of general results from the theory of approximate subgroups. Secondly, the ultrafilter method offers more versatility. It reduces the stability problem to an equality problem in some other Lie group, $L$ say. When $L=\mathbb{R}^n$ this might appear to be a small simplification. But different stability results would require to deal with other $L$'s. See the third paragraph of \S \ref{Subsection: Local Brunn--Minkowski improved} where we would require $L$ to be nilpotent. Finally, it seems that the ideas developed in \S \ref{Subsection: Taking limits without boundedness assumption} show great promise in establishing the existence of minimisers for the doubling ratio in vast generality. 

\begin{appendix}
    \section{Subgroup of maximal dimension}\label{Section: Subgroup of maximal dimension}
 We collect in this appendix the Lie algebras $\mathfrak{g}$ of compact simple Lie groups along with the Lie algebra $\mathfrak{h}_M$ of the proper closed subgroup of maximal dimension. This table follows from the classification of simple Lie algebras and is copied from \cite[Table 2.1]{biller1999actions} which is based on \cite[\S 4]{10.1215/ijm/1256055005}.
 
    \[
    \begin{array}{|l|l|l|l|} 
    \hline
\mathfrak{g}   &   \dim \mathfrak{g}  &  \dim \mathfrak{g}/\mathfrak{h}_M & \mathfrak{h}_M   \\
    \hline
 \mathfrak{a}_r (r\neq 3) & r(r+2)      & 2r  & \mathbb{R} \times \mathfrak{a}_{r-1} \\
  \mathfrak{b}_r (r\geq 3) & r(2r+1)      & 2r  & \mathfrak{d}_{r-1} \\
   \mathfrak{c}_r (r\geq 2) & r(2r+1)      & 4(r-1)  & \mathfrak{a}_1 \times \mathfrak{c}_{r-1} \\
   \mathfrak{d}_r (r\geq 3) & r(2r-1)      & 2r-1  &  \mathfrak{b}_{r-1} \\
   \mathfrak{e}_6  & 78      & 26  &  \mathfrak{f}_{4} \\
   \mathfrak{e}_7  & 133      & 54  &  \mathbb{R} \times \mathfrak{e}_{6} \\
   \mathfrak{e}_8  & 248      & 112  &  \mathfrak{a}_1 \times \mathfrak{e}_{7} \\
   \mathfrak{f}_4  & 52      & 16  &   \mathfrak{b}_{4} \\
   \mathfrak{g}_2  & 14      & 6  &   \mathfrak{a}_{2} \\
    \hline
    \end{array}
\]
\end{appendix}


\end{document}